\documentclass[12pt]{puthesis}

\pdfoutput=1

\usepackage{graphicx}
\usepackage{epsfig}
\usepackage{amsfonts}
\usepackage{amssymb}
\usepackage{amsmath}   
\usepackage{amsthm}
\usepackage{latexsym}
\usepackage{subfigure}
\usepackage{psfrag} 
\usepackage[vcentering,dvips]{geometry}

\newtheorem{theorem}{Theorem}[section]
\newtheorem{lemma}[theorem]{Lemma}
\newtheorem{corollary}[theorem]{Corollary}
\newtheorem{proposition}[theorem]{Proposition}

\newtheorem{observation}[theorem]{Observation}

\newtheorem{remark}[theorem]{Remark}
\newtheorem{definition}[theorem]{Definition}

\newcommand{\sgn}{\operatorname{sgn}}

\bibliographystyle{amsplain}

\title{Freedom through Imperfection: \\
exploiting the flexibility offered by redundancy in signal processing}
\submitted{September 2009}  %graduation date
\author{Rachel Ward}
\adviser{Ingrid Daubechies}

\abstract{
\noindent This thesis consists of four chapters.  The first two chapters pertain to the design of stable quantization methods for analog to digital conversion, while the third and fourth chapters concern problems related to compressive sensing.  
\\
\\
\noindent In the first chapter, we study the $\beta$-encoder and golden ratio encoder, and show that these quantization schemes are robust with respect to uncertainty in the multiplication element of their implementation, whereby $x \rightarrow \beta x$.  In particular, we use a result from analytic number theory to show that the possibly unknown value of $\beta$ can be reconstructed as the unique root of a power series with coefficients in $\{-1,0,1\}$ obtained from the quantization output of test input $x$ and its negative, $-x$.    
\\
\\
\noindent The focus of our attention in Chapter 2 is Sigma Delta ($\Sigma \Delta$) modulation, a course quantization method in analog to digital conversion that is widely used for its simplicity of implementation and robustness to component imperfections.  A persistent problem in $\Sigma \Delta$ quantization of audio signals is the occurrence of undesirable \emph{tones} arising from periodicities in the bit output; these tones are particularly intolerable in the space between successive audio tracks.  As one of the contributions of this thesis, we show that the standard second order 1-bit $\Sigma \Delta$ scheme can be modified so as to eliminate such periodicities, and this modification can be achieved without sacrificing accuracy or introducing significant complexity.  
\\
\\
\noindent The emerging area of compressed sensing guarantees that sufficiently sparse signals can be recovered from significantly fewer measurements than their underlying dimension suggests, based on the observation that an $N$ dimensional real-valued vector can be reconstructed efficiently to within a factor of its best $k$-term approximation by taking $m = k\log{N}$ measurements, or inner products.  However, the quality of such a reconstruction is not assured if the underlying sparsity level of the signal is unknown. In Chapter 3, we show that sharp bounds on the errors between a signal and $p$ estimates (corresponding to a different sparsity hypotheses, say) can be achieved by reserving only $O(\log{p})$ measurements of the total $m$ measurements for this purpose.  The proposed technique is reminiscent of cross validation in statistics and learning theory, and theoretical justification in the context of compressed sensing is provided by the Johnson Lindenstrauss lemma.  
\\
\\
\noindent In Chapter 4, we study the relation between nonconvex minimization problems arising in compressed sensing and those known as \emph{free-discontinuity} problems, which are often encountered in the areas of image segmentation and edge detection.  We adapt recent thresholding techniques in the area of sparse recovery to a class of nonconvex functionals that contains both compressed sensing and free-discontinuity-type problems.   Along the way, we establish a connection between these two types of problems, using the notion of gamma convergence, and gain new insights into the minimizing solutions of such functionals.
}

\acknowledgements{First and foremost, I would like to thank my graduate advisor, Ingrid Daubechies, for directing me towards interesting and challenging problems, and for giving me the confidence to attack them.  You are truly an inspiration, and I feel so proud and fortunate to have had the opportunity to work with you. \\

\noindent  I would also like to thank the signal processing community as a whole for being friendly and warm to young people in the field.  I have had many wonderful interactions and established nice collaborations as a result.  In particular, I would like to thank Massimo Fornasier for being a great colleague and friend.  Working with you is so much fun! Your attitude is infectious, and I can only hope to pass it along to others.  I am also particularly thankful to Robert Calderbank, Sinan Gunturk, Albert Cohen, Thao Nguyen, and Ozgur Yilmaz for enlightening discussions that strengthened this thesis immensely.  \\

\noindent I am thankful to the graduate students in PACM for making me feel part of a family.  I have yet to encounter another department that gets along as well as we do.  I am thankful to Arie for helping me simplify several of the arguments in this thesis, but more importantly for being by my side during the dark times, and for yelling at me when I take work too seriously.  And finally, to my amazing family: your love and support inspires and strengthens me every day. }

\dedication{To my father, for introducing me to the strange and wonderful world of mathematics.}

\begin{document}

%\pagebreak

\begin{center}.

\vspace{60mm}

\emph{``But what did the bird see in the clear stream below? His own image; no longer a dark, gray bird, ugly and disagreeable to look at, but a graceful and beautiful swan."}
\end{center}
\indent \indent \indent \indent \indent \indent \indent \indent \indent \indent \indent \indent \indent \indent \indent \indent \indent  - The Ugly Duckling \\
\vspace{40mm}

\pagebreak

%\begin{singlespace}
\section{Overview}
The first part of the thesis is concerned with the acquisition of real-valued, continuous-time signals, the likes of which we unknowingly process every day:  speech, music, video, and wireless communications are all produced in this form, for example.  At the same time, it is becoming increasingly efficient to store and manipulate information in \emph{digital} format.  This is partly due to the greater robustness of digital signals versus analog signals with respect to noise and fluctuation: any slight variation in an analog signal can cause a considerable amount of distortion, but for digital sequences can be safely rounded off to the nearest element in the discrete alphabet.  The design and implementation of fast, stable, and accurate algorithms for conversion between the continuous and discrete domain is therefore crucial.   
\\
\\
\noindent The process of \emph{analog-to-digital conversion} usually consists of two parts: \emph{sampling} and \emph{quantization}.  Sampling consists of converting the continuous-time signal of interest $f(t)$ into a discrete-time signal $f (t_n)$.  According to the Shannon - Nyquist interpolation formula, this operation is invertible if the analog signal of interest is uniformly bounded and absolutely integrable, $f \in L^{\infty}(\mathbb{R}) \cap L^1(\mathbb{R})$, and \emph{bandlimited}, meaning that its Fourier transform $\hat{f}(\omega) = \int_{-\infty}^{\infty} f(t) \exp{(-i \omega t)} dt$ vanishes outside a bounded interval $[-\Omega/2, \Omega/2]$.  The importance of this result lies in the fact that many signals of practical engineering interest are well-approximated by bandlimited functions.  For example, speech signals can be modelled as bandlimited functions whose bandwidth $\Omega$ is $4$ KHz, while audio signals are well modelled as bandlimited functions whose bandwidth is $20$ KHz \cite{BandLimit}.  
\\
\\
Specifically, $\Omega$-bandlimited functions $f \in L^{\infty}(\mathbb{R}) \cap L^1(\mathbb{R})$ can be recovered from sufficiently dense evenly-spaced samples according to the following reconstruction formula, which holds for $\lambda \geq 1$:
\begin{equation}
f(t) = \frac{1}{\lambda} \sum_n f \Big( \frac{\pi n}{\lambda \Omega} \Big) g \Big( t - \frac{\pi n}{\lambda \Omega} \Big); 
\label{reconstruct}
\end{equation}  
here, the reconstruction filter $g$ can be any function satisfying $\| \hat{g} \|_{\infty} \leq 1$, $\hat{g}(\omega) = 1$ for $|\omega| \leq \Omega/2$, and $\hat{g}(\omega) = 0$ for $|\omega| \geq \lambda \Omega/2$.  At  critical sampling rate $\lambda = 1$, or \emph{Nyquist rate}, these restrictions determine the Fourier transform $\hat{g}$ as the indicator function for the interval $[-\Omega/2,\Omega/2]$, yielding the \emph{sinc} filter, $g(t) = \Omega$sinc$(\Omega t) = \sin{(\Omega \pi t)} / \pi t$.  As sinc$(t)$ is not absolutely summable, sampling at the critical rate $\lambda = 1$ renders the reconstruction \eqref{reconstruct} unstable in the presence of inevitable additive error on the samples; if noisy input $f_n = f ( \frac{\pi n}{\Omega}) + \epsilon_n$ is instead observed, and $| \epsilon_n | = \epsilon$, then the sign pattern of the noise sequence $(\epsilon_n)$ can be chosen adversarily so that the resulting series $\sum_n f_n g( t - \frac{\pi n}{\Omega})$ does not even converge \cite{DD03}.  
\\
\\
\noindent In practice, reconstruction is stabilized by oversampling at a fixed rate corresponding to $\lambda > 1$.  In this setting, one has the freedom to design $g$ so that its Fourier transform $\hat{g}$ is smooth, in which case $g$ will have fast decay, and only those samples $f(\frac{\pi n}{\Omega})$ for which $| \frac{\pi n}{\Omega} - t|$ is sufficiently small will contribute significantly towards the reconstruction of $f(t)$. 

\vspace{5mm}
\noindent {\bf\normalsize Quantization in analog to digital conversion}
\vspace{2mm}

\noindent The second step in analog to digital conversion is quantization, whereby the real-valued sequence $\big( f^{\lambda}_n \big) = \big( f( \frac{\pi n}{\lambda \Omega}) \big)$ is encoded in a bitstream $(q_{(n,m)})$, with $K$ bits $(q_{(n,m)})_{m=1}^K$ allocated to each sample $f^{\lambda}_n$.  From these bits, the original function $f(t)$ can be reconstructed in the analog domain, albeit imperfectly, by replacing each sample $f_n^{\lambda}$ in \eqref{reconstruct} by a  function of the $K$-bit sequence $(q_{(n,m)})_{m=1}^K$.  For example, this sequence could be taken to be the first $K$ bits in the binary expansion of $f_n^{\lambda}$ - such \emph{pulse code modulation} quantization schemes are often employed in practice.  In Chapter 1, we will study a related quantization scheme, the $\beta$-encoder, whereby expansions of $f_n^{\lambda}$ in base $1 < \beta < 2$ are considered in place of binary expansions (where $\beta = 2$).  Note that quantization, unlike sampling, is no longer an invertible operation.  However, in any reasonable quantization scheme, the accuracy of the approximation to $f$ from the quantization sequence $(q_{(n,m)})$ will increase as as function of $B = K\lambda$, the number of bits spent per Nyquist interval.  That is, oversampling allows for more accurate as well as more stable reconstruction, but at the expense of additional sampling resources.   The balance between accuracy, stability, and sampling efficiency in signal acquisition will be a recurring theme throughout this thesis.
\\
\\
In Chapters 1 and 2, we focus on the design of stable quantization schemes, assuming that we may spend many bits per Nyquist interval in acquiring samples $(f_n^{\lambda})$.  This is the case for audio signal processing, where signals of interest have reasonable bandwidth $\Omega \leq 40$ KHz, and sampling at a rate of e.g. $32$ bits per Nyquist interval corresponds to using fewer than $1$ million bits per second.  In determining the stability of a particular quantization scheme to nonidealities in its implementation, of particular importance is its sensitivity to imperfections in the \emph{quantizer} element $Q:\mathbb{R} \rightarrow \{-1,1\}$ that is necessary for conversion from the analog to digital representation.  $Q$  is usually taken to be the signum function $Q(u) = \sgn(u)$, the behavior of which an actual quantizer, having finite precision and subject to thermal fluctuations, can only approximate.  Actual quantizer elements usually come with a pre-assigned tolerance $\epsilon$: for input with magnitude below this tolerance, $| u | \leq \epsilon$, the output $Q_{\epsilon}(u) \in \{-1,1\}$ is not reliable. We will say that a quantization scheme is {\it robust} with respect to quantization error if, for some $\epsilon > 0$, the worst approximation error produced by the quantization scheme can be made arbitrarily small by allowing a sufficiently large bit budget, even if the quantizer used in its implementation is imprecise to within a tolerance $\epsilon$.   Such robustness can be achieved if the quantization sequence $(q_{(n,m)})$ constitutes a \emph{redundant} representation of the original $f(t)$, so that an incorrect bit $q'_{(m,n)}$ caused by quantization error can be redeemed by an appropriate choice of subsequent bits.  Of course, any quantization scheme of interest will require additional components in its implementation, such as adders, multipliers, integrators, and so forth; robustness with respect to imperfections in all these elements must be taken into account in evaluating the reconstruction accuracy of a particular scheme.  
\\
\\
\noindent In Chapter 1, we investigate robustness properties of the $\beta$-encoder, a quantization scheme that was recently designed to be robust with respect to quantization errors, while also having asymptotically optimal reconstruction accuracy.  We show that  the $\beta$-encoder is also robust with respect to the other components in its implementation, securing the validity of its reconstruction accuracy.  Interestingly, such robustness is achieved by enforcing a slight shift in the quantizer element, $Q(u) = \sgn{u + \tau}$; that is, we take advantage of the \emph{freedom to choose} an imperfect quantizer, offered by redundancy, in order to ensure additional robustness guarantees.  
\\
\\
\noindent In Chapter 2, we focus on a different notion of stability in the design of quantization schemes.  Sigma Delta ($\Sigma \Delta$) quantization, and in particular one-bit $\Sigma \Delta$ modulation, is a \emph{coarse} quantization method, as each sample value $f_n^{\lambda}$ in \eqref{reconstruct} is replaced by a single bit $q_n \in \{-1,1\}$ that depends on all previous $f_k^{\lambda}$.  Because of their robustness properties and ease of implementation, $\Sigma \Delta$ schemes are widely popular in practice, despite suffering from an entirely different kind of problem: in audio signal processing, periodic oscillatory patterns in the bit output  $q_n$ often occur, producing idle tone components in response to stretches of zero input $f_n^{\lambda} = 0$, or even small amplitude sinusoidal inputs. Such tones are audible to the listener, and are so pronounced in low-order $\Sigma \Delta$ quantizers that such schemes are often avoided in audio applications for more complex higher order $\Sigma \Delta$ schemes \cite{SDChaos}.  As one of the contributions of this thesis, we introduce a second-order $\Sigma \Delta$ scheme that eliminates such periodicities by damping the internal variables of the system once the input $f_n^{\lambda}$ is identically zero.  This approach is in line with the philosophy of Chapter 1: to eliminate the idle tones, we exploit the freedom to induce a small amount of  leakage to the system without sacrificing accuracy, as offered by redundancy of the $\Sigma \Delta$ quantization output.

\vspace{5mm}
\noindent {\bf\normalsize Beyond Nyquist sampling}
\vspace{2mm}

\noindent The second part of the thesis concerns \emph{compressed sensing},  a fast emerging area of applied mathematics that represents a new approach to traditional sampling paradigms.  At the core of this field is the remarkable fact that a large class of underdetermined linear systems $y = \Phi x$ are invertible, and can be inverted efficiently, subject to the constraint that $x$ is sufficiently \emph{sparse}, or has sufficiently small nonzero support $\| x \|_0 = | \{ i : x_i \neq 0 | \}$.  In particular, a matrix $\Phi \in \mathbb{R}^{m \times N}$ with unit normed columns is said to have the ($k,\delta$)-\emph{restricted isometry property}, or satisfy $k$-RIP for short, if all singular values of any $k$ column submatrix of $\Phi$ lie in the interval $[1 - \delta, 1 + \delta]$ for a given constant $\delta < 1$.  For $2k$-RIP matrices, a $k$-sparse vector $x$ can be recovered from its lower-dimensional image $y = \Phi x$ as the minimal $\ell_1$ norm solution,
\begin{equation}
x = \arg \min_{\Phi u = y} \| u \|_{\ell^N_1}.
\label{convex1}
\end{equation}
That is, $2k$-RIP for the matrix $\Phi$ furnishes an equivalence between the minimal $\ell_0$ norm and minimal $\ell_1$ norm solution from the affine space $\{u \in \mathbb{R}^N: \Phi u = y \}$ if a $k$-sparse solution exists; while finding the former solution represents an NP-hard problem in general, the latter solution can be recovered efficiently using linear programming methods.  Even more can be said: for vectors $x$ satisfying $y = \Phi x$ that are approximately but not exactly $k$-sparse, the error $\| x - \hat{x} \|_{\ell_2^N}$ incurred by the approximation $\hat{x} = \arg \min_{\Phi u = y} \| u \|_1$ using $2k$-RIP matrix $\Phi$ is still small, thanks to the following stability result: 
\begin{equation}
\| x - \hat{x} \|_{\ell^N_2} \leq \frac{C_{\delta}}{\sqrt{k}}\inf_{\|z \|_0 \leq k} \|x - z\|_{\ell_1^N}.
\label{approxerr0}
\end{equation}
In words, the error $\| x - \hat{x} \|_{\ell^N_2}$ is bounded by a fraction of the best possible approximation error between $x$ and the set of $k$-sparse vectors in the metric of $\ell_1^N$  \cite{Candes08}. 
\\
\\
%The $\ell_1$ minimization problem \eqref{convex1} can be recast as a linear program, for which many sophisticated algorithms have been developed to efficiently solve; the design of even \emph{faster} algorithms for solving the  $\ell_1$ minimization problem \eqref{convex1} is an active area of research, both within and outside of compressed sensing.  
%In many situations, the sparsest solution \eqref{nonconvex} is not even unique, let alone equivalent to a convex problem.  This is potentially true for any matrix $\Phi$ whose null space 
%${\cal N}_{\Phi} = \{ \eta \in \mathbb{R}^N, \Phi \eta = 0 \}$ contains a vector $\eta$ of sparsity level $\| \eta \|_0 = 2d$; writing $\eta$ as the difference of $d$-sparse vectors $\eta = \eta_1 - \eta_2$, it follows that $\Phi \eta_1 = \Phi \eta_2$. 
%However, many matrices $\Phi \in \mathbb{R}^{m \times N}$ do not contain sparse vectors in their null space; for example, $99$ out of every $100$ matrices of dimension $m \times N$ whose coefficients are drawn as independent and identical realizations of a Gaussian or Bernoulli random variable satisfy the following \emph{null-space property} \cite{cdd09}: If $\eta \in {\cal{N}}_{\Phi}$, then $\sum_{j=1}^{2d} | \eta_{(j)}| \leq \sum_{2d+1}^N | \eta_{(j)} }|$, where $\eta_{(k)}$ denotes the $k$th largest-magnitude entry of $\eta$.
{\bf Constructing $k$-RIP matrices} With high probability, an $m \times N$ random matrix whose coefficients are drawn as independent and identical realizations of a Gaussian or Bernoulli random variable will satisfy $k$-RIP of optimal order,
 \begin{equation}
 k  = O \big( m/\log{(N/m)}) \big).
 \label{order}
 \end{equation}
  While no deterministic constructions of RIP matrices have been found of this order, many matrices with a smaller degree of randomness satisfy the restricted isometry property to almost optimal order.   Most notably, with high probability an $m \times N$ matrix obtained by selecting $m$ rows at random from the $N \times N$ discrete Fourier matrix satisfies $k$-RIP of order $k =  O \big(m / (\log{N}\log{(\log{N})}) \big) $ \cite{RV06}.   This particular result has an interesting interpretation that connects back to the first two chapters: signals that are well-approximated as elements from the class of \emph{sparse trigonometric polynomials},
\begin{eqnarray}
 f(t) = \sum_{\omega=-N}^{\omega = N} a_{\omega} \exp{(2\pi i \omega t)} \textrm{ , } t \in [0,1) \textrm{ , } a_{\omega} \in \mathbb{R}, \| a \|_0 \leq d,
 \end{eqnarray}
 can be efficiently reconstructed from only $m = O(d\log^3{N}))$ samples; when $d$ is small, this represents an \emph{exponentially smaller} number than the $O(N)$ samples required for the Shannon-Whittaker reconstruction formula \eqref{reconstruct} to hold.  This is significant in applications such as radar, navigation, and satellite communications, where signals of interest often have a sparse frequency representation, while measurements are at the same time  expensive to implement.

\vspace{5mm}
\noindent {\bf\normalsize Cross validation in compressed sensing}
\vspace{2mm}

\noindent From the reconstruction formula \eqref{approxerr0} and order relation \eqref{order}, the quality of the approximation $\hat{x} = \arg \min_{\Phi u = y} \| u \|_1$ to an unknown signal $x$ in the affine space $\Phi x =  y$ depends on the approximability of $x$ is by its $k = \frac{m}{\log{N/m}}$ largest coordinates.  In the literature, a known and fixed value for $k$ is assumed to well-approximate all signals in the class of interest, while more realistically, $k$ may be a parameter of the unknown input.   Natural images, for instance, are generally well-approximated by only a few discrete Wavelet basis elements $\cite{DeVore92}$; however, certain heavily textured images, such as those containing hair or sand, are not particularly compressible in such bases.  In statistics, parameter selection and noise estimation are commonly achieved through \emph{cross validation}, whereby the available data  is separated into independent \emph{testing} and \emph{training} sets.  In Chapter 3, we show that cross validation incorporates naturally into the compressed sensing paradigm, as the random measurements often used for compressed sensing are the same measurements that provide almost-isometric lower dimensional embeddings of generic point sets, as guaranteed by the Johnson-Lindenstrauss lemma.   More precisely, we take a set of $m$ measurements of the unknown $x$, and use $m - r$ of these measurements, $\Phi x$, in a compressed sensing decoding algorithm to return a sequence $(\hat{x}_1, \hat{x}_2, ... )$ of candidate approximations to $x$.  The remaining $r$ measurements, $\Psi x$, are then used to identify from among this sequence a `best' approximation $\hat{x} = \hat{x}_j$, along with an estimate of the sparsity level of $x$.   The proposed method for error estimation in compressed sensing is \emph{extremely cheap}:  approximation errors of up to $p$ distinct approximations $(\hat{x}_j)_{j=1}^p$ can be estimated with high accuracy at the expense of only $10\log{p}$ samples.  Whereas in Chapters 1 and 2 we exploited redundancy of representation afforded by oversampling to provide stability results for quantization schemes, we now exploit the stability of random measurements to validate the assumption that the representation at hand is redundant. 

\vspace{5mm}
\noindent {\bf\normalsize Compressed sensing and free-discontinuity problems}
\vspace{2mm}

\noindent Chapter 4 explores the connection between minimization problems arising in compressed sensing and those corresponding to \emph{free-discontinuity problems}, which describe situations where the solution of interest is defined by a function and a lower dimensional set consisting of the discontinuities of the function.  Such situation arise, for instance, in crack detection from fracture mechanics \cite{ro08-1} or in certain digital image segmentation problems \cite{essh02}.  The best-known example of a free-discontinuity problem is that of minimizing the so-called Mumford-Shah functional \cite{chdm99}, which is defined by
$$
J(u,K):= \int_{\Omega \setminus K} \left [ | \nabla u |^2 + \alpha ( u - g )^2 \right ] dx +\beta \mathcal{H}^{d-1}(K \cap \Omega);
$$
here, the set $\Omega$ is a bounded open subset of $\mathbb{R}^d$, the constants $\alpha, \beta >0$ are fixed, $g \in L^\infty(\Omega)$, and $\mathcal H^N$ denotes the $N$-dimensional Hausdorff measure.  In the context of visual analysis, the function $g$ is a given noisy image that is to be approximated by the minimizing function $u \in W^{1,2}(\Omega \setminus K)$; the set $K$ is simultaneously is used in order to {\it segment} the image into connected components. 
\\
\\
The Mumford-Shah functional $J(u,K)$ is not easily handled, theoretically or numerically speaking. This difficulty has given rise to the development of approximation methods for the Mumford-Shah functional and its minimizers where \emph{sets} are no longer involved \cite{chdm99}.  In the one-dimensional setting, minimizers of $J(u,K)$ can be approximated in a strong sense by minimizers of a discrete functional which, reformulated in terms of the discrete derivative, and generalized to incorporate inverse problems, takes the form of a  \emph{selective least squares} problem,
\begin{equation}
\textrm{Minimize }\mathcal J_r(u):= \| T u - f \|_{\ell_2^m}^2 + \sum^N_{i=1} \min \left \{|u_i|^2, r \right \}.
\label{regMSintro}
\end{equation} 
Note that the unknown discrete derivative vector, assumed to be `small' and smoothly varying except on the  discontinuity set of the solution, should be well-approximated by the minimizer of $\mathcal J_r$.  However, despite successful numerical results \cite{chve01} observed for such selective least squares problems, no rigorous results on the existence of minimizers, let alone on the convergence of several proposed algorithms to such minimizers, are currently available in the literature.  
\\
\\
\noindent In Chapter 4, we establish a connection between the selective least squares problem \eqref{regMSintro} and the $\ell_0$ minimization problem from compressed sensing,
\begin{equation}
\textrm{Minimize }\mathcal J_r(u):= \| T u - f \|_{\ell_2^m}^2 + \| u \|_0 
\label{intro0}
\end{equation} 
using the notion of gamma convergence.  Moreover, we derive an iterative thresholding algorithm for finding solutions to the selective least squares functional ${\cal{J}}_r$, motivated by the recent application of such algorithms for minimizing the $\ell_0$ regularized functional \eqref{intro0}.  Our algorithm is shown to converge to local minimizers $\bar{u}$ of the functional $\mathcal J_r$ that are characterized by certain fixed point conditions, including the following \emph{gap} property:  for each $j$, either $|\bar{u}_j| \geq  \sqrt{2}r$ or $|\bar{u}_j| \leq \frac{1}{\sqrt{2}}r$.   We show in addition that any global minimizer of $\mathcal J_r$ must satisfy such fixed point conditions, giving mathematical justification to the observation that minimizers of $\mathcal J_r$ tend to be ``cartoon"-like, or segmented into regions of small gradient, separated by edges corresponding to large gradient.    Moreover, we show that minimizers are restricted to regions where the non-convex functional $\mathcal J_r$ is locally strictly convex.  Thus, global minimizers are always isolated, although not necessarily unique, whereas local minimizers may constitute a continuum of unstable equilibria.  These observations sheds light on fundamental properties, virtues, and limitations, of regularization by means of the Mumford-Shah functional, and provide a rigorous justification of the numerical results appearing in the literature.

% Although we reserve the details of this problem for Chapter 4, it is worth noting that the analysis of this section concerns the limiting behavior of a discontinuous, piecewise-convex discrete dynamical system.  This is similar to the situation encountered in Chapter 2, where we study the dynamics of a piecewise-\emph{linear} discrete dynamical system in relation to quiet $\Sigma \Delta$ quantization. 

\pagebreak
\noindent {\bf \large Contributions of this thesis}
\vspace{4mm}

\noindent {\bf \normalsize Chapter 1.} The $\beta$-encoder and golden ratio encoder were recently introduced as quantization schemes in analog to digital conversion that are simultaneously robust with respect to quantizer imperfections in their analog implementation and efficient in their bit-budget use.  However, it was not clear at first \cite{DY06} that these schemes were also robust with respect to imprecisions in the multiplier element.  Using a result in analytic number theory concerning the roots of power series with coefficients in $\{-1,0,1\}$, we show that these schemes are indeed robust as such, and the underlying value of $\beta$ can be reconstructed as the unique root on $[0,1]$ of a power series obtained from the quantization output of test input $(x,-x)$.

\vspace{5mm}
\noindent {\bf \normalsize Chapter 2.} Sigma Delta ($\Sigma \Delta$) modulation is a course quantization method in analog to digital conversion that is widely used for its simplicity of implementation and robustness properties; however, a persistent problem in $\Sigma \Delta$ modulation is the occurrence of undesirable idle tones arising from oscillatory patterns in the bit output.  Such oscillations are omnipresent, and automatically arise along stretches of zero input, the likes of which are unavoidable in audio signal processing.  Responding to a question left open in \cite{OYthesis}, we introduce a family of \emph{quiet} second-order 2-bit asymmetrically damped $\Sigma \Delta$ schemes whose quantization output becomes constant at the onset of zero input, effectively eliminating the idle tones that arise in this context. 
%Our analysis relies on the recently introduced Lyapunov function approach \cite{SinanThesis} \cite{SidongThesis} for studying the global stability of $\Sigma \Delta$ quantization.

\vspace{5mm}
\noindent {\bf \normalsize Chapter 3.} The emerging area of compressed sensing boasts efficient sensing techniques based on the phenomenon that an $N$ dimensional real-valued vector can be reconstructed efficiently to within a factor of its best $k$-term approximation error by taking only $m = k\log{N}$ measurements. However, the quality of such a reconstruction is not assured if the underlying sparsity level of the signal is unknown. We show that sharp bounds on the errors between a signal and $p$ approximations to the signal (corresponding to a different sparsity hypotheses, say) can be achieved by reserving only $O(\log{p})$ measurements of the total $m$ measurements for this purpose.  This error estimation technique is reminiscent of cross validation in statistics and learning theory, and theoretical justification in the context of compressed sensing is provided by the Johnson Lindenstrauss lemma.

\vspace{5mm}
\noindent {\bf \normalsize Chapter 4.}  Inspired by recent thresholding techniques in compressed sensing, we develop an algorithm for minimizing a discrete version of the Mumford Shah (MS) functional arising in image segmentation \cite{ca95}.  Although regularization methods involving similar nonconvex functionals work well in practice \cite{ro07}, ours is the first proven to converge to local minimizers of the discrete MS functional.  The proposed algorithm can be adapted to a more general class of nonconvex functionals that includes the $\ell_0$-regularized functional from compressed sensing.  Finally, we show that the $\ell_0$ regularized functional can be viewed as the limit of a sequence of free-discontinuity type problems, illuminating an intimate connection between the two problems.

\chapter{Robust quantization in analog to digital conversion}  %insert chapter 1 title
\section{Analog to digital conversion: an introduction}

Digital signals are omnipresent; one important reason is that transmission and storage for digital signals  is much more robust with respect to noise and fluctuation than for analog signals.  Because many signals of interest to us are produced in analog form, a transition from analog to digital is therefore necessary.
\\
\\
Analog-to-digital (A/D) conversion consists of two parts: {\it sampling} and {\it quantization}.  The process of sampling, followed by quantization, can be schematically represented by 
\begin{center}
$f(t) \Rightarrow \big( f(t_n) \big)_{n \in Z} \Rightarrow  \big( (q_{(n,m)})_{m=1}^K \big)_{n \in Z}$
\end{center}
where $q_{(n,m)} = \big( q_{(n,1)}, q_{(n,2)},..., q_{(n,K)} \big)$ is a finite sequence of digits from a finite (and usually binary) alphabet.   It is reasonable to assume that the real-valued analog signal of interest, $f$, is uniformly bounded and absolutely integrable, $f \in L^{\infty}(\mathbb{R}) \cap L^1(\mathbb{R})$, and is {\it bandlimited}; i.e., its Fourier transform $\hat{f}(\omega) = \int_{-\infty}^{\infty} f(t) \exp{(-i \omega t)} dt$ vanishes outside some bounded interval $[-\Omega/2, \Omega/2]$.  Under this assumption, the sampling step in A/D conversion is an invertible operation, as $f(t)$ can be reconstructed from the samples $\big( f(\frac{n \pi}{\Omega \lambda})\big)_{n \in Z}$ as long as $\lambda \geq 1$, via the reconstruction formula
\begin{equation}
f(t) = \frac{1}{\lambda} \sum_n f \Big( \frac{n \pi}{\lambda \Omega} \Big) g \Big( t - \frac{n \pi}{\lambda \Omega} \Big).
\label{reconstruct}
\end{equation}  
Above, $g$ can be any function such that $\| g \|_{\infty} \leq 1, \hat{g} = 1$ for $|\xi| \leq \Omega$ and $\hat{g} = 0$ for $|\xi| \geq \lambda \Omega$.  If $\hat{g}$ is smooth, then $g$ will have fast decay and the reconstruction $\eqref{reconstruct}$ will be almost entirely local $\cite{DD03}$.   By scaling and normalizing appropriately, we can reduce any bandlimited function in $L^{\infty}(\mathbb{R}) \cap L^1(\mathbb{R})$ to an element of the class $S = \{f : \|f\|_{\infty} \leq 1, \|f\|_1 \leq 1, \textrm{supp }{\hat{f}} \in [-\pi,\pi] \}$, in which case the reconstruction formula $\eqref{reconstruct}$ reduces to
\begin{equation}
f(t) = \frac{1}{\lambda} \sum_n f \Big( \frac{n}{\lambda} \Big) g \Big( t - \frac{n}{\lambda} \Big).
\label{reconstruct+}
\end{equation}  
The second step in A/D conversion is {\it quantization}, in which sample values $f(\frac{n}{\lambda})$ are replaced by finite bitstreams $(q_{(n,m)})^{K}_{m=1}$ and associated function $\Delta: \{0,1\}^K \rightarrow \mathbb{R}$, so that the original function can be reconstructed at a later time by the approximation
\begin{equation}
\tilde{f}(t) = \frac{1}{\lambda} \sum_n \Delta(q_{(n,.))}) g \Big( t - \frac{n \pi}{\lambda \Omega} \Big).
\label{ftilde}
\end{equation}
 Quantization, unlike sampling, is not in general an invertible operation.   That is, if $E$ is the operator which maps functions in $S$ to bitstreams (the {\it encoding} operator) associated with a particular quantization scheme, and $D$ is the {\it decoding} operator which maps bitstreams back to functions in $S$, then typically $f \neq D(Ef)$.   In order to measure the performance of a particular quantization scheme, one considers the {\it distortion} of the scheme, given by $d(S;E,D) := \sup_{f \in S}\| f - D(Ef) \|$, where $\| . \|$ is the norm of interest; e.g.,the $L^{\infty}$ norm on  $\mathbb{R}$.  In any reasonable quantization scheme, the distortion will decrease as the number of bits per unit interval  (the so-called {\it bit budget}) increases; schemes whose distortion decreases faster as a function of the bit budget are generally considered superior quantization schemes.
\\
\\
If the relation between the distortion $d$ and the bit budget $B$ were the only important measure associated with a quantization scheme, then the widely-used {\it pulse code modulation} (PCM) quantization algorithm would always be preferred in practice.  Given a function $f$ in $S$, a $N$-bit PCM algorithm simply replaces each sample value $f(\frac{n}{\lambda})$ with $N$ bits: one bit for its sign, followed by the first $N-1$ bits of the binary expansion of $|f(\frac{n}{\lambda})|$.  One can show that for signals $f$ in $S$, and for fixed $\lambda > 1$, the distortion $d(f,E,D) \leq C_{\lambda}2^{-N} = C_{\lambda}2^{-B/\lambda}$ for an $N$-bit PCM, when, for instance, $\| . \|$ is the $L^{\infty}$ norm on $\mathbb{R}$.  
\begin{flushleft}
{\normalsize\bf Sigma Delta ($\Sigma \Delta$) quantization}
\end{flushleft} 
\noindent
Another popular quantization algorithm, {\it $\Sigma \Delta $ modulation}, has distortion that decays only like an inverse polynomial in the bit budget $B$.  One-bit $\Sigma \Delta$ modulation replaces each sample $f_n^{\lambda}$ in $\eqref{reconstruct}$ by a single bit, $b_n \in \{-1,1\}$ (In multibit $\Sigma \Delta$ schemes, the coefficients $b(n)$ replacing the $f_n^{\lambda}$ can assume a larger range of discrete values, and thus require several bits).  In contrast to PCM, the $b_n$ in $\Sigma \Delta$ depend not only on $f_n^{\lambda}$, but on all previous $f_k^{\lambda}$, $k \leq n$.   Consider first-order, one-bit $\Sigma \Delta$, where the $b_n$ are generated recursively by the scheme,
\begin{eqnarray}
b_n &=& Q(u_{n-1} + f^{\lambda}_n) \nonumber \\
u_n &=& u_{n-1} + f^{\lambda}_n - b_n.
\label{1storder}
\end{eqnarray} 
The \emph{quantizer} $ Q(x) = \sgn(x) $ just returns the sign of its argument, which can be taken to be either $1$ or $-1$ when $x = 0$.  Assuming $|f^{\lambda}_n| \leq 1$ (which is true of functions $f \in S$),  and initializing $|u_0| < 1$, a simple inductive argument guarantees that $|u_n| < 1$ for all $n$ \cite{DD03}.  Using this, along with the observation that $B = \lambda$ for one-bit quantization schemes, we arrive at an error estimate $\| f - \tilde{f}^B \|_{\infty} \leq 2\|\frac{dg}{dt}\|_1B^{-1}$  for the first-order $\Sigma \Delta$ scheme $\eqref{1storder}$.  More generally, the $K^{th}$ order analog of the recursion $\eqref{1storder}$ equates $f_n^{\lambda} - b_n$ to the $K^{th}$ order (as opposed to first order) difference of a bounded sequence $(u_n)_{n \in \mathbb{Z}}$, and has corresponding decay $\| f - \tilde{f}^B \|_{\infty} \leq C_{K,g}B^{-K}$;  in particular, the second-order one-bit $\Sigma \Delta$ quantization scheme can be recast in the form
 \begin{eqnarray}
 b_n &=& Q(F(u_{n-1},v_{n-1}, f^{\lambda}_n)) \nonumber \\
u_n &=& u_{n-1} + f^{\lambda}_n - b_n \nonumber \\
v_n &=& v_{n-1} + u_n.
\label{2ndorder}
 \end{eqnarray}
The function $F$ can be any map that guarantees the boundedness of the state variables $(u_n, v_n)$; for example, the linear function $F(u_n,v_n) = u_n + \gamma v_n$ suffices for an appropriate range of $\gamma$ \cite{OYthesis}.
\begin{flushleft}
{\normalsize\bf Robustness for $\Sigma \Delta$ quantization}
\end{flushleft} 
\noindent
Clearly, the $O(2^{-B})$ decay of PCM will outperform the $O(B^{-K})$ decay of a $K$-th order one-bit $\Sigma \Delta$ scheme, for any order $K$.  Yet, $\Sigma \Delta$ schemes of as low as second order are preferred in practice over high order PCM quantization schemes.  This can be partly understood by the fact that $\Sigma \Delta$ schemes are less sensitive than PCM schemes to inevitable errors in the analog components of their implementation.   The notion of $\emph{robustness}$ of $\Sigma \Delta$ schemes to quantization error, which was first studied in \cite{DD03}, is connected to how well $\Sigma \Delta$ schemes exploit the redundancy of the representation $f(t) = \frac{1}{\lambda} \sum_n c_n  g(t - t_n^{\lambda} )$ afforded by oversampling $\lambda > 1$.  By redundancy, we refer to the fact that the functions $\big( g_n(t) \big)_{n \in \mathbb{Z}}  = \big( g(t - t_n^{\lambda})\big)_{n \in \mathbb{Z}}$ form a frame \footnote{Technically, the shifts of $g$ are not a frame for the space of $\Omega$-bandlimited functions because these functions don't live in this space. But this is only a technicality due to the definition of a frame; the function $g$ lives in the larger Hilbert space of 
$\Omega \lambda$-bandlimited functions, or simply $L^2$ for that matter, and its shifts satisfy the frame property for the smaller space of $\Omega$-bandlimited functions.} for the Hilbert space of $\Omega$-bandlimited functions, and their frame redundancy increases with $\lambda$.    One source of quantization error incurred in both $\Sigma \Delta$ and PCM quantization comes from the quantizer function itself:  the function $Q(u) = \sgn{u}$ that is used in both the $\Sigma \Delta$ schemes and in the recursive algorithms used to generate binary expansions in PCM cannot be built to have infinite precision.  In fact, quantizer elements for A/D circuits generally   come with a prescribed tolerance $\nu$ for which the output $Q(u)$ of such quantizers should not be trusted once  $|u| < \nu$ (and of course, quantizers with lower tolerance are more expensive).  As the binary expansion of almost every real number is unique, an incorrect bit assignment, and especially an incorrect initial bit, in the truncated binary expansion of a sample $f_n^{\lambda}$ will cause an error in the resulting approximation that obviates the possibility of $O(2^{-B})$ decay.  $\Sigma \Delta$ schemes, on the other hand, keep track of prior samples $f_k^{\lambda}$, $k \leq n$, in such a way that errors caused by imprecise quantizers can be corrected later, and the $O(B^{-K})$ maintained, by exploiting the redundant information in the samples $f_n^{\lambda}$ $\cite{DD03}$.   
\\
\\
This discussion brings us to the question: \emph{Is it possible to have the best of both worlds}?  That is, can one design a quantization scheme that has exponential reconstruction guarantees of PCM while also being robust with respect to quantization imperfections like $\Sigma \Delta$?  This question was answered affirmatively in \cite{DDGV06}, with the introduction of the $\beta$-encoder.
 
 \section{On the robustness of beta-encoders and golden ratio encoders}

Beta-encoders are similar to PCM in that they replace each sampled function value $x = f_n^{\lambda}$ with a truncated series expansion in a base $\beta$, where $1 < \beta < 2$, and with binary coefficients.  Clearly, if $\beta = 2$, then this algorithm coincides with PCM.  However, whereas the binary expansion of almost every real number is unique, for every $\beta \in (1,2)$, there exist a continuum of different $\beta$ expansions of almost every $x$ in $(0,1]$ (see $\cite{Sid03}$).  It is precisely this redundancy that gives beta-encoders the freedom to correct errors caused by imprecise quantizers shared by $\Sigma \Delta$ schemes.  Whereas in $\Sigma \Delta$, a higher degree of robustness is achieved via finer sampling, beta-encoders are made more robust by choosing a smaller value of $\beta$ as the base for expansion.  
\\
\\
Although beta-encoders as discussed in $\cite{DDGV06}$ are robust with respect to quantizer imperfections, these encoders are not as robust with respect to imprecisions in other components of their circuit implementation.  Typically, beta-encoders require a multiplier in which real numbers are multiplied by $\beta$.  Like all analog circuit components, this multiplier will be imprecise; that is, although a known value $\beta_0$ may be set in the circuit implementation of the encoder, thermal fluctuations and other physical limitations will have the effect of changing the true multiplier to an unknown value $\beta \in [\beta_{low}, \beta_{high}]$ within an interval of the pre-set value $\beta_0$.   The true value $\beta$ will vary from device to device, and will also change slowly in time within a single device.  This variability, left unaccounted for, disqualifies the beta-encoder as a viable quantization method since the value of $\beta$ must be known with exponential precision in order to reconstruct a good approximation to the original signal from the recovered bit streams.
\\
\\
We overcome this potential limitation of the beta-encoder by introducing a method for recovering $\beta$ from the encoded bitstreams of a real number $x \in [-1,1]$ and its negative, $-x$.  Our method incorporates the techniques used in $\cite{DY06}$, but our analysis is simplified using a transversality condition, as defined in $\cite{solom95}$, for power series with coefficients in $\{-1,0,1\}$.  As the value of $\beta$ can fluctuate within an interval $[\beta_{low}, \beta_{high}]$ over time, our recovery technique can be repeated at regular intervals during quantization (e.g., after the quantization of every 10 samples).  
\\
\\
The golden ratio encoder (GRE) was proposed in $\cite{GREsubmit}$ as a quantizer that shares the same robustness and exponential rate-distortion properties as beta-encoders, but that does not require an explicit multiplier in its circuit implementation.  GRE functions like a beta-encoder in that it produces beta-expansions of real numbers; however, in GRE, $\beta$ is fixed at the value of the golden ratio, $\beta = \phi = \frac{1+\sqrt{5}}{2}$.  The relation $\phi^2 = \phi + 1$ characterizing the golden ratio permits elimination of the multiplier from the encoding algorithm.  Even though GRE does not require a precise multiplier, component imperfections such as integrator leakage in the implementation of GRE may still cause the true value of $\beta$ to be slightly larger than $\phi$; in practice it is reasonable to assume $\beta \in [\phi, 1.1\phi]$.   Our method for recovering $\beta$ in general beta-encoders can be easily extended to recovering $\beta$ in the golden ratio encoder.
\\
\\
Our work in this section will be organized as follows:
\begin{enumerate}
\item In sections $1.2.1$ and $1.2.2$, we review relevant background on beta-encoders and golden ratio encoders, respectively.
\item In section $1.2.3$, we introduce a more realistic model of the golden ratio encoder that takes into account the effects of integrator leak in the delay elements of the circuit.  We show that the output of this revised model still correspond to truncated beta-expansions of the input, but in an unknown base $\beta$ that differs from the pre-set value.
\item Section $1.2.4$ describes a way to recover the unknown value of $\beta$ up to arbitrary precision using the bit streams of a `test' number $x \in [-1,1]$, and $-x$.  The recovery scheme reduces to finding the root of a polynomial with coefficients in $\{-1,0,1 \}$.
\item Section $1.2.5$ extends the recovery procedure of the previous section to the setting of beta-encoders having leakage in the (single) delay element of their implementation.  We show that the analysis in this case is completely analogous to that of Section $1.2.4$.
\end{enumerate}

\subsection{The beta-encoder}
In this section, we summarize certain properties of beta-encoders (or $\beta$-encoders) with error correction, from the perspective of encoders which produce beta expansions with coefficients in $\{-1,1\}$.  For more details on beta-encoders, we refer the reader to $\cite{DDGV06}$.
\\
\\
We start from the observation that given $\beta \in (1,2]$, every real number $x \in [-\frac{1}{\beta - 1},\frac{1}{\beta - 1}]$ admits a sequence $(b_j)_{j \in N}$, with $b_j \in \{-1,1\}$, such that 
\begin{equation}
x = \sum_{j=1}^{\infty} b_j \beta^{-j}.
\label{ex}
\end{equation}
Under the transformation $\tilde{b}_j = \frac{b_j + 1}{2}$, $\eqref{ex}$ is equivalent to the observation that every real number $y \in [0,\frac{1}{\beta - 1}]$ admits a beta-expansion in base $\beta \in (1,2]$, with coefficients $\tilde{b}_j \in \{0,1\}$.  Accordingly, all of the results that follow in this section have straightforward analogs in terms of $\{0,1\}$-beta-expansions; see $\cite{DDGV06}$ for more details.
\\
\\
One way to compute a sequence $(b_j)_{j \in N}$ that satisfies $\eqref{ex}$ is to run the iteration
\begin{eqnarray}
u_1 &=& \beta x \nonumber \\
b_1 &=& Q(u_1) \nonumber \\
\textrm{for $j \geq 1:$ } u_{j+1} &=& \beta(u_j - b_j) \nonumber \\
b_{j+1} &=& Q(u_{j+1}) 
\label{iter}
\end{eqnarray}
where the quantizer $Q$ is simply the sign-function,
\begin{equation}
	Q(u)
           = \left\{\begin{array}{cl}
	-1,  & u \leq 0 \\
	1,   &u > 0. \\
	   \end{array}
	   \right.
           \label{(Q)}
\end{equation}
For $\beta = 2$, the expansion $\eqref{ex}$ is unique for almost every $x \in [-1,1]$; however, for $ \beta \in (1,2)$, there exist uncountably many expansions of the type $\eqref{ex}$ for any $x \in [-1,1]$  (see $\cite{Sid03}$).  Because of this redundancy in representation, beta encoders are robust with respect to quantization error, while PCM schemes are not.  We now explain in more detail what we mean by quantization error.  The quantizer $Q$ in $\eqref{(Q)}$ is an idealized quantizer; in practice, one has to deal with quantizers that only approximate this ideal behavior.  A more realistic model is obtained by replacing $Q$ in $\eqref{(Q)}$ with a `flaky' version $Q^{\nu}$, for which we know only that
\begin{equation}
Q^{\nu}(u)  = \left\{\begin{array}{cl}
	-1,  & u  <  -\nu \\
	1,   &u  \geq \nu \\
	-1 \textrm{ or } 1, &-\nu \leq u  \leq \nu.
	   \end{array}\right.
         \label{(fqs)}
\end{equation}
In practice, $\nu$ is a quantity that is not known exactly, but over the magnitude of which we have some control, e.g. $|\nu| \leq \epsilon$ for a known $\epsilon$.  This value $\epsilon$ is called the {\it tolerance} of the quantizer.   We shall call a quantization scheme {\it robust} with respect to quantization error if, for some $\epsilon > 0$, the worst approximation error produced by the quantization scheme can be made arbitrarily small by allowing a sufficiently large bit budget, even if the quantizer used in its implementation is imprecise to within a tolerance $\epsilon$.  According to this definition, the naive $\{-1,1\}$-binary expansion is not robust.  More specifically, suppose that a flaky quantizer $Q^\nu$ is used in $\eqref{iter}$ to compute the base-2 expansion of a number $x \in [-1,1]$ which is sufficiently small that $|2x| \leq \nu$.  Since $2x$ is within the flaky zone for $Q^{\nu}$, if $b_1 = Q^{\nu}(2x)$ is assigned incorrectly; i.e., if $b_1$ differs from the sign of $x$, then no matter how the remaining bits are assigned, the difference between $x$ and the number represented by the computed bits will be at least $|x|$.   This is not the case if $1 < \beta < 2$, as shown by the following whose proof can be found in $\cite{DDGV06}$:

\begin{theorem}
Let $\epsilon> 0$ and $x \in [-1,1]$.  Suppose that in the beta-encoding of $x$, the procedure $\eqref{iter}$ is followed, but the quantizer $Q^{\nu}$ is used instead of the ideal $Q$ at each occurence, with $\nu$ satisfying $\nu \leq  \epsilon$.  Denote by $(b_j)_{j\in N}$ the bit sequence produced by applying this encoding to the number $x$.  If $\beta$ satisfies
\begin{center}
$1 < \beta < \frac{2+\epsilon}{\epsilon+1},$
\end{center}
then
\begin{equation}
|x - \sum_{j=1}^N b_j \beta^{-j} | \leq C\beta^{-N}
\end{equation}
with $C = \epsilon + 1$.
\label{b-encod+} 
\end{theorem}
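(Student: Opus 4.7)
The plan is to reduce the theorem to showing that the state variables $(u_j)$ remain uniformly bounded, and then verify this boundedness by induction, where the hypothesis on $\beta$ enters exactly as the sharp condition required for the induction to close.

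First I would unroll the recursion \eqref{iter} explicitly. Since $u_1=\beta x$ and $u_{j+1}=\beta(u_j-b_j)$, a straightforward induction gives
\begin{equation*}
u_{N+1}=\beta^{N+1}x-\sum_{j=1}^{N}\beta^{\,N+1-j}b_j
=\beta^{N+1}\Big(x-\sum_{j=1}^{N}b_j\beta^{-j}\Big).
\end{equation*}
Consequently, the desired bound $|x-\sum_{j=1}^N b_j\beta^{-j}|\le(\epsilon+1)\beta^{-N}$ is equivalent to the uniform estimate $|u_{N+1}|\le(\epsilon+1)\beta$. So the whole problem collapses to proving the invariant
\begin{equation*}
|u_j|\le(\epsilon+1)\beta\qquad\text{for all }j\ge 1.
\end{equation*}

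Next I would establish this invariant by induction. The base case is immediate: $|u_1|=\beta|x|\le\beta\le(\epsilon+1)\beta$. For the inductive step the key observation is that the definition of the flaky quantizer $Q^\nu$ in \eqref{(fqs)} forces the implication $b_j=+1\Rightarrow u_j\ge-\nu\ge-\epsilon$ and $b_j=-1\Rightarrow u_j\le\nu\le\epsilon$. I would then split into the two cases. If $b_j=+1$, then $u_{j+1}=\beta(u_j-1)$; the upper bound uses the inductive hypothesis, giving $u_{j+1}\le\beta((\epsilon+1)\beta-1)$, and this is $\le(\epsilon+1)\beta$ exactly when $(\epsilon+1)(\beta-1)\le 1$, i.e.\ when $\beta\le\frac{\epsilon+2}{\epsilon+1}$; the lower bound uses $u_j\ge-\epsilon$, giving $u_{j+1}\ge-\beta(\epsilon+1)$. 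The case $b_j=-1$ is symmetric, using $u_j\le\epsilon$ for the upper bound and the inductive hypothesis for the lower bound, and it yields the same threshold $\beta\le\frac{\epsilon+2}{\epsilon+1}$.

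There is no real obstacle here; the only subtlety is recognizing the correct invariant and noticing that the flaky quantizer still forces the sign relation $u_j\ge-\nu$ (resp.\ $\le\nu$) when $b_j=+1$ (resp.\ $-1$). This one-sided control, combined with the strict bound $\beta<\frac{2+\epsilon}{\epsilon+1}$, is exactly what is needed for the contraction in the two inductive cases to cancel the error introduced by the imprecise quantizer. Combining the invariant $|u_{N+1}|\le(\epsilon+1)\beta$ with the identity from the first step then yields the claimed error estimate with constant $C=\epsilon+1$.
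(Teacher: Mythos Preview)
Your proof is correct. The paper itself does not prove this theorem; it simply states that the proof ``can be found in \cite{DDGV06}.'' Your argument---reducing the error bound to the uniform estimate $|u_j|\le(\epsilon+1)\beta$ via the identity $u_{N+1}=\beta^{N+1}\big(x-\sum_{j=1}^N b_j\beta^{-j}\big)$, and then verifying that invariant by induction using the one-sided control $b_j=\pm1\Rightarrow \pm u_j\ge-\epsilon$ that the flaky quantizer still provides---is exactly the standard proof of this result and is what one finds in the cited reference.
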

For a given tolerance $\epsilon > 0$, running the recursion $\eqref{iter}$ with a quantizer $Q^{\nu}$ of tolerance $\epsilon$ and a value of $\beta$ in $(1,\frac{2+\epsilon}{\epsilon+1})$ produces bitstreams $(b_j)$ corresponding to a beta-expansion of the input $x$ in base $\beta$; however, the precise value of $\beta$ must be known in order recover $x$ from such a beta-expansion.  As mentioned in the introduction and detailed in the following section, component imperfections may cause the circuit to behave as if a different value of $\beta$ is used, and this value will possibly be changing slowly over time within a known range, $[\beta_{low}, \beta_{high}]$.   In $\cite{DY06}$, a method is proposed whereby an exponentially precise approximation $\tilde{\gamma}$ to the value of $\gamma = \beta^{-1}$ at any given time can be encoded and transmitted to the decoder without actually physically measuring its value, via the encoded bitstreams of a real number $x \in [0,1)$ and $1- x$.  This decoding method can be repeated at regular time intervals during the quantization procedure, to account for the possible time-variance of $\gamma$.  That an exponentially precise approximation $\tilde{\gamma}$ to $\gamma$ is sufficient to reconstruct subsequent samples $f(t_n)$ with exponential precision is the content of the following theorem, which is essentially a restatement of Theorem 5 in $\cite{DY06}$. 

\begin{theorem}[Daubechies, Yilmaz]
Consider $x \in [0,1)$ and $(b_j)_{j \in N} \in \{0,1\}$, or $x \in [-1,1]$ and $(b_j)_{j \in N} \in \{-1,1\}$.  Suppose $\gamma \in (1/2,1)$ is such that $x = \sum_{j=1}^{\infty} b_j \gamma^j$.  Suppose $\tilde{\gamma}$ is such that $|\gamma - \tilde{\gamma}| \leq C_1 \gamma^N$ for some fixed $C_1 > 0$.    Then $\tilde{x}_N := \sum_{j=1}^N b_j \tilde{\gamma}^j$ satisfies
  \begin{equation}
  |x - \tilde{x}_N| \leq C_2 \gamma^N \end{equation} 
\label{daub}
where $C_2$ is a constant which depends only on $\gamma$ and $C_1$.
\end{theorem}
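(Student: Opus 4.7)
The plan is to apply the triangle inequality with the intermediate point $x_N := \sum_{j=1}^N b_j \gamma^j$, writing
\[
|x - \tilde{x}_N| \;\leq\; |x - x_N| \;+\; |x_N - \tilde{x}_N|,
\]
and to show that each of the two pieces is bounded by a constant times $\gamma^N$. The tail term $|x - x_N| = \bigl|\sum_{j>N} b_j \gamma^j\bigr|$ is immediate: since $|b_j|\leq 1$ and $\gamma \in (1/2,1)$, the geometric series estimate gives $|x-x_N| \leq \gamma^{N+1}/(1-\gamma)$.

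For the second piece, I would use the standard factorization
\[
\gamma^j - \tilde{\gamma}^j \;=\; (\gamma - \tilde{\gamma})\sum_{k=0}^{j-1}\gamma^{k}\tilde{\gamma}^{j-1-k},
\]
which yields $|\gamma^j - \tilde{\gamma}^j| \leq j\,|\gamma-\tilde{\gamma}|\,\rho^{j-1}$ with $\rho := \max(\gamma,\tilde{\gamma})$. Summing and using the hypothesis $|\gamma - \tilde{\gamma}| \leq C_1\gamma^N$ gives
\[
|x_N - \tilde{x}_N| \;\leq\; C_1\,\gamma^N \sum_{j=1}^{N} j\,\rho^{j-1}.
\]
Provided $\rho < 1$ uniformly in $N$, the sum is bounded by $1/(1-\rho)^2$, and combining with the tail bound yields the desired inequality with $C_2 = \gamma/(1-\gamma) + C_1/(1-\rho)^2$.

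The one thing that needs care is ensuring $\rho<1$ uniformly, since a priori $\tilde{\gamma}$ could be close to $1$ (or even exceed it) when $N$ is small. The clean way to dispose of this is a two-regime argument: fix any $\gamma^* \in (\gamma,1)$ and choose $N_0$ so that $C_1\gamma^{N_0} \leq \gamma^* - \gamma$; then for all $N \geq N_0$, we have $\tilde{\gamma}\leq \gamma^*<1$, so $\rho \leq \gamma^*$ and the geometric estimates above go through with constants depending only on $\gamma$ and $C_1$. For the finitely many indices $N < N_0$, the trivial bound $|x - \tilde{x}_N| \leq |x| + |\tilde{x}_N|$ is $O(1)$, which is absorbed by enlarging $C_2$ (since $\gamma^{-N}$ is bounded on a finite range). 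This two-regime handling of $\tilde{\gamma}$ is the only mild obstacle; everything else reduces to geometric series bookkeeping. The argument is agnostic to whether $b_j \in \{0,1\}$ or $b_j \in \{-1,1\}$, since only $|b_j|\leq 1$ is used.
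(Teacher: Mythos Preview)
Your argument is correct. The triangle-inequality split with intermediate point $x_N = \sum_{j=1}^N b_j\gamma^j$, the geometric tail bound, the factorization $\gamma^j - \tilde\gamma^j = (\gamma-\tilde\gamma)\sum_{k=0}^{j-1}\gamma^k\tilde\gamma^{j-1-k}$, and the two-regime handling of the possibility $\tilde\gamma \geq 1$ for small $N$ are all sound. The only place one might quibble is the ``$O(1)$'' claim for $|\tilde x_N|$ when $N < N_0$: since $|\tilde\gamma| \leq \gamma + C_1\gamma^N \leq \gamma + C_1$, one has $|\tilde x_N| \leq N_0(\gamma+C_1)^{N_0}$, which indeed depends only on $\gamma$ and $C_1$ once $\gamma^*$ (and hence $N_0$) is fixed in terms of $\gamma$, say $\gamma^* = (1+\gamma)/2$. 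So the constant $C_2$ depends only on $\gamma$ and $C_1$, as claimed.

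As for comparison: the paper does not actually supply a proof of this statement. It attributes the result to Daubechies and Yilmaz, describing it as ``essentially a restatement of Theorem~5 in~[DY06]'', and moves on. Your self-contained argument is the natural one and would serve perfectly well as the missing proof.
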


Although the approach proposed in $\cite{DY06}$ for estimating $\beta$ from bitstreams overcomes potential approximation error caused by imprecise multipliers in the circuit implementation of the beta-encoder, new robustness issues are nevertheless introduced. Typically, one cannot ensure that the reference level 1 in $1-x$ is known with high precision.  To circumvent this problem, the authors consider other heuristics whereby $\beta$ is recovered from clever averaging of multiple pairs $x_j$ and $1-x_j$.  These heuristics do not require that the reference level 1 in $1-x$ be precise; however, these approaches become quite complicated in and of themselves, and any sort of analytical analysis of their performance becomes quite difficult.  As one of the contributions of the present work, we present an approach for recovering $\beta$ that is inspired by the approach in \cite{DY06} but does not require a precise reference level, yet still allows for exponentially precise approximations to $\beta$.

\subsection{The golden ratio encoder (GRE)}
 As shown in the previous section, beta-encoders are robust with respect to imperfect quantizer elements, and their approximation error decays exponentially with the number of bits $N$.   To attain this exponential precision, $\beta$ must be measured with high precision, which is quite complicated in practice.   These complications motivated the invention of the golden ratio encoder (GRE) of $\cite{GREsubmit}$, which has the same robustness and rate-distortion properties as beta-encoders, but uses an additional delay element in place of precise multiplier in its implementation.  That is, if one implements the recursion $\eqref{iter}$ with $\beta = \phi = \frac{1 + \sqrt{5}}{2}$, then using the relation $\phi^2 = \phi + 1$, one obtains the recursion formula $u_{n+1} = u_{n-1} + u_n - (b_{n-1} + \phi b_n)$.  If the term $b_{n-1} + \phi b_n$ in this formula is removed, then the resulting recursion $v_{n+1} = v_n + v_{n-1}$ should look familiar; indeed, with initial conditions $(v_0,v_1) = (0,1)$, this recursion generates the Fibonacci numbers $v_n$, and it is well-known that the sequence $\frac{v_{n+1}}{v_n} \rightarrow \phi$ as $n \rightarrow \infty$.   If $b_{n-1} + \phi b_n$ is instead replaced by a single bit taking values in $\{-1,1\}$, then we are led to the following scheme:
\begin{eqnarray}
            u_{0} &=& x    \nonumber \\
           u_{1} &=& 0   \nonumber \\
           \textrm{for $n \geq 0:$ }    	
      b_{n} &=& Q(u_{n}, u_{n+1})      \nonumber \\	
      u_{n+2} &=& u_{n+1} + u_{n} - b_{n}
	  \label{(simple recurs)} 
        \end{eqnarray} 
In this paper, we will consider quantizers $Q$ in $\eqref{(simple recurs)}$ of the form $Q_{\alpha}$, where
\begin{equation}
Q_{\alpha}(u,v)  = \left\{\begin{array}{cl}
	-1,  & u + \alpha v  <  0\\
	1,   &u + \alpha v \geq 0
	   \end{array}\right.
           \label{(f2)}
\end{equation}
along with their flaky analogs,
\begin{equation}
Q_{\alpha}^{\nu}(u,v)  = \left\{\begin{array}{cl}
	-1,  & u + \alpha v  <  -\nu \\
	1,   &u + \alpha v \geq \nu \\
	-1 \textrm{ or } 1, &-\nu \leq u + \alpha v \leq \nu
	   \end{array}\right.
           \label{(fq2)}
\end{equation}
In $\cite{GREsubmit}$, the authors consider the recursion formula $\eqref{(simple recurs)}$ implemented with flaky $\{0,1\}$-quantizers  of the form $\bar{Q}_{\alpha}^{\nu,\iota}(u,v) = \Big[ Q^{\nu}(u + \alpha v - \iota) + 1 \Big]/2$.  For the simplicity of presentation, we will consider only the $\{-1,1\}$-quantizers $\eqref{(fq2)}$, but many of our results extend straightforwardly to quantizers of the type $\bar{Q}_{\alpha}^{\nu,\iota}$. 
\\
\\
The following theorem was proved in $\cite{GREsubmit}$; it shows that as long as $x$ and $Q$ are such that the state sequence $u = \{u_n\}_{n=0}^{\infty}$ remains bounded, a golden ratio encoder (corresponding to the recursion $\eqref{(simple recurs)}$) will produce a bitstream $(b_j)$ corresponding to a beta-expansion of $x$ in base $\beta = \phi$, just as does the beta-encoder from which the GRE was derived.  

\begin{theorem}[Daubechies, G{\"u}nt{\"u}rk, Wang, Yilmaz]
Consider the recursion $\eqref{(simple recurs)}$.  Suppose the 1-bit quantizer $Q$ which outputs bits $(b_j)$ in $\{-1,1\}$ is of the type $Q_{\alpha}^{\nu}$ such that the state sequence $u = \{u_n\}_{n=0}^{\infty}$ with $u_0 = x$ and $u_1 = 0$ is bounded.   Then
\begin{equation}
 |x - \sum_{n=0}^{N} b_n {\phi}^{-n} | \leq \phi^{-N+1}
\label{expand}
\end{equation}
Here $\phi$ is the golden ratio.
\end{theorem}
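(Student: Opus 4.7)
The plan is to exploit the defining relation $\phi^2=\phi+1$ to find a linear combination of two consecutive state variables that satisfies a simple one-step recursion, and then telescope. Specifically, I would introduce the auxiliary quantity
$$
s_n := u_n + \phi\, u_{n+1}.
$$
Using the recursion $u_{n+2}=u_{n+1}+u_n-b_n$ and the golden-ratio identity $1+\phi=\phi^2$, a short direct computation gives
$$
s_{n+1}=u_{n+1}+\phi\,u_{n+2}=(1+\phi)u_{n+1}+\phi u_n-\phi b_n=\phi\,(u_n+\phi u_{n+1})-\phi b_n=\phi\,s_n-\phi b_n.
$$
Thus $s_n$ evolves by multiplication by $\phi$ plus a controlled inhomogeneous term.

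Next, I would divide the relation $s_{n+1}=\phi s_n-\phi b_n$ by $\phi^{n+1}$ to obtain
$$
\phi^{-n}s_n-\phi^{-(n+1)}s_{n+1}=\phi^{-n}b_n,
$$
and sum from $n=0$ to $n=N$, noting that the initial conditions $u_0=x$, $u_1=0$ give $s_0=x$. The sum telescopes to
$$
x-\phi^{-(N+1)}s_{N+1}=\sum_{n=0}^{N}b_n\phi^{-n},
$$
or equivalently
$$
x-\sum_{n=0}^{N}b_n\phi^{-n}=\phi^{-(N+1)}\bigl(u_{N+1}+\phi\,u_{N+2}\bigr).
$$
This identity is exact; it converts the approximation problem into a bound on the residual state combination $u_{N+1}+\phi u_{N+2}$.

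Finally, I would invoke the hypothesis that $\{u_n\}$ stays bounded (by $1$, as is the setting of the accompanying invariant-set analysis for $Q_\alpha^\nu$) to conclude
$$
\Bigl|x-\sum_{n=0}^{N}b_n\phi^{-n}\Bigr|\le \phi^{-(N+1)}\,(1+\phi)=\phi^{-(N+1)}\phi^{2}=\phi^{-N+1},
$$
where I use $1+\phi=\phi^2$ once more. This is exactly the stated estimate.

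The calculation itself is routine once the right combination $s_n=u_n+\phi u_{n+1}$ is identified, so there is no real analytic obstacle at this step; the identification is forced by the characteristic equation $\lambda^2-\lambda-1=0$ of the homogeneous recursion, whose dominant root is $\phi$. The true technical difficulty—already placed as a hypothesis in the statement—is guaranteeing that the nonlinear map $(u_n,u_{n+1})\mapsto(u_{n+1},u_{n+1}+u_n-Q_\alpha^\nu(u_n,u_{n+1}))$ admits a bounded invariant set containing the orbit starting from $(x,0)$. That dynamical boundedness question, rather than the telescoping identity, is what would require the substantive work in a complete treatment.
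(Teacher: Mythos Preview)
Your telescoping via $s_n=u_n+\phi u_{n+1}$ is correct and yields exactly the same identity the paper obtains by direct substitution of $b_n=u_n+u_{n+1}-u_{n+2}$ into the partial sum; indeed their expression $\phi^{-N}\bigl((\phi-1)u_{N+1}+u_{N+2}\bigr)$ equals your $\phi^{-(N+1)}(u_{N+1}+\phi u_{N+2})$ since $\phi-1=\phi^{-1}$. So the core of the argument is the same, and your derivation via the characteristic root is arguably cleaner.

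There is one small discrepancy in the final estimate. The theorem hypothesizes only that $\{u_n\}$ is \emph{bounded}, not that $|u_n|\le 1$; the invariant-set analysis in the paper does not in fact produce the uniform bound $1$. Your last step therefore imports an assumption not present in the statement. The paper avoids this by first using boundedness of $\{u_n\}$ only qualitatively, to conclude that the full series converges to $x$, and then bounding the tail via $|b_n|\le 1$:
\[
\Bigl|x-\sum_{n=0}^{N}b_n\phi^{-n}\Bigr|=\Bigl|\sum_{n=N+1}^{\infty}b_n\phi^{-n}\Bigr|\le \frac{\phi^{-(N+1)}}{1-\phi^{-1}}=\phi^{-N+1}.
\]
This gives the stated constant independently of the actual size of the invariant region. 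Replacing your last inequality with this tail bound closes the gap.
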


\begin{proof} 
Note that
\begin{eqnarray}
\sum_{n=0}^{N} b_{n} \phi^{-n} &=& \sum_{n=0}^{N} (u_{n} + u_{n+1} - u_{n+2}) \phi^{-n} \nonumber \\
&=& \sum_{n=0}^{N} u_n \phi ^{-n} +  \sum_{n=1}^{N+1} u_n \phi^{-(n-1)}  - \sum_{n=2}^{N+2} u_n \phi ^{-(n-2)} \nonumber \\
\nonumber \\
&=& u_0 +  (1 + \phi - \phi^2) \sum_{n=2}^{N} \phi^{-n} u_n  \nonumber \\  
&&+ u_1(\phi^{-1} + 1) + \phi^{-N} \Big(u_{N+1} (1 - \phi) - u_{N+2} \Big) \nonumber \\
&=& u_0 + \phi^{-N} \Big(u_{N+1} (1 - \phi) - u_{N+2} \Big) \nonumber \\
&=& x  +\phi^{-N}\Big( u_{N+1} (1 - \phi) - u_{N+2} \Big).  \nonumber
\end{eqnarray}
The second to last equality uses the relation $1+ \phi - \phi^2 = 0$, and the last equality is obtained by setting $u_0 = x$ and $u_1 = 0$.  Since the state sequence $u = \{u_n\}_{n=0}^{\infty}$ is bounded, it follows that $x =  \sum_{n=0}^{\infty} b_n \phi^{-n}$ .   Thus,
\begin{eqnarray}
 |x - \sum_{n=0}^{N} b_n {\phi}^{-n} | &=&  |\sum_{n=N+1}^{\infty} b_n \phi^{-n} | \nonumber \\
 &\leq& \frac{\phi^{-(N+1)}}{1-\phi^{-1}} \nonumber \\
 &=& \phi^{-N+1}. \nonumber
 \end{eqnarray}
 \end{proof} 
%Figure ~\ref{fig:fig1} compares the block-diagrams of a beta-encoder with error correction, corresponding to the recursion formula $\eqref{iter}$, and a golden ratio encoder corresponding to the recursion $\eqref{(simple recurs)}$.  
Although the implementation of GRE requires 2 more adders and one more delay element than the implementation of the beta-encoder, the multiplier element $\alpha$ in GRE does not have to be precise (see section 6), whereas imprecisions in the multiplier element $\beta$ of the beta-encoder result in beta-expansions of the input $x$ in a different base $\beta'$.  

%\begin{figure*}
%\centerline{\subfigure[Beta-encoder]{\includegraphics[width=2.5in]{}
%\hfil
%\label{one}}
%\subfigure[GRE-encoder]{\includegraphics[width=2.5in]{}
%\label{2}}}
%\caption{A comparison of the block diagrams for (a) a beta-encoder with error correction, and a (b) golden ratio encoder.  Here $D$ represents a delay term, which holds input throughout one timestep. }
%\label{fig:fig1}
%\end{figure*}
  
\subsection{GRE: A revised scheme incorporating integrator leak}
In modeling the golden ratio encoder by the system $\eqref{(simple recurs)}$, we are assuming that the delay elements used in its circuit implementation are ideal.  A more realistic model would take into account the effect of integrator leak, which is inevitable in any practical circuit implementation (see $\cite{GregTem}$ for more details).  After one clock time, the stored input in the first delay is reduced to $\lambda_1$ times its original value, while the stored input in the second delay is replaced by $\lambda_2$ times its original value.  In virtually all circuits of interest, no more than $10$ percent of the stored input is leaked at each timestep; that is,  we can safely assume that $\lambda_1$ and $\lambda_2$ are parameters in the interval [.9,1].  The precise values of these parameters may change in time; however, as virtually all practical A/D converters produce over 1000 bits per second (and some can produce over 1 billion bits per second),  we may safely assume that $\lambda_1$ and $\lambda_2$ are constant throughout the quantization of at least every 10 samples.  
\\
\\
Fixing an input value $x \in [-1,1]$, we arrive at the following revised description of the golden ratio encoder (revised GRE): 
\begin{eqnarray}
           u_{0} &=& 0    \nonumber \\
           u_{1} &=& x   \nonumber \\
           \textrm{for $n \geq 0:$ }    
         b_{n} &=& Q(\lambda_1\lambda_2 u_{n}, \lambda_1 u_{n+1})      \nonumber \\		
      u_{n+2} &=& \lambda_1 \lambda_2 u_{n} + \lambda_1 u_{n+1} - b_{n} 
	  \label{(lambda recurs)}
\end{eqnarray}	
Obviously, $\lambda_1 = \lambda_2 = 1$ corresponds to the original model $\eqref{(simple recurs)}$.  It is reasonable to assume in practice that $(\lambda_1,\lambda_2) \in M := [.95,1]^2$, and in virtually all cases $(\lambda_1,\lambda_2) \in V := [.9,1]^2$.
\\
\\
We will show that the revised scheme $\eqref{(lambda recurs)}$ still produces beta-expansions of the input $x$, but in a slightly different base $\gamma = \beta^{-1} = \frac{-\lambda_1 + \sqrt{\lambda_1^2 + 4 \lambda_1 \lambda_2}}{2 \lambda_1 \lambda_2}$, which increases away from $\phi^{-1}$ as the parameters $\lambda_1$ and $\lambda_2$ decrease.   Key in the proof of Theorem $\ref{expand}$ was the use of the relation $\phi^2 - \phi - 1 = 0$ to reduce $\sum_{n=0}^N (u_n + u_{n+1} - u_{n+2}) \phi^{-n}$ to the sum of the input $x$, and a remainder term that becomes arbitrarily small with increasing $N$.   Accordingly, the relation $1  - \lambda_1 \gamma - \lambda_1 \lambda_2 \gamma^2 = 0$ gives $\sum_{n=0}^N (\lambda_1 \lambda_2 u_{n} + \lambda_1 u_{n+1} - u_{n+2}) \gamma^{n+1} = x + R(N)$, where $R(N)$ goes to $0$ as $N$ goes to infinity.   

\begin{theorem}    Suppose the 1-bit quantizer $Q$ in $\eqref{(lambda recurs)}$ of type $\eqref{(fq2)}$ is such that the state sequence $u = \{u_n\}_{n=0}^{\infty}$ with $u_0 = 0$ and $u_1 = x$ is bounded.  Consider $\gamma = \frac{-\lambda_1 + \sqrt{\lambda_1^2 + 4 \lambda_1 \lambda_2}}{2 \lambda_1 \lambda_2}$. Then
\begin{center}
$|x - \sum_{n=0}^{N} b_n \gamma^{n+1} | \leq C_{\gamma} \gamma^{N} $
 \end{center}
 where $C_{\gamma} = \frac{\gamma}{1 - \gamma}$.
\label{exp}
\end{theorem}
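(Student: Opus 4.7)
The plan is to mimic exactly the proof of Theorem \ref{expand} for the ideal GRE, with the multiplicative constants $\lambda_1$ and $\lambda_2$ forcing us to use a different characteristic polynomial. The first step is to observe that the chosen $\gamma$ is precisely the positive root of
\[
1 - \lambda_1 \gamma - \lambda_1 \lambda_2 \gamma^2 = 0,
\]
so that one can verify $\lambda_1 \gamma - 1 = -\lambda_1 \lambda_2 \gamma^2$, and also that $\gamma < 1$ in the regime $(\lambda_1,\lambda_2) \in V = [.9,1]^2$, making $C_\gamma = \gamma/(1-\gamma)$ finite and positive. This characteristic relation is the exact analogue of $\phi^2 - \phi - 1 = 0$ used in the original GRE argument; the rest of the proof will be driven by algebraic cancellations it enables.

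Next, I would substitute the recursion $b_n = \lambda_1\lambda_2 u_n + \lambda_1 u_{n+1} - u_{n+2}$ into the partial sum $\sum_{n=0}^{N} b_n \gamma^{n+1}$ and split it into three pieces indexed by $u_n$, $u_{n+1}$, and $u_{n+2}$. After shifting the indices so that each piece is written as a sum over the same running variable $u_m$, I would then collect the coefficient of each $u_m$ separately. The boundary indices $m=0$ and $m=1$ contribute the starting data: $u_0=0$ kills one term and $u_1=x$ is multiplied by $\lambda_1 \gamma + \lambda_1\lambda_2\gamma^2 = 1$ by the characteristic relation, producing the $x$ on the right-hand side. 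For the bulk indices $2 \le m \le N$, the coefficient factors as $\gamma^{m-1}(\lambda_1\lambda_2 \gamma^2 + \lambda_1 \gamma - 1) = 0$, so these terms vanish. The only survivors are the tail contributions at $m = N+1$ and $m=N+2$, which, after applying the identity $\lambda_1\gamma - 1 = -\lambda_1\lambda_2\gamma^2$, combine into a single remainder
\[
\sum_{n=0}^{N} b_n \gamma^{n+1} = x - \gamma^{N+1}\bigl(\lambda_1\lambda_2 \gamma\, u_{N+1} + u_{N+2}\bigr).
\]

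Finally, boundedness of the state sequence implies that the remainder above tends to zero as $N \to \infty$ (since $\gamma < 1$), so $x = \sum_{n=0}^{\infty} b_n \gamma^{n+1}$ and the partial-sum error coincides with the tail of a geometric series with $|b_n| \le 1$. Estimating this tail by $\sum_{n=N+1}^{\infty} \gamma^{n+1} = \gamma^{N+2}/(1-\gamma) \le C_\gamma \gamma^N$ delivers the claimed bound. The only nontrivial bookkeeping is the reindexing step: the initial conditions are now $u_0=0$, $u_1=x$ (swapped from the original GRE) and each summand carries the factor $\gamma^{n+1}$ rather than $\gamma^n$, so one has to be careful that the $u_0=0$ slot really does absorb the leftover $S_1$ boundary term and that $u_1$ gets the coefficient $1$ from the characteristic polynomial; once this is set up correctly, the proof is formally identical to that of Theorem \ref{expand}, and no genuine obstacle arises.
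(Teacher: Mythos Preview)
Your proposal is correct and follows essentially the same approach as the paper: substitute the recursion into $\sum_{n=0}^N b_n\gamma^{n+1}$, use the characteristic relation $1-\lambda_1\gamma-\lambda_1\lambda_2\gamma^2=0$ to kill the bulk terms and to show the $u_1$ boundary term contributes exactly $x$, then bound the geometric tail. Your bookkeeping of the remainder (yielding $-\gamma^{N+1}(\lambda_1\lambda_2\gamma\,u_{N+1}+u_{N+2})$) is in fact cleaner than the paper's, which writes the tail in the equivalent but less reduced form $\gamma^N\bigl(u_{N+1}(\lambda_1-\gamma^{-1})-u_{N+2}\bigr)$ (modulo a minor index shift in the paper's display).
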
	

\begin{proof}
\begin{eqnarray}
\sum_{n=0}^{N} b_{n} \gamma^{n+1} &=& \sum_{n=0}^{N} ( \lambda_1 \lambda_2 u_{n} + \lambda_1 u_{n+1} - u_{n+2}) \gamma ^{n+1} \nonumber \\
&=&  (\lambda_1 \lambda_2 \gamma^2 + \lambda_1 \gamma  - 1) \sum_{n=2}^{N-1} \gamma^{n} u_n \nonumber \\
&  & + \lambda_1 \lambda_2 u_0 \gamma + u_1(\lambda_1 \gamma  + \lambda_1 \lambda_2 \gamma^2) \nonumber \\
&  & + \gamma^{N} \Big(u_{N} (\lambda_1 - \gamma^{-1}) - u_{N+1} \Big) \nonumber \\
&=&  x  +\gamma^{N}\Big( u_{N} (\lambda_1 - \gamma^{-1}) - u_{N+1} \Big). \nonumber
\end{eqnarray}
The last equality is obtained by setting $u_0 = 0$ and $u_1 = x$. 
Since the $u_n$ are bounded, it follows as in the proof of $\eqref{(simple recurs)}$ that 
\begin{eqnarray}
 |x - \gamma \sum_{n=0}^{N} b_n \gamma^n | &\leq& \frac{\gamma^{N+1}}{1 - \gamma}. 
 \nonumber
\end{eqnarray}
 \end{proof}

Theorem $\ref{exp}$ implies that if $\gamma$ is known, then the revised GRE scheme $\eqref{(lambda recurs)}$ still gives exponential approximations to the input signal $x$, provided that the $u_n$ are indeed bounded.   The following theorem gives an explicit range for the parameters $(\nu, \alpha)$ which results in bounded sequences $u_n$ when the input $x \in [-1,1]$, independent of the values of the leakage parameters $(\lambda_1,\lambda_2)$ in the set $V = [.9,1]^2$.  This parameter range is only slightly more restrictive than that derived in $\cite{GREsubmit}$ for the ideal GRE scheme $\eqref{(simple recurs)}$; that is, the admissable parameter range for $\alpha$ and $\nu$ is essentially robust with respect to leakage in the delay elements of the GRE circuit implementation.  
\begin{theorem}
Let $x \in [-1,1]$, and $(\lambda_1, \lambda_2) \in [.9, 1]^2$.  Suppose that the GRE scheme $\eqref{(lambda recurs)}$ is followed, and the quantizer $Q_{\alpha}^{\nu}(u,v)$ is used, with $\nu$ possibly varying at each occurrence, but always satisfying $\nu \leq \epsilon$ for some fixed tolerance $\epsilon \leq .337$.    If the parameter $\alpha$ takes values in the interval $[1.198 + 1.479 \epsilon, 2.053 - 1.058 \epsilon]$, then the resulting state sequence $(u_j)_{j \in N}$ is bounded.
\label{range}
\end{theorem}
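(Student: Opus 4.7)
The plan is to establish a trapping region for the two-dimensional state $(u_n, u_{n+1})$ by diagonalizing the linear part of the recursion $\eqref{(lambda recurs)}$. The companion matrix $\begin{pmatrix} 0 & 1 \\ \lambda_1\lambda_2 & \lambda_1 \end{pmatrix}$ has eigenvalues $\mu_\pm := \tfrac{1}{2}(\lambda_1 \pm \sqrt{\lambda_1^2+4\lambda_1\lambda_2})$, which satisfy $\mu_+ > 1$ and $|\mu_-| < 1$ throughout $V = [.9,1]^2$; thus the dynamics admit a clean expanding/contracting splitting. Introducing eigencoordinates via $u_n = \xi_n + \eta_n$ and $u_{n+1} = \mu_+\xi_n + \mu_-\eta_n$, the recursion decouples into
\begin{equation*}
\xi_{n+1} = \mu_+ \xi_n - \sigma b_n, \qquad \eta_{n+1} = \mu_- \eta_n + \sigma b_n,
\end{equation*}
with $\sigma := 1/(\mu_+ - \mu_-)$, while the quantization functional $\lambda_1\lambda_2 u_n + \alpha\lambda_1 u_{n+1}$ rewrites as $A\,\xi_n + B\,\eta_n$ with $A = \lambda_1\lambda_2 + \alpha\lambda_1\mu_+$ and $B = \lambda_1\lambda_2 + \alpha\lambda_1\mu_-$. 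The \emph{magic} value $\alpha^{\ast} := \mu_+/\lambda_1$ (which reduces to $\phi$ when $\lambda_1=\lambda_2=1$) makes $B=0$, so in that ideal case the quantizer reads off $\sgn(\xi_n)$ exactly.

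Because $|\mu_-| < 1$, the $\eta$-recursion is automatically contractive and $|\eta_n|$ is trapped in $[0, \eta^{\ast}]$ with $\eta^{\ast} := |\eta_0| + \sigma/(1-|\mu_-|)$, independently of how the bits $(b_n)\in\{-1,1\}$ are chosen. The nontrivial work is in the expanding $\xi$-direction, and I would lean on two observations. First, if the quantizer always returned $\sgn(\xi_n)$, the scalar map $\xi \mapsto \mu_+\xi - \sigma\,\sgn(\xi)$ would leave $[-\xi^{\ast},\xi^{\ast}]$ with $\xi^{\ast}:= \sigma/(\mu_+ - 1)$ invariant: one checks $\mu_+\leq 2$ over $V$ so that a single step cannot overshoot. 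Second, the quantizer is guaranteed to return $\sgn(\xi_n)$ whenever the flaky uncertainty $\nu\leq\epsilon$ and the $\eta$-cross-talk together satisfy $|A\xi_n| > |B|\eta^{\ast} + \epsilon$. An induction then shows that $(\xi_n,\eta_n)\in [-\xi^{\ast},\xi^{\ast}]\times[-\eta^{\ast},\eta^{\ast}]$ for all $n$, splitting into the \emph{safe} regime $|\xi_n| > (|B|\eta^{\ast}+\epsilon)/|A|$, where the quantizer behaves ideally, and the \emph{flaky} regime $|\xi_n| \leq (|B|\eta^{\ast}+\epsilon)/|A|$, where one uses only the crude bound $|\xi_{n+1}|\leq \mu_+|\xi_n|+\sigma$ and demands that it stay $\leq \xi^{\ast}$.

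The advertised interval for $\alpha$ then arises from combining these two regime constraints: the lower bound $\alpha\geq 1.198+1.479\epsilon$ comes from forcing the flaky threshold $(|B|\eta^{\ast}+\epsilon)/|A|$ to sit strictly inside $[-\xi^{\ast},\xi^{\ast}]$ on the side $\alpha < \alpha^{\ast}$, while the upper bound $\alpha \leq 2.053-1.058\epsilon$ enforces the same containment on the side $\alpha > \alpha^{\ast}$, where $B$ has the opposite sign. The principal obstacle will be making both bounds uniform over $(\lambda_1,\lambda_2)\in V$: the quantities $\mu_\pm$, $\sigma$, $A$, $B$, $\xi^{\ast}$, and $\eta^{\ast}$ are all functions of the leakage, so each inequality is a family of constraints parametrized by $V$, and one must locate the worst-case corner of $V$ for each bound. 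Once this is done, extracting the linear-in-$\epsilon$ expressions above and checking that the threshold $\epsilon\leq .337$ is precisely the value at which the admissible interval for $\alpha$ first becomes empty is a finite but careful numerical verification.
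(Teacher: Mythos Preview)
Your approach is essentially the paper's, rephrased in scalar eigencoordinates rather than in the geometric language of rectangles aligned with the eigenvectors $\Phi_1,\Phi_2$. The paper also diagonalizes the companion matrix, builds a parallelogram $R$ in the $(\Phi_1,\Phi_2)$ frame, and then imposes exactly your ``flaky strip $\subset$ overlap region'' condition (their $F\cap R\subset H$); your safe/flaky dichotomy and the requirement $\mu_+\theta+\sigma\le\xi^\ast$ are the algebraic translation of their slope constraints $m_1\le -1/\tilde\alpha\le m_2$ through the corners $C_1,B_1^\#$.

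Two minor points of divergence are worth flagging. First, in the contracting direction the paper does not use your crude symmetric bound $\eta^\ast=|\eta_0|+\sigma/(1-|\mu_-|)$; it builds an \emph{asymmetric} interval (their parameters $l\neq r$) tailored to the sign-flipping contraction $\eta\mapsto\mu_-\eta\pm\sigma$ with $\mu_-<0$, and this tighter box is what feeds into the numerical bounds $L(\epsilon_1,\epsilon_2)$ and $U(\epsilon_1,\epsilon_1+\epsilon_2)$. Your cruder $\eta^\ast$ would still prove boundedness for \emph{some} range of $\alpha$, but recovering the precise constants $1.198+1.479\epsilon$ and $2.053-1.058\epsilon$ likely requires the sharper construction. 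Second, the paper introduces a rescaling $\tilde\alpha=\alpha/\lambda_2$, $\tilde\nu=\nu/(\lambda_1\lambda_2)$ before doing the worst-case analysis over $V$, and the threshold $.337$ arises from converting the $\tilde\nu$-bound back through $\nu\le\lambda_1\lambda_2\tilde\nu$ rather than directly from the interval collapsing; this is bookkeeping you would need to track to match the stated numbers.
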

We leave the proof of Theorem $\ref{range}$ to the appendix.  This result in some sense parallels  Theorem $\ref{b-encod+}$ in that an admissable range for the "multiplication" parameter  ($\alpha$ in GRE, $\beta$ in beta-encoders) is specified for a given quantizer tolerance $\epsilon$; however, we stress that the specific value of $\beta$ is needed in order to recover the input from the bitstream $(b_j)$ in beta-encoders.  In contrast, a GRE encoder can be built with a multiplier $\alpha$ set at any value within the range $[1.198 + 1.479 \epsilon, 2.053 - 1.058 \epsilon]$, and as long as this multiplier element  has enough precision that the true value of $\alpha$ will not stray from this interval, then the resulting bitstreams $(b_j)$ will always represent a series expansion of the input $x$ in base $\gamma$ of Theorem $\ref{exp}$, which does not depend on $\alpha$.  The base $\gamma$ does however depend on the leakage parameters $\lambda_1$ and $\lambda_2$, which are not known a priori to the decoder and can also vary in time from input to input: as discussed earlier, the only information available to the decoder a priori is that $(\lambda_1,\lambda_2) \in [.9,1]^2$ in virtually all cases of interest, and $(\lambda_1,\lambda_2) \in [.95,1]^2$, or $\gamma \in [\phi^{-1}, \frac{20}{19} \phi^{-1}] \approx [.618, .6505]$ in most cases of interest.  In the next section, we show that the upper bound of $.6505$ is sufficiently small that the value of $\gamma$ can be recovered with exponential precision from the encoded bitstreams of a pair of real numbers $(x,-x)$.  In this sense, GRE encoders are robust with respect to leakage in the delay elements, imprecisions in the multiplier $\alpha$, { \it and } quantization error.  

\subsection{Determining $\gamma = 1/\beta$ up to exponential precision }
\subsubsection{Approximating $\gamma$ using an encoded bitstream for x = 0}
Recall that by Theorem $\ref{daub}$, exponentially precise approximations $\tilde{\gamma}$ to the root $\gamma$ in Theorem $\ref{exp}$ are sufficient in order to reconstruct subsequent input $x_n = f(t_n) \in [-1,1]$ whose bit streams are expansions in root $\gamma$ with exponential precision.  In this section, we present a method to approximate $\gamma$ with such precision using the only information at our disposal at the decoding end of the quantization scheme: the encoded bitstreams of real numbers $x \in [-1,1]$.  More precisely, we will be able to recover the value $\gamma = \beta^{-1} $ using only a single bitstream corresponding to a beta-expansion of the number 0.  It is easy to adapt this method to slow variations of $\gamma$ in time, as one can repeat the following procedure at regular time intervals during quantization, and update the value of $\gamma$ accordingly.  
\\
\\
 The analysis that follows will rely on the following theorem by Peres and Solomyak $\cite{solom96}$:
\begin{theorem}[Peres-Solomyak] [$\delta$-transversality]
Consider the intervals $I_{\rho} = [0,\rho]$.  If $\rho \leq .6491...$, then for any $g$ of the form
\begin{equation}
g(x) = 1 + \sum_{j=1}^{\infty} b_j x^j \textrm{,                    }  b_j \in \{-1,0,1\}
\label{g}
\end{equation}
and any $x \in I_{\rho}$, there exists a $\delta_{\rho} > 0$ such that if $g(x) < \delta_{\rho}$ then $g'(x) < -\delta_{\rho}$.  Furthermore, as $\rho$ increases to $.6491...$, $\delta_{\rho}$ decreases to 0.
\label{trans}
\end{theorem}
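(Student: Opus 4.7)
The plan is to argue by contradiction. Suppose the conclusion fails: then for every $\delta > 0$ there exist $g$ of the form \eqref{g} and $x \in I_\rho$ with $g(x) < \delta$ and $g'(x) \geq -\delta$. Pick a sequence $\delta_n \to 0$, producing sequences $g_n(x) = 1 + \sum_{j \geq 1} b_j^{(n)} x^j$ and points $x_n \in I_\rho$. Since $\{-1,0,1\}^{\mathbb{N}}$ is compact in the product topology and $I_\rho$ is compact, after passing to a subsequence we may assume $b_j^{(n)} \to b_j^* \in \{-1,0,1\}$ coefficientwise and $x_n \to x^* \in I_\rho$. Uniform convergence on compacta of $g_n$ and $g_n'$ (both are dominated by $\sum x^{j-1}$ times a geometric tail in $I_\rho$) then forces $g^*(x^*) = 0$ and $g^{*\prime}(x^*) \geq 0$. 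Because $g^*(0) = 1 > 0$ and $g^*$ is analytic, combined with $g^*(x^*)=0$, one can argue $g^{*\prime}(x^*) \leq 0$, so $g^{*\prime}(x^*) = 0$. Thus the statement reduces to showing that \emph{no} power series $g$ of the form \eqref{g} has a double zero in $(0,\rho]$.

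\textbf{Finite truncation and tail control.} To rule out such a double zero, I split $g(x) = P_N(x) + x^{N+1} R(x)$, where $P_N(x) = 1 + \sum_{j=1}^N b_j x^j$ is a polynomial with $\{-1,0,1\}$ coefficients and $R(x) = \sum_{j \geq 0} b_{N+1+j} x^j$ satisfies $|R(x)| \leq (1-x)^{-1}$ and $|R'(x)| \leq (1-x)^{-2}$ on $I_\rho$. If $g(x^*) = g'(x^*) = 0$ then
\begin{equation*}
|P_N(x^*)| \leq \frac{(x^*)^{N+1}}{1-x^*}, \qquad |P_N'(x^*)| \leq (N+1)\frac{(x^*)^N}{1-x^*} + \frac{(x^*)^{N+1}}{(1-x^*)^2}.
\end{equation*}
For $\rho \leq .6491$ both bounds decay geometrically in $N$. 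The plan is therefore to choose $N$ large enough that the above quantities fall below a small absolute threshold $\eta_\rho$, and then verify for each of the $3^N$ polynomials $P_N$ that the quantity $\max(|P_N(x)|,|P_N'(x)|)$ exceeds $\eta_\rho$ on all of $I_\rho$. The dependence $\delta_\rho \to 0$ as $\rho \to .6491$ will come out of how small $\eta_\rho$ must be taken.

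\textbf{Key technical step and main obstacle.} The hard part is the polynomial verification. A direct case check is of size $3^N$, which is infeasible for $N$ large enough to make the tail small. The Peres--Solomyak approach replaces this by an inductive / tree-pruning argument: starting from $P_0 = 1$, one grows the tree of partial polynomials $P_k$ (appending $b_{k+1} \in \{-1,0,1\}$) and prunes any branch along which one can already prove $\min_{x\in I_\rho}\max(|P_k(x) + x^{k+1}R(x)|,|P_k'(x) + \partial_x(x^{k+1}R(x))|) > 0$ uniformly over all continuations $R$. Sharp interval-arithmetic bounds on the tail let one cut most branches early, and only a finite tree remains. Showing this pruning terminates is the combinatorial core. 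The specific threshold $\rho^* = .6491\ldots$ arises as the supremum of $\rho$ for which the tree is finite; just above $\rho^*$, an actual $\{-1,0,1\}$ power series with a double root on $I_\rho$ exists (near a ``Pisot-like'' root accumulation), which simultaneously explains why $\delta_\rho \to 0$ as $\rho \uparrow \rho^*$ and why the threshold is not a simple algebraic number.
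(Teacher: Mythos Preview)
First, note that the paper does not itself prove this theorem: it is quoted from Peres and Solomyak, with only a one-line description of their method (``the estimates are obtained by computing the smallest double zero of a larger class of power series $\bar B:=\{1+\sum_{n\geq 1}b_n x^n: b_n\in[-1,1]\}$''). That description is the relevant point of comparison, and your proposal diverges from it in two important ways.

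\textbf{A gap in the compactness reduction.} Your claim that ``$g^*(0)=1>0$ and $g^*(x^*)=0$, together with analyticity, give $g^{*\prime}(x^*)\leq 0$'' is not correct: a function can start at $1$, dip below zero, and return to $0$ at $x^*$ with strictly positive derivative. The compactness limit in fact only yields $g^*(x^*)\leq 0$ and $g^{*\prime}(x^*)\geq 0$, and within the discrete class $\{-1,0,1\}$ there is no evident way to trade this pair of inequalities for a genuine double zero of some element of the class. This is exactly why Peres and Solomyak pass to the \emph{larger} convex class $\bar B$ with $b_j\in[-1,1]$, as the paper explicitly records: in $\bar B$ one can perturb coefficients continuously to force $g(x^*)=g'(x^*)=0$ exactly, so the reduction to ``no double zero in $I_\rho$ for any $g\in\bar B$'' goes through cleanly.

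\textbf{The actual method is not tree-pruning.} You attribute to Peres--Solomyak an ``inductive / tree-pruning argument'' over $\{-1,0,1\}^N$. That is not what they do. Working in the convex class $\bar B$, they argue variationally that the smallest double zero over all of $\bar B$ is attained by an extremal power series of a specific explicit form (a so-called $(*)$-function), whose smallest double zero can then be computed directly; the number $0.6491\ldots$ is that computed value. This also explains transparently why $\delta_\rho\to 0$ as $\rho\uparrow 0.6491\ldots$, since one is approaching an actual double zero in $\bar B$. Your tree-pruning strategy is a conceivable alternative route, but you have neither carried it out nor correctly identified it as the original argument, and it would in any case still need the passage to $\bar B$ (or some substitute) to close the gap in the reduction step.
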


Theorem $\ref{trans}$ has the following straightforward corollary:
\begin{corollary}
If $g(x)$ is a polynomial or power series belonging to the class $B$ given by
\begin{equation}
B = \{ \pm 1 + \sum_{j=1}^{\infty} b_j x^j  \textrm{,     }: b_j \in \{-1,0,1\} \}
\label{g+}
\end{equation}
then $g$ can have no more than one root on the interval $(0, .6491]$.  Furthermore, if such a root exists, then this root must be simple.
\label{root}
\end{corollary}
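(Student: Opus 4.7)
The plan is to reduce the corollary to a direct application of the Peres--Solomyak transversality result (Theorem~\ref{trans}) with $\rho = 0.6491$, which sits strictly below the sharp threshold in that theorem so that a positive transversality constant $\delta_\rho$ is genuinely at our disposal. First I would observe that if $g(x) = -1 + \sum_{j \geq 1} b_j x^j$, then $-g$ satisfies the hypotheses of Theorem~\ref{trans} verbatim, shares the same zero set with $g$, and has derivative of opposite sign; hence the two cases embedded in the definition of $B$ can be treated together, and it suffices to argue for $g(x) = 1 + \sum_{j \geq 1} b_j x^j$.

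For the simplicity claim I would note that at any root $x_0 \in (0, 0.6491]$ we have $g(x_0) = 0 < \delta_\rho$, so Theorem~\ref{trans} forces $g'(x_0) < -\delta_\rho < 0$; in particular $g'(x_0) \neq 0$, so $x_0$ is a simple root.

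For uniqueness I would proceed by contradiction: suppose $x_1 < x_2$ were two distinct roots in $(0, 0.6491]$. Since the previous step gives $g'(x_1) < -\delta_\rho$, the function $g$ dips strictly below zero immediately to the right of $x_1$. Letting $x^{*} := \inf\{x \in (x_1, x_2] : g(x) = 0\}$, continuity yields $g(x^{*}) = 0$ and $g < 0$ on $(x_1, x^{*})$. Then $g$ attains a strictly negative minimum at some interior point $c \in (x_1, x^{*})$, at which Fermat's theorem gives $g'(c) = 0$. But the contrapositive of transversality applied at $c \in (0, 0.6491]$ says that $g'(c) \geq -\delta_\rho$ forces $g(c) \geq \delta_\rho > 0$, contradicting $g(c) < 0$.

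The main obstacle is conceptual rather than computational: one must recognize that the useful form of Theorem~\ref{trans} is its contrapositive, which rules out interior critical points at which $g$ takes a nonpositive value. Once this is isolated, the uniqueness argument is purely a real-variable consideration and no further analytic--number-theoretic input about the coefficient set $\{-1,0,1\}$ is required beyond what is already packaged into Theorem~\ref{trans}. A minor verification I would include is that the endpoint $0.6491$ lies strictly below the Peres--Solomyak constant $0.6491\ldots$, so the transversality constant $\delta_\rho$ used throughout is strictly positive.
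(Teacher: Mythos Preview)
Your proposal is correct. The paper does not actually give a proof of this corollary; it merely states it as a ``straightforward corollary'' of Theorem~\ref{trans}, so your argument is precisely the kind of justification the paper leaves implicit. Your reduction of the $-1$ case to the $+1$ case via $g \mapsto -g$, your simplicity argument ($g(x_0)=0<\delta_\rho \Rightarrow g'(x_0)<-\delta_\rho$), and your uniqueness argument via an interior minimum between two putative zeros are all sound. One could streamline the uniqueness step slightly by observing that once $g(x_0)<\delta_\rho$, transversality forces $g'<-\delta_\rho$ on all of $[x_0,\rho]$ (else a first crossing of the level $\delta_\rho$ would contradict the monotone decrease), so $g$ is strictly decreasing thereafter and can vanish at most once; but this is just a repackaging of your Fermat-point contradiction, not a different idea.
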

In $\cite{solom96}$, Peres and Solomyak used Theorem $\ref{trans}$ to show that the distribution $\nu_\lambda$ of the random series  $\sum \pm \lambda^n$ is absolutely continuous for a.e. $\lambda \in (1/2, 1)$.   The estimates in Theorem $\ref{trans}$ are obtained by computing the smallest double zero of a larger class of power series $\bar{B} := \{1 + \sum_{n=1}^{\infty} b_nx^n : b_n \in [-1,1] \}$.   The specific upper bound $\rho = .6491...$ for which $\delta$-transversality holds on the interval $I_{\rho}$ is the tightest bound that can be reached using their method of proof, but the true upper bound cannot be much larger; in $\cite{zeros06}$, a power series $g(t)$ belonging to the class $B$ in $\eqref{g+}$ is constructed which has a double zero at $t_0 \approx .68$ (i.e., $g(t_0) = 0$ and $g'(t_0) = 0$), and having a double root obviously contradicts $\delta$-transversality.
\\
\\
We now show how to use Theorem $\ref{trans}$ to recover $\gamma$ from a bitstream $(b_j)_{j=1}^{\infty}$ produced from the GRE scheme $\eqref{(lambda recurs)}$ corresponding to input $x=0$.  We assume that the parameters $(\alpha,\nu)$ of the quantizer $Q_{\alpha}^{\nu}$ used in this implementation are within the range provided by Theorem $\ref{range}$.  Note that such a bitstream $(b_j)_{j=1}^{\infty}$ is not unique if $\nu > 0$, or if more than one pair $(\lambda_1,\lambda_2)$ correspond to the same value of $\gamma$.   Nevertheless, Theorem $\ref{exp}$ implies that $ \sum_{j=1}^{\infty} b_j \gamma^{j} = 0$, so that $\gamma$ is a root of the power series $F(t) = b_1 + \sum_{j=1}^{\infty} b_{j+1} t^j$.  Suppose we know that $\beta \geq 1.54056$, or that $\gamma \leq .6491$.  Since $F(t)$ belongs to the class $B$ of Corollary $\eqref{root}$, $\gamma$ must necessarily be the { \it smallest} positive root of $F(t)$.  In reality one does not have access to the entire bitsream $(b_j)_{j=1}^{\infty}$, but only a finite sequence $(b_j)_{j=1}^{N}$.  It is natural to ask whether we can approximate the first root of $F(t)$ by the first root of the polynomials $P_n(t) = b_1 + \sum_{j=1}^{n} b_{j+1} t^j$.   Since the $P_n$ still belong to the class $B$ of Corollary $\eqref{root}$, $|P_n(t)|$ has {\it at most} one zero on the interval $[\phi^{-1},.6491] \approx [.618, .6491]$.  The following theorem shows that, if it is known a priori that $\gamma \leq .6491 - \epsilon$ for some $\epsilon > 0$, then for $n$ sufficiently large, $|P_n(t)|$ is guaranteed to have a root $\gamma_n$ in $[0, .6491]$, and  $| \gamma - \gamma_n |$ decreases exponentially as $n$ increases.  

\begin{theorem}
Suppose that for some $\epsilon > 0$,  it is known that $\gamma \leq \gamma_{high} = .6491 - \epsilon$.   Let $\delta > 0$ be such that $\delta$-transversality holds on the interval $[0, \gamma_{high}]$.  Let $N$ be the smallest integer such that $\gamma^{N+1} \leq (1 - \gamma) \epsilon \delta$.  Then for $n \geq N$, \begin{enumerate}
\renewcommand{\labelenumi}{(\alph{enumi})}
\item $P_n$ has a unique root $\gamma_n$ in $[0,.6491]$
\item $|\gamma - \gamma_n| \leq C_1 \gamma^n$, where $C_1 = \frac{\gamma}{\delta(1 - \gamma)}$.
\end{enumerate}  
\label{mytheorem}
\end{theorem}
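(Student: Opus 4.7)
The plan is to combine the geometric tail estimate for $F - P_n$ at $t = \gamma$ with Peres--Solomyak transversality to locate a root of $P_n$ near $\gamma$ and to control its distance from $\gamma$.

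Theorem \ref{exp} applied with input $x = 0$ gives $\sum_{j=1}^{\infty} b_j \gamma^j = 0$, which, after dividing by $\gamma$, identifies $\gamma$ as a zero of $F(t) = b_1 + \sum_{j=1}^{\infty} b_{j+1} t^j$. Consequently the truncated polynomial $P_n$ satisfies
\[
|P_n(\gamma)| \;=\; |P_n(\gamma) - F(\gamma)| \;\leq\; \sum_{j=n+1}^{\infty} \gamma^{j} \;=\; \frac{\gamma^{n+1}}{1 - \gamma},
\]
and the choice of $N$ tightens this bound to $|P_n(\gamma)| \leq \epsilon\delta$ for every $n \geq N$.

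To locate the nearby root, I would invoke $\delta$-transversality on $\pm P_n$, with the sign chosen to place the constant term at $+1$, as required by Theorem \ref{trans}: at every $t$ in the transversality interval with $P_n(t) < \delta$, the derivative $P_n'(t)$ is less than $-\delta$ (with the symmetric statement for $-P_n$ when $b_1 = -1$). Consider, say, the case $b_1 = +1$ and $P_n(\gamma) \geq 0$; the other cases are handled by symmetry. A bootstrap argument with $s_\ast := \sup\{s \geq 0 : P_n(\gamma + r) < \delta \text{ for all } r \in [0, s]\}$ allows integration of the derivative bound: on $[0, s_\ast)$ one has $P_n(\gamma + s) \leq P_n(\gamma) - \delta s \leq P_n(\gamma) < \delta$, so $P_n$ cannot escape the strip before it hits zero, forcing $s_\ast \geq |P_n(\gamma)|/\delta$. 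The intermediate value theorem then produces a root $\gamma_n \in [\gamma, \gamma + |P_n(\gamma)|/\delta]$ with
\[
|\gamma_n - \gamma| \;\leq\; \frac{|P_n(\gamma)|}{\delta} \;\leq\; \frac{\gamma^{n+1}}{\delta(1 - \gamma)} \;=\; C_1 \gamma^n,
\]
which is conclusion (b). The leftward mirror of this argument handles $P_n(\gamma) < 0$. Since $|\gamma_n - \gamma| \leq C_1\gamma^n \leq \epsilon$ and $\gamma \leq .6491 - \epsilon$, the root lies inside $[0, .6491]$, and Corollary \ref{root} upgrades it to the unique root of $P_n$ there, yielding (a).

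The main technical obstacle is the bootstrap itself: one must rule out the possibility that $P_n$ escapes the strip $\{P_n < \delta\}$ before the derivative bound has had a chance to drive $P_n$ to zero. This is exactly what transversality --- a one-sided quantitative monotonicity statement --- is designed to deliver, and it is essential that one use it rather than a mere derivative bound at the single point $\gamma$. A minor bookkeeping point is that the sweep of length $|P_n(\gamma)|/\delta \leq \epsilon$ starting from $\gamma \leq \gamma_{high}$ can reach as far as $.6491$, so $\delta$-transversality must be available on all of $[0, .6491]$; since the Peres--Solomyak constant varies continuously with the right endpoint, this is automatic after possibly shrinking $\delta$ by an amount that only forces $N$ to be chosen marginally larger.
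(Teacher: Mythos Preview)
Your proposal is correct and follows the same approach as the paper: the tail bound $|P_n(\gamma)| \leq \gamma^{n+1}/(1-\gamma)$, a case split on the sign of $P_n(\gamma)$, transversality to locate and bound the root, and Corollary~\ref{root} for uniqueness. The only cosmetic differences are that for the case $P_n(\gamma) \leq 0$ the paper uses the sign change $P_n(0)=b_1=1>0$ with the intermediate value theorem directly (rather than a leftward bootstrap, which is not quite a mirror since the lower bound $P_n(\gamma-s)\geq P_n(\gamma)+\delta s$ does not by itself keep $P_n$ in the strip), and you correctly flag the slight mismatch between $\delta$-transversality on $[0,\gamma_{high}]$ versus $[0,.6491]$ that the paper leaves implicit.
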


\begin{proof}
Without loss of generality, we assume $b_1 = 1$, and divide the proof into 2 cases: (1) $P_n(\gamma) \leq 0$, and (2) $P_n(\gamma)  > 0$.   The proof is the same for $b_1 = -1$, except that the cases are reversed.
\\
{\bf Case (1)}: Suppose $P_n(\gamma) \leq 0$.  In this case, many of the restrictions in the theorem are not necessary; in fact, the theorem holds here for all $n > 0$ and $\epsilon \geq 0$.    $P_n(t)$ has opposite signs at $t=0$ and $t=\gamma$, so $P_n$ must have at least one root $\gamma_n$ in between.  Moreover, this root is unique by Theorem $\ref{trans}$.   To prove part (b) of the theorem, observe that  Theorem $\ref{trans}$ implies that if $P_n(t_0) \leq \delta$ at some $t_0 \in (0,.6491)$, then $P_n'(t) \leq - \delta$ in the interval $[t_0,.6491)$.  In particular, $P_n(t) \leq \delta$ and $P_n'(t) \leq - \delta$ for $t \in [\gamma_n, \gamma]$.  By the Mean Value Theorem, $|P_n(\gamma)| = |P_n(\gamma) - P_n(\gamma_n)| = |P_n'(\xi)| |\gamma - \gamma_n|$ for some $\xi \in [\gamma_n, \gamma]$, so that
\begin{eqnarray}
    |\gamma - \gamma_n| &\leq&  \frac{|P_n(\gamma)|}{\inf_{\xi \in [\gamma_n,\gamma]} |P_n'(\xi)|} \nonumber \\
    &\leq&  \frac{|P_n(\gamma)|}{\delta} \nonumber \\ 
    &\leq&  \frac{\gamma^{n+1}}{(1 - \gamma)\delta} .
    \nonumber
  \end{eqnarray}  
 The inequality $|P_n(\gamma)| \leq \frac{\gamma^{n+1}}{1-\gamma}$ follows from Theorem $\ref{exp}$.
\\
{\bf Case (2)}:  If $P_n(\gamma) > 0$, then Theorem $\ref{trans}$ implies that the first positive root of $P_n$, if it exists, must be greater than $\gamma$.   Whereas in Case (1), $P_n$ was guaranteed to have a root $\gamma_n \leq \gamma$ for all $n \geq 0$, in this case $P_n$ might not even have a root in $(0,1)$; for example, the first positive root of the polynomial $P_4(t) = 1 - t - t^2 + t^3$ occurs at $t = 1$.   However, if $n$ is sufficiently large, $P_n$ will have a root in $[\gamma, .6491]$.  Precisely, let $n \geq N$, where $N$ is the smallest integer such that $\gamma^{N+1} \leq (1 - \gamma) \epsilon \delta$.  Then $P_n(\gamma) \leq \frac{\gamma^{n+1}}{1-\gamma} \leq \frac{\gamma^{N+1}}{1-\gamma}  \leq \epsilon \delta \leq \delta$.  Since $\gamma + \epsilon \leq .6491$, Theorem $\ref{trans}$ gives that $P_n'(t) \leq - \delta$ for $t \in [\gamma, \gamma + \epsilon]$, and    
\begin{eqnarray}
P_n(\gamma + \epsilon) &\leq& P_n(\gamma) + \epsilon \sup_{t \in [\gamma, \gamma+\epsilon] } P_n'(t) \nonumber \\ &\leq& \epsilon \delta - \epsilon \delta \nonumber \\  &=& 0 \nonumber.
\end{eqnarray}
We have shown that $P_n(\gamma) > 0$ and $P_n(\gamma + \epsilon) \leq 0$, so $P_n$ is guaranteed to have a root $\gamma_n$ in the interval $[\gamma, \gamma + \epsilon]$, which is in fact the unique root of $P_n$ in $[0,.6491]$.  Furthermore, the discrepency between $\gamma_n$ and $\gamma$ becomes exponentially small as $n > N$ increases, as
\begin{center}
$| \gamma_n - \gamma | \leq \frac{|P_n(\gamma)|}{\inf_{\xi \in [\gamma,\gamma_n]} |P_n'(\xi)|} 
  \leq  \frac{\gamma^{n+1}}{(1 - \gamma)\delta}$  .
  \end{center}
 \end{proof}
 
\begin{remark} In $\cite{solom96}$ it is shown that $\delta$-transversality holds on $[0, .63]$ with $\delta = \delta_{.63} = .07$.  This interval corresponds to $\epsilon = .6491 - .63 = .0191$ in Theorem $\ref{mytheorem}$.   If $\gamma$ is known a priori to be less than $.63$, then $\frac{\gamma}{1 - \gamma} \leq \frac{.63}{1-.63} \approx 1.7027$, and $N$ of Theorem $\eqref{mytheorem}$ satisfies
\begin{center}
 $N \leq \frac{\log{\epsilon\delta} - \log{1.7027}}{\log{.63}} \leq 16$.
 \end{center}
 \end{remark}
 
 Of course, even if we know $P_n(t)$,  $n$ will be too large to solve for $\gamma_n$ analytically.  However, if we can approximate $\gamma_n$ by $\tilde{\gamma}_n$ with a precision of $O(\gamma^n)$, then $|\gamma - \tilde{\gamma}_n| \leq |\gamma - \gamma_n| + |\gamma_n - \tilde{\gamma}_n|$ will also be of order $\gamma^n$, so that the estimates $\tilde{\gamma}_n$ are still exponentially accurate approximations to the input signal.   
\\
\\
Indeed, any $\tilde{\gamma}_n \in [\phi^{-1},.6491 - \epsilon]$ satisfying $| P_n(\tilde{\gamma}_n) | \leq \phi^{-n}$ provides such an exponential approximation to $\gamma_n$.  Since $n \geq N$, it follows that $\phi^{-n} \leq \gamma^n \leq \gamma^N \leq \frac{(1 - \gamma) \epsilon \delta}{\gamma} \leq \delta$, and so $P_n'(t) \leq -\delta$ on the interval between $\tilde{\gamma}_n$ and $\gamma_n$.  Thus, 
\begin{eqnarray}
 | \gamma - \tilde{\gamma}_n | &\leq& | \gamma - \gamma_n | + | \gamma_n - \tilde{\gamma}_n | 
 \nonumber \\
 &\leq& C_1 \gamma^{n} +  \frac{|P_n(\tilde{\gamma}_n)|}{\inf_{\xi \in [\tilde{\gamma}_n,\gamma_n] or \xi \in [\gamma_n, \tilde{\gamma}_n]} |P_n'(\xi)|}  \nonumber \\
 &\leq& C_1 \gamma^{n} + \frac{\phi^{-n}}{\delta} \nonumber \\
 & \leq& C_1 \gamma^{n} + \frac{\gamma^n}{\delta} \nonumber \\
 & =  & C_2 {\gamma^n},
 \label{search}
\end{eqnarray}
where $C_2 = C_1 + \frac{1}{\delta} = \frac{1}{\delta(1 - \gamma)}$.

\subsubsection{Approximating $\gamma$ with beta-expansions of $(-x,x)$}

The method for approximating the value of $\gamma$ in the previous section requires a bitstream $b$ corresponding to running the GRE recursion $\eqref{(lambda recurs)}$ with specific input $x = 0$.  This assumes that the reference value $0$ can be measured with high precision, which is an impractical constraint.  We can try to adapt the argument using bitstreams of an encoded pair $(x,-x)$ as follows.   Let   $b$ and $c$ be bitstreams corresponding to $x$ and $-x$, respectively.  Define $d_j = b_j + c_j$. Put $k = min \{j \in N |  d_{j+1} \neq 0 \}$, and consider the sequence $\bar{d} = (\bar{d}_j)_{j=1}^{\infty}$ defined by $\bar{d}_j = \frac{1}{2} d_{j+k}$.  Since $d_j \in \{-2,0,2\}$, it follows that $\bar{d}_j \in \{-1,0,1\}$, and by Theorem $\ref{exp}$, we have that
\begin{eqnarray}
\sum_{n=1}^{\infty} \bar{d}_n \gamma^n &=&  \frac{1}{2} \gamma^{-k} \sum_{n=1}^{\infty} d_n \gamma^n \nonumber \\
          &=& \frac{1}{2} \gamma^{-k} \Big[ \sum_{n=1}^{\infty} b_n \gamma^n + \sum_{n=1}^{\infty} c_n \gamma^n \Big]  \nonumber \\
             &=& 0      
             \end{eqnarray}                  
so that 
\begin{equation}
| \sum_{n=1}^{N} \bar{d}_n \gamma^n | \leq C_{\gamma} \gamma^N,
\label{close}
\end{equation}
where the constant $C_{\gamma} = \frac{\gamma}{1 - \gamma}$.

Equation $\eqref{close}$, along with the fact that the polynomials $\bar{P}_N(t) :=  \bar{d}_1 + \sum_{j=1}^{N} \bar{d}_{j+1} t^j $ are of the form $\eqref{g+}$, allows us to apply Theorem $\ref{mytheorem}$ to conclude that for $N$ sufficiently large, the first positive root $\gamma_N$ of $\bar{P}_N$ becomes exponentially close to $\gamma$.  However, note that the encoding of $(x,-x)$ is {\it not} equivalent to the encoding of $0$.  The value of $k = min \{j \in N |  d_{j+1} \neq 0 \}$ used to define the sequence $\bar{d}$ can be {\it arbitrarily large}.  In fact, if an ideal quantizer $Q_{\alpha}^0$ is used, then the bitstreams $b$ and $c$ are uniquely defined by $b \equiv -c$, so that $d \equiv 0$.  Thus, this method for recovering $\gamma$ actually {\it requires} the use of a flaky quantizer $Q_{\alpha}^{\nu}$.  To this end, one could intentially implement GRE with a quantizer which toggles close to, but not exactly at zero.  One could alternatively send not only a single pair of bitstreams $(b,c)$, but {\it multiple} pairs of bitstreams $(b^l, c^l)$ corresponding to several pairs $(x_l,-x_l)$, to increase the chance of having a pair that has $b^{L^j} + c^{L^j} \neq 0$ for relatively small $j$. 
\\
\\
Figure ~\ref{fig:fig3} plots several instances of $\bar{P}_{8}, \bar{P}_{16}$, and $\bar{P}_{32}$, corresponding to $\gamma = .64375$.  The quantizer $Q_{\alpha}^{\nu}(u,v)$ is used, with $\alpha = 2$ and $\nu = .3$.  These values of $\alpha$ and $\nu$ generate bounded sequences $(u_n)$ for all $(\lambda_1,\lambda_2) \in [.9,1]^2$ by Theorem $\ref{range}$.  Figure ~\ref{fig:fig4} plots several instances of the same polynomials, but with root $\gamma = .75$.  
\\
\\
As shown in Figure ~\ref{fig:fig2}, numerical evidence suggests that 10 iterations of Newton's method starting from $x_0 = \phi^{-1} \approx .618$ will compute an approximation to $\gamma_N$, the first root of $\bar{P}_N$, with the desired exponential precision.  The figure plots $\gamma$ versus the error $| \gamma - \tilde{\gamma}_N|$, where $\tilde{\gamma}_N$ is the approximation to $\gamma_N$ obtained via a 10- step Newton Method, starting from $x_0 = .618$.  More precisely, for each $N$, we ran $100$ different trials, with $x$ and $\gamma$ picked randomly from the intervals $[-1,1]$ and $[.618,.7]$ respectively, and independently for each trial.   The worst case approximation error of the 100 trials is plotted in each case.  The quantizer used is $Q_{\alpha}^{\nu}(u,v)$ with $\nu = .3$, and $\alpha$ picked randomly in the interval $[1.7,2]$, independently for each trial.  Again, Theorem $\ref{range}$ shows that these values of $\alpha$ and $\nu$ generate bounded sequences $(u_n)$ for all $(\lambda_1,\lambda_2) \in [.9,1]^2$.

\subsection{The beta-encoder revisited}
Even though our analysis of the previous section was motivated by leaky GRE encoders, it can be applied to general beta-encoders to recover the value of $\beta$ at any time during quantization.  From the last section, we have: 
\begin{theorem}
Let $F(t) = \sum_{j=0}^{\infty}b_j t^j$ be a power series belonging to the class $B = \{ \pm 1 + \sum_{j=1}^{\infty} b_j x^j  \textrm{,     } b_j \in \{-1,0,1\} \}$.  Suppose that $F$ has a root at $\gamma \in [\gamma_{low},\gamma_{high}]$, where $\gamma_{high} = .6491 - \epsilon$ for some $\epsilon > 0$.   Let $\delta > 0$ be such that $\delta$-transversality holds on the interval $[0, \gamma_{high}]$.  Let $N$ be the smallest integer such that $\gamma^{N+1} \leq \epsilon \delta (1 - \gamma)$. Then for $n \geq N$, \begin{enumerate}
\renewcommand{\labelenumi}{(\alph{enumi})}
\item The polynomials $P_n(t) = \sum_{j=0}^{n}b_j t^j$ have a unique root $\gamma_n$ in $[0,.6491]$
\item Any $\tilde{\gamma} \in [\gamma_{low}, \gamma_{high}]$ which satisfies $| P_n(\tilde{\gamma}) | \leq (\gamma_{low})^n$ also satisfies $|\tilde{\gamma} - \gamma| \leq \tilde{C}|\gamma|^n$, where $\tilde{C} = \frac{1}{\delta(1 - \gamma)}$.
\end{enumerate}  
\label{mybigtheorem}
\end{theorem}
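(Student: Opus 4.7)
The plan is to mimic the proof of Theorem~\ref{mytheorem} almost verbatim, with the essential observation that the only property of the GRE scheme used there was the tail bound $|F(\gamma)-P_n(\gamma)|\leq \gamma^{n+1}/(1-\gamma)$ (which came from the boundedness of the state sequence). Here, the tail bound follows for free from the hypothesis $F(\gamma)=0$ together with $|b_j|\leq 1$, since
\[
|P_n(\gamma)| \;=\; \Bigl|\,\sum_{j=n+1}^{\infty} b_j \gamma^{j}\Bigr| \;\leq\; \sum_{j=n+1}^{\infty}\gamma^{j} \;=\; \frac{\gamma^{n+1}}{1-\gamma}.
\]
So the GRE-specific machinery is not really needed; only the membership $P_n \in B$ (which is immediate, since $P_n$ has the same constant term $\pm 1$ as $F$ and all other coefficients in $\{-1,0,1\}$) plays a role.

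For part (a), I would assume without loss of generality that $b_0=+1$ (otherwise work with $-F$, $-P_n$) and split into two cases according to the sign of $P_n(\gamma)$. If $P_n(\gamma)\leq 0$, then the intermediate value theorem applied to $P_n$ on $[0,\gamma]$ produces at least one root, and uniqueness on $[0,0.6491]$ is immediate from Corollary~\ref{root}. If $P_n(\gamma)>0$, we use the $n\geq N$ hypothesis: since $|P_n(\gamma)|\leq \gamma^{n+1}/(1-\gamma)\leq \epsilon\delta$, $\delta$-transversality on $[0,\gamma_{\mathrm{high}}]$ forces $P_n'(t)\leq -\delta$ throughout the interval $[\gamma,\gamma+\epsilon]\subset[0,0.6491]$, whence
\[
P_n(\gamma+\epsilon)\;\leq\; P_n(\gamma)+\epsilon\sup_{t\in[\gamma,\gamma+\epsilon]}P_n'(t)\;\leq\;\epsilon\delta-\epsilon\delta\;=\;0,
\]
again giving a root by IVT (and again unique on $[0,0.6491]$ by Corollary~\ref{root}).

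For part (b), I would compare $P_n(\tilde\gamma)$ directly to $P_n(\gamma)$ via the mean value theorem rather than via the root $\gamma_n$. Since $|P_n(\tilde\gamma)|\leq \gamma_{\mathrm{low}}^{\,n}\leq \gamma^n$ and $|P_n(\gamma)|\leq \gamma^{n+1}/(1-\gamma)$, both values lie well below $\delta$ once $n\geq N$. The smaller of $\tilde\gamma,\gamma$ is therefore a point of $[0,\gamma_{\mathrm{high}}]$ at which $|P_n|<\delta$, and $\delta$-transversality then yields $|P_n'(\xi)|\geq \delta$ for every $\xi$ between $\tilde\gamma$ and $\gamma$. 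Applying MVT gives
\[
|\tilde\gamma-\gamma|\;\leq\;\frac{|P_n(\tilde\gamma)|+|P_n(\gamma)|}{\delta}\;\leq\;\frac{1}{\delta}\Bigl(\gamma^n+\tfrac{\gamma^{n+1}}{1-\gamma}\Bigr)\;=\;\frac{\gamma^n}{\delta(1-\gamma)},
\]
which is exactly the claimed bound with $\tilde C=\frac{1}{\delta(1-\gamma)}$.

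I do not expect any serious obstacle: the only subtlety is ensuring that the transversality hypothesis is invoked at a point where $P_n$ is genuinely small, and this is guaranteed by the choice of $N$ making $\gamma^{N+1}/(1-\gamma)\leq \epsilon\delta$. The sign bookkeeping when $b_0=-1$ is handled uniformly by replacing $P_n$ with $-P_n$ and reversing the sign conventions in the transversality statement.
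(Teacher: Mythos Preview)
Your proposal is correct and follows the paper's own argument almost verbatim: the paper presents Theorem~\ref{mybigtheorem} as an immediate consequence of Theorem~\ref{mytheorem} together with the discussion surrounding~\eqref{search}, and your two-case analysis for part~(a) reproduces the proof of Theorem~\ref{mytheorem} exactly. The only difference is in part~(b): the paper passes through the root $\gamma_n$ via the triangle inequality $|\tilde\gamma-\gamma|\leq|\tilde\gamma-\gamma_n|+|\gamma_n-\gamma|$ and bounds each piece separately (yielding $C_1+\tfrac{1}{\delta}=\tfrac{1}{\delta(1-\gamma)}$), whereas you compare $\tilde\gamma$ and $\gamma$ directly by a single application of the mean value theorem; both routes use the same transversality input and arrive at the identical constant.
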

This theorem applies to beta encoders, corresponding to implementing the recursion $\eqref{iter}$ with flaky quantizer $Q^{\nu}$ defined by $\eqref{(fqs)}$, and with $\gamma = \beta^{-1}$ known a priori to be contained in an interval $[\gamma_{low}, \gamma_{high}]$.   If $\beta \geq 1.54059$, or $\gamma_{high} \leq .6491$, then we can recover $\gamma = \beta^{-1}$ from either a bitstream corresponding to 0, or a pair of bitstreams $(x,-x)$, using Theorem $\ref{mybigtheorem}$.  Of course we should not consider only the scheme $\eqref{iter}$, but rather a revised scheme which accounts for integrator leak on the (single) integrator used in the beta-encoder implementation.
% (see Figure ~\ref{fig:fig1}).   
  The revised beta-encoding scheme, with slightly different initial conditions, becomes
\begin{eqnarray}
u_1 &=& x \nonumber \\
b_1 &=& Q(u_1) \nonumber \\
\textrm{for $j \geq 1$ }: u_{j+1} &=& \lambda \beta(u_j - b_j) \nonumber \\
b_{j+1} &=& Q(u_{j+1}) 
\label{iterlambda}
\end{eqnarray}
where $\lambda$ is an unknown parameter in $[.9,1]$.  As long as $\tilde{\beta} = \lambda \beta > 1$, we still have that $|x - \sum_{i=0}^{N} b_{i+1} \gamma^{i}| \leq C_{\tilde{\beta}} \tilde{\beta}^{-N}$ where $C_{\tilde{\beta}} = \frac{1}{\tilde{\beta}-1}$; furthermore, we can use Theorem $\ref{mybigtheorem}$ to recover $\tilde{\beta} = \lambda \beta$ in $\eqref{iterlambda}$, although the specific values of $\lambda$ and $\beta$ cannot be distinguished, just as the specific values of $\lambda_1$ and $\lambda_2$ in the expression for $\gamma$ could not be distinguished in GRE.

\begin{figure*}[htbp]
    \mbox{
      \subfigure[]{ \includegraphics[width=2.5in]{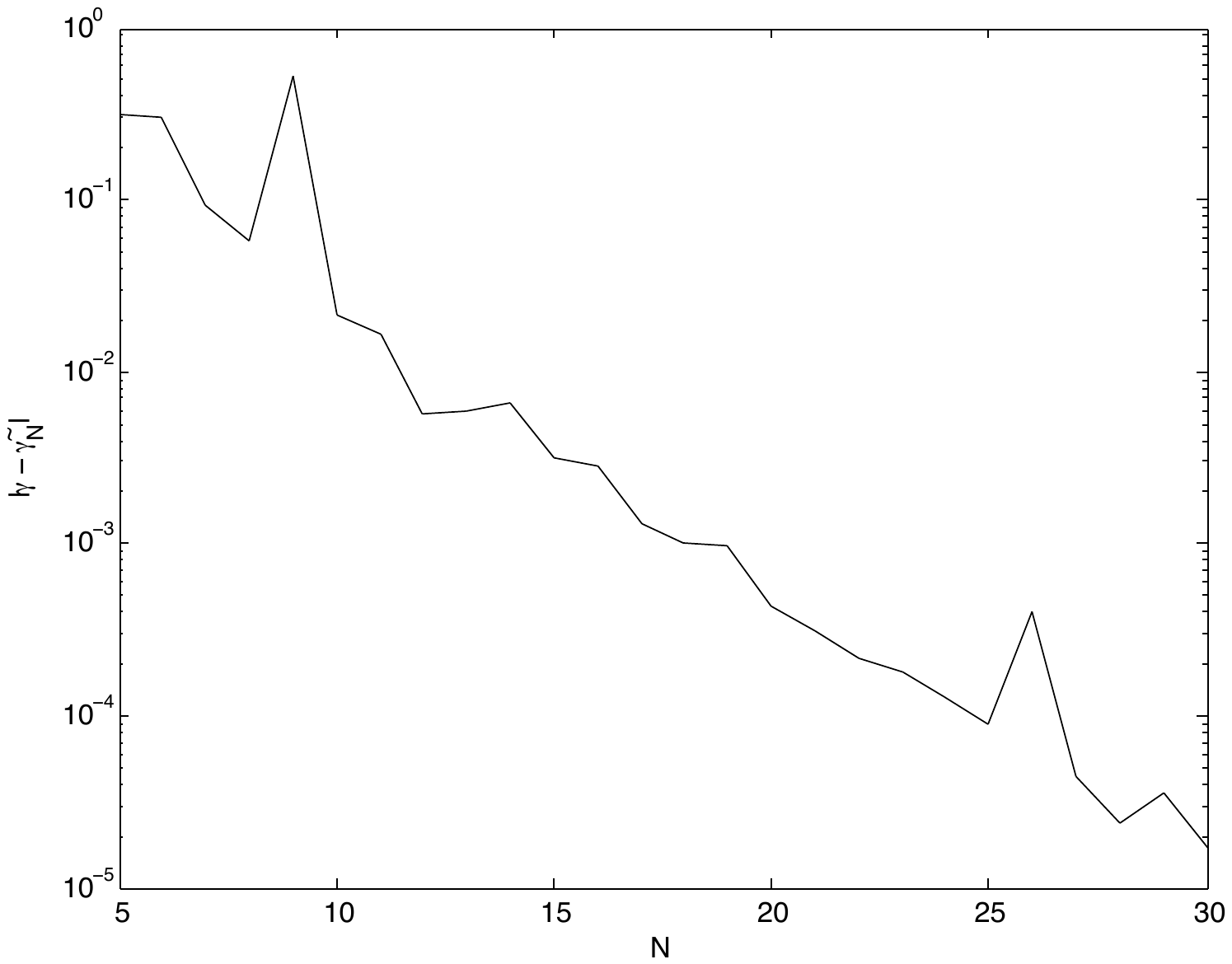}}
      \subfigure[]{\includegraphics[width=2.5in]{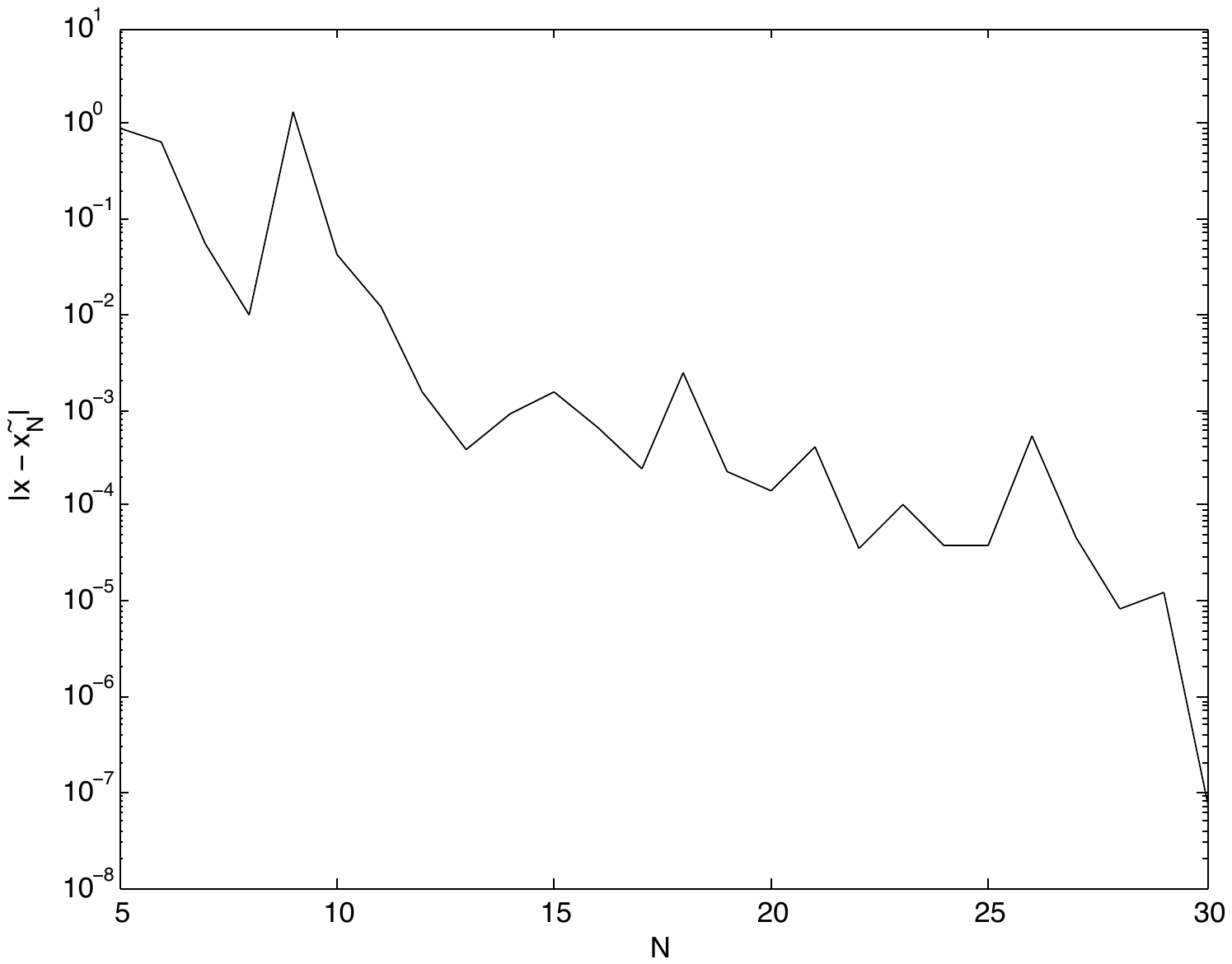}} 
      }
    \caption{(a) $N$ versus $ | \gamma - \tilde{\gamma}_N | $, where $\tilde{\gamma}_N$ is obtained via a 10 step Newton Method approximation to the first root of the polynomial $\bar{P}_N$ starting from $x_0 = .618$.  For each $N$, $| \gamma - \tilde{\gamma}_N |$ is the worst-case error among 100 experiments corresponding to different $(x_j,-x_j)$ pairs and different $\gamma_j$ chosen randomly from $[.618,.7]$.  (b) $N$ versus $ | x - \tilde{x}_N | $, where $\tilde{x}_N$ is reconstructed from $\tilde{\gamma}_N$ using the first $N$ bits.  The quantizer used in this experiment is $Q_{\alpha}^{\nu}$ with $\nu = .3$ and $\alpha$ chosen randomly in the interval $[1.7,2]$.}
 \label{fig:fig2}
\end{figure*}

\subsubsection{Remarks}
Figure ~\ref{fig:fig2} suggests that the first positive roots of the $P_n$ serve as exponentially precise approximations to $\gamma$ for values of $\gamma$ greater than $.6491$; 
Figure ~\ref{fig:fig4} suggests that the first positive root of $P_n$ will approximate values of $\gamma$ up to $\gamma = .75$.  Furthermore, these figures suggest that the constants $C_1$ and $C_2$ of $\eqref{search}$ in the exponential convergence of these roots to $\gamma$ can be made much sharper, even for larger values of $\gamma$. This should not be surprising, considering that nowhere in the analysis of the previous section did we exploit the specific structure of beta-expansions obtained via the particular recursions $\eqref{iterlambda}$ and $\eqref{(simple recurs)}$, such as the fact that such sequences $(b_n)$ cannot contain infinite strings of successive 1's or -1's.  It is precisely power series with such infinite strings that are the `extremal cases' which force the bound of $.6491$ in Theorem $\ref{trans}$.  It is difficult to provide more refined estimates for the constants $C_1$ and $C_2$ of $\eqref{search}$ in general, but in the idealized setting where beta-expansions of $0$ are available via the ideal GRE scheme $\eqref{(simple recurs)}$ without leakage,  or via the beta-encoding $\eqref{(simple recurs)}$ with $\beta = \phi$, the beta-expansions of $0$ have a very special structure:
\begin{proposition}
Consider the ideal GRE recursion $\eqref{(simple recurs)}$ with input $u_0 = u_1 = 0$ and $Q= Q_{\alpha}^{\nu}$, or the beta-encoder recursion $\eqref{iter}$ with $\beta = \phi$, $u_0 = 0$, and $Q = Q^{\nu}$.   As long as $\alpha >  \nu$ in $\eqref{(simple recurs)}$, or $\nu \leq 1$ in $\eqref{iter}$, then for each  $j \in \{0,1, ... \}$, $b_{3j}$ is equal to $-1$ or $+1$, and $b_{3j + 1} = b_{3j+2} = -b_{3j} $.    
\label{period}
\end{proposition}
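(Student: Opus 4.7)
The plan is to verify the proposition by tracing three steps of each recursion starting from the zero state, checking that the state returns to zero after exactly three steps, and then iterating. The two cases (ideal GRE and the $\beta = \phi$ $\beta$-encoder) are parallel, so I would carry out the GRE case first and then note the simple modifications for the $\beta$-encoder.

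For the ideal GRE recursion $\eqref{(simple recurs)}$ with $(u_0,u_1) = (0,0)$, the quantizer input at step $0$ is $u_0 + \alpha u_1 = 0$, which lies in the flaky interval $[-\nu,\nu]$, so $b_0 \in \{-1,+1\}$ is freely chosen. Fix $b_0 = 1$ without loss of generality (the $b_0 = -1$ case follows by sign symmetry of the recursion). I would then compute in turn: $u_2 = u_0 + u_1 - b_0 = -1$; the quantizer input $u_1 + \alpha u_2 = -\alpha$ satisfies $-\alpha < -\nu$ by the hypothesis $\alpha > \nu$, forcing $b_1 = -1$. Next, $u_3 = u_1 + u_2 - b_1 = 0$, and the quantizer input $u_2 + \alpha u_3 = -1$ is strictly less than $-\nu$ (using $\nu < 1$), forcing $b_2 = -1$. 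Finally, $u_4 = u_2 + u_3 - b_2 = 0$, so the state $(u_3,u_4) = (0,0)$ and the cycle restarts with $b_3$ in the role of $b_0$. Induction on $j$ finishes the GRE case.

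For the $\beta$-encoder recursion $\eqref{iter}$ with $\beta = \phi$ and input $x = 0$, the structure is identical. The key simplification is $u_3 = \phi(u_2 - b_2) = \phi(-\phi - (-1)) = \phi(1 - \phi) = \phi - \phi^2 = -1$, using the golden ratio identity $\phi^2 = \phi + 1$; this is the algebraic fact that closes the three-step cycle, mirroring the identity $1 + \phi - \phi^2 = 0$ exploited in the proof of Theorem $\ref{expand}$. After one more step $u_4 = \phi(u_3 - b_3) = 0$, and the argument iterates.

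The only subtlety worth flagging is that every quantizer decision intended to be forced really is forced, i.e., the intermediate inputs $-\alpha$ (after step $0$) and $-1$ (after step $1$) must lie strictly outside $[-\nu,\nu]$. The first is immediate from $\alpha > \nu$; the second requires $\nu < 1$, which for GRE is implicit since the admissible range for $(\alpha,\nu)$ given by Theorem $\ref{range}$ forces $\nu \leq 0.337$, and for the $\beta$-encoder is exactly the stated hypothesis $\nu \leq 1$ (the boundary case $\nu = 1$ being handled by the flaky-choice convention, with the prescribed pattern being at least one admissible output). There is no deeper obstruction: the content of the proposition is really that the zero-state is a period-$3$ orbit of the dynamical system, made possible precisely by the golden ratio identity.
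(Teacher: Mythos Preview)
Your proposal is correct and is exactly the kind of direct verification the paper has in mind; the paper itself says only ``The proof is straightforward, and we omit the details,'' so there is nothing further to compare. Your care in flagging that the GRE step~2 needs $\nu<1$ (not literally implied by $\alpha>\nu$ alone, but implicit in the admissible parameter range of Theorem~\ref{range}) is a nice observation that the paper glosses over.
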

The proof is straightforward, and we omit the details.
\\
\\
Proposition $\ref{period}$ can be used to prove directly that $\gamma$ must be the first positive root of the polynomials $P_N(t) = b_1 + \sum_{j=1}^{3N} b_{j+1} t^j$, when $\gamma = \phi^{-1}$ in either $\eqref{(simple recurs)}$ or $\eqref{iter}$.  Indeed, $P_N$ can be factored as follows:
\begin{eqnarray}
P_N(t)  &=& \sum_{j=0}^{N} (b_{3j+1} - b_{3j+1} t - b_{3j+1} t^2) t^{3j} \nonumber \\
        &=& (1 - t - t^2) \sum_{j=1}^{N} b_{3j+1} t^{3j} \nonumber \\
        &=& (1 - t - t^2) R_N(t)
\end{eqnarray}
where $R_N$ is a polynomial with random coefficients of the form $\sum_{j=0}^{N} \pm t^{3j}$.
\\
\\
$P_N(t) = (1 - t - t^2)R_N(t)$ clearly has a root at $t = \phi^{-1}$, and this root must be the only root of $P_N$ on $[0, \phi^{-1})$, since on this interval $R_N$ is bounded away from $0$ by $|R_N(t)| \geq 1 - \frac{t^3}{1 - t^3} \geq 1 - \frac{\phi^{-3}}{1 - \phi^{-3}} \approx .691$.  We can also obtain a lower bound on $|P_N'(\phi^{-1})| = -(1 + 2\phi^{-1})R_N(\phi^{-1})$ by $|P_N'(\phi^{-1})| \geq |1 + 2\phi^{-1}| |1 -  \frac{\phi^{-3}}{1 - \phi^{-3}}| \geq 1.545$.  Note that this bound holds also for the infinite sum $F(t) = b_1 + \sum_{j=1}^{\infty} b_{j+1} t^j$, and that the bound of $|F'(\gamma)| \geq 1.545$ is much sharper than the bound on $|F'(\gamma)|$ given by Theorem $\ref{trans}$; e.g., $\delta_{.63} = .07$, and $\delta_{.6491} = .00008$ (see $\cite{solom96}$).  Similar bounds on the derivatives $|P_N'(\gamma)|$ and $|F_N'(\gamma)|$ corresponding to beta-expansions of $0$ in a base $\gamma$ close to $\phi^{-1}$ should hold, leading to sharper estimates on the constants $C_1$ and $C_2$ of $\eqref{search}$ in the case where beta-expansions of $0$ are available.    

\subsection{Closing remarks}
We have shown that golden ratio encoders are robust with respect to leakage in the delay elements of their circuit implementation.  Although such leakage may change the base $\gamma$ in the reconstruction formula $|y - \sum_{j=1}^{N} b_j \gamma^j| \sim  O(\gamma^N)$ , we have shown that exponentially precise approximations $\tilde{\gamma}$ to $\gamma$ can be obtained from the bitstreams of a pair $(x,-x)$, and such approximations $\tilde{\gamma}$ are sufficient to reconstruct subsequent input $y$ by $|y - \sum_{j=1}^{N} b_j \tilde{\gamma}^j| \sim  O(\gamma^N)$.   
\\
\\
Our method can be extended to recover the base $\beta$ in general beta-encoders, as long as $\beta$ is known a priori to be sufficiently large; e.g. $\beta \geq 1.54$.   This method is similar to the method proposed in $\cite{DDGV06}$ for recovering $\beta$ in beta-encoders when $\{0,1\}$-quantizers are used, except that our method does not require a fixed reference level, which is difficult to measure with high precision in practice.

\begin{figure*}[htbp]
\mbox{
\subfigure[]{\includegraphics[width=2in]{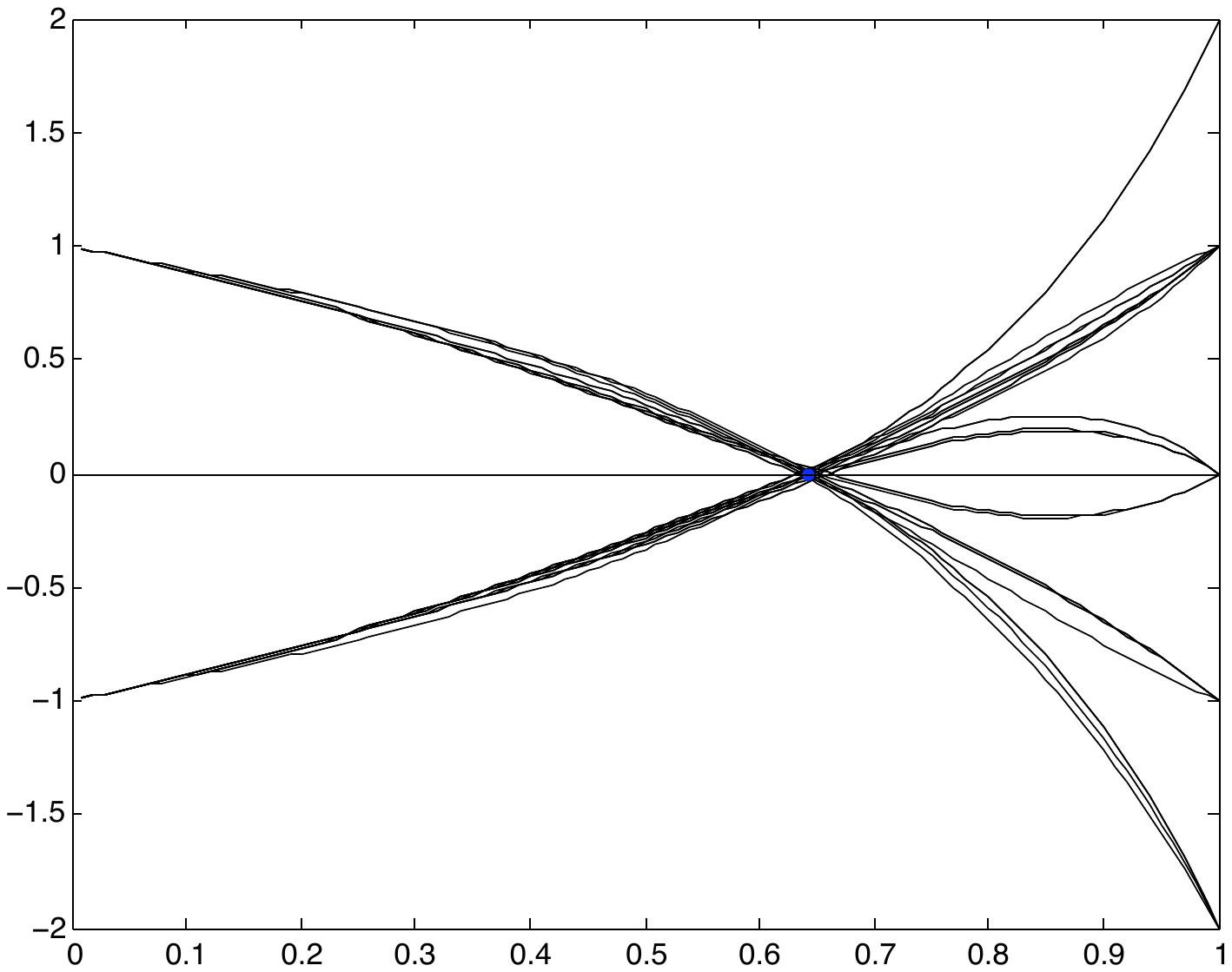}}
\subfigure[]{\includegraphics[width=2in]{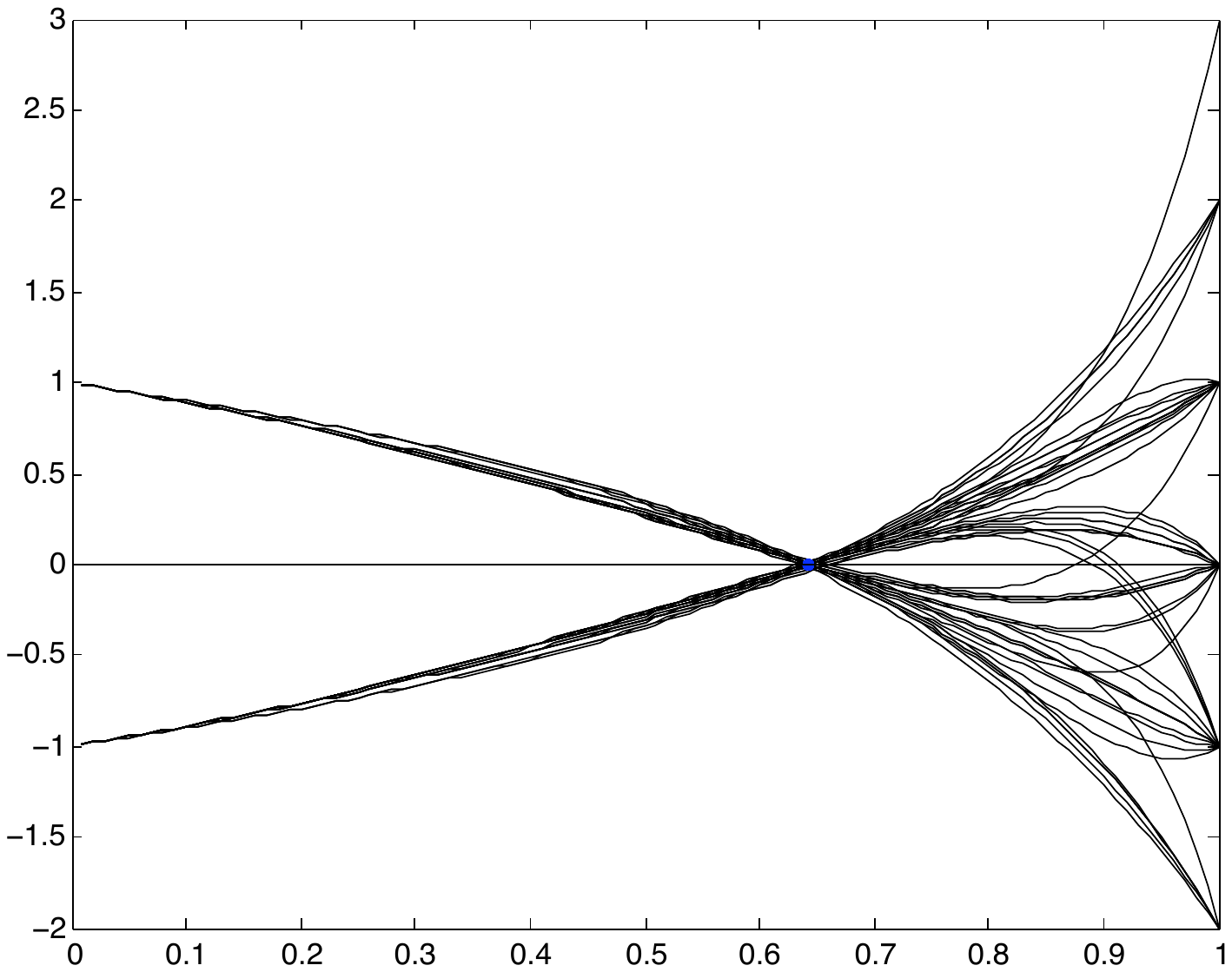}} 
\subfigure[]{\includegraphics[width=2in]{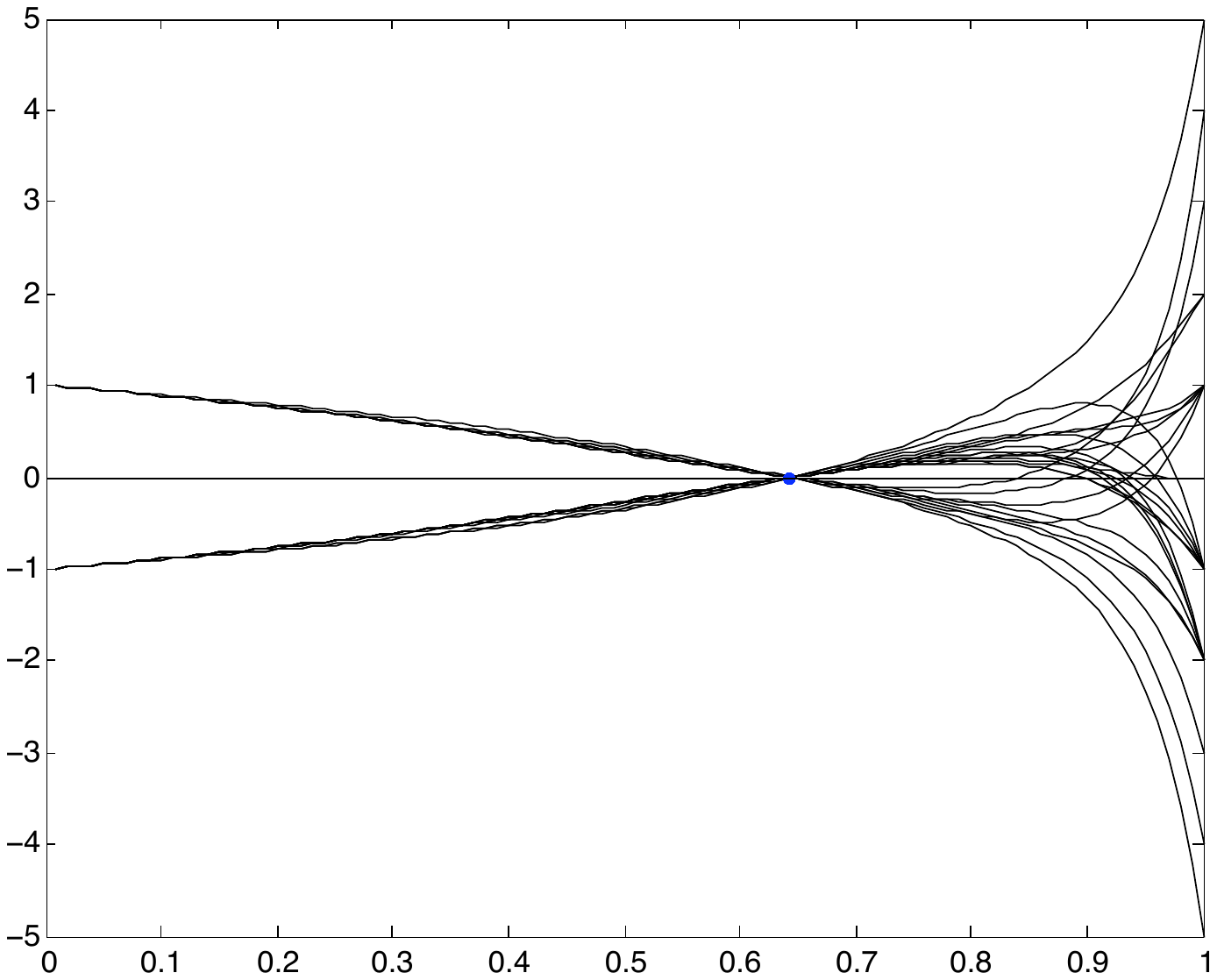}}
}
\caption{The graphs of (a) $\bar{P}_{8}$, (b) $\bar{P}_{16}$, and (c) $\bar{P}_{32}$ for several pairs $(x_j,-x_j)$ corresponding to $\gamma = .64575$.  These polynomials can have several roots on the unit interval, but the first of these roots must become exponentially close to $\gamma$ as $N$ increases.    The quantizer used is $Q_{\alpha}^{\nu}$ with parameter values $\nu = .3$, and $\alpha = 2$.}
\label{fig:fig3}
\end{figure*}

\begin{figure*}[htbp]
\mbox{
\subfigure[]{\includegraphics[width=2in]{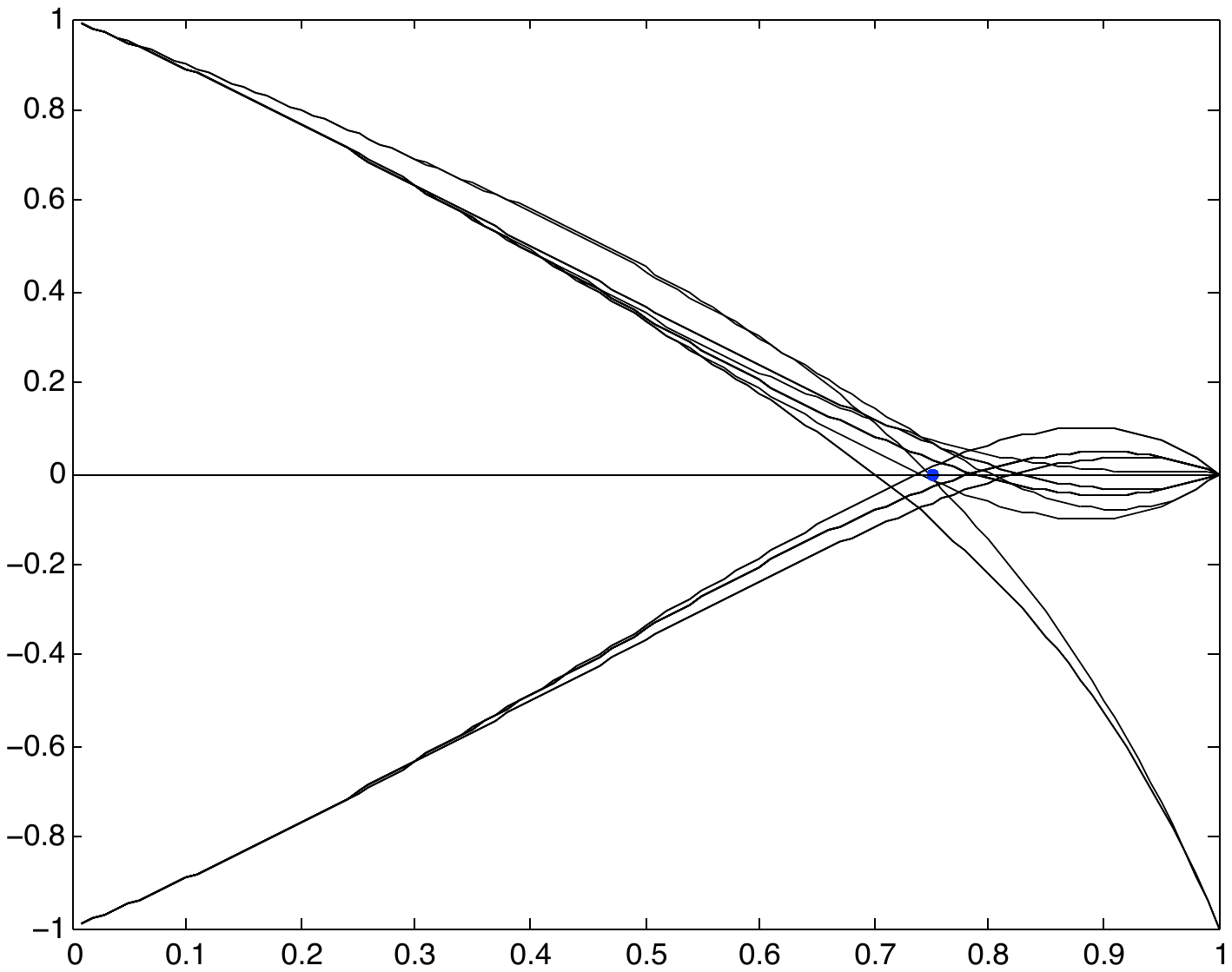}}
\subfigure[]{\includegraphics[width=2in]{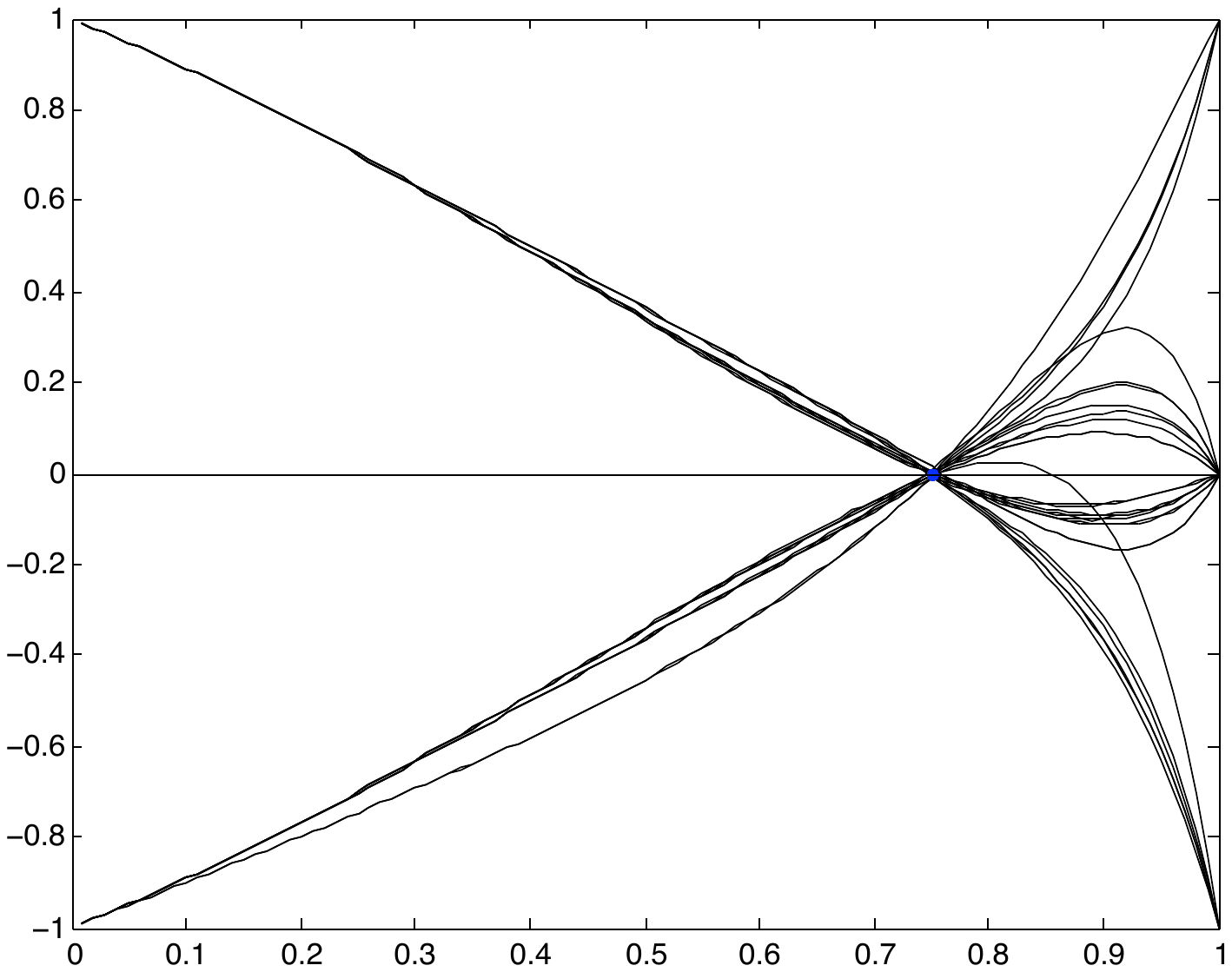}} 
\subfigure[]{\includegraphics[width=2in]{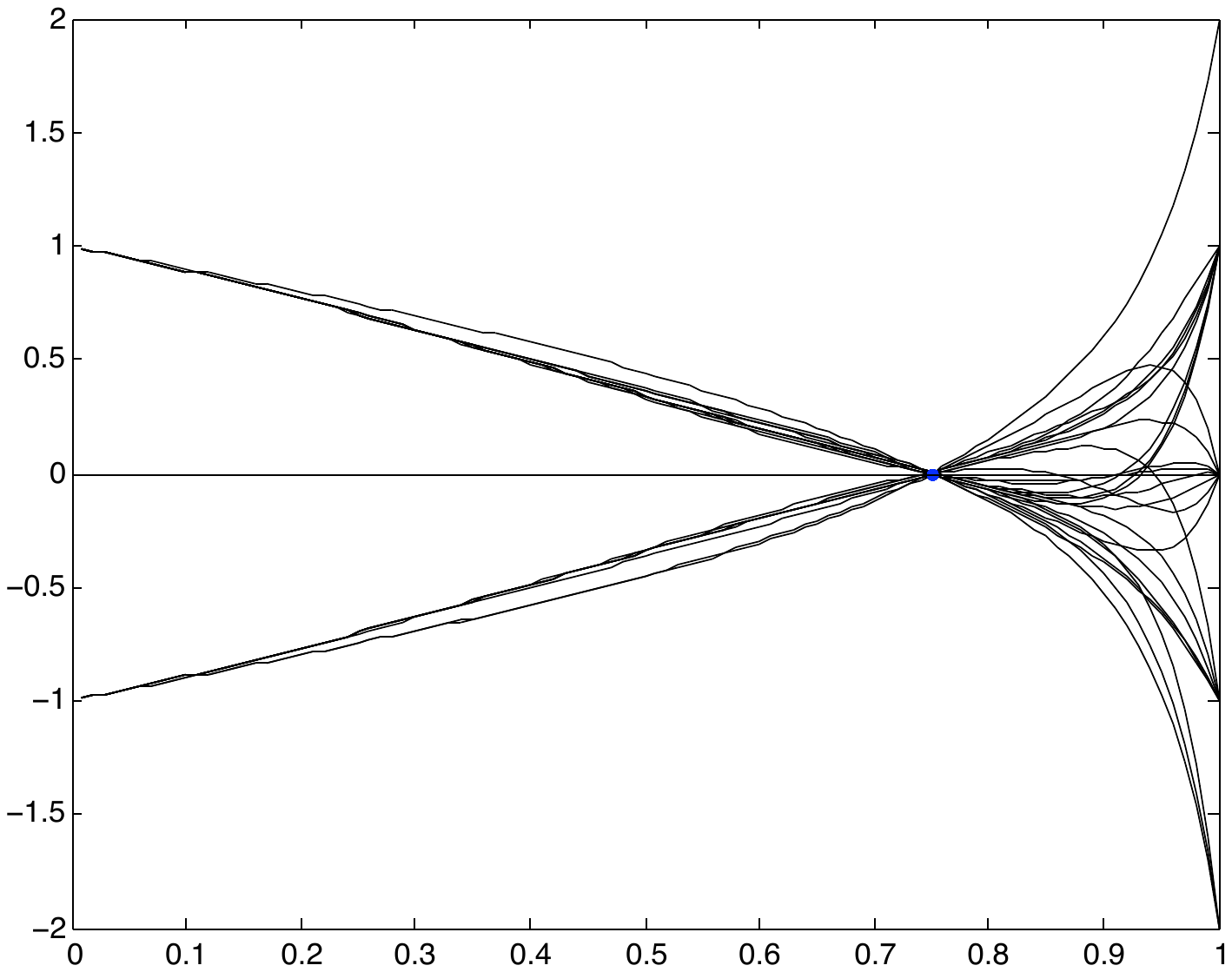}}
}
\caption{The graphs of (a) $\bar{P}_{8}$, (b) $\bar{P}_{16}$, and (c) $\bar{P}_{32}$ for several pairs $(x_j,-x_j)$ corresponding to $\gamma = .75$.  These graphs suggest that the first positive root of these polynomials becomes exponentially close to $\gamma$ as $N$ increases, although we do not have a proof of this result for values of $\gamma$ greater than approximately $.65$.   The quantizer used is $Q_{\alpha}^{\nu}$ with parameter values $\nu = .3$, and $\alpha = 2$.}
\label{fig:fig4}
\end{figure*}

\subsection{Appendix: proof of Theorem $\ref{range}$}

\begin{figure*}[htbp]
\begin{center}
\includegraphics[width=4in]{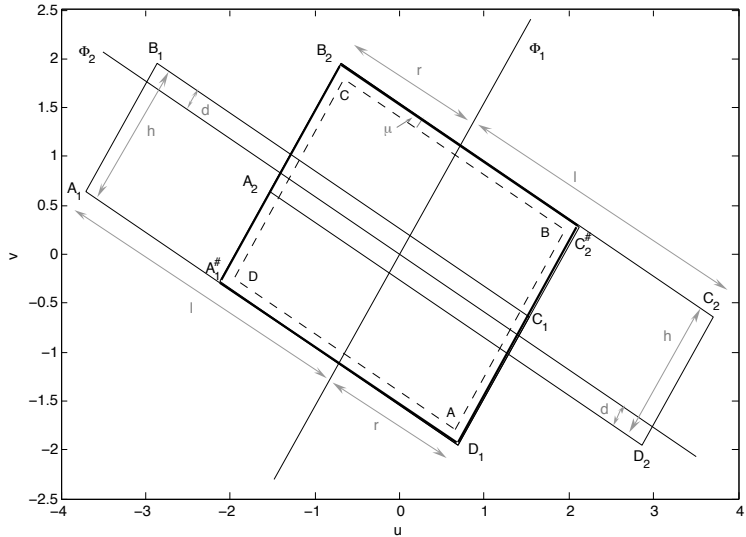}
\caption{The rectangle $R = A_1^{\#} B_2 C_2^{\#} D_1$ (bold outline) is a positively invariant set for the map $T_{\alpha}^{\nu}$.  In fact, $T_{\alpha}^{\nu} (R) $ is contained in the smaller rectangle $ABCD$ (dashed outline), illustrating that the revised GRE scheme $\eqref{(lambda recurs)}$ is robust with respect to small additive errors.  Here, $\Phi_1$ and $\Phi_2$ are the eigenvectors of the matrix in A in $\eqref{matrix}$ below.  In this figure, $\lambda_2 = .6$ and $\mu$ is taken to be $.0625$. The length parameters $l, r, h,$ and $d$ will be discussed later.}
\label{fig:fig5}
\end{center}
\end{figure*}

\noindent In this section we prove Theorem $\ref{range}$, which provides a range within which the "flakiness" parameter $\nu$ and the "amplifier" parameter $\alpha$ in the quantizer $Q_{\alpha}^{\nu}(u,v)$ can vary from iteration to iteration, without changing the fact that the sequences $(u_n)$ produced by the scheme $\eqref{(lambda recurs)}$ will be bounded.  The derived range for $\nu$ and $\alpha$ is independent of the specific values of the parameters $(\lambda_1, \lambda_2) \in [.9,1]^2$ in $\eqref{(lambda recurs)}$.  
\\
\\
The techniques of this section are borrowed in large part from those used in $\cite{GREsubmit}$ to prove a similar result for the ideal GRE scheme $\eqref{(simple recurs)}$; i.e., taking $\lambda_1 = \lambda_2 = 1$ in $\eqref{(T_Q)}$.  As is done in $\cite{GREsubmit}$, we first observe that the recursion formula $\eqref{(lambda recurs)}$ corresponding to the leaky GRE  is equivalent to the following piecewise affine discrete dynamical system on $\mathbb{R}^2$ :

\begin{equation}
		\left[\begin{array}{cl}
	u_{n+1} \\
	u_{n+2}
	   \end{array}\right]
           =T_{\alpha}^{\nu}\left[\begin{array}{cl}
	u_n \\
	u_{n+1}
	   \end{array}\right]
           \label{(DS)}
\end{equation}

where

\begin{equation}
	T_{\alpha}^{\nu}:	\left[\begin{array}{cl}
	u \\
	v
	   \end{array}\right]
           \rightarrow       \left[\begin{array}{cc}
	0 & 1 \\
	\lambda_1 \lambda_2 & \lambda_1
	   \end{array}\right] 
           \left[\begin{array}{cl}
	u \\
	v
	   \end{array}\right] 
           - Q_{\tilde{\alpha}}^{\tilde{\nu}}(u,v) \left[\begin{array}{cl}
	0 \\
	1
	   \end{array}\right]
             \label{(T_Q)}
 \end{equation}
where $\tilde{\alpha} = \frac{\alpha}{\lambda_2}$ and $\tilde{\nu} = \frac{\nu}{\lambda_1 \lambda_2}$.  We will construct a class of subsets $R = R(\mu) = R(\lambda_1,\lambda_2,\mu)$ of $\mathbb{R}^2$ for which $T_{\alpha}^{\nu}(R) + B_{\mu}(0) \subset R$, where $B_{\mu}(0)$ is the disk of radius $\mu$ centered at the origin; i.e. $B_{\mu}(0) = \{ (u,v): u^2 + v^2 \leq \mu^2 \}$.  Note that these sets are not only { \it positively invariant sets } of the map $T_{\alpha}^{\nu}$, but have the additional property that if $(u_0, v_0) \in R(\mu)$, the image $(u_{n+1}, v_{n+1}) = T_{\alpha}^{\nu} (u_n, v_n)$ may be perturbed at any time within a radius of $\mu$ (for example, by additive noise), and the the resulting sequence $(u_n)_{n=0}$ will still remain bounded within $R(\mu)$ for all time $n$.  
\\
\\
We refer the reader to Figure ~\ref{fig:fig5} as we detail the construction of the sets $R(\mu)$.   Rectangles $A_1 B_1 C_1 D_1$ and $A_2 B_2 C_2 D_2$ in Figure ~\ref{fig:fig5} are designed so that their respective images under the affine maps $T_1$ and $T_2$ (defined below) are both equal to the dashed rectangle $ABCD$.  
\begin{eqnarray}
	T_1: \left[\begin{array}{cl}
	u \\
	v
	   \end{array}\right]   \rightarrow   \left[\begin{array}{cc}
	0 & 1 \\
	\lambda_1 \lambda_2 & \lambda_1
	   \end{array}\right] 
           \left[\begin{array}{cl}
	u \\
	v
	   \end{array}\right] 
           - \left[\begin{array}{cl}
	0 \\
	1
	   \end{array}\right],  \nonumber \\
          T_2 : \left[\begin{array}{cl}
	u \\
	v
	   \end{array}\right]   \rightarrow   \left[\begin{array}{cc}
	0 & 1 \\
	\lambda_1 \lambda_2 & \lambda_1
	   \end{array}\right] 
           \left[\begin{array}{cl}
	u \\
	v
	   \end{array}\right] 
           + \left[\begin{array}{cl}
	0 \\
	1
	   \end{array}\right].
	   \label{T1}
	   \end{eqnarray}     
That is, rectangles $A_1 B_1 C_1 D_1$ and $A_2 B_2 C_2 D_2$ will satsify $T_1(A_1 B_1 C_1 D_1) = ABCD = T_2(A_2 B_2 C_2 D_2)$. More specifically, $T_1(A_1) = T_2(A_2) = A$, $T_1(B_1) = T_2(B_2) = B$, and so on.  Since the rectangle $R = A_1^{\#} B_2 C_2^{\#} D_1$ is contained within the union of $A_1 B_1 C_1 D_1$ and $A_2 B_2 C_2 D_2$, $R$ is a positively invariant set for any map $T(u,v)$ satisfying
\begin{equation}
T(u,v) =  \left\{\begin{array}{cl}
	T_1(u,v),  & (u,v)  \in R \setminus A_2 B_2 C_2 D_2   \\
	T_2(u,v),   &(u,v) \in R  \setminus A_1 B_1 C_1 D_1 \\
	T_1(u,v) \textrm{  } or \textrm{  } T_2(u,v), &(u,v) \in H \\
	   \end{array}\right.
	   \label{admin}
\end{equation}  
where $H =  A_1 B_1 C_1 D_1 \cap A_2 B_2 C_2 D_2$.  In particular, if the parameters $\alpha$ and $\nu$ in the map $T_{\alpha}^{\nu}$ are chosen such that the intersection of $R$ and the strip $F = \{(u,v): -\tilde{\nu} < u +\tilde{\alpha}v < \tilde{\nu} \}$ is a subset of $H$ (recall $\tilde{\alpha} = \alpha/\lambda_2$ and $\tilde{\nu} = \nu/(\lambda_1 \lambda_2)$, then $T_{\alpha}^{\nu}$ is of the form $\eqref{admin}$.  Indeed, $F$ is the region of the plane in which the quantizer $Q_{\tilde{\alpha}}^{\tilde{\nu}}(u,v)$ operates in flaky mode. This geometric setup is clarified with a figure, provided in Figure ~\ref{fig:fig6}.

\begin{figure}[htbp]
\begin{center}
\includegraphics[width=3in]{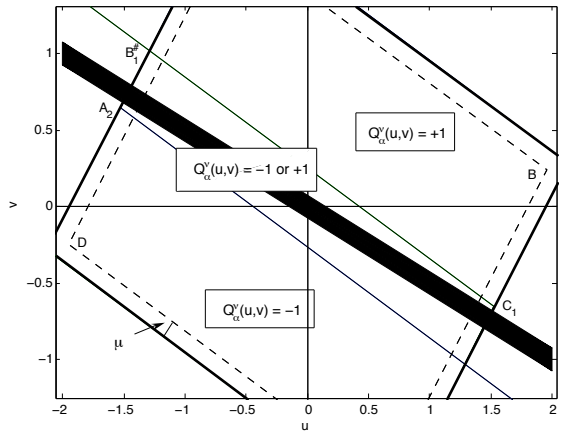}
\end{center}
\label{fig:fig6}
\caption{ {\it The flaky quantizer $Q_{\tilde{\alpha}}^{\tilde{\nu}}(u,v)$ outputs either $-1$ or $+1$ when the input $(u,v)$ belongs to the strip $F = \{(u,v): \tilde{\nu} < u + \tilde{\alpha} v < \tilde{\nu} \}$ (shaded above).  Here $\mu = .0625$, $\tilde{\alpha} = 2$, and $\tilde{\nu} = .15$.  For these parameter values, the intersection of $F$ and $R$ is a subset of  $A_1 B_1 C_1 D_1 \cap A_2 B_2 C_2 D_2$, meaning that the scheme $\eqref{(lambda recurs)}$ produces bounded sequences $(u_n)$, when implemented with this quantizer, and with input $(u_0,u_1) \in R$.}}
\end{figure}
\vspace{5mm}
\noindent It remains to verify the existence of at least one solution to the setup in Figure ~\ref{fig:fig5} for each $(\lambda_1,\lambda_2) \in [.9,1]^2$.  This can be done, following the lead of $\cite{GREsubmit}$, in terms of the parameters defined in Figure ~\ref{fig:fig5}.
\\
\\
Note that the matrix 
\begin{equation}
A =  \left[\begin{array}{cc}
	0 & 1 \\
	\lambda_1 \lambda_2 & \lambda_1
	   \end{array}\right]
	   \label{matrix}
\end{equation}
in $\eqref{T1}$ has as eigenvalues $\epsilon_1 = \frac{\lambda_1 + \sqrt{\lambda_1^2 + 4\lambda_1\lambda_2}}{2}$ and $- \epsilon_2  = -\big( \frac{- \lambda_1 + \sqrt{\lambda_1^2 - 4\lambda_1\lambda_2}}{2} \big)$.   In particular, when $\lambda_1 = \lambda_2 = 1$, we have that $\epsilon_1 = \phi \approx 1.618$ and $\epsilon_2 = \phi^{-1} \approx .618$.  The eigenvalues $\epsilon_1$ and $-\epsilon_2$ have respective normalized eigenvectors 
\begin{center}
$\Phi_1 = s_1^{-1}\left[\begin{array}{cl}
	1 \\
	\epsilon_1
	   \end{array}\right]$ 
	   and $\Phi_2 = - s_2^{-1} \left[\begin{array}{cl}
	1 \\
	\epsilon_2
	   \end{array}\right]$,
\end{center}
where $s_1 = \sqrt{1 + \epsilon_1^2}$ and $s_2 = \sqrt{1 + \epsilon_2^2}$.   It follows that the affine map $T_1$ acts as an expansion by a factor of $\epsilon_1$ along $\Phi_1$ and a reflection followed by contraction by a factor of $\epsilon_2$ along $\Phi_2$, followed by a vertical translation of +1.  $T_2$ is the same as $T_1$ except with a vertical translation of -1 instead of +1.  After some straightforward algebraic calculations, the mapping relations described above imply that the parameters in Figure ~\ref{fig:fig5} are given by
\begin{eqnarray}
h  &=& \frac{2\mu}{1-\epsilon_1} + \frac{2s_1}{\epsilon_1(\epsilon_1-1)(\epsilon_1 + \epsilon_2)} \nonumber \\
d  &=& \frac{\mu}{1-\epsilon_1} + \frac{s_1(2 - \epsilon_1)}{\epsilon_1(\epsilon_1-1)(\epsilon_1 + \epsilon_2)} \nonumber \\
l  &=& \frac{\mu}{1 - \epsilon_2} + \frac{s_2(2 - \epsilon_2)}{\epsilon_2(1 - \epsilon_2)(\epsilon_1 + \epsilon_2)} \nonumber \\
r &=& \frac{\mu}{1 - \epsilon_2} + \frac{s_2}{(1-\epsilon_2)(\epsilon_1 + \epsilon_2)}
\label{points}
\end{eqnarray}
The existence of the overlapping region $H = A_1 B_1 C_1 D_1 \cap A_2 B_2 C_2 D_2$ is equivalent to the condition $d > 0$ which in turn is equivalent to the condition $\mu < \frac{s_1(2-\epsilon_1)}{\epsilon_1(\epsilon_1 + \epsilon_2)}$.  This expression is minimized over the range $(\lambda_1,\lambda_2) \in V = [.9,1]^2 $ when $\lambda_1 = \lambda_2 = 1$, in which case this constraint simplifies to $\mu < \frac{2 - \phi}{\sqrt{\phi^2+1}} \approx .2008$. 
\\
\\
We are interested primarily in quantizing numbers in $[-1,1]$, so that this is the range of interest for the input $u_1 = x$; for $u_0$ we simply take $0$.   The set $\{0\} \times [-1,1] \subset  \bigcap_{(\lambda_1,\lambda_2) \in V} R(\lambda_1,\lambda_2,\mu)$ is equivalent to the condition that the line  $v = mu + b$ which passes through the points $B$ and $C$ in Figure ~\ref{fig:fig5} have y-intercept $b \geq 1$ for all $(\lambda_1,\lambda_2) \in V$.   This line has slope $m = -\epsilon_2$, and passes through the point $C$, so that its y-intercept is given by $b = s_1^{-1}(\epsilon_1 + \epsilon_2)(h - d)$.  Rearranging terms, this implies that $b \geq 1$ if and only if 
\begin{equation}
\mu \leq \frac{(\sqrt{1 + \epsilon_1^2})(2 - \epsilon_1)}{\epsilon_1 + \epsilon_2}.
\label{line}
\end{equation}
The right hand side of the above inequality is bounded below over the range $(\lambda_1,\lambda_2) \in V$ by $\frac{\sqrt{1 + (.9\phi)^2}(2 - \phi)}{\sqrt{5}} \approx .301$, so that indeed $\{0\} \times [-1,1] \subset R(\lambda_1,\lambda_2,\mu)$ is satisfied independent of $(\lambda_1,\lambda_2) \in V$, for all admissable $\mu \in [0,.2008] $.   
\\
\\
In practice, the "flaky" parameter $\nu$ of the quantizer $Q_{\alpha}^{\nu}$ is not known exactly; we will however be given a tolerance $\delta$ for which it is known that $|\nu| \leq \delta$.  It follows that for each $\delta$, we would like a range $(\alpha_{min}, \alpha_{max})$ for the "amplifier" $\alpha$ such that the GRE scheme $\eqref{(lambda recurs)}$, implemented with quantizer $Q_{\alpha}^{\nu}$, produces bounded sequences $(u_n)$ for all values of $(\lambda_1,\lambda_2) \in V$.  Now, it will be easier to derive all of the following results for $\tilde{\alpha}$ and $\tilde{\nu}$, and return to the original parameters $(\alpha, \nu)$ afterwards.  For a particular choice of $(\lambda_1, \lambda_2)$, it is not hard to derive an admissable range for $\tilde{\alpha}$ in terms of the eigenvalues $\epsilon_1$ and $\epsilon_2$ for fixed $\delta$.  We will take $\mu = 0$ in the following analysis for the sake of simplicity.  First of all, the tolerance $\delta$ must be admissable, i.e. the coordinate $(\delta, 0)$ should lie within the region $H$.  Indeed, for $\tilde{\nu} \in [0,\delta]$, $\tilde{\alpha}$ can vary within a small neighborhood of $\frac{1}{\epsilon_2}$, as long as the line with slope $-\frac{1}{\tilde{\alpha}}$ which passes through $(0,\delta)$ remains bounded above in the region $H$ by the line passing through $B_1$ and $C_1$.  In other words, the admissable range for $\tilde{\alpha}$ is obtained from the constraint $m_1 \leq -\frac{1}{\tilde{\alpha}} \leq m_2$, where $m_1$ is the slope  of the line through the points $\{ ( \delta, 0),  C_1 \}$ and $m_2$ is the slope of the line through the points $\{ ( \delta, 0 ), B_1^{\#} \}$ in Figure ~\ref{fig:fig6}.   Rewriting $C_1$ and $B_1^{\#}$ in terms of the eigenvalues $\epsilon_1$ and $\epsilon_2$ via the relations $\eqref{points}$, we have that

\begin{equation}
L(\epsilon_1,\epsilon_2) \leq \tilde{\alpha} \leq U(\epsilon_1,\epsilon_1 + \epsilon_2)
\label{bounds}
\end{equation}
where
\begin{eqnarray}
&&L(x,y) = N(x,y)/D(x,y) \nonumber \\
&=&  \frac{x(x - 1) - (2-x)(1 - y) + \delta x(x - 1)(x + y)(1 - y)}{x((2 - x)(1-y) + y(x-1))} \nonumber
\end{eqnarray}
and 
\begin{equation}
U(x,y) = \frac{2 + xy - 2y - \delta xy(x - 1)(1 - y + x)}{x(y - 2)}.
\end{equation}
 It is not hard to show that for any fixed $y \in [.9 \phi + .9 \phi^{-1}, \phi + \phi^{-1}] \approx [2.0124, 2.236]$, the function $U(x,y)$ increases as a function of $x$ over the range $x \in [.9\phi, \phi] \approx [1.456, 1.618]$, as long as $\tilde{\nu} \leq  .75$.  It follows that the minimum of $U(x,y)$ over the rectangle $S = [1.456, 1.618] \times [2.0124, 2.236]$ occurs along the edge corresponding to $x = 1.456$.  But $U(1.456,y)$ decreases as a function of $y$ over the interval $[2.0124, 2.236]$, so that the minimum of $U(x,y)$ over the entire rectangle $S$ occurs at $(x,y) = (1.456, 2.236)$, giving the following uniform lower bound on $\tilde{\alpha}_{max}$ for $(\lambda_1,\lambda_2) \in V$:
\begin{eqnarray}
\tilde{\alpha}_{max} &\geq& U(1.456, 2.236) \nonumber \\
                         &\approx& 2.281 - .952 \delta.
\label{max}
\end{eqnarray} 
We proceed in the same fashion to derive a uniform upper bound on $\tilde{\alpha}_{min}$ over $(\lambda_1,\lambda_2) \in V$, except that we analyze the numerator and the denominator in the expression for $L(x,y)$ separately.  The numerator $N(x,y)$ increases as a function of $x$ over the range $x \in [1.456, 1.618]$ for fixed $y \in [.9\phi^{-1}, \phi^{-1}] \approx [.5562,.618]$, so that $N(x,y)$ achieves its maximum over the rectangle $T = [1.456, 1.618]\times [.5562,.618]$ along the edge corresponding to $x = 1.618$.   $N(1.618,y)$ increases as a function of $y$ over the range $y \in [.5562,.618]$, so that $N(x,y)$ achieves its maximum over $T$ at $(x,y) = (1.618, .618)$.   A similar analysis shows that the denominator $D(x,y)$ is minimized over $T$ at $(x,y) = (1.456, .618)$.  It follows that for $(\lambda_1,\lambda_2) \in [.9,1]^2$, $\tilde{\alpha}_{min}$ is bounded above by
\begin{eqnarray}
\tilde{\alpha}_{min} &\leq& N(1.618, .618)/D(1.456, .618) \nonumber \\
                        &\approx& 1.198(1+\delta)
\label{m}
\end{eqnarray}
Finally, we note that $\delta$ is admissable for all $(\lambda_1, \lambda_2)$ if and only if $L(\epsilon_1, \epsilon_2) \leq U(\epsilon_1, \epsilon_1 + \epsilon_2)$ for this value of $\delta$ and for all coresponding $\epsilon_1$ and $\epsilon_2$.  Thus an  admissable range for $\delta$ is obtained by equating the lower bound of $2.281 - .952 \delta$ in $\eqref{max}$ and the upper bound of $1.198(1+\delta)$ in $\eqref{m}$; namely, $\delta \in [0, .5037]$.
\\
\\
In terms of the original parameters $\alpha = \lambda_2 \tilde{\alpha}$ and $\nu = \lambda_1 \lambda_2 \tilde{\nu})$,
\begin{eqnarray}
1.198(1 + \delta) &\leq& \alpha \leq 2.053 - .8568 \delta  \textrm{, and so} \nonumber \\
0 &\leq& \delta \leq .4161, \nonumber \\
\nu &=& \lambda_1 \lambda_2 \tilde{\nu} \leq \lambda_1\lambda_2 \delta \leq .337.
\end{eqnarray}
represents a uniformly admissible range for $(\alpha, \nu)$ over the interval $(\lambda_1, \lambda_2) \in [.9,1]^2$.

\chapter{Quiet quantization in analog to digital conversion}
%
%{\bf \large Quiet $\Sigma \Delta$ quantization}
\vspace{10mm}

In the last chapter, we studied robustness properties of the $\beta$-encoder, a quantization method in analog to digital conversion that is both robust with respect to quantizer imperfections like $\Sigma \Delta$ schemes, and also achieves optimal reconstruction guarantees like pulse code modulation (PCM).   While we were able to show that $\beta$-encoders are also robust with respect to other component imperfections, the $\beta$-encoder is \emph{not} robust with respect to additive noise; that is, if the discrete input $x_n$ to the $\beta$-encoder \eqref{iter} represent noisy signal samples $x_n = f_n+ \epsilon_n$, then unavoidably
\begin{equation}
\sum_{j=0}^{N-1} b_j \beta^{-j} = f_n + \epsilon_n + O(\beta^{-N}),
\label{noise_beta}
\end{equation}
and the situation is similar for the golden ratio encoder.  That is, no matter how many bits are spent per Nyquist interval, the reconstruction error of the beta-encoder and golden ratio encoder is limited by the level of noise on the samples $f_n$.   
\\
\\
\noindent On the other hand, $\Sigma \Delta$ schemes \emph{are} robust with respect to additive noise.  That is, the $O(\lambda^{-K})$ reconstruction accuracy of the $K$th order $\Sigma \Delta$ scheme is preserved under additive noise, as facilitated by the incorporation of all previous samples $f_k^{\lambda}$,  $k \leq n$, in determining the quantization output $b_n$. However, $\Sigma \Delta$ schemes suffer from an entirely different kind of instability in the context of audio signal processing, corresponding to a mechanical rather than mathematical reconstruction inaccuracy: at the onset of periodicities in the bit output $b_n$, the filters used in the reconstruction of the analog signal by $\Sigma \Delta$ quantizers are such that periodic oscillatory patterns in the quantization output $b_n$ generate low-level \emph{idle tones} that are not present in the signal.   These tones are particularly intolerable to the listener when no signal is present, such as between successive audio tracks.  As idle tone components are most prominent in low-order, 1-bit  $\Sigma \Delta$ schemes \cite{sigmadelta}, it is desirable to adapt these schemes in such a way as to eliminate the possibility of periodic behavior of the output $b_n$ for vanishing input $x_n$; we shall refer to such as \emph{quiet $\Sigma \Delta$ quantizers}, in line with the following definition:

\pagebreak

  \begin{definition}[{\bf Quiet quantization}]
A quantization scheme is said to be \emph{quiet} if the onset of identically zero input $x_n \equiv 0$ after some finite time $k$ induces constant bit output $b_n \equiv b_m$ after some later time $m \geq k$.
\label{def1}
\end{definition}

\noindent In the first order $\Sigma \Delta$ model $\eqref{1storderagain}$, which, starting from initial state $u_0$, follows the recursion
\begin{eqnarray}
b_n &=& Q(u_{n-1} + f^{\lambda}_n) \nonumber \\
u_n &=& u_{n-1} + f^{\lambda}_n - b_n,
\label{1storderagain}
\end{eqnarray} 
one can eliminate periodicities in the output $b_n$ at instantiation of small input $f_n$ by simply replacing the standard 2-level quantizer $Q(u) = \sgn(u)$ by a \emph{tri-level} quantizer $Q^{\tau}_{tri}$:
\begin{equation}
Q^{\tau}_{tri}(u) = \left\{\begin{array}{cl}
1 & u >  \tau, \\
0 & -\tau \leq u \leq \tau, \\
-1 & u < -\tau. \end{array}\right.
\label{tri-level}
\end{equation}
It is straightforward that the first-order $\Sigma \Delta$ scheme retains its first-order approximation accuracy when implemented with the tri-level quantizer $Q^{.5}_{tri}$, and is also quiet, in the sense of Definition \ref{def1}; however, first order schemes are rarely used in practice, and we would thus like a similar result for higher order schemes.  Unfortunately, the second-order $\Sigma \Delta$ scheme,
 \begin{eqnarray}
 b_n &=& Q(F(u_{n-1},v_{n-1})) \nonumber \\
u_n &=& u_{n-1} + f^{\lambda}_n - b_n \nonumber \\
v_n &=& v_{n-1} + u_n,
\label{2ndorder2}
 \end{eqnarray}
is not quiet, as shown in \cite{OYthesis}, even when implemented with tri-level quantizer $Q^{\tau}_{tri}$ and with linear rule $F_{\gamma}(u,v) = u + \gamma v$.  In fact, for most initial states $(u_0, v_0)$ at the onset of zero input $f^{\lambda}_1 = 0$, the sequence $b_n$ will become oscillatory.  An exception to this rule occurs for the initial condition $(u_0, v_0) = (0,0)$, in which case $(u_n, v_n) = (0,0)$ and $b_n = 0$ persist as long as $f_n^{\lambda} = 0$ remains zero.  That is, the origin is a fixed point of the \emph{zero-input} discrete map $M: (u_n, v_n) \rightarrow (u_{n+1}, v_{n+1})$ produced by \eqref{2ndorder2} with tri-level quantizer and zero input $f_n^{\lambda} = 0$,  but is not an \emph{attractive} fixed point.  Recall that an attractive fixed point for a discrete map $M$ is a fixed point $x_0 = M(x_0)$ having the property that for any value of $x$ in the domain that is close enough to $x_0$, the \emph{orbit} of $x$, 
\begin{center}
$(x, M(x), M \big(M(x) \big) = M^2 x, ... )$
\end{center}
converges to $x_0$. Such a fixed point is said to be a \emph{global} attracting fixed point if $M^n x \rightarrow x_0$ for any value of $x$ in the domain. In order to modify the second-order scheme \eqref{2ndorder2} so that the origin is an attractive fixed point for the corresponding zero-input map,  the state sequence $(u_n, v_n)$ must be made to \emph{decay}.   One way to induce decay is by applying a contraction $(u,v) \rightarrow (\rho u, \rho v)$ before each iteration of \eqref{2ndorder2}, resulting in the modification
 \begin{eqnarray}
 b_n &=& Q(F(\rho u_{n-1}, \rho v_{n-1})) \nonumber \\
u_n &=& \rho u_{n-1} + x_n - b_n, \nonumber \\
v_n &=& \rho v_{n-1} + u_n.
\label{2ndorder-finite}
\end{eqnarray}
This so-called \emph{finite-memory} scheme was first studied in \cite{OYthesis}.  As the accuracy of $\Sigma \Delta$ methods rely on `keeping track' of the previous input $f^{\lambda}_k, k \leq n$ through the sequence $(u_n, v_n)$, it is not clear that the finite-memory scheme, which loses a fraction of its `memory' at each step, should still be second-order.  However, as long as $1 - \rho$ is sufficiently small, second-order accuracy \emph{is} maintained, as proven in \cite{OYthesis}.  For completeness, we include this result below, starting with the analogous result for the first-order finite-memory scheme,
\begin{eqnarray}
 b_n &=& Q(\rho u_{n-1} + f_n^{\lambda}), \nonumber \\
u_n &=& \rho u_{n-1} + f^{\lambda}_n - b_n. 
\label{1storder-finite}
 \end{eqnarray}
 We note that the following result does not depend on the choice of quantizer, and requires only that the sequence $(u_n)$ be uniformly bounded.

\begin{lemma}[Yilmaz]
Suppose $\lambda > 0$, let $f \in L^2(\mathbb{R})$ be bandlimited with supp $f \subset [-\pi, \pi]$ and $\| f \|_{\infty} \leq 1$, and let $g$ be a function satisfying $\hat{g} = 1$ for $|\xi| \leq \pi$, $\hat{g} = 0$ for $|\xi| \geq \lambda \pi$, and $g \in {\cal C}^{\infty}$.   Suppose that the leakage factor $\rho \geq \rho^{\lambda} = 1 - \frac{1}{\lambda}$, and assume that the sequence $(u_n)$ generated by the finite-memory scheme \eqref{1storder-finite} is bounded.  If $(b_n^{\lambda})$ is the output of the first-order finite $\Sigma \Delta$-quantizer \eqref{1storder-finite}, then
\begin{equation}
\| f(t) - \tilde{f}(t) \|_{\infty} \leq \frac{ \| v \|_{\ell_{\infty}}}{\lambda} ( \| g' \|_{L^1} + C_g),
\end{equation}
where $C_g \leq \| g \|_{L^1} + \frac{1}{\lambda} \| g' \|_{L^1}$, and $\tilde{f}(t) = \frac{1}{\lambda} \sum b_n^{\lambda} g(t - \frac{n}{\lambda}).$
\label{leaky1}
\end{lemma}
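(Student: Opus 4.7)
The plan is to express the reconstruction error as a discrete convolution of the state sequence $(u_n)$ with a ``differenced'' version of the reconstruction kernel $g$, paralleling the standard summation-by-parts argument for the ideal ($\rho = 1$) first-order $\Sigma\Delta$ scheme. From the update rule I read off $f_n^\lambda - b_n = u_n - \rho u_{n-1}$. Because $f$ is bandlimited to $[-\pi,\pi]$ and $\hat g \equiv 1$ on that interval, the oversampled reconstruction identity $f(t) = \frac{1}{\lambda}\sum_n f_n^\lambda\, g(t - n/\lambda)$ holds, so subtracting $\tilde f$ and shifting the index of the $\rho u_{n-1}$ term gives
\begin{equation}
f(t) - \tilde f(t) \;=\; \frac{1}{\lambda}\sum_n u_n \Big[ g\big(t - \tfrac{n}{\lambda}\big) - \rho\, g\big(t - \tfrac{n+1}{\lambda}\big)\Big].
\end{equation}

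Next, I would split the kernel into a finite-difference piece and a ``leakage'' piece,
\begin{equation}
g\big(t - \tfrac{n}{\lambda}\big) - \rho\, g\big(t - \tfrac{n+1}{\lambda}\big) \;=\; \Big[g\big(t - \tfrac{n}{\lambda}\big) - g\big(t - \tfrac{n+1}{\lambda}\big)\Big] + (1-\rho)\, g\big(t - \tfrac{n+1}{\lambda}\big),
\end{equation}
so that taking $|\cdot|$ and summing separates the estimate into two independent pieces. For the first piece, the fundamental theorem of calculus gives $\big|g(t-n/\lambda) - g(t-(n+1)/\lambda)\big| \le \int_{n/\lambda}^{(n+1)/\lambda} |g'(t-s)|\,ds$, and these intervals tile $\mathbb R$, so the sum over $n$ is exactly bounded by $\|g'\|_{L^1}$. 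Pulling out $\|u\|_{\ell_\infty}/\lambda$ produces the $\|g'\|_{L^1}$ term in the claimed bound.

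For the leakage piece I would use the hypothesis $1-\rho \le 1/\lambda$ together with a Riemann-sum type estimate controlling the sampled $\ell^1$ norm of $|g|$: for any $s\in[n/\lambda,(n+1)/\lambda]$, $|g(t-n/\lambda)| \le |g(t-s)| + \int_{n/\lambda}^{(n+1)/\lambda}|g'(t-r)|\,dr$; integrating in $s$ over the same interval, multiplying by $\lambda$, and summing over $n$ yields
\begin{equation}
\sum_n \big| g\big(t-\tfrac{n}{\lambda}\big)\big| \;\le\; \lambda \|g\|_{L^1} + \|g'\|_{L^1}.
\end{equation}
Multiplying by $(1-\rho)\le 1/\lambda$ gives exactly $C_g = \|g\|_{L^1} + \frac{1}{\lambda}\|g'\|_{L^1}$, and combining with the previous step produces the stated inequality (taking the supremum in $t$ on the left, and interpreting the $\|v\|_{\ell_\infty}$ in the statement as $\|u\|_{\ell_\infty}$, since this is the only state variable in the first-order scheme).

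The argument uses only the assumed boundedness of $(u_n)$ and the smoothness/integrability of $g$; nothing about the specific form of the quantizer enters. The one delicate point is the sampled $\ell^1$ estimate: one must avoid an uncontrolled constant and make the prefactor $\lambda$ appear so that it cancels against the $(1-\rho)\le 1/\lambda$ factor. With that FTC-per-interval trick in hand, the rest is routine summation by parts and $L^1$ bookkeeping.
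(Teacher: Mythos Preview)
Your proof is correct and follows essentially the same route as the paper: write $f_n^\lambda - b_n^\lambda = u_n - \rho u_{n-1}$, shift indices to put the error in the form $\frac{1}{\lambda}\sum u_n\big[g(t-\tfrac{n}{\lambda}) - g(t-\tfrac{n+1}{\lambda})\big] + \frac{1-\rho}{\lambda}\sum u_n\, g(\cdot)$, bound the first sum by $\|g'\|_{L^1}$ via the fundamental theorem of calculus, and absorb the leakage term using $1-\rho \le 1/\lambda$. You actually supply more detail than the paper does on the Riemann-sum estimate $\frac{1}{\lambda}\sum_n |g(t-n/\lambda)| \le \|g\|_{L^1} + \frac{1}{\lambda}\|g'\|_{L^1}$ justifying $C_g$, and your observation that the $\|v\|_{\ell_\infty}$ in the statement should read $\|u\|_{\ell_\infty}$ is correct.
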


\begin{proof}
We have $u_n - \rho u_{n-1} = f_n^{\lambda} - b_n^{\lambda}$.  Therefore, 
\begin{eqnarray}
f(t) - \tilde{f}(t) &=& \frac{1}{\lambda} \Big( \sum(u_n - u_{n-1}) g(t - \frac{n}{\lambda}) + (1 - \rho)\sum u_{n-1} g(t - \frac{n}{\lambda})  \Big) \nonumber \\
&=& \frac{1}{\lambda}  \Big( \sum u_n \big(g(t - \frac{n}{\lambda}) - g(t - \frac{n+1}{\lambda}) \big) + (1 - \rho) \sum u_n g(t - \frac{n}{\lambda}) \Big). \nonumber
\end{eqnarray}
Using the bound $1 - \rho \leq \frac{1}{\lambda}$, 
\begin{eqnarray}
| f(t) - \tilde{f}(t) | &\leq& \frac{\| u \|_{\ell^{\infty}}}{\lambda} \Big( \sum |g(t - \frac{n}{\lambda}) - g(t - \frac{n+1}{\lambda})| + \frac{1}{\lambda}\sum |g(t - \frac{n}{\lambda})| \Big) \nonumber \\
&\leq&  \frac{\| u \|_{\ell^{\infty}}}{\lambda} ( \| g' \|_{L^1} + C_g ). \nonumber 
\end{eqnarray}
\end{proof}
\noindent The analogous result for the second-order scheme follows the same argument, and we state the result without proof. 
\begin{lemma}[Yilmaz]
Let $f, g$, and $\rho$ be as in Lemma \ref{leaky1}.  Assume that the sequence $(u_n, v_n)$ generated by \eqref{2ndorder-finite} is bounded.  If $(b_n^\lambda)$ is the output of the second-order leaky $\Sigma \Delta$-quantizer given in \eqref{2ndorder-finite}, then
\begin{equation}
| f(t) - \tilde{f}(t) | \leq \frac{ \| v \|_{\ell_{\infty}}}{\lambda^2} (\| g'' \|_{L^1} + 2 \| g' \|_{L^1} + 2 C_g),
\end{equation}
where $C_g$ and $\tilde{f}$ are as before.
\label{leaky2}
\end{lemma}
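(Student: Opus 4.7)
The plan is to mimic the proof of Lemma \ref{leaky1}, replacing its single summation-by-parts with two applications of the same technique. From the recursion \eqref{2ndorder-finite} I would extract the two first-order relations $u_n - \rho u_{n-1} = f_n^\lambda - b_n^\lambda$ and $v_n - \rho v_{n-1} = u_n$, and combine them into the second-order identity
$$f_n^\lambda - b_n^\lambda = v_n - 2\rho v_{n-1} + \rho^2 v_{n-2}.$$
Substituting into the reconstruction error and shifting indices (painless because $g$ decays rapidly) gives
$$f(t) - \tilde f(t) = \frac{1}{\lambda}\sum_n v_n \bigl[g_n - 2\rho\, g_{n+1} + \rho^2 g_{n+2}\bigr], \qquad g_n := g\bigl(t - \tfrac{n}{\lambda}\bigr).$$

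Next I would algebraically split the bracket so as to isolate a pure second difference of $g$ from the leakage corrections,
$$g_n - 2\rho g_{n+1} + \rho^2 g_{n+2} = (g_n - 2 g_{n+1} + g_{n+2}) + 2(1-\rho)(g_{n+1} - g_{n+2}) + (1-\rho)^2 g_{n+2},$$
and control the three resulting sums by classical Riemann-sum estimates. Writing $g_{n+1} - g_{n+2}$ as an integral of $g'$ and $g_n - 2 g_{n+1} + g_{n+2}$ as a double integral of $g''$ (and using Fubini) yields
\begin{align*}
\sum_n |g_n - 2 g_{n+1} + g_{n+2}(t)| &\leq \tfrac{1}{\lambda}\|g''\|_{L^1}, \\
\sum_n |g_{n+1}(t) - g_{n+2}(t)| &\leq \|g'\|_{L^1}, \\
\tfrac{1}{\lambda}\sum_n |g_{n+2}(t)| &\leq C_g.
\end{align*}
The hypothesis $\rho \geq 1 - 1/\lambda$ inherited from Lemma \ref{leaky1} ensures $(1-\rho) \leq 1/\lambda$ and $(1-\rho)^2 \leq 1/\lambda^2$, so after pulling $\|v\|_{\ell^\infty}$ outside each sum every contribution carries the full $1/\lambda^2$ decay; collecting the three pieces then recovers the claimed estimate.

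The only delicate point is bookkeeping: the double telescoping of $\rho$-weighted differences has to be arranged so that exactly the three terms above appear, each with the correct power of $(1-\rho)$, since an asymmetric split would either waste the gain from a leakage factor or multiply the principal second-difference term by an unwanted $\rho$. Everything else --- the uniform boundedness of $(v_n)$ needed to extract $\|v\|_{\ell^\infty}$, the smoothness of $g$ that legitimizes the integral representation of the second difference, and the absolute convergence that justifies the index shifts --- is inherited directly from the hypotheses and from the argument in the first-order case.
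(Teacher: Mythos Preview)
Your proposal is correct and is precisely the argument the paper has in mind: it explicitly omits the proof, saying only that ``the analogous result for the second-order scheme follows the same argument'' as Lemma~\ref{leaky1}, and your two-step summation by parts together with the decomposition $g_n - 2\rho g_{n+1} + \rho^2 g_{n+2} = (g_n - 2g_{n+1} + g_{n+2}) + 2(1-\rho)(g_{n+1}-g_{n+2}) + (1-\rho)^2 g_{n+2}$ is exactly that. In fact your bookkeeping gives the third term with constant $C_g$ rather than $2C_g$, so your bound is marginally sharper than the one stated.
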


\noindent It is important to note that Lemma \ref{leaky2} is useful only in the finite regime, even though it is asymptotic statement.  Indeed, the oversampling ratio $\lambda$ is not taken to be arbitrarily large in practice as limited by the resolution of analog-to-digital technologies; in most A/D converters, $\lambda \leq 64$ is set.  For this range of $\lambda$, the assumption in Lemma \ref{leaky2} that the damping factor $\rho^{\lambda}$ can be prescribed up to resolution 
$\rho \in [.97,1]$ is certainly reasonable.
\\
\\
\noindent Lemma \ref{leaky2} rests on the assumption that the initial input $(u_0, v_0)$ can be prescribed so that the sequence $(u_n, v_n)$ remains uniformly bounded.  We can ensure such stability by constructing positively invariant sets for the discrete map $(u_n, v_n) \rightarrow (u_{n+1}, v_{n+1})$ corresponding to the finite-memory scheme \eqref{2ndorder-finite}.  Recall that a set $S$ is positively invariant for the discrete map $M$ if $x \in S$ implies that  $M(x) \in S$.  The following theorem, borrowed again from \cite{OYthesis}, gives explicit constructions of such positively invariant sets for the finite-memory map \eqref{2ndorder-finite} with linear rule $F_{\gamma}(u,v) = u + \gamma v$:   

\begin{theorem}[Yilmaz]
Fix $\alpha < 1$, and constant $C$ satisfying
\begin{equation}
C \geq \frac{1}{2- 2\alpha^2} + \frac{12 + 9 (1 + \alpha) }{8(1-\alpha)}. \nonumber
\end{equation}
Consider the following functions, 
\begin{equation}
B_1(u) = \left\{\begin{array}{cl}
	-\frac{u^2}{2(1-\alpha)} + \frac{u}{2} + C,  & u \geq 0 \\
	-\frac{u^2}{2(1+ \alpha)} + \frac{u}{2} + C,   &u < 0 
	   \end{array}\right.
           \label{(b1)} 
\end{equation}
\begin{equation}
B_2(u) = \left\{\begin{array}{cl}
	\frac{u^2}{2(1+ \alpha)} + \frac{u}{2} - C,  & u \geq 0 \\
	\frac{u^2}{2(1-\alpha)} + \frac{u}{2} - C,   &u < 0 
	   \end{array}\right.
           \label{(b2)} 
\end{equation}
and the subset of points in $\mathbb{R}^2$ lying between the graphs of $B_1$ and $B_2$:
\begin{eqnarray}
S &=& \{ (u,v):  v \leq B_1(u), v \geq B_2(u)\}.
\end{eqnarray}
Finally, suppose that $\gamma$ in the linear rule $F_{\gamma}(u,v) = u + \gamma v$ is set to a value in the range,
\begin{equation}
\frac{2 \big( 2C(1-\alpha^2) \big)^{1/2} - (1+\alpha)\}}{ \big( 2C(1 - \alpha^2) \big)^{1/2} + 2\alpha C} \leq  \gamma \leq \frac{4(1+\alpha)((1+\alpha) + 2)}{8C(1 + \alpha) - (1 + \alpha) - 4}.
\nonumber
\end{equation}
If $(u_{n+1}, v_{n+1})$ is obtained from $(u_n, v_n) \in S$ via the second-order recursion \eqref{2ndorder-finite} with input $f_n$ satisfying $|f_n| \leq \alpha$, quantizer $Q^{\tau}_{tri}$, $\tau \leq 1/2$, and linear rule $F_{\gamma}(u_n, v_n)$, then $(u_{n+1}, v_{n+1}) \in S$.  
\label{stability}
\end{theorem}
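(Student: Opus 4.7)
The plan is to establish positive invariance by a case analysis on the quantizer output $b_{n+1} \in \{-1,0,+1\}$, which partitions the plane of states according to the sign of $\rho u_n + \gamma \rho v_n$ relative to the threshold $\pm \tau$. In each region the recursion \eqref{2ndorder-finite} reduces to an affine map, and the task is then to verify that the two scalar inequalities $v_{n+1} \leq B_1(u_{n+1})$ and $v_{n+1} \geq B_2(u_{n+1})$ defining $S$ are preserved.

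The first step is to rewrite $S$ as the intersection of sublevel sets of two piecewise quadratic Lyapunov-type functionals, namely $V_1(u,v) := v - \tfrac{u}{2} + \tfrac{u^2}{2(1-\alpha)}$ for $u \geq 0$ and $v - \tfrac{u}{2} + \tfrac{u^2}{2(1+\alpha)}$ for $u<0$, so that $v \leq B_1(u)$ is equivalent to $V_1(u,v) \leq C$, and an analogous pair $V_2$ for the lower boundary. The goal is then to show $V_i(u_{n+1}, v_{n+1}) \leq C$ whenever $V_i(u_n, v_n) \leq C$ and $|f_{n+1}| \leq \alpha$. I would use the identities $u_{n+1}-\rho u_n = f_{n+1} - b_{n+1}$ and $v_{n+1} - \rho v_n = u_{n+1}$ to expand $V_i(u_{n+1},v_{n+1})$ and compare it to $V_i(u_n,v_n)$; the coefficients $1 \pm \alpha$ in the denominators are designed so that for $b_{n+1} = \pm 1$ the worst-case increment in $V_i$ coming from the quadratic terms is exactly cancelled by the linear ones, giving telescoping on the level sets.

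Case $b_{n+1} = +1$: here $\rho u_n + \gamma \rho v_n > \tau$, and the displacement $u_{n+1} - \rho u_n = f_{n+1} - 1$ lies in $[-1-\alpha,\,-1+\alpha]$. Expanding $V_1(u_{n+1},v_{n+1}) - V_1(\rho u_n,\rho v_n)$, the quadratic contribution is controlled by the branch of $V_1$ selected by the sign of $u_{n+1}$, and the linear part is dominated by $-\gamma v_n$ thanks to the quantizer constraint. The lower bound on $\gamma$ is precisely what guarantees that the combined increment is non-positive, even when $u_n$ and $u_{n+1}$ have opposite signs so that one crosses between the two pieces of $B_1$. The case $b_{n+1} = -1$ is symmetric, and the upper bound on $\gamma$ plays the symmetric role of ensuring the $V_2$ increment is non-positive. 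For $b_{n+1} = 0$ the update becomes $u_{n+1} = \rho u_n + f_{n+1}$, $v_{n+1} = \rho v_n + u_{n+1}$, and the strip condition $|\rho u_n + \gamma \rho v_n| \leq \tau$ together with $|f_{n+1}| \leq \alpha$ confines $(u_{n+1}, v_{n+1})$ to a bounded neighbourhood of the origin; the hypothesis $C \geq \frac{1}{2-2\alpha^2} + \frac{12 + 9(1+\alpha)}{8(1-\alpha)}$ is precisely the sizing condition that forces this neighbourhood to lie inside $S$.

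The main obstacle will be the piecewise structure of $B_1$ and $B_2$. When $u_n$ and $u_{n+1}$ lie in opposite half-planes one must compare $V_i$ values computed from different quadratic branches, and the algebraic identities that produced a clean cancellation on a single branch pick up a defect term. Controlling this crossover defect simultaneously with the two-sided bound on $\gamma$ (lower bound for the $B_1$ side, upper bound for the $B_2$ side) is where the bookkeeping is tightest, and it is exactly there that the explicit numerical coefficients in the hypotheses on $C$ and $\gamma$ are used. Once these crossover estimates are made, the remaining verification in each case reduces to a quadratic inequality in $u_n, v_n, f_{n+1}$, routine to check but needing separate subcases for the extremal values $f_{n+1} = \pm \alpha$.
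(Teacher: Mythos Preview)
The paper does not actually prove this theorem. It is stated with attribution to Yilmaz and explicitly described as ``borrowed again from \cite{OYthesis}''; immediately after the statement the paper moves on to a qualitative figure and then to its own new contributions. So there is no proof in the paper against which your proposal can be compared.

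That said, your outline is broadly the right kind of argument for results of this type: a case split on the quantizer output, followed by verification that each affine piece of the dynamics preserves the sublevel sets of a piecewise-quadratic Lyapunov function whose level curves are the parabolic arcs $B_1$ and $B_2$. The observation that the denominators $1\pm\alpha$ are tuned so that the worst-case input increment telescopes against the quadratic part is exactly the mechanism behind such constructions.

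One place where your sketch is loose: in the $b_{n+1}=0$ case you say the strip constraint $|\rho u_n+\gamma\rho v_n|\le\tau$ together with $|f_{n+1}|\le\alpha$ ``confines $(u_{n+1},v_{n+1})$ to a bounded neighbourhood of the origin,'' and that the lower bound on $C$ then sizes $S$ to contain it. The strip constraint bounds only a linear combination of $u_n,v_n$, not either coordinate separately; what actually bounds the image is the strip constraint \emph{together with} the prior membership $(u_n,v_n)\in S$, and one must still check that the shear $v_{n+1}=\rho v_n+u_{n+1}$ keeps the point between the two parabolas. In a full write-up this case needs the same Lyapunov-increment computation as the $b=\pm1$ cases, not a separate ``bounded neighbourhood'' argument. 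Your assignment of roles to the two $\gamma$ bounds (lower bound for one boundary, upper bound for the other) is also not quite right: in the standard argument both endpoints of the $\gamma$ interval arise from the requirement that the quantizer switching lines $u+\gamma v=\pm\tau$ be correctly positioned relative to the parabolic boundaries, rather than each controlling a different Lyapunov increment.
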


\noindent A qualitative depiction of the invariant region $S$ is shown in Figure $\ref{fig:stable}$ below, as generated by admissible parameter values $\alpha = .9$, $C = 40$, and $\gamma = .1$.   

\begin{figure*}[htbp]
\begin{center}
\includegraphics[width=3in]{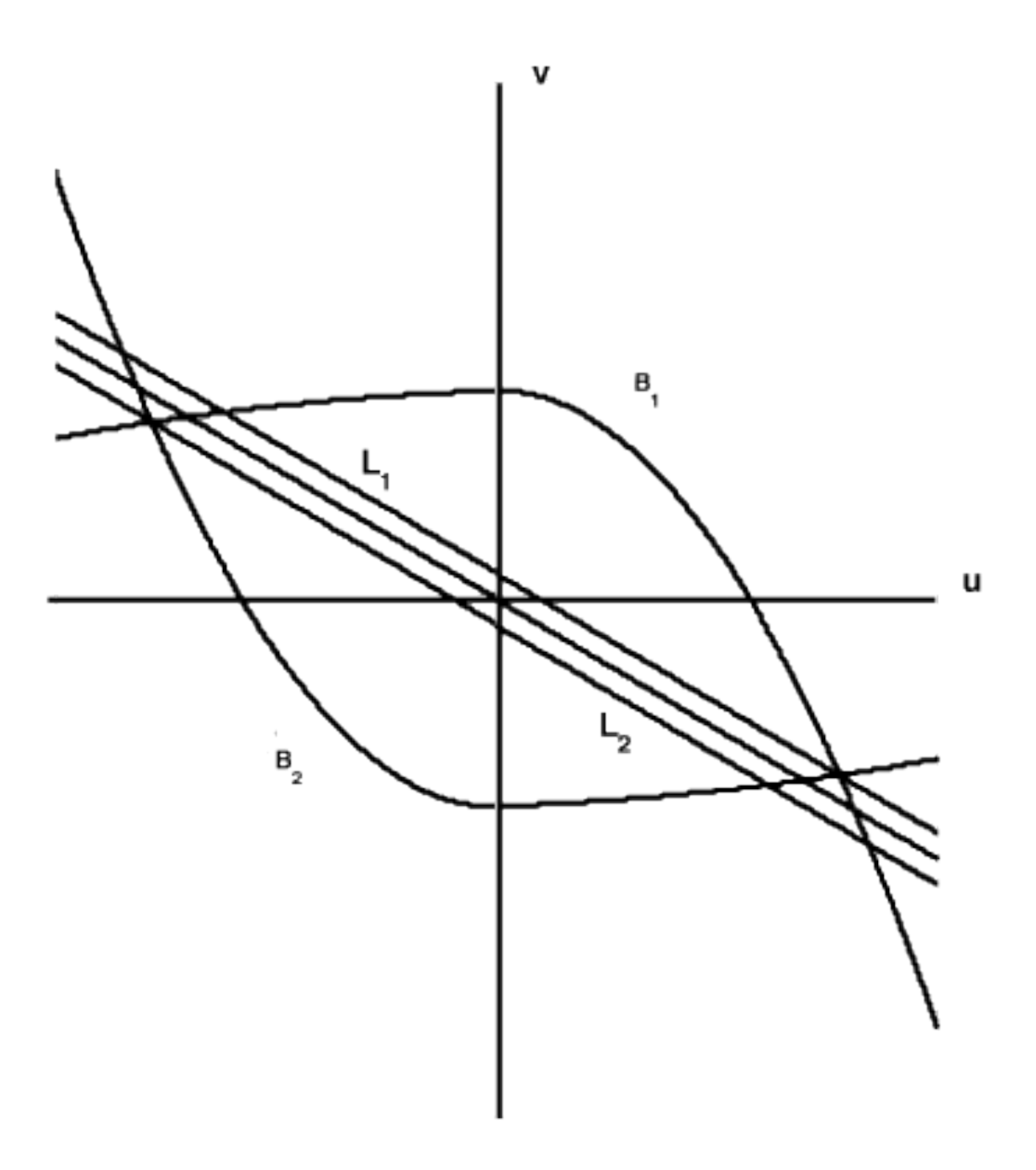}
\caption{The functions $B_1$ and $B_2$ from Theorem \ref{stability} are pictured along with the lines $L_1$ and $L_2$ consisting of the points $(u,v)$ such that $F_{\gamma}(u,v) = .5$ and $F_{\gamma}(u,v) = -.5$, respectively.  This figure was generated using admissible values $(\alpha, C, \gamma) = (.9, 40, .1)$}.
\label{fig:stable}
\end{center}
\end{figure*}

\subsection{The revised scheme: Quiet $\Sigma \Delta$ quantization}

\noindent Henceforth, we will assume that the oversampling factor $\lambda$ is fixed, and that the damping factor $\rho < 1$ in the finite-memory scheme \eqref{2ndorder-finite} satisfies the requirements of Lemma \ref{leaky2} for the finite-memory scheme to be second-order.  We further assume that the $\ell_{\infty}$ bound $|f_n| \leq \alpha < 1$ is fixed, and that $\gamma$ is in the admissible range, as specified by Theorem \ref{stability}, for the finite-memory $\Sigma \Delta$ scheme \eqref{2ndorder-finite} to be stable.  
\\
\\
\noindent  Unfortunately, the finite-memory scheme  \eqref{2ndorder-finite} implemented with tri-level quantizer $Q_{tri}^{.5}$ is not quiet for $\rho$ sufficiently close to $1$, as a range of initial conditions $(u_0, v_0)$ at the onset of zero input still lead to oscillatory behavior of the output.  In Figure \ref{fig:quietmap}, we indicate whether or not the iterates $(u_n, v_n)$ generated by the finite-memory scheme with zero input $x_n = 0$ remain oscillatory, for a set of perturbations $\rho \in [.96,1]$ and initial conditions $(u_0, 0) \in [-2,0] \times 0$.  

\begin{figure}[htbp]
\begin{center}
\includegraphics[width=4in]{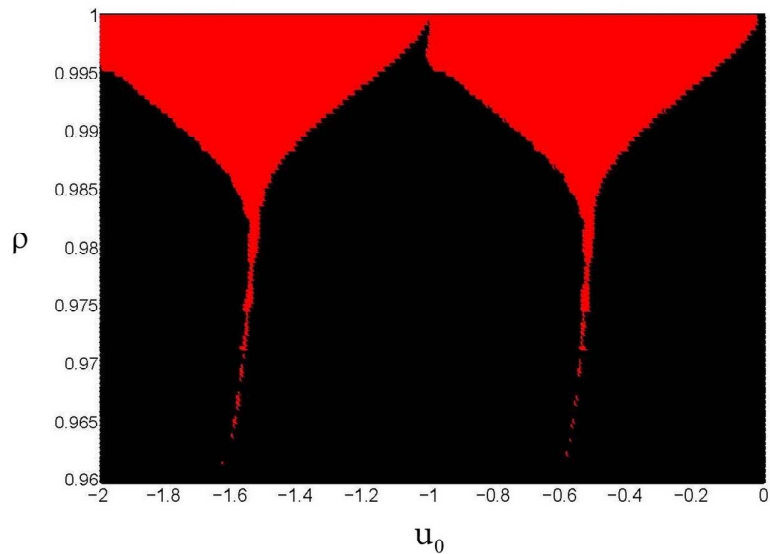}
\caption{We indicate whether or not the iterates $(u_n, v_n)$ converge to zero, as generated by the second order finite-memory scheme \eqref{2ndorder-finite} with zero input $x_n = 0$, for several values of damping factor $\rho \in [.96,1]$ and initial conditions $(u_0, 0) \in [-2,0] \times 0$.  In particular, $(u_n, ,v_n) \rightarrow 0$ can only occur over the black region, as a gray mark  indicates that after one million iterations $n \geq 1000000$, the quantization output $q_n$ is nonzero $|q_n| = 1$ at regular time interval not exceeding $100$ iterations, for one million additional steps $n \in [1000000, 2000000]$.  The finite-memory scheme \eqref{2ndorder-finite} was implemented with parameter $\rho = .2$ and tri-level quantizer $Q^{1/3}_{tri}$. }.

\label{fig:quietmap}
\end{center}
\end{figure}
\vspace{5mm}
\noindent Nevertheless, we show in the following sections that a simple modification to the finite-memory scheme \eqref{2ndorder-finite} will ensure quietness, in the sense of Definition \ref{def1}.  The key idea is to apply a damping factor $\rho$ as in the finite second order scheme $\eqref{2ndorder-finite}$, but not at  \emph{every} iteration;  only at iterations $n$ for which $u_n + \gamma v_n \geq 0$ (or the symmetric, when $u_n + \gamma v_n \leq 0$).  The situation $u_n + \gamma v_n \geq 0$ must keep reccurring if the system is to remain stable, and the application of an asymmetric damping \emph{forces} the state variables $u_n$, and in turn the variables $v_n$, to zero once $f_n = 0$.

\vspace{5mm}

\noindent More precisely, we will consider the following \emph{asymmetrically damped} modification to \eqref{2ndorder-finite}, which, starting from initial $(u_0, v_0)$, can be implemented according to the recursion
\begin{eqnarray}
% q_n &=& \sgn{ (F_{\gamma}(u_{n-1}, v_{n-1}) )}, \nonumber \\
% \rho_n &=& 1 - \rho/2 - q_n(\rho/2), \nonumber \\
(b_n, q_n) &=& Q_4^\tau {(u_{n-1}/\gamma + v_{n-1})}, \nonumber \\
u_n &=&  u_{n-1} + q_n(\rho-1)u_{n-1} - b_n + f_n^{\lambda}, \nonumber \\
v_n &=&  v_{n-1} + q_n(\rho-1)v_{n-1} + u_n,
\label{2ndorder-aleak}
 \end{eqnarray} 
with 4-level quantizer,
 \begin{eqnarray}
% q_n &=& \sgn{ (F_{\gamma}(u_{n-1}, v_{n-1}) )}, \nonumber \\
% \rho_n &=& 1 - \rho/2 - q_n(\rho/2), \nonumber \\
Q_{4}^{\tau}(u) &=& \left\{\begin{array}{cl}
	(-1,0),  &  u \leq - 1/2, \\
	(0,0),   & -1/2 < u \leq 0, \\
	(0,1), & 0 < u \leq 1/(2\tau), \\
	 (1,1), & u > 1/(2\tau).  \end{array}\right.
           \label{(b2)}
\label{q2}
 \end{eqnarray} 
% \begin{eqnarray}
% q_n &=& \sgn{ (F_{\gamma}(u_{n-1}, v_{n-1}) )}, \nonumber \\
% \rho_n &=& 1 - \rho/2 - q_n(\rho/2), \nonumber \\
% b_n &=& \frac{1}{2}\Big( q_n \sgn{\big(F_{\gamma}( \rho_n u_{n-1}, \rho_n v_{n-1}) - \gamma q_n/2 \big)} + 1 \Big) \nonumber \\
%u_n &=& \rho_n u_{n-1} - b_n + f_n^{\lambda} \nonumber \\
%v_n &=& \rho_n v_{n-1} + u_n
%\label{q4}
% \end{eqnarray} 
Although we will take $\tau = \rho$ for the theoretical analysis of the next section, simulation results suggest that quietness persists for a much more general class of 4-level quantizers that includes the symmetric quantizer $Q_4^{1}$.
%We will need to take $(\tau,\iota) = (\gamma/2, \rho)$ in the 4-level quantizer $Q_{(\tau, \iota)}$ for the theoretical analysis of the next section; however, simulation results suggest that quietness persists for more general 4-level quantizers, including the more manageable symmetric quantizer  $Q_{(1/2, 1)}$.  
\\
\\
That the asymmetric scheme \eqref{2ndorder-aleak} maintains second order accuracy for $\rho \geq \rho^{\lambda} = 1 - \frac{1}{\lambda}$ is an immediate consequence of Proposition \ref{leaky2}.  Furthermore, the asymmetric scheme is stable and inherits the positively invariant sets $S_{\gamma}$ constructed in Proposition \ref{stability}.  It remains to prove that  \eqref{2ndorder-aleak} is quiet. Our accomplishments are summarized in the following theorem.

 \begin{theorem}[{\bf Main theorem}]
The asymmetrically-damped second-order $\Sigma \Delta$ scheme \eqref{2ndorder-aleak} is quiet when implemented with 4-level quantizer $Q_4^{\rho}$. That is, if the input $(f_n)$ to \eqref{2ndorder-aleak} becomes identically equal to zero after some time $n \geq N$, then the quantization output $(b_n, q_n) \equiv (0,1)$ becomes constant after a finite number of additional iterations $n \geq N + k$.   
 \label{mainthm}
 \end{theorem}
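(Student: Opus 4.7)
The plan is a two-stage analysis of the zero-input map induced by \eqref{2ndorder-aleak} with $f_n^{\lambda} = 0$ for $n > N$. Writing $L_n := u_n/\gamma + v_n$, the recursion reduces to one of four piecewise affine maps, selected by which region of the quantizer $Q_4^{\rho}$ contains $L_{n-1}$: Region~1 ($L \leq -\frac{1}{2}$) gives $(u,v) \mapsto (u+1, v+u+1)$; Region~2 ($-\frac{1}{2} < L \leq 0$) gives $(u,v) \mapsto (u, v+u)$; Region~3 ($0 < L \leq \frac{1}{2\rho}$) gives the linear contraction $(u,v) \mapsto (\rho u, \rho v + \rho u)$; and Region~4 ($L > \frac{1}{2\rho}$) gives $(u,v) \mapsto (\rho u - 1, \rho v + \rho u - 1)$. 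The desired conclusion is equivalent to showing that $L_{n-1}$ eventually lies in $(0, \frac{1}{2\rho}]$ for all subsequent $n$, i.e.\ that the orbit is permanently captured in Region~3.

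The first stage is a Lyapunov argument for state convergence $(u_n, v_n) \to (0,0)$. In the damping regions~3 and~4 both coordinates carry the prefactor $\rho < 1$, so an appropriately weighted quadratic form $V(u,v) = u^2 + \alpha v^2$ (with $\alpha$ tuned to force $V$ to decrease strictly along the Region~3 linear map) is contracted by a uniform factor. In the non-damping regions the orbit cannot stall: in Region~2 the update shifts $v$ by $u$ per iteration, so any excursion with $u \neq 0$ exits the strip $-\frac{1}{2} < L \leq 0$ within a bounded number of steps; in Region~1 the $u$-coordinate is incremented by $1$ per step. Combined with the positive invariance of the bounded set $S$ from Theorem~\ref{stability}, this uniformly bounds the number of consecutive non-damping iterations, and the strict contraction in Regions~3 and~4 then drives $V(u_n, v_n) \to 0$.

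The second and more delicate stage upgrades this state convergence to the stronger claim that the orbit resides permanently in Region~3. Iterating the Region~3 map explicitly gives the closed forms $u_n = \rho^n u_0$ and $L_n = \rho^n(L_0 + n u_0)$; in particular, any state $(u_0, v_0)$ with $u_0 \geq 0$ and $0 < L_0$ sufficiently small is caught, since $L_n$ stays strictly positive, decays to zero, and remains $\leq \frac{1}{2\rho}$ for all $n \geq 0$ as soon as $L_0$ and $u_0$ are small (note $\sup_n n\rho^n < \infty$). It therefore suffices to show that the orbit eventually lands in the trapping wedge $W = \{(u,v) : u \geq 0,\ 0 < L(u,v) \leq \varepsilon\}$ for some small $\varepsilon > 0$. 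I expect the main obstacle to be ruling out indefinite oscillation in which $L$ repeatedly changes sign near the origin without the state ever committing to $W$; the resolution should exploit the asymmetry of the damping itself. The only dynamical mechanism that converts a negative $u$ into a nonnegative one is a Region~1 visit, where $u \mapsto u+1$, and once $u \geq 0$ in Region~3 the closed-form iteration above guarantees the orbit stays in $\{u \geq 0,\ L > 0\}$ forever. After the first-stage contraction has made the state small enough to preclude Region~4 visits (the only mechanism that could flip a positive $u$ back to negative, via $u \mapsto \rho u - 1$), the orbit must, after its last Region~1 excursion, enter $W$ and remain trapped in Region~3, which is precisely the conclusion $(b_n, q_n) \equiv (0,1)$.
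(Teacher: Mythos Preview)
Your Stage~1 Lyapunov argument does not go through. The claim that $V(u,v)=u^2+\alpha v^2$ is contracted by a uniform factor in Region~4 is false: the Region~4 map $(u,v)\mapsto(\rho u-1,\rho v+\rho u-1)$ is affine, not linear, and the shift by $-1$ dominates for small $u$. Concretely, take $u=0$ and $v$ just above $1/(2\rho)$ so that $L>1/(2\rho)$; then $V(0,v)=\alpha v^2$ is small (you need $\alpha$ small for Region~3 contraction when $\rho$ is close to~1), while after one step $u'=-1$ and $V(u',v')\geq 1$. More importantly, the full orbit genuinely oscillates: starting from $u\approx\varepsilon>0$ in Region~4, one step gives $u\approx\rho\varepsilon-1\approx -1$; the orbit then drifts through Region~2 into Region~1, where $u\mapsto u+1\approx\rho\varepsilon$, and the cycle repeats. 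Along this cycle $u^2$ bounces between $\approx 0$ and $\approx 1$, so no positive-definite $V$ can decrease monotonically. Your bound on consecutive non-damping steps does not help, since the cycle mixes Region~4 (damping) and Region~1 (non-damping) and the net change in $V$ over one cycle is not a contraction.

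The paper's resolution separates the two jobs you are trying to do with one Lyapunov function. First it constructs an explicit positively invariant set $T=T^+\cup T^-$ (two parallelograms) in which the bit sequence is forced to alternate; this yields the exact identity $u_{n_{j+1}}=\rho\, u_{n_j}$ along the subsequence of visits to $T^+$, which is the real engine of convergence. A short case analysis then shows that once this subsequence is small the orbit can no longer leave $T^+$, and from there your Region~3 closed-form calculation applies verbatim. Second, to show that every orbit eventually reaches $T$, the paper uses the Lyapunov function $h(u,v)=u^2+|2v-u|$, whose two linear branches $h^\pm$ are chosen precisely so that $h^+(u-1,u+v-1)=h^+(u,v)$ and $h^-(u+1,u+v+1)=h^-(u,v)$: the affine shifts in Regions~4 and~1 leave $h$ invariant, and the damping step then strictly decreases it outside $T$ by convexity. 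The point is that this $h$ is used only to drive the orbit into $T$, not to zero; the convergence to the origin comes from the subsequence identity, which is exactly the piece your proposal is missing.
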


\noindent We prove Theorem \ref{mainthm} by showing that $(0,0)$ is a global attracting fixed point of the zero-input map governing $(u_n, v_n) \rightarrow (u_{n+1}, v_{n+1})$ in the asymmetric scheme  \eqref{2ndorder-aleak}.  As such, we need to develop a better understanding for this dynamical system. 
 
 \begin{observation}
 The discrete map $(u_n, v_n) \rightarrow (u_{n+1}, v_{n+1})$ produced by the asymmetrically-damped $\Sigma \Delta$ scheme \eqref{2ndorder-aleak} with zero input $f_n^{\lambda} \equiv 0$ and 4-level quantizer $Q_4^{\rho}$ corresponds to the orbit of a piecewise-affine dynamical system, 
\begin{equation}
(u_n, v_n) = M_{(\gamma, \rho)}(u_{n-1}, v_{n-1}) = A_{\gamma} \circ D_{(\gamma,\rho)}(u_{n-1}, v_{n-1}), 
\end{equation}
where   
\begin{equation}
 D_{(\gamma,\rho)}(u,v) =   \left\{\begin{array}{ll}
	(\rho u, \rho v), & \textrm{    } u/\gamma + v  > 0, \\
	(u,v), & \textrm{ else,}  \\
   \end{array}\right.
           \label{D}
 \end{equation}
 and
 \begin{equation}
A_{\gamma}(u,v) =   \left\{\begin{array}{ll}
	(u-1, u + v - 1), & \textrm{    } u/\gamma + v > 1/2, \\
	(u, u + v) , & {    }  | u/\gamma + v | \leq 1/2,  \\
	(u + 1, u + v + 1), & {   } u/\gamma + v <  -1/2. \\
   \end{array}\right.  \nonumber
 \end{equation}
\label{Mmaps} 
\end{observation}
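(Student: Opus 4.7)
The statement is essentially a bookkeeping identity, and I would prove it by exhaustive case analysis on the output pair $(b_n, q_n)$ of the quantizer $Q_4^{\rho}$. Setting $s := u_{n-1}/\gamma + v_{n-1}$ and recalling that $f_n^{\lambda} \equiv 0$, the definition of $Q_4^\rho$ partitions the real line into four intervals in $s$ (with thresholds at $-1/2$, $0$, and $1/(2\rho)$), each producing a distinct pair $(b_n, q_n) \in \{(-1,0),(0,0),(0,1),(1,1)\}$. In each of the four cases I would substitute into the recursion \eqref{2ndorder-aleak} and read off $(u_n, v_n)$ in closed form; for example, in the regime $s > 1/(2\rho)$ one has $q_n = 1$ and $b_n = 1$, which yields $u_n = \rho u_{n-1} - 1$ and $v_n = \rho v_{n-1} + u_n$.

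In parallel I would evaluate $A_\gamma \circ D_{(\gamma,\rho)}$ at $(u_{n-1}, v_{n-1})$ in the same four regimes. The first observation is that by inspection of $Q_4^\rho$ we have $q_n = 1 \iff s > 0$, which is exactly the condition under which $D_{(\gamma,\rho)}$ applies the contraction $(u,v)\mapsto(\rho u, \rho v)$ rather than the identity. Thus the presence or absence of damping in the recursion \eqref{2ndorder-aleak} is already faithfully encoded inside $D_{(\gamma,\rho)}$, and I am left to verify that $A_\gamma$ reproduces the combined shift-and-subtract step $(u,v)\mapsto (u-b_n,\,u+v-b_n)$.

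The only remaining arithmetic check concerns the thresholds of $A_\gamma$: if we write $(u',v') = D_{(\gamma,\rho)}(u_{n-1},v_{n-1})$, then $u'/\gamma + v'$ equals $s$ when $s \le 0$ and equals $\rho s$ when $s > 0$. Consequently the cutoff $1/(2\rho)$ in the definition of $Q_4^\rho$ is carried by the damping to precisely the cutoff $1/2$ in the definition of $A_\gamma$, while the undamped cutoff at $-1/2$ is already matched. Matching the four cases against the three branches of $A_\gamma$ (the two cases with $b_n = 0$ collapsing into the middle branch $|u'/\gamma + v'| \le 1/2$) then verifies the identity $M_{(\gamma,\rho)} = A_\gamma \circ D_{(\gamma,\rho)}$ by inspection. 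There is no real obstacle here: the content of the observation lies less in its proof than in the factorization itself, which isolates the piecewise contraction $D_{(\gamma,\rho)}$ from the measure-preserving shift-register map $A_\gamma$ and so sets up the attracting-fixed-point analysis that will drive Theorem \ref{mainthm}.
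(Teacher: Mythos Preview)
Your case-by-case verification is correct and is exactly how one would establish the observation; the paper itself offers no proof, treating the factorization $M_{(\gamma,\rho)} = A_\gamma \circ D_{(\gamma,\rho)}$ as self-evident and proceeding directly to the dynamical analysis. Your identification of the key point---that $q_n = 1 \iff s>0$ matches the damping condition in $D_{(\gamma,\rho)}$, and that the asymmetric threshold $1/(2\rho)$ in $Q_4^\rho$ becomes the symmetric threshold $1/2$ in $A_\gamma$ after the contraction---is precisely the content of the observation.

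One trivial remark: the boundary conventions in the paper are not perfectly aligned (the quantizer assigns $b_n=-1$ when $s \le -1/2$, whereas the bottom branch of $A_\gamma$ is triggered only for $s' < -1/2$), so the identity fails on the measure-zero set $s=-1/2$. This has no bearing on the subsequent analysis and is clearly an oversight in the paper's conventions rather than a flaw in your argument.
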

%\\
%\\
%In terms of the map $M_{\rho}$, the proof of Theorem \eqref{mainthm} can be summarized as follows:
%\begin{itemize}
%\item we first construct a closed and bounded global trapping set $S \subset \mathbb{R}^2$ for the map $M_{\rho=1}$.
%\item then, we prove that any sequence $(u_n, v_n)$ that is trapped in $S$ must eventually converge to $(0,0)$ from the positive quadrant (u_n, v_n) \rightarrow (0^+, 0^+)$.  As a consequence, $b_n = 0$ and $q_n = 1$ identically after a finite time $n \geq k$. 
%\end{itemize}
\noindent The origin $(0,0)$ is clearly a fixed point of the map $M_{(\gamma,\rho)}$.  We will show that $(0,0)$ is a global attracting fixed point, or that $(u_n, v_n) = M^n_{(\gamma, \rho)}(u_0, v_0) \rightarrow (0,0)$ for any initial condition $(u_0, v_0)$, in two steps:

\begin{enumerate}
\item In Section $2.0.9$, we construct a positively invariant set $T$ for the map $M_{(\gamma, \rho)}$ having the property that all orbits initialized in $T$ converge to the origin.  
\item In Section $2.0.10$, we show that $T$ is also a \emph{trapping set} for $M_{(\gamma, \rho)},$ so that all orbits eventually become trapped in $T$.
\end{enumerate} 

\noindent It is not clear that the proof need necessarily be split into two steps; however, numerical results such as those in Figure \ref{fig:magnifty_orbit} suggest a marked change in the behavior of sequences $({\bf x}, M_{(\gamma, \rho)}{\bf x}, M^2_{(\gamma, \rho)}{\bf x}, ... )$ upon entering the invariant set $T$.  

\subsection{An invariant set $T$ and convergence to the origin} 
We begin by constructing a positively invariant set for the map $M_{(\gamma, \rho)}$.  We refer the reader to Figure \ref{fig:stable2} for reference.
\begin{proposition}
The union $T = T^+ \cup T^-$ of the affine regions
\begin{eqnarray}
T^+ &=& \{ (u,v) \textrm{:   } 0 < u < 1 \textrm{,  } -1/2 \leq u/\gamma + v \leq (1/2 + 1/\gamma) \}, \nonumber \\
T^- &=& \{ (u,v) \textrm{:  } -1 < u \leq 0 \textrm{,  } -(1/2 + 1/\gamma) \leq u/\gamma + v \leq 1/2  \} \nonumber
\end{eqnarray}
is a positively invariant set for the map $M_{(\gamma, \rho)}$.  Moreover, the bit sequence $b_n$ as in \eqref{2ndorder-aleak} associated to a sequence $M^n_{(\gamma, \rho)}(u_0, v_0) = (u_n, v_n)$ contained in $T$ has the following alternating structure:
\begin{itemize}
\item If $b_n= 1$ then $b_{n+1} \in \{-1,0\}$,
\item If $b_n= -1$ then $b_{n+1} \in \{0,1\}$.   
\end{itemize}
Finally, $b_n = 1$ if and only if $(u_n, v_n) \in T^+$ and $(u_{n+1}, v_{n+1}) \in T^-$, while $b_n = -1$ if and only if $(u_n, v_n) \in T^-$ and $(u_{n+1}, v_{n+1}) \in T^+$.
\label{invariantregion}
\end{proposition}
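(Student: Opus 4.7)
The plan is to verify the three assertions -- invariance of $T$, alternation of the $\pm 1$ bits, and the $T^+ \leftrightarrow T^-$ characterization of $b_n = \pm 1$ -- via an exhaustive case analysis of the piecewise-affine zero-input map $M_{(\gamma,\rho)} = A_\gamma \circ D_{(\gamma,\rho)}$. For a point $(u,v) \in T^\pm$ the analysis splits first on the sign of $u/\gamma + v$ (which determines whether damping $D_{(\gamma,\rho)}$ fires) and then on where the argument of $A_\gamma$ falls relative to $\pm 1/2$. Although $M_{(\gamma,\rho)}$ a priori has six affine pieces, the constraints defining $T^\pm$ cut the reachable cases down to three from each region.

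Starting from $(u,v) \in T^+$: when $u/\gamma + v \leq 0$, the $T^+$-lower bound $u/\gamma + v \geq -1/2$ forces $A_\gamma$ to be the shift-free piece, so $b_n = 0$ and $(u_n, v_n) = (u, u+v)$; the inequalities $0 < u < 1$ and $u_n/\gamma + v_n = (u/\gamma + v) + u \in [-1/2, 1)$ keep the image in $T^+$, provided $\gamma \leq 2$. When $u/\gamma + v > 0$, damping is applied; if the damped ratio $\rho(u/\gamma + v) \leq 1/2$ then $b_n = 0$ and $(\rho u, \rho u + \rho v)$ remains in $T^+$ (using $\rho \leq 1/\gamma$), whereas if $\rho(u/\gamma+v) > 1/2$ then $b_n = 1$ and the image $(\rho u - 1, \rho u + \rho v - 1)$ has first coordinate in $(-1,0)$ and ratio $\rho(u/\gamma+v) + \rho u - 1/\gamma - 1$, which the bounds $u/\gamma + v \leq 1/2 + 1/\gamma$ and $u < 1$ confine to $(-(1/2 + 1/\gamma), 1/2]$, placing it squarely in $T^-$. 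The analysis starting from $(u,v) \in T^-$ is analogous: in the damping case, the $T^-$-upper bound $u/\gamma + v \leq 1/2$ forces $\rho(u/\gamma + v) \leq 1/2$, so $b_n = 0$ and the state remains in $T^-$; in the no-damping case, either $u/\gamma + v \leq -1/2$ and $b_n = -1$ produces an image $(u+1, u+v+1)$ in $T^+$, or $-1/2 < u/\gamma + v \leq 0$ and $b_n = 0$ keeps the state in $T^-$ (this last case using $\gamma \leq 1$).

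All three claims then follow immediately. Invariance is clear from the casework since every branch lands in $T^+ \cup T^-$. For the characterization of $b_n = 1$: the quantizer $Q_4^\rho$ requires $u_{n-1}/\gamma + v_{n-1} > 1/(2\rho)$, which strictly exceeds $1/2$ for $\rho < 1$ and so cannot be met from $T^-$ (where the ratio is at most $1/2$); hence $b_n = 1$ must originate in $T^+$ and, by the analysis above, lands in $T^-$. The characterization of $b_n = -1$ is the mirror statement. Alternation is then automatic: if $b_n = 1$ then $(u_n, v_n) \in T^-$, and from $T^-$ only $b_{n+1} \in \{0, -1\}$ is produced, and symmetrically for $b_n = -1$. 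The main obstacle is essentially bookkeeping -- six subcases with two linear inequalities each -- and the substantive content in every step is simply that the admissible $(\gamma, \rho)$ range inherited from Theorem \ref{stability} (in particular $\gamma \leq 1$ and $\rho \leq 1$) leaves enough headroom for the worst-case bounds like $\rho/2 + \rho + (\rho-1)/\gamma \leq 1/2$ and $1 \leq 1/2 + 1/\gamma$ to hold.
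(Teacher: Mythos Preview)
Your proof is correct and follows essentially the same case-analysis approach as the paper. The one difference worth noting is that the paper avoids half of your casework by first observing that $T^+$ and $T^-$ are each convex and contain the origin, so $D_{(\gamma,\rho)}$ (which is either the identity or a contraction toward $0$) automatically maps $T$ into $T$; this reduces the problem to checking invariance under $A_\gamma$ alone, and then symmetry cuts the work to just the two cases $b=0$ and $b=1$ from $T^+$. Your direct analysis of $M_{(\gamma,\rho)}=A_\gamma\circ D_{(\gamma,\rho)}$ reaches the same conclusions but with more subcases to track.
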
 

 \begin{figure}
 \begin{center}
 \subfigure[The positively invariant set $T$]{\includegraphics[width=4in]{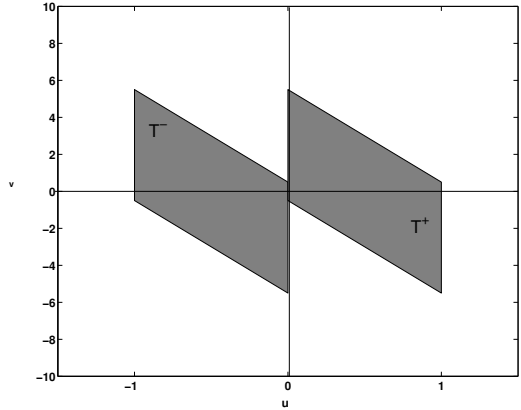}
\label{1}}
 \caption{The positively invariant set $T = T^+ \cup T^-$ for the map $M_{(\gamma, \rho)}$ with parameter $\gamma = .2$.}
 \label{fig:stable2} 
 \end{center}
 \end{figure}

\begin{proof}
First, as $T = T^+ \cap T^-$ is the union of two convex sets, each containing the origin, ${\bf x} \in T$ implies $\lambda {\bf x} \in T$ for $\lambda \in [0,1]$.  As such, if $T$ is positively invariant for the map $A_{\gamma}$, then $T$ must also be positively invariant for the full map $M_{(\gamma, \rho)} = A_{\gamma} \circ D_{(\gamma,\rho)}$.  In order to show that $T$ is positively invariant for $A_{\gamma}$, or that $(u,v) \in T$ implies $A_{\gamma} (u,v) \in T$, we can assume without loss that that $(u,v) \in T^+$ by symmetry of $T$ and $A_{\gamma}$.  Note that the bit $b$ in \eqref{2ndorder-aleak} associated to a point $(u,v) \in T^+$ in the asymmetric scheme \eqref{2ndorder-aleak} is restricted to $b \in \{0,1\}$. We split the proof into two cases,  according to whether $b=0$ or $b=1$.  
\begin{enumerate}
\item {\bf Case 1}: $b = 1$, or $\gamma/2 <  u + \gamma v \leq \gamma/2 + 1$:  In this case, $(u', v') = A_{\gamma}(u,v) = ( u - 1,  v + u -1)$.  Since $-1 < u' < 0$, $(u',v') \in T$ if $(u',v') \in T^-$, or if $(u',v') \in T^-$ if $-(\gamma/2 + 1) \leq u' + \gamma v' \leq \gamma/2$.  We can expand $u' + \gamma v'$ as 
$$ u' + \gamma v' = u + \gamma v - (\gamma + 1) + \gamma u,$$
and so indeed
$$ u' + \gamma v'  \leq (\gamma/2 + 1) - (\gamma + 1) + \gamma  \leq \gamma/2, $$
while also
$$ u' + \gamma v' \geq \gamma/2 - (\gamma + 1) + \gamma u \geq -(\gamma/2 + 1). $$
Thus, $(u',v') \in T^-$ and has associated bit $b' \in \{-1,0\}$.

\item {\bf Case 2:} $b = 0$, or $-\gamma/2 <  u + \gamma v \leq \gamma/2$: Within this region, $(u', v') = A_{\gamma}(u,v) = (u,  v + u)$.  Since $0 < u' \leq 1$, $(u',v') \in T$ if $(u',v') \in T^+$, or if $-\gamma/2 \leq u' + \gamma v' \leq \gamma/2 + 1$.  Proceeding as before,
$$ u' + \gamma v' = u + \gamma v + \gamma u \leq \gamma/2 + \gamma \leq \gamma/2 + 1, $$
while at the same time
$$ u' + \gamma v'  > -\gamma/2 + \gamma u > -\gamma/2. $$
Thus, $(u',v') \in T^+$ and has associated bit $b' \in \{0,1\}$.  
\end{enumerate}
The proposition follows by incorporating the symmetric result for initial conditions $(u,v) \in T^-$.
\end{proof}

\noindent The alternating pattern of the bit sequence $b_n$ associated to iterates $(u_n, v_n) \in T$ will be key in the following lemma.  In particular, this constraint on the bit sequence, in combination with the imbalance induced by damping $(u_{n+1}, v_{n+1}) = (\rho u_n, \rho v_n)$ only in the positive half-space $S^+ = \{ (u,v): u + \gamma v \geq 0 \}$, forces the $u_n$, and in turn the $v_n$, to zero.
 
 \begin{lemma} \label{utozero}
Suppose that $(u_0, v_0) \in T$.  Then the subsequence $(u_{n_j})$ from $(u_n, v_n) = M_{(\gamma,\rho)}^n(u_0,v_0)$ consisting of indices $n_j$ for which $u_{n_j} \in T^+$ satisfies $u_{n_j} \rightarrow 0$ as $n \rightarrow \infty$.    
 \end{lemma}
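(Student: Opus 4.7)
My plan is to analyze the evolution of $u_n$ along the $T^+$-subsequence via an explicit recurrence derived from the case structure of $M_{(\gamma,\rho)}$ on $T$. By Observation~\ref{Mmaps} together with Proposition~\ref{invariantregion}, the dynamics on $T$ split into four mutually exclusive cases classified by $u/\gamma+v$: a drift case $u'=u,\ v'=v+u$; a pure damping case $u'=\rho u,\ v'=\rho v+\rho u$; an exit transition $T^+\to T^-$ emitting bit $+1$ with damping; and an entry transition $T^-\to T^+$ emitting bit $-1$ \emph{without} damping. The asymmetry---damping accompanies the forward jump but not the return---is the central mechanism driving contraction.

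A first consequence is that within any block of consecutive $T^+$-indices, $u$ is non-increasing (preserved under the drift case, shrunk by factor $\rho$ under pure damping), so it suffices to show $u_{l_j}\to 0$, where $l_j$ denotes the $j$-th entry to $T^+$. Chaining the four transition rules through one complete cycle---a $T^+$-block with $p_j$ damping steps, the $+1$-exit, a $T^-$-excursion with $m_j$ damping steps, and the $-1$-reentry---yields the explicit recurrence
\[
u_{l_{j+1}} = \rho^{\,p_j+m_j+1}\, u_{l_j} + (1-\rho^{m_j}),\qquad p_j,\,m_j\ge 0.
\]

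The main obstacle is the positive additive term $(1-\rho^{m_j})$, which can prevent monotone decrease when $u_{l_j}$ is small and $m_j\ge 1$. I would control it in two stages. First, a uniform bound $m_j\le M$: damping in $T^-$ requires $v>-u/\gamma>0$, and such a step replaces $v$ by $\rho v+\rho u$ with $u<0$, so $v$ strictly decreases; compactness of $T$ caps the number of damping steps in any $T^-$-excursion. Second, I would show that once $u_{l_j}$ falls below an explicit threshold computable from $\rho$, $\gamma$, and the range of $v$ in $T$, the state entering $T^-$ fails the damping inequality $u/\gamma+v>0$ outright, forcing $m_j=0$ and collapsing the recurrence to the geometric contraction $u_{l_{j+1}}\le \rho\, u_{l_j}$. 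Finally, to rule out a positive limsup I would argue by contradiction: assume $\limsup_j u_{l_j}=\alpha>0$ and extract a subsequence $(u_{l_{j_k}},v_{l_{j_k}})\to(u^{*},v^{*})$ with $u^{*}=\alpha$, so that $(u^{*},v^{*})$ lies in the forward-invariant $\omega$-limit set $\Omega\subset T$. Since the drift case alone would force $v$ to escape $T$, the orbit of $(u^{*},v^{*})$ must execute a full $T^+\to T^-\to T^+$ cycle inside $\Omega$; the recurrence together with the bound on $m_j$ then produces a subsequent $T^+$-entry value strictly less than $\alpha$, contradicting the maximality of $\alpha$ on $\Omega\cap\{T^+\text{-entries}\}$ and hence forcing $\alpha=0$.
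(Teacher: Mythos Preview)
Your proposal is considerably more careful than the paper's own argument, and in fact you have put your finger on a genuine omission there. The paper asserts that $u_{n+1}=u_n$ whenever both $(u_n,v_n)$ and $(u_{n+1},v_{n+1})$ lie in $T^-$, and from this concludes $u_{n_{j+1}}=\rho\,u_{n_j}$ along the entire $T^+$-subsequence. But the damping operator $D_{(\gamma,\rho)}$ acts whenever $u/\gamma+v>0$, and the strip $\{-1<u\le 0,\ 0<u/\gamma+v\le 1/2\}$ lies inside $T^-$; one can check (e.g.\ for $\gamma=0.2$, $\rho=0.98$) that the state immediately after a $T^+\to T^-$ exit can land in this strip. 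So the paper's claimed recurrence is only the special case $p_j=m_j=0$ of your correct formula $u_{l_{j+1}}=\rho^{\,p_j+m_j+1}u_{l_j}+(1-\rho^{m_j})$. You are not taking a different route so much as filling a hole the paper left.

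That said, your patch is itself not yet closed. In step 3 you assert that the recurrence, applied at a point with $u$-coordinate $\alpha$, yields a subsequent $T^+$-entry value strictly below $\alpha$; but $u_{l_{j+1}}<\alpha$ is equivalent to $\alpha>(1-\rho^{m_j})/(1-\rho^{p_j+m_j+1})$, and nothing in your compactness bound $m_j\le M$ ties $m_j$ to $\alpha$ tightly enough to force this inequality. (For $p_j=0$ and $m_j$ large the right side approaches $1$.) You also invoke forward invariance of the $\omega$-limit set, which requires care since $M_{(\gamma,\rho)}$ is discontinuous. A more robust route is to sharpen your step 2 quantitatively: show that the $k$-th consecutive damping step in $T^-$ requires the $u$-value \emph{at the moment of exit} to exceed a threshold $c_k$, with $c_k$ strictly increasing in $k$; since this exit value is at most $u_{l_j}$, comparing $c_k$ directly against $(1-\rho^{k})/(1-\rho^{k+1})$ would give monotone decrease of $u_{l_j}$ outright, with no need for the $\omega$-limit argument.
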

 
 \begin{proof}
First note that the event $(u_n, v_n) \in T^+$ must keep recurring, for if not, or if $(u_n, v_n) \notin T^+$ for all $n \geq 0$, then $b_n = 0$ for all $n$ according to Proposition \ref{invariantregion}, and 
$$v_n = v_0 + \sum^n_{j=0} u_j = v_0 + (n+1) u_0 $$
 diverges.  The index set ${\cal{I}^+} = \{ n_j : u_{n_j} \in T^+ \}$ is then an infinite subset of the natural numbers, so that $(u_{n_j})$ represents an infinite subsequence of $(u_n)$.  As $u_{n+1} = \rho u_n - 1$ in passing from $T^+$ to $T^-$, $u_{n+1} = u_n$ while both $(u_n,v_n)$ and $(u_{n+1}, v_{n+1})$ remain in $T^-$, and $u_{n+1} = u_n + 1$ in passing back from $T^-$ to $T^+$, it follows that $u_{n_{j+1}} = \rho u_{n_j}$ contracts along the index set $\cal I^+$. The subsequence $u_{n_j} = \rho^j u_{n_0}$ thus converges to zero as $n_j \rightarrow \infty$.
 \end{proof}
 
\noindent  Note that convergence of the subsequence $(u_{n_j})$ in Lemma \ref{utozero} does not guarantee convergence of the \emph{full} sequence $(u_n)$, as the residual sequence $(u_{n_k})$ over the index set ${\cal \cal{I}^-} = \mathbb{N} \setminus {\cal{I}^+}$ forms a subsequence satisfying $u_{n_k} \rightarrow -1$ as $k \rightarrow \infty$, if ${\cal \cal{I}^-}$ is an infinite set.   However, the following proposition ensures that this situation cannot occur.
 
 \begin{proposition}
 If $(u_0, v_0) \in T$, then the full state sequence $(u_n, v_n) = M^n_{(\gamma, \rho)}(u_0, v_0)$ satisfies $(u_n, v_n) \rightarrow (0^+,0^+)$ as $n \rightarrow \infty$. 
 \label{tozero!}
 \end{proposition}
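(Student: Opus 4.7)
The plan is to build on Lemma \ref{utozero}, which gives $u_{n_j}\to 0$ along the index set ${\cal I}^+$ of returns to $T^+$, by tracking $v$ simultaneously and showing that the regime in which $b=+1$ fires must terminate in finitely many returns. By Proposition \ref{invariantregion}, a generic return cycle goes from $T^+$ via a step of $b=+1$ into $T^-$, coasts there for some $k_j\ge 0$ steps with $(b,q)=(0,0)$ (during which $u$ is constant and $v$ drifts linearly), and comes back to $T^+$ via a step of $b=-1$. Composing the affine pieces of $A_\gamma$ and $D_{(\gamma,\rho)}$ over one such cycle produces the coupled recursion
\begin{equation*}
u_{n_{j+1}}=\rho\,u_{n_j},\qquad v_{n_{j+1}}=\rho\,v_{n_j}+\rho u_{n_j}(2+k_j)-1-k_j,
\end{equation*}
where $k_j$ is uniformly bounded in $j$, because the $T$-invariance $u_{n_j}/\gamma+v_{n_j}\le \tfrac12+\tfrac1\gamma$ caps the initial value of the drifting quantity, while each coasting step in $T^-$ decreases it by at least $1-\rho$.

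The crux is to show that this recursion self-terminates. As $u_{n_j}\to 0$, the $v$ update becomes $v_{n_{j+1}}=\rho v_{n_j}-(1+k_j)+o(1)$, whose equilibria lie at $v^*=-(1+k^*)/(1-\rho)<0$. But a $b=+1$ event at $n_j$ requires $v_{n_j}>1/(2\rho)-u_{n_j}/\gamma$, which is asymptotically strictly positive. The two constraints are incompatible, so after finitely many returns the iterate $v_{n_j}$ is forced below the threshold $1/(2\rho)-u_{n_j}/\gamma$ and no further $b=+1$ event can be triggered. This is the main obstacle: one must carry out the bookkeeping carefully enough, given that $k_j$ itself depends on the state, to rule out a spurious limit cycle that would keep $v_{n_j}$ above the threshold indefinitely. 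The asymmetry of the damping, applied only when $u/\gamma+v>0$, is what produces the unopposed negative term $-(1+k_j)$ in the $v$ update and drives the termination.

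Once the orbit is trapped in $T^+$ with $b\equiv 0$, the residual dynamics split into two affine branches. When $u/\gamma+v\le 0$ the map acts as $(u,v)\mapsto(u,v+u)$, so $v$ strictly increases by $u>0$ at each step and this branch can be applied only finitely many times in a row before $u/\gamma+v$ crosses $0$. When $u/\gamma+v>0$ the map acts as $(u,v)\mapsto(\rho u,\,\rho(v+u))$; a short induction gives $u_n=\rho^{n-N}u_N$ and $v_n=\rho^{n-N}\bigl(v_N+(n-N)u_N\bigr)$ for $n\ge N$, both of which tend to zero at rate $\rho^n$ up to a polynomial factor. Combining the two branches yields $(u_n,v_n)\to(0^+,0^+)$ and completes the proof.
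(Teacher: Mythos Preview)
Your proof is correct and tracks the paper's argument closely in structure: reduce to small $u$ via Lemma~\ref{utozero}, show by contradiction that the $b=+1$ regime must terminate, then analyze the residual $b\equiv 0$ dynamics in $T^+$. The difference lies in how the contradiction is obtained. You compose the affine pieces over a full return cycle $T^+\to T^-\to T^+$ to get the explicit return-map recursion $u_{n_{j+1}}=\rho u_{n_j}$, $v_{n_{j+1}}=\rho v_{n_j}+\rho u_{n_j}(2+k_j)-(1+k_j)$, bound $k_j$ uniformly, and then argue that the attracting equilibrium of the $v$-recursion is negative while the $b=+1$ threshold is positive. The paper instead works at the single-step level: under the assumption that every visit to $T^+$ fires $b=+1$, each of the three step types (a), (b), (c) satisfies $v_{n+1}\le v_n+\epsilon$ or $v_{n+1}\le v_n+\epsilon-1$, and since (c) cannot repeat consecutively one gets the period-2 bound $v_{n+2}\le v_n+2\epsilon-1$, forcing $v_n\to-\infty$ and contradicting boundedness of $T$.

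The paper's route is shorter because it sidesteps the bookkeeping you flag as ``the main obstacle'': there is no need to bound $k_j$ or worry about its state-dependence, since the period-2 inequality already captures the net downward drift regardless of how many coasting steps intervene. Your approach, on the other hand, yields the explicit return map, which gives more quantitative information and makes the role of the asymmetric damping (the unopposed $-(1+k_j)$ term) more transparent. Your treatment of the residual dynamics is also slightly more complete than the paper's Case~1, which tacitly assumes the orbit is already in the damped branch $u/\gamma+v>0$; you correctly note that the undamped branch can occur but terminates in finitely many steps since $v$ increases by $u>0$.
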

 
 \begin{proof}
 Using the aforementioned results, we assume without loss that $(u_0, v_0) \in T^+$, and that $u_0 \leq \epsilon$ for arbitrarily small $\epsilon > 0$.  
 \begin{enumerate}
 \item Consider the situation where $(u_n, v_n) \in T^+$ for all $n \geq 0$.  In this case, $b_n = 0$ for all $n \geq 0$, $u_n \rightarrow 0^+$, and $v_n$ is defined recursively according to 
 \begin{eqnarray}
 v_{n+1} &=& \rho v_{n} + \rho u_{n-1} \nonumber \\
 &=& \rho^2 v_{n-1} + \rho^2 u_{n-2} + \rho u_{n-1} \nonumber \\
 &=& ... \nonumber \\
 &=& \rho^n v_0 + \sum^{n-1}_{j=0} \rho^{n-j} u_j \nonumber \\
 &=& \rho^n v_0 + \rho^n \epsilon,
 \end{eqnarray}
 where we use in the last equality that $u_j = \rho^j u_0$.  In this case, then, $v_n \rightarrow 0$.

\item Suppose that  $\rho  (u_0 + \gamma v_0) \leq \gamma/2$, in which case  $b_1 = 0, u_1 = \rho u_0$, and $v_1 = \rho  (v_0 + u_0)$, yielding
\begin{eqnarray}
\rho  (u_1 + \gamma v_1) &=& \rho ^2 (u_0 + \gamma v_0) + \rho ^2 \gamma u_0 \nonumber \\
&\leq& \rho (\gamma/2) + \rho ^2 \gamma \epsilon, \nonumber \\
&\leq& \gamma/2,
\end{eqnarray}
the last inequality requiring that $u_0 \leq \epsilon \leq (1 - \rho)/(2 \rho^2)$, which is satisfied as $\epsilon$ was arbitrary.  This case, then, reduces to the first case.

\item It remains only to consider sequences $(u_n, v_n)$ for which $(u_n, v_n) \in T^+$ implies $b_n = 1$, or $\rho \big( u_n + \gamma v_n \big) > \gamma/2$.  In terms of Figure \ref{fig:stable2}, such sequences avoid the lower parallel section of the set $T^+$.  The trajectory of such $(u_{n},v_{n})$ under the map $M_{(\gamma, \rho)}$ can then be described as follows:

\begin{enumerate}
\item If $(u_{n}, v_{n}) \in T^+$, then  \\
\vspace{1mm}
\hspace{10mm} $b_{n} = 1$, \\
\vspace{1mm}
\hspace{10mm} $u_{n+1} = \rho  u_{n}  - 1$,  \\
\vspace{1mm}
\hspace{10mm} $v_{n+1} = \rho v_{n} + \rho  u_{n} - 1 \leq \rho  v_{n} + \rho  \epsilon - 1$ \\ 

\noindent Else if $(u_{n}, v_{n}) \in T^-$,

\vspace{5mm}
\item  If $-1/2  \leq u_{n}/\gamma + v_{n} \leq 1/2$, then \\
\vspace{1mm}
\hspace{10mm} $b_{n} = 0$, \\
\vspace{1mm}
\hspace{10mm} $u_{n+1} =  u_{n} \leq \epsilon - 1$,  \\
\vspace{1mm}
\hspace{10mm} $v_{n+1} = v_{n} + u_{n} \leq v_{n} + \epsilon - 1$ \\ 

\item  If $u_{n}/\gamma + v_{n} \leq -1/2,$ then  \\
\vspace{1mm}
\hspace{10mm} $b_{n} = -1$, \\
\vspace{1mm}
\hspace{10mm} $u_{n+1} = u_{n}  + 1 \leq \epsilon$,  \\
\vspace{1mm}
\hspace{10mm} $v_{n+1} =  v_{n} +  u_{n} + 1 \leq v_{n} + \epsilon$ 
\end{enumerate}

Since $(c)$ cannot occur in successive iterations according to the alternating nature of $b_n$, we arrive at the period-2 inequality
\begin{equation}
v_{n+2} \leq v_n + 2\epsilon - 1,
\end{equation}
indicating that the iterates $v_n$ diverge. This case, in conclusion, cannot occur.
\end{enumerate}  
\end{proof}

\subsection{The invariant set $T$ as a global trapping set for $M_{(\gamma, \rho)}$}
We prove in this section that the positively invariant set $T$ is also a global \emph{trapping set} for $M_{(\gamma, \rho)}$; this result in combination with the results of the last section guarantee that the origin is a global fixed point for $M_{(\gamma, \rho)}$.
\\
\\
To be precise,
\begin{definition}
A global trapping set of a discrete mapping $M: \mathbb{R}^2 \rightarrow \mathbb{R}^2$ is any set $S \subset \mathbb{R}^2$ such that
\begin{enumerate}
\item $M(S) \subset S$ (that is, $S$ is a positively invariant set for $M$),
\item for any ${\bf x} \in \mathbb{R}^2$, there exists $n \geq 0$ such that $M^n {\bf x} \in S$.
\end{enumerate}
\end{definition}
\noindent The construction of global trapping sets for the \emph{full} second-order $\Sigma \Delta$ scheme was recently developed by Sidong \cite{SidongThesis}.   We will follow the approach of that work and show that $T$ is a global trapping set for the map $M_{(\gamma, \rho)}$ using a \emph{Lyapunov function} argument.   In the context of discrete dynamical systems, a Lyapunov function refers loosely to a nonnegative convex energy functional $h: \mathbb{R}^2 \rightarrow \mathbb{R}$ that contracts along the action of the discrete map, $h(M{\bf x}) \leq (1 - \delta)h({\bf x})$.    If $h$ contracts as such for all ${\bf x} \in \mathbb{R}^2$, and $h({\bf x}) = 0$ only if ${\bf x} = {\bf 0}$, then all orbits $(u_n, v_n) = M^n(u_0, v_0)$ must converge to the global fixed point $(0,0)$.  More generally, if $h$ decreases along orbits $h(M^n{\bf x}) \leq h({\bf x}) - \delta$ while the iterates $M^n{\bf x}$ remain outside an invariant set $S \in \mathbb{R}^2$, then $S$ can be shown to be a globally attracting set, as follows:

\begin{lemma}
Let $S$ be a positively invariant set for the discrete map $M$.  Suppose there exists a nonnegative function $h: \mathbb{R}^2 \rightarrow \mathbb{R}^+$ and a parameter $\delta > 0$ for which, if ${\bf x}$ is not in $S$, then either $M^k {\bf x} \in S$ or $h(M^k{\bf x}) - h({\bf x}) \leq - \delta$ after some finite time $k$. It follows that $S$ is a global trapping set for $M$.
\label{globaltrap}
\end{lemma}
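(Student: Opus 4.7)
The plan is a standard Lyapunov-style contradiction argument, exploiting the fact that $h$ is bounded below by $0$ while the hypothesis forces $h$ to decrease by a definite amount $\delta$ along any orbit that manages to avoid $S$.

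First I would assume for contradiction that there exists a point ${\bf x}_0 \in \mathbb{R}^2$ whose forward orbit $\{M^n {\bf x}_0\}_{n \geq 0}$ never intersects $S$. Since ${\bf x}_0 \notin S$, the hypothesis furnishes a finite $k_1 = k({\bf x}_0)$ such that either $M^{k_1}{\bf x}_0 \in S$ or $h(M^{k_1}{\bf x}_0) \leq h({\bf x}_0) - \delta$. The first alternative contradicts our standing assumption, so the second must hold. Set ${\bf x}_1 := M^{k_1}{\bf x}_0$, which again lies outside $S$, and repeat: there is a finite $k_2 = k({\bf x}_1)$ with ${\bf x}_2 := M^{k_2}{\bf x}_1 = M^{k_1+k_2}{\bf x}_0 \notin S$ and $h({\bf x}_2) \leq h({\bf x}_1) - \delta \leq h({\bf x}_0) - 2\delta$.

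Iterating this construction inductively produces a sequence of times $n_j = k_1 + k_2 + \cdots + k_j$ and points ${\bf x}_j = M^{n_j}{\bf x}_0 \notin S$ satisfying
\begin{equation*}
h({\bf x}_j) \;\leq\; h({\bf x}_0) - j \delta.
\end{equation*}
Choosing any $j > h({\bf x}_0)/\delta$ yields $h({\bf x}_j) < 0$, contradicting the nonnegativity of $h$. Hence no such orbit-avoiding initial point ${\bf x}_0$ can exist, and every orbit must enter $S$ within finitely many iterations. Combining this with the standing positive invariance $M(S) \subset S$ (which guarantees that once an orbit enters $S$ it stays there) verifies both defining properties of a global trapping set.

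I do not anticipate any serious obstacle: the only mild subtlety is that the integer $k$ in the hypothesis depends on the base point, but this dependence is harmless because the inductive step reapplies the hypothesis to a \emph{new} base point ${\bf x}_j$ each time and therefore needs only that some finite $k({\bf x}_j)$ exists. It will be worth stating this dependence explicitly in the write-up to avoid any appearance of a uniform-$k$ assumption.
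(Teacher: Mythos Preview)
Your proposal is correct and follows essentially the same Lyapunov-style contradiction argument as the paper: assume an orbit never enters $S$, iterate the hypothesis to drive $h$ below zero, and contradict nonnegativity. Your write-up is in fact slightly more careful than the paper's in making explicit that the hypothesis is reapplied at each new base point ${\bf x}_j$ (so no uniform $k$ is assumed), which is a worthwhile clarification.
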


\begin{proof}
Suppose ${\bf x} \notin S$ is such that $M^k{\bf x} \notin S$ for all $k \geq 0$, and denote $c = h({\bf x})$. From the stated hypotheses, $h(M^{k_1}{\bf x}) \leq c - \delta$ after some finite time $k_1$, and, by induction,  $h(M^{k_{n}}{\bf x}) \leq c - n \delta$ after a finite time $k_n$ for any positive integer $n$.   But then eventually $h(M^{k}{\bf x}) \leq 0$, which is impossible since $h \geq 0$ by assumption.
\end{proof}

\noindent Lemma \eqref{globaltrap} has the following implication for the map $M_{(\gamma, \rho)} = A_{\gamma} \circ D_{(\gamma, \rho)}$ and invariant set $T$ under consideration.

\begin{proposition}
Consider the positively invariant set $T$ as constructed in Proposition \eqref{invariantregion}.  $T$ is a global trapping set for $M_{(\gamma, \rho)}$ if  there exists a nonnegative and \emph{convex} function $h: \mathbb{R}^2 \rightarrow \mathbb{R}^+$ and a parameter $\epsilon > 0$ for which the following sets are contained in $T$:
\begin{eqnarray}
\Delta_h &:&  \{(u,v) \in \mathbb{R}^2 : h(u,v) \leq h(A_{\gamma}(u,v)) \}, \nonumber \\\Omega^{\epsilon}_{h} &:& \{(u,v) \in \mathbb{R}^2 : h(u,v) \leq \epsilon \}
\end{eqnarray}
\label{globaltrap2}
\end{proposition}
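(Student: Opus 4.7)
The plan is to verify the hypotheses of Lemma~\ref{globaltrap}: given any ${\bf x}\notin T$, I will show that after one iteration the orbit either enters $T$ or $h$ has dropped by a fixed amount $\delta>0$. The argument proceeds by decomposing $M_{(\gamma,\rho)} = A_\gamma\circ D_{(\gamma,\rho)}$ and exploiting the two inclusions $\Delta_h\subset T$ and $\Omega^{\epsilon}_h\subset T$ separately.

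First I would observe that the damping step is harmless for $h$: since $D_{(\gamma,\rho)}{\bf x}$ is either ${\bf x}$ itself or $\rho{\bf x}$ with $\rho\in(0,1)$, convexity of $h$ together with the fact that the origin is a minimizer of $h$ (a mild assumption satisfied by the Lyapunov function the authors will construct in the sequel) yields
\[
h(\rho{\bf x})\le \rho\, h({\bf x})+(1-\rho)\,h({\bf 0})\le h({\bf x}).
\]
Next, I would split on whether $D_{(\gamma,\rho)}{\bf x}\in T$. If yes, then the invariance of $T$ under $A_\gamma$ alone, which was actually established inside the proof of Proposition~\ref{invariantregion}, immediately gives $M_{(\gamma,\rho)}{\bf x}=A_\gamma D_{(\gamma,\rho)}{\bf x}\in T$ and the orbit is trapped. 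If instead $D_{(\gamma,\rho)}{\bf x}\notin T$, then by $\Delta_h\subset T$ we have $D_{(\gamma,\rho)}{\bf x}\notin\Delta_h$, so by the very definition of $\Delta_h$,
\[
h\bigl(A_\gamma D_{(\gamma,\rho)}{\bf x}\bigr)<h\bigl(D_{(\gamma,\rho)}{\bf x}\bigr)\le h({\bf x}),
\]
a strict one-step decrease. Iterating, either the orbit enters $T$ at some finite time, or $(h(M^n_{(\gamma,\rho)}{\bf x}))_n$ is a strictly decreasing sequence; moreover, the hypothesis $\Omega^{\epsilon}_h\subset T$ forces $h(M^n_{(\gamma,\rho)}{\bf x})>\epsilon$ at every iterate that lies outside $T$, preventing this sequence from drifting harmlessly towards $0$.

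The main obstacle will be upgrading this pointwise strict decrease to the \emph{uniform} decrement $\delta>0$ required by Lemma~\ref{globaltrap}. Here I expect convexity of $h$ to do the heavy lifting: a convex nonnegative $h$ has convex sublevel sets, and for the concrete coercive $h$ to be constructed in the sequel the sublevel set $\{h\le h({\bf x}_0)\}$ is compact, so as long as the orbit avoids $T$ it is confined to the compact annular region $T^{c}\cap\{\epsilon<h\le h({\bf x}_0)\}$. By partitioning $A_\gamma$ into its three affine pieces and handling each separately—thereby sidestepping the jump discontinuities—the decrement ${\bf y}\mapsto h({\bf y})-h(A_\gamma D_{(\gamma,\rho)}{\bf y})$ becomes continuous and strictly positive on each of three compact subsets, and taking the minimum of the three infima yields the uniform $\delta>0$. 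Feeding this into Lemma~\ref{globaltrap} completes the proof that $T$ is a global trapping set for $M_{(\gamma,\rho)}$.
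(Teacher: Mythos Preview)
Your overall plan—reduce to Lemma~\ref{globaltrap}, decompose $M_{(\gamma,\rho)}=A_\gamma\circ D_{(\gamma,\rho)}$, use $\Delta_h\subset T$ to force $h$ to decrease under $A_\gamma$ outside $T$, and use $\Omega^\epsilon_h\subset T$ to keep $h>\epsilon$ there—matches the paper exactly. The gap is in how you obtain the \emph{uniform} decrement $\delta$.

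Your compactness argument does not go through. The region $T^c\cap\{\epsilon<h\le h({\bf x}_0)\}$ is bounded but not closed, since $T$ is closed and hence $T^c$ is open. When you pass to the closure of each affine piece you pick up boundary points on $\partial T$, and nothing in the hypothesis $\Delta_h\subset T$ prevents $\Delta_h$ from meeting $\partial T$; at such points the decrement $h-h\circ A_\gamma$ vanishes. Concretely, on the half-plane $S^-=\{u+\gamma v<0\}$ the damping $D_{(\gamma,\rho)}$ is the identity, so the only possible drop is $h({\bf x})-h(A_\gamma{\bf x})$, and this can be made arbitrarily small as ${\bf x}\to\partial T$ from outside. (For the Lyapunov function actually used later, $h=u^2+|2v-u|$, there are unbounded regions in $S^-\setminus T$ on which $h\circ A_\gamma=h$ exactly, so the issue is not merely a limit.) Thus no positive infimum is available on $S^-$ by compactness alone.

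The paper's proof avoids this entirely by splitting on $S^\pm$. On $S^+$ it uses precisely the estimate you already wrote down but did not exploit: $h(\rho{\bf x})\le\rho\,h({\bf x})$ together with $h({\bf x})>\epsilon$ gives the explicit, uniform decrement
\[
h(M_{(\gamma,\rho)}{\bf x})-h({\bf x})\le h(\rho{\bf x})-h({\bf x})\le-(1-\rho)\,h({\bf x})<-(1-\rho)\epsilon=:-\delta,
\]
with no compactness needed. On $S^-$ the paper abandons the Lyapunov function altogether and argues dynamically: if the orbit stayed in $S^-\cap T^c$ forever then $b_n\in\{-1,0\}$, so $u_n$ is nondecreasing and the running sum $v_n=v_0+\sum u_k$ must diverge, contradicting the fact that $h$ (hence the orbit) remains bounded. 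Hence the orbit must eventually hit $T$ or $S^+$, after which the $S^+$ estimate applies. You are missing both pieces: extracting $(1-\rho)\epsilon$ from the damping on $S^+$, and recognizing that a separate, non-Lyapunov argument is needed on $S^-$.
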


\begin{proof}
We verify that the conditions for Lemma \eqref{globaltrap} hold for invariant set $T$ and parameter $\delta = \epsilon (1 - \rho) > 0$.  The proof is split into two cases, according to whether or not ${\bf x} \in S^+ = \{ (u,v) \in \mathbb{R}^2: u + \gamma v \geq 0 \}$ or $S^- = \mathbb{R}^2 \setminus S^+$.

\begin{enumerate}
\item {\bf Case 1:} Suppose first that ${\bf x} \in S^+$. On this half-space, $D_{(\gamma,\rho)}: {\bf x} \rightarrow \rho {\bf x}$ acts as a contraction. If ${\bf x} \notin T$, but $D_{(\gamma,\rho)} {\bf x} \in T$, then $M_{(\gamma, \rho)}{\bf x} = A_{\gamma} \circ D_{(\gamma,\rho)} {\bf x} \in T$ by positive invariance of $T$ for $A_{\gamma}$.  If alternatively  $D_{(\gamma,\rho)} {\bf x} \notin T$, then
\begin{eqnarray}
h(M_{(\gamma,\rho)} {\bf x}) - h({\bf x}) &=&  h(A_{\gamma} (D_{(\gamma, \rho)} {\bf x})) - h({\bf x}) \nonumber \\
&\leq& h(D_{(\gamma, \rho)} {\bf x}) - h({\bf x})  \textrm{, as } \Delta_h \in T  \nonumber \\
&=& h(\rho {\bf x}) - h({\bf x}) \nonumber \\
&\leq& \rho h({\bf x}) - h({\bf x}) \textrm{, using convexity of $h$ and $h({\bf 0}) = 0$} \nonumber \\ 
&\leq& -\epsilon(1 - \rho), \textrm{ since $\Omega_h^{\epsilon} \in T$} \nonumber \\
&=& -\delta.  \nonumber
\end{eqnarray}

\item {\bf Case 2}: Suppose now that $M_{(\gamma,\rho)}^k {\bf x} \in (S^+)^c \cap T^c$ for all $k \geq 0$.  Under this assumption, the quantization output is restricted to $b_j \in \{-1, 0\}$ for all $j$, so that $u_j = u_{j-1} - b_j$ forms a monotonically nondecreasing sequence.  If $u_j < 0$ for all $j$, then $| v_j | = | v_0 + \sum_{k = 0}^j u_k| $ diverges, which is impossible since $(u_j,v_j)$ belongs to a bounded set. If alternatively $u_j > 0$ at some index $j$, then $u_n > 0$ for all $n \geq j$ by monotonicity and, still, $|v_j| \rightarrow \infty$.  The case $u_j = 0$ for all $j \geq n$ cannot happen since $(u_n,v_n) \notin T$ by assumption.  It follows that this case is impossible.
\end{enumerate}

We can conclude that after a finite number of iterations $k$, either $M_{(\gamma, \rho)}^k \in T$ or $M^k_{(\gamma, \rho)} \in S^+$.  The result follows by application of Lemma \ref{globaltrap}.
\end{proof}

\noindent Following the approach in  \cite{SidongThesis}, we consider the following Lyapunov function for the map $A_{\gamma}$:
\begin{equation}
h(u,v) = u^2 + |2v - u|.
\end{equation}
Note that $h$ is a convex function on $\mathbb{R}^2$. Letting $h^+(u,v) = u^2 + 2v - u$ and $h^-(u,v) = u^2 -2v + u$, it is easily verified that $h(u,v) = \max{ \{h^+(u,v), h^-(u,v) \} }$.  The motivation for this choice of Lyapunov function is that $h^+$ and $h^-$ are the unique functional solutions satisfying the relations 
\begin{eqnarray}
\left. h^+(A_{\gamma}(u,v)) \right \vert_{ u + \gamma v > \gamma/2} = h^+(u - 1, u + v -1) &=& h^+(u,v),  \nonumber \\ 
\left. h^-(A_{\gamma}(u,v)) \right \vert_{ u + \gamma v < -\gamma/2} = h^-(u + 1, u + v +1) &=& h^-(u,v). 
\end{eqnarray}
It is straightforward that the positively invariant set $T$ contains the set $\Omega_{h}^{\epsilon} = \{(u,v): h(u,v) \leq \epsilon \}$ for sufficiently small $\epsilon$.   In order to apply Proposition \eqref{globaltrap2}, it remains only to verify that $T$ also contains the set $\Delta_h = \{ (u,v) \in \mathbb{R}^2: h(u,v) \leq h \big( A_{\gamma}(u,v) \big) \}$.

%\begin{figure}
% \begin{center}
% \includegraphics[width=12cm]{}
% \caption{The Lyapunov function $h(u,v) = u^2 + |2v - u|$ satisfies $h(A_{\gamma}(u,v)) \leq h(u,v)$ if $(u,v)$ on the complement of the region $R = R_1 \cup R_2$.}
% \end{center}
% \label{fig:region}
% \end{figure}

 \begin{proposition}
Consider the map $A_{\gamma}$ as defined in \eqref{Mmaps}, the convex function $h(u,v) = u^2 + | 2v - u|$, and the set $R = R_1 \cup R_2$, with
 \begin{eqnarray}
 R_1 &=& \{ (u,v) \in \mathbb{R}^2: u + \gamma v \geq 0,  2v + u \leq 1,  u \leq 1/2 \} \textrm{, and} \nonumber \\
 R_2 &=& \{ (u,v) \in \mathbb{R}^2 : u + \gamma v < 0, 2v + u \geq -1, u \geq -1/2 \}.
 \label{R}
 \end{eqnarray}
The set $R$ is contained in the positively invariant set $T$ (as shown in Figure \ref{fig:stable3}).  Moreover, $R$ contains the set $\Delta_h = \{ (u,v) \in \mathbb{R}^2: h(u,v) \leq h \big( A_{\gamma}(u,v) \big) \}$.
 \label{trap_eps0}
 \end{proposition}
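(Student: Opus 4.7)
The plan is to prove the two inclusions separately: first the routine inclusion $R \subseteq T$, and then the substantive inclusion $\Delta_h \subseteq R$.

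For $R \subseteq T$ I would argue by direct comparison of the defining inequalities. By the evident symmetry $(u,v) \mapsto (-u,-v)$ that swaps $R_1 \leftrightarrow R_2$ and $T^+ \leftrightarrow T^-$, it suffices to check $R_1 \subseteq T$. Take $(u,v) \in R_1$. The bound $u \le 1/2$ places $u$ either in $(0,1)$ (aiming at $T^+$) or in $[-1,0]$ (aiming at $T^-$); a brief check using $u+\gamma v\ge 0$ and $\gamma<2$ shows that the constraint $2v+u\le 1$ already forces $u \ge -\gamma/(2-\gamma) > -1$. The lower bound $u/\gamma + v \ge -1/2$ is immediate from $u+\gamma v \ge 0$. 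For the upper bound, one rewrites $u/\gamma + v \le u/\gamma + (1-u)/2 = u(1/\gamma - 1/2) + 1/2$, which is at most $1/(2\gamma)+1/4$ when $u \le 1/2$, and this is bounded above by both $1/2$ (for the $T^-$ alternative, when $u \le 0$) and $1/2 + 1/\gamma$ (for the $T^+$ alternative, when $u > 0$).

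For the inclusion $\Delta_h \subseteq R$ I would use the decomposition $h(u,v) = u^2 + |2v-u|$ and compute $h\!\circ\!A_\gamma - h$ separately on each of the three affine branches of $A_\gamma$. On the branch $u/\gamma + v > 1/2$, substituting $A_\gamma(u,v) = (u-1,\,u+v-1)$ yields
\begin{equation}
h(A_\gamma(u,v)) - h(u,v) \;=\; (1-2u) + |2v+u-1| - |2v-u|.
\end{equation}
On the middle branch $|u/\gamma+v|\le 1/2$ the expression collapses to $|2v+u|-|2v-u|$; the third branch is treated symmetrically. In each branch I would open the two absolute values into sign sub-cases, reducing the inequality $h(A_\gamma) \ge h$ to a linear inequality in $(u,v)$, and verify that the sub-cases compatible with $h(A_\gamma) \ge h$ collectively force the point into the corresponding half ($R_1$ on branches where $u+\gamma v \ge 0$, $R_2$ on the mirror branches), while the incompatible sub-cases give strict decrease of $h$.

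The main obstacle is precisely this sub-case bookkeeping: the branch conditions on $A_\gamma$ involve the line $u/\gamma + v = \pm 1/2$ and hence depend on $\gamma$, whereas the sign conditions that arise from the absolute values in $h$ and $h\!\circ\!A_\gamma$ involve the $\gamma$-independent lines $2v - u = 0$ and $2v+u = 1$. Reconciling these two families and verifying that, after discarding the strictly-decreasing sub-cases, the remaining description of $\Delta_h$ coincides with $R_1\cup R_2$ is where the delicate computation lies; the bound $u \le 1/2$ and the bound $2v+u\le 1$ appearing in $R_1$ are the two linear constraints that fall out of this analysis. Once this is done, both inclusions $\Delta_h\subseteq R\subseteq T$ are in hand, supplying the hypothesis needed to invoke Proposition~\ref{globaltrap2}.
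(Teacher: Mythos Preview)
Your approach is essentially the same as the paper's. Both arguments handle $\Delta_h \subseteq R$ by splitting according to the three affine branches of $A_\gamma$, opening the absolute values in $h$ and $h\circ A_\gamma$ into sign sub-cases, and reducing the comparison $h(A_\gamma(u,v))$ vs.\ $h(u,v)$ to the linear constraints $u\le 1/2$ and $2v+u\le 1$ (and their mirrors). The paper phrases this as the contrapositive---showing that on $S^+\setminus R_1$ one has $h(A_\gamma(u,v))\le h(u,v)$---while you phrase it as directly characterizing the sub-cases where $h(A_\gamma)\ge h$; these are the same case analysis. Your computed differences $(1-2u)+|2v+u-1|-|2v-u|$ on the outer branch and $|2v+u|-|2v-u|$ on the middle branch are exactly what the paper uses. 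One small note: the paper does not actually write out a proof of $R\subseteq T$ (it defers to the figure), so your sketch of that inclusion is additional; just be aware that your step ``$-\gamma/(2-\gamma)>-1$'' requires $\gamma<1$, which is implicit in the paper's setting but not stated in your argument.
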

\noindent  We defer the proof of Proposition \ref{trap_eps0}, which amounts to a straightforward case by case analysis, to Appendix 1.  
\\
\\
\noindent The main result of this chapter, Theorem \ref{mainthm}, follows from Proposition \ref{globaltrap2} and Lemma \ref{tozero!}.

\begin{proof}[Proof of Theorem \ref{mainthm}]
By Proposition \eqref{globaltrap2}, $T$ is a global trapping set for $M_{(\gamma, \rho)}$, the discrete map governing $(u_n, v_n) \rightarrow (u_{n+1}, v_{n+1})$ in the asymmetrically-damped scheme \eqref{2ndorder-aleak} with 4-level quantizer $Q_4^{\rho}$ and zero input $f_n \equiv 0$.  In other words, for any initial condition $(u_0, v_0)$, the iterates $(u_n, v_n) = M^n_{(\gamma, \rho)} (u_0, v_0)$ become trapped in the set $T$ after a finite number of iterations. From Lemma \ref{tozero!}, it follows that $(u_n, v_n) \rightarrow (0^+, 0^+)$; once the iterates $(u_n, v_n) \in T^+ \cap \{0 \leq u + \gamma v \leq \gamma/2 \}$ become trapped in the positive quadrant, the bit output $(b_n, q_n) = (0,1)$ is constant.  
\end{proof}
 
 \begin{figure}
 \begin{center}
 \includegraphics[width=12cm]{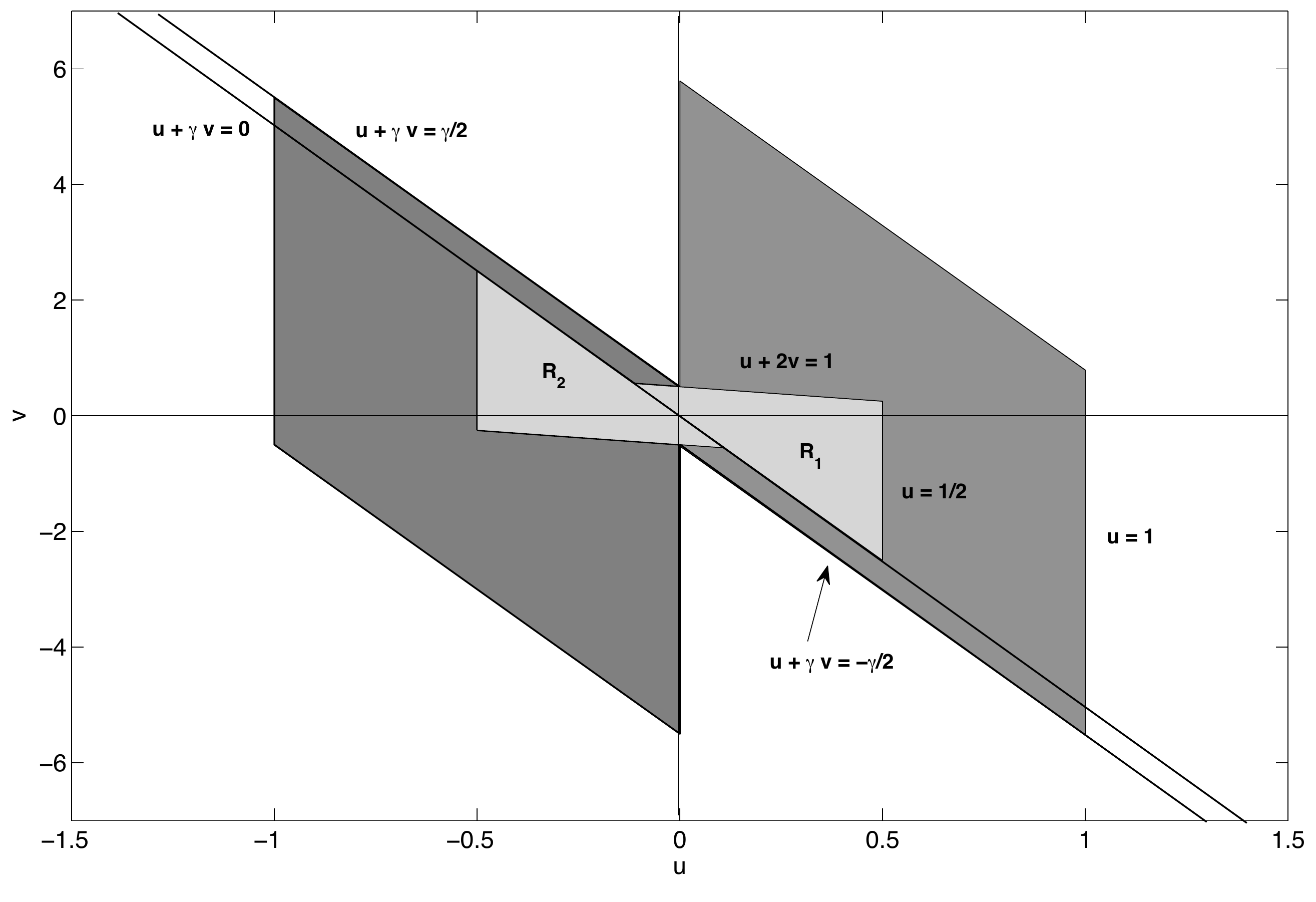}
 \caption{The positively invariant set $T$ is the union of two parallelograms, as shown in dark gray for parameter $\gamma = .2$. As the region $\Delta_h$, on which the map $A_{\gamma}$ may satisfy $A_{\gamma}(h(u,v)) - A_{\gamma}(u,v) > 0$, is contained in $R$ (light gray triangles) which is in turn contained in $T$, the set $T$ is a globally attracting set.  }

 \label{fig:stable3} 
 \end{center}
 \end{figure}

\begin{figure*}[htbp]
\begin{center}
\includegraphics[width=3in]{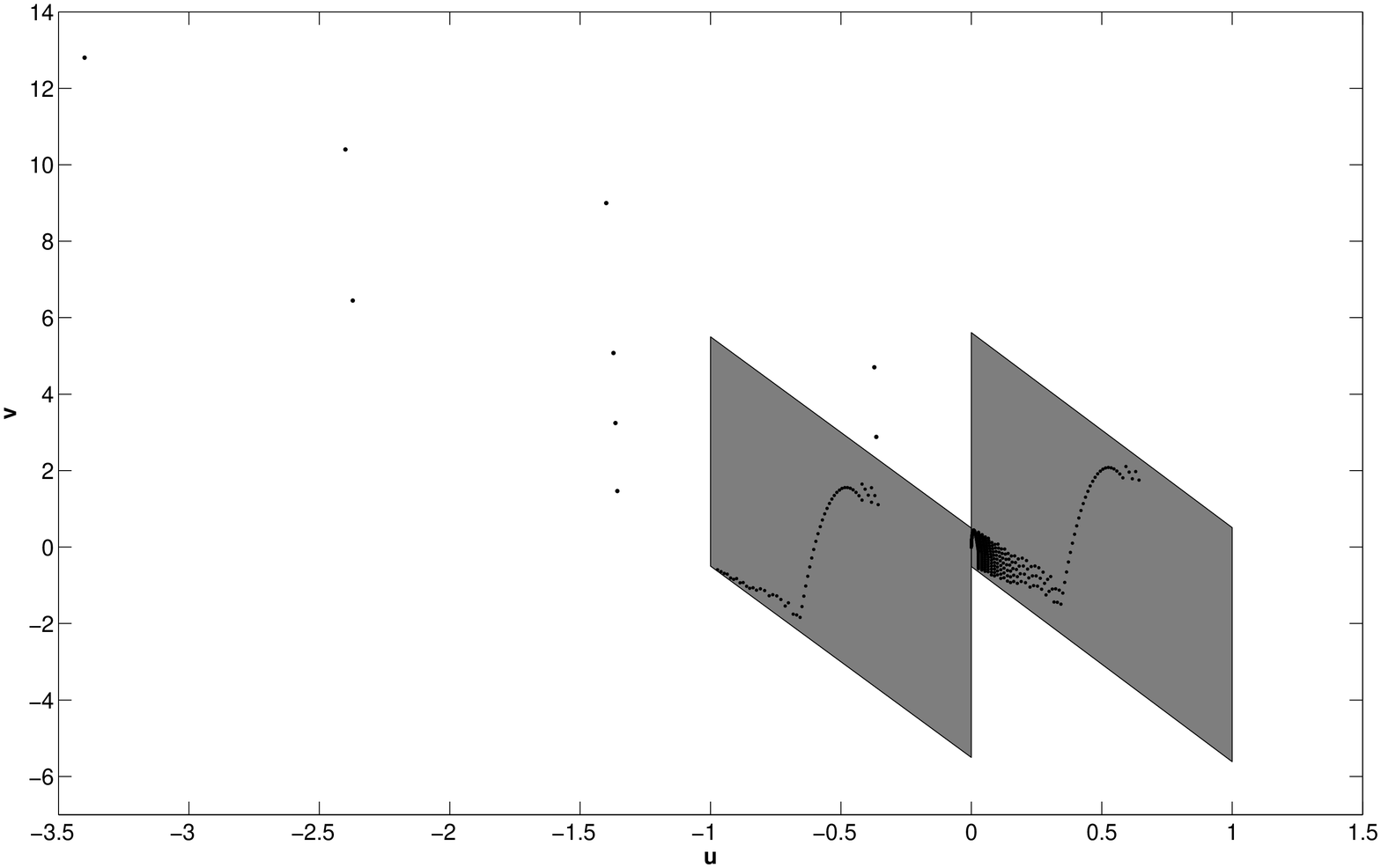}
\includegraphics[width=3in]{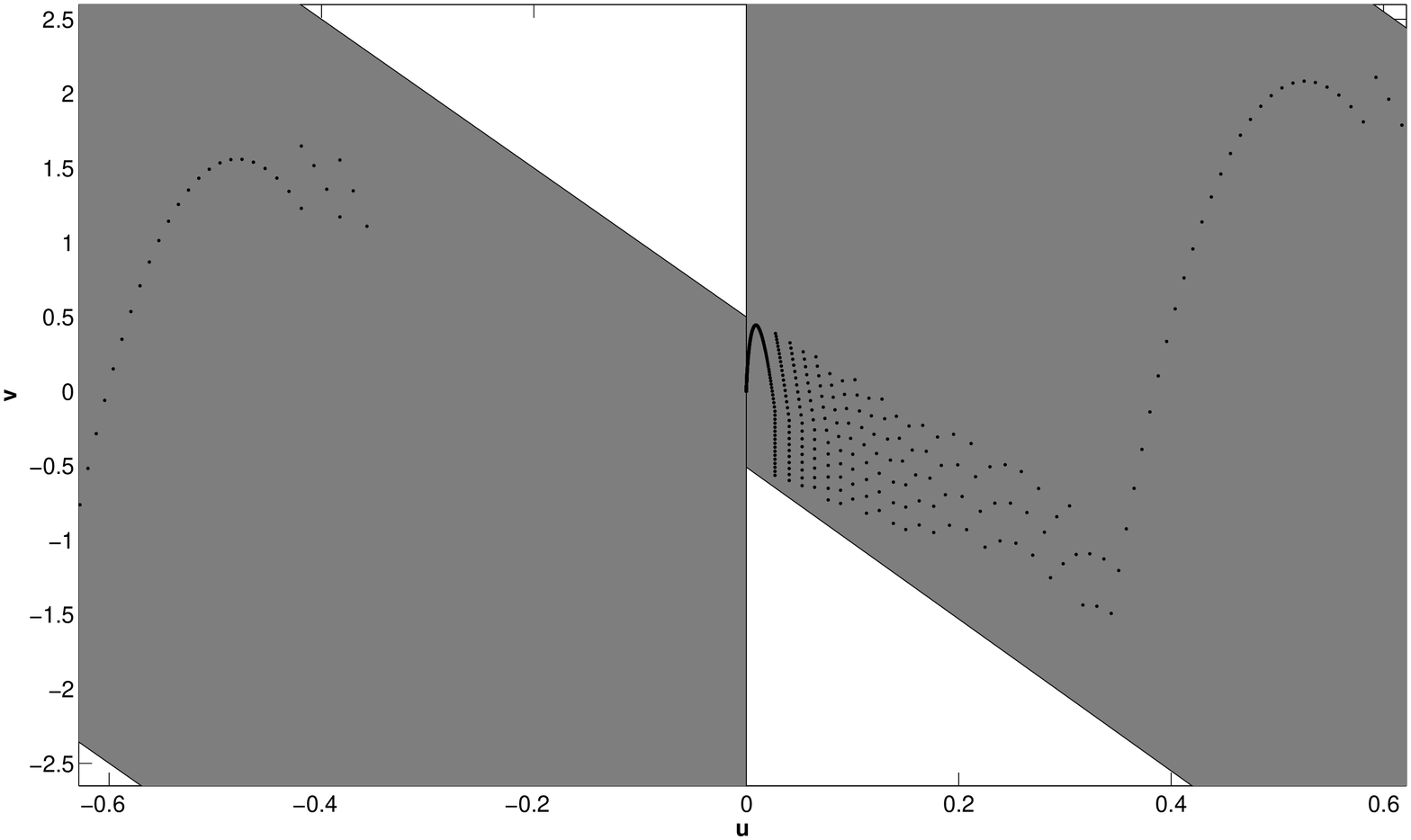}
\includegraphics[width=3in]{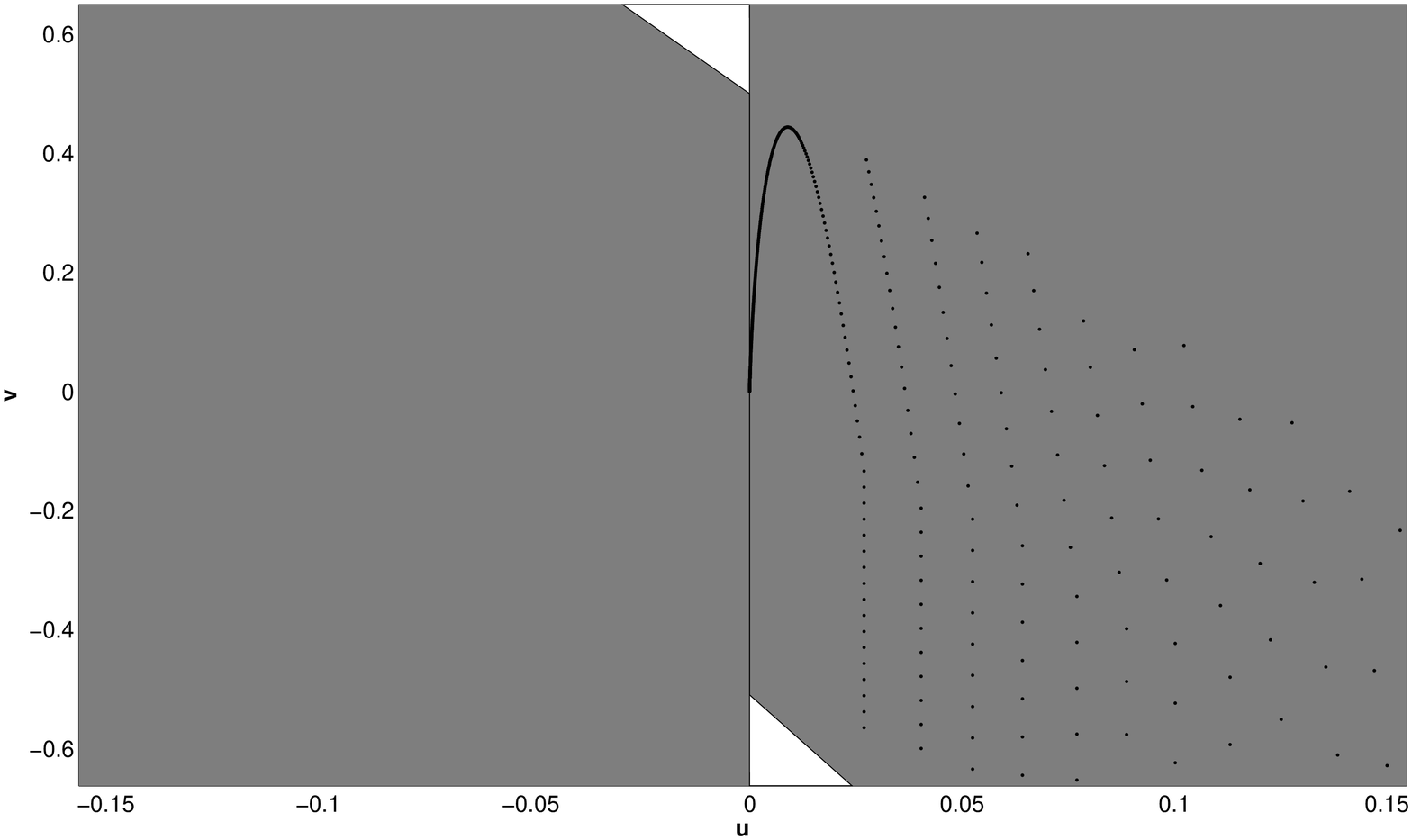}
\caption{ \small Different magnifications of an orbit of the map $M_{(\gamma, \rho)}$ for $\rho = .98$ and $\gamma = .2$.  The initial point $(u_0, v_0) = (-3.4, 12.7)$ can be seen in the top image; this point and the first few iterations are outside the trapping set $T$ (dark gray parallelograms).   Once trapped in $T$, the iterates $(u_n, v_n)$ converge to the globally attracting fixed point $(0,0)$.}
\label{fig:magnifty_orbit}
\end{center}
\end{figure*}
 
 \pagebreak
 
 \section{Future Directions}
We have introduced a $2$-bit second order $\Sigma \Delta$ scheme that is guaranteed to be \emph{quiet}:  periodicities in the bit output are eliminated when the input vanishes. It remains to analyze the stability of such quietness in the face of unavoidable component imprecisions.  Although a full analysis is beyond the scope of the current presentation, we outline a few key issues below.
\begin{enumerate}

\item {\bf Randomness helps.}  Any introduction of randomness to a quantization scheme will only \emph{increase} the aperiodicity of the bit output; this includes zero-mean additive noise on the samples $x_n = f_n + \epsilon_n$ and unbiased `flakiness' in the quantizer element.  Quietness  for the asymmetrically-perturbed scheme \eqref{2ndorder-aleak} is threatened more from \emph{biased} imprecisions, such as those resulting from an offset in the 4-level quantizer, $\tilde{Q}^{\rho}_4 (u) = Q_4^{\rho}(u + \delta)$.   Numerical simulations suggest that quietness of the asymmetric scheme nevertheless persists in the face of such offsets: the state sequence $(u_n, v_n)$ still converges at the onset of zero input, but the iterates $v_n$ approach a nonzero limit $L(\delta)$ that grows with the magnitude of the shift; if $\delta > 0$ is small enough, then quietness is still attained.  More generally, quietness of the asymetrically-damped scheme appears to persist for a much more general class of quantizers than the  the particular 4-level quantizer $Q^{\rho}_4$ that was needed for the theoretical analysis of the previous section; indeed, indistinguishable results are observed by implementing \eqref{2ndorder-aleak} with the far simpler symmetric 4-level quantizer,

\begin{equation}
Q_4(u) = \left\{\begin{array}{cl}
	(-1,0),  &  u \leq -1/2, \\
	(0,0),   & -1/2 < u \leq 0, \\
	(0,1), & 0 < u \leq1/2, \\
	 (1,1), & u > 1/2.  \end{array}\right.
\label{q4}
\end{equation}

\item {\bf Other robustness issues.}

\begin{enumerate}
\item  {\bf Imprecisions in $\rho$}: The damping factor $\rho$ does not have to be constant from iteration to iteration nor known precisely; all of the analysis of the previous section holds for 
$\rho_n > \rho^{\lambda}$ varying at each iteration, as long as the sequence remains bounded from below as to maintain second-order accuracy of the scheme \eqref{2ndorder-aleak}, and is not chosen adversarialy to converge $\rho_n \rightarrow 1$.  

\item {\bf Imprecisions in $\gamma$:} We have until now assumed that the parameter $\gamma$ is fixed throughout the iterations.  However, numerical evidence suggests that quietness does not require that $\gamma$ be fixed in the expression $u_n + \gamma v_n$, as long as $\gamma$ is within the range of stability, as stated in Theorem \eqref{stability}.   

\item {\bf Leaky integrators.}  More realistically, one should incorporate integrator leakage into the proposed model \eqref{2ndorder-aleak}, analogous to the modification studied for the golden ratio encoder in the last chapter. That is, the asymmetric scheme, being second order, requires two delay elements in its implementation to hold each of the states $u_n$ and $v_n$ over one iteration.  After one clock time, the stored input in the first delay is reduced to $\lambda_1$ times its original value, while the stored input in the second delay is replaced by $\lambda_2$ times its original value; we can incorporate such leakage into the asymmetric model \eqref{2ndorder-aleak} as follows:
\begin{eqnarray}
(b_n, q_n) &=& Q^4{(\lambda_1 u_{n-1}/\gamma +  \lambda_2 v_{n-1})}, \nonumber \\
u_n &=&  \lambda_1 \big( u_{n-1} + q_n(\rho_1-1)u_{n-1} \big) - b_n + f_n^{\lambda}, \nonumber \\
v_n &=& \lambda_2 \big( v_{n-1} + q_n(\rho_2-1)v_{n-1} \big) + u_n.
\label{aleak_intleak}
 \end{eqnarray} 
 Above, $(\lambda_1, \lambda_2) \in [.95,1]^2$ in most circuits on interest; the specific leakage factors within this window are generally unknown and may vary slowly in time.  When $\lambda_1 = \lambda_2$, the positively invariant set $T$ is still a trapping set for the revised scheme; however, once in this set, it is not immediately clear how to modify the analysis of section $2.0.9$ to prove that $(u_n, v_n) \rightarrow (0,0)$.  Numerical studies indicate that quietness persists in the face of such leakage, requiring only that the perturbations remain asymmetric, or that each of $\lambda_1 \rho_n$ and $\lambda_2 \rho_n$ is strictly smaller than either of $\lambda_1$ and $\lambda_2$. 

\end{enumerate}

\item{\bf Hybrid Chaotic-Quiet $\Sigma \Delta$ Quantization}

While the asymmetry of the perturbation $1 - \rho_n$ in \eqref{2ndorder-aleak} is key for inducing quietness, it is not clear that this perturbation need be nonnegative.  So-called \emph{chaotic $\Sigma \Delta$ quantization} \cite{SDChaos} has been proposed as a means to eliminate idle tones in $\Sigma \Delta$ quantization, not just at the onset of zero input, but at the onset of any constant, or DC input sequence.  In the context of 1-bit, second-order $\Sigma \Delta$ quantization, chaotic $\Sigma \Delta$ quantization corresponds to \emph{expanding}, rather than contracting, by a small factor $(1 + \epsilon) > 0$ at each iteration:
 \begin{eqnarray}
 b_n &=& Q(u_{n-1} + \gamma v_{n-1}),  \nonumber \\
u_n &=& (1 + \epsilon) u_{n-1} + f^{\lambda}_n - b_n, \nonumber \\
v_n &=& (1 + \epsilon) v_{n-1} + u_n.
\label{2ndorder_chaos}
 \end{eqnarray}
This modification is termed chaotic because it has the effect of generating aperiodic output at the onset of DC input; however, a complete stability analysis of the scheme \eqref{2ndorder_chaos} remains an open problem.  Numerical studies, such as those in Figure \ref{chaos}, indicate that the chaotic scheme outperforms the asymmetrically-damped scheme \eqref{2ndorder-aleak} in generating aperiodic output for general DC input, although the asymmetric scheme shows a marked improvement over the fully damped tri-level scheme.  While the chaotic scheme \eqref{2ndorder_chaos} is not quiet in the sense of Definition \eqref{def1}, we expect that quietness can be achieved by introducing asymmetry into the model \eqref{2ndorder_chaos}.  Specifically, the hybrid scheme,
\begin{eqnarray}
(b_n, q_n) &=& Q_4({u_{n-1}/\gamma + v_{n-1})}, \nonumber \\
u_n &=&  u_{n-1} + q_n \epsilon u_{n-1} - b_n + f_n^{\lambda}, \nonumber \\
v_n &=& v_{n-1} + q_n\epsilon v_{n-1} + u_n,
\label{aleak_intleak}
 \end{eqnarray} 
 with $q_n$ taking values in $\{-1,1\}$ so that either damping $(\rho_n  = 1 - \epsilon)$ or expansion $(\rho_n = 1 + \epsilon)$ is applied at each iteration, appears to inherit the aperiodic orbit structure of its parent chaotic map \eqref{2ndorder_chaos}, and also quietness induced by its asymmetry.  We hope to study the stability and aperiodicity for the chaotic maps \eqref{2ndorder_chaos}, \eqref{aleak_intleak} in the future.
 
 \end{enumerate}

\begin{figure*}[htbp]
\begin{center}
\subfigure[Original and damped second order $\Sigma \Delta$ scheme \eqref{2ndorder-finite} with $\rho = 1$ and $\rho = .995$, respectively]{\label{1} \includegraphics[width=2.5in]{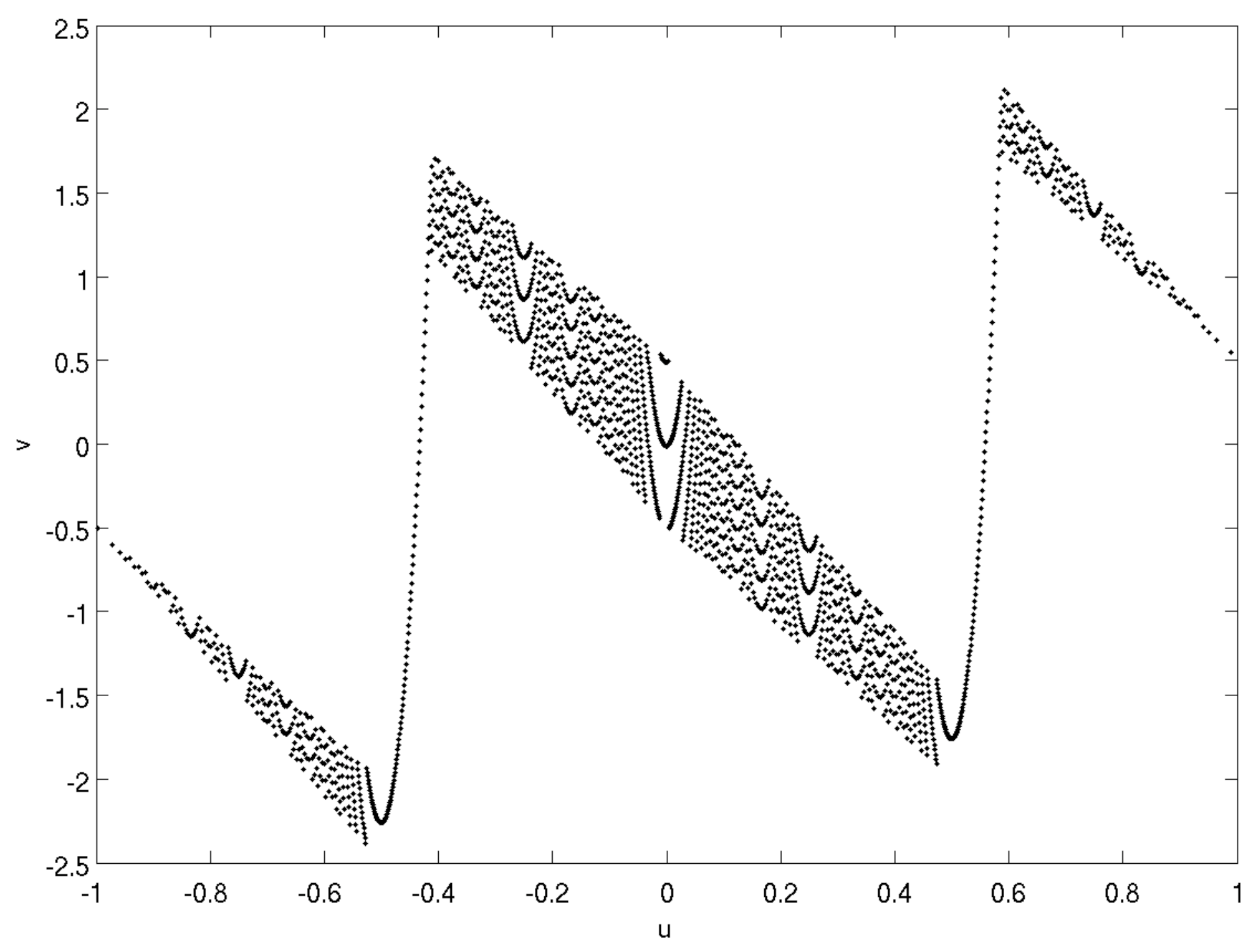}\includegraphics[width=2.5in]{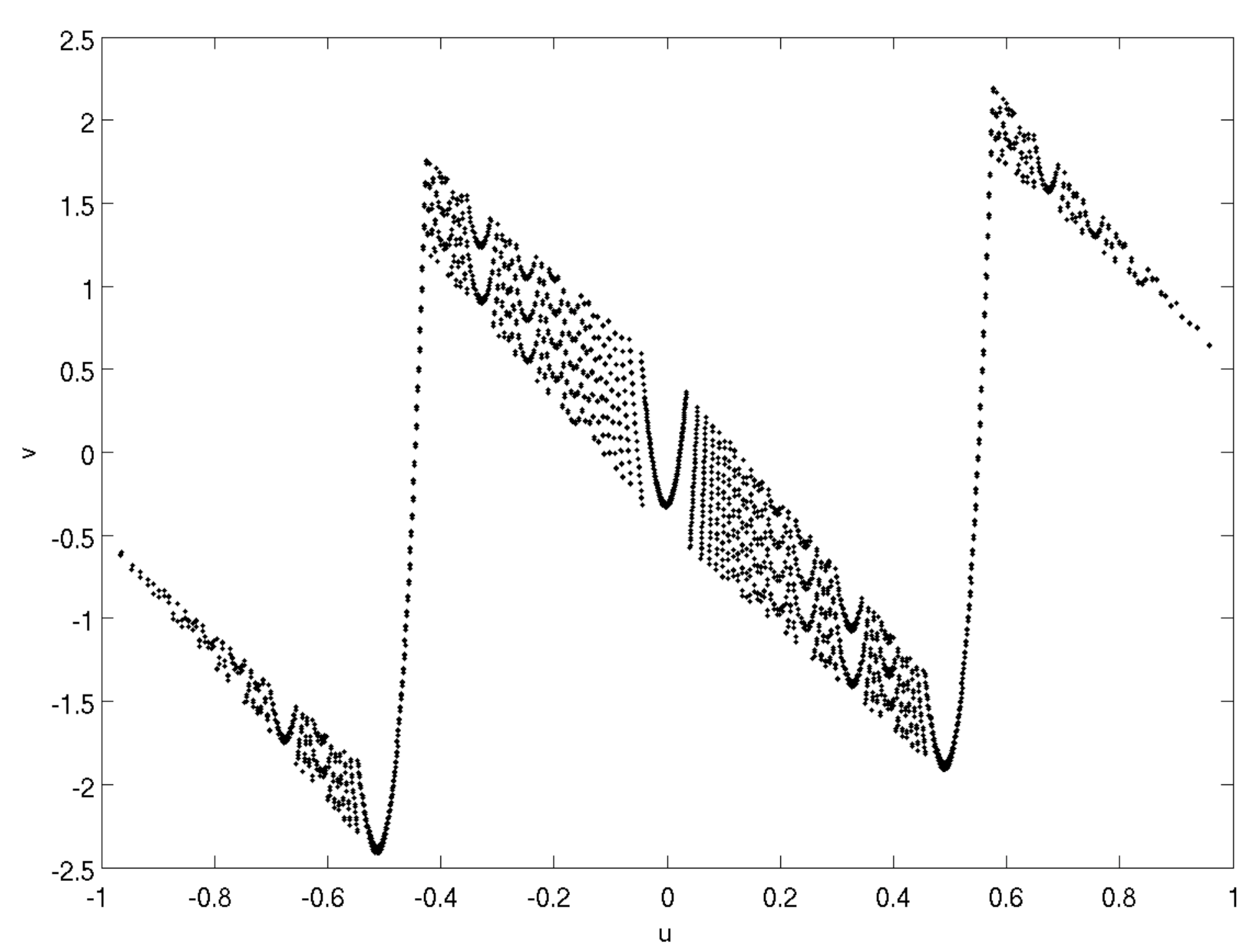}}

\subfigure[Asymmetrically damped $\Sigma \Delta$ scheme with $\rho = .995$.]{\label{2} \includegraphics[width=2.5in]{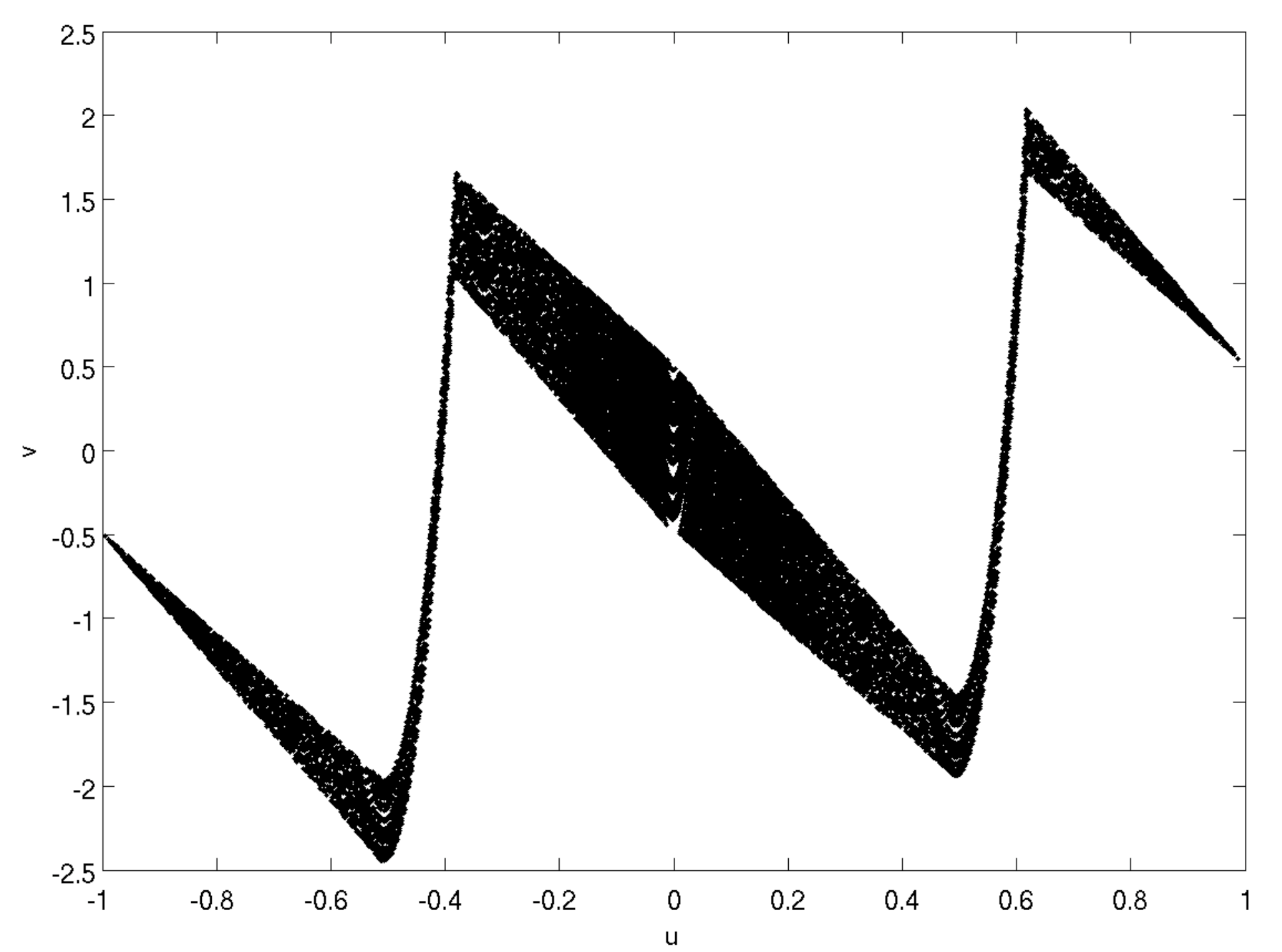}}

\subfigure[Chaotic $\Sigma \Delta$ scheme with $\rho = 1.01$.]
{\label{3} \includegraphics[width=2.5in]{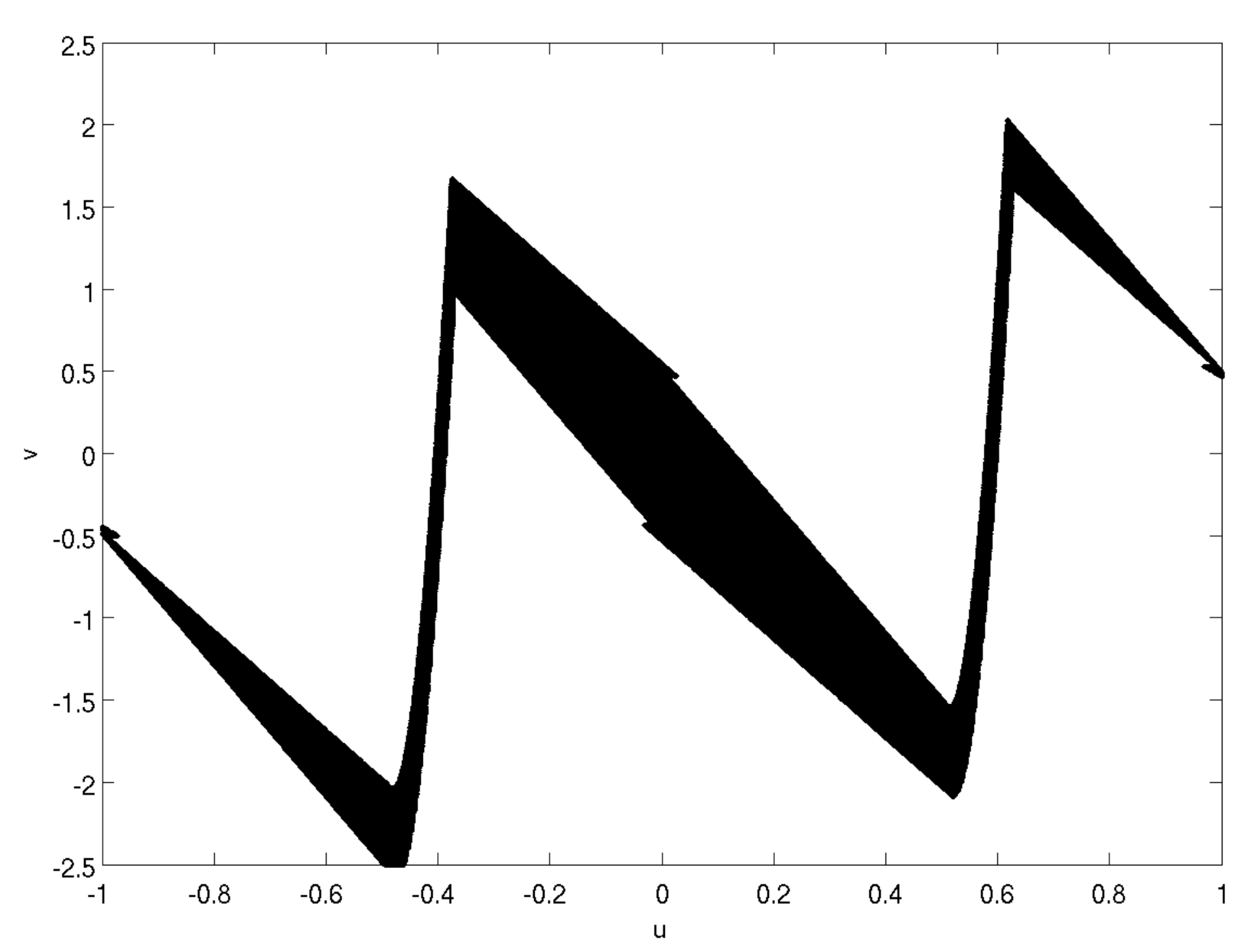}}

\caption{One million iterates $(u_n, v_n)$ from a single orbit under (a) the original damped second order scheme \eqref{2ndorder-finite} with damping factor $\rho = 1$ and $\rho = .995$, respectively, (b) the asymmetrically-damped scheme \eqref{2ndorder-aleak}, with asymmetric damping factor $\rho = .995$, and (c) the chaotic map \eqref{2ndorder_chaos}, with expansion factor $\rho = 1.01$.  Constant input $f_n = -.001$ was used for all cases, along with parameter $\gamma = .2$ and either tri-level quantizer $(a)$ or 4-level quantizer $(b), (c)$.  The asymmetrically-damped orbit $(b)$ is seen to fill up more space in the plane than the fully damped orbits $(a)$, but not as densely as the chaotic orbit $(c)$, which appears to be aperiodic.}
\label{fig:chaos}
\end{center}
\label{chaos}
\end{figure*}

\pagebreak

 \section{Appendix : proof of Proposition \ref{trap_eps0}}
Let $S^+ = \{ (u,v) \in \mathbb{R}^2: u + \gamma v \geq 0 \}$ and let $S^- =  \{ (u,v) \in \mathbb{R}^2: u + \gamma v < 0 \} = \mathbb{R}^2 \setminus S^+$.  Suppose that $(u,v) \in S^+ \setminus R_1$ is such that $(u',v') = A_{\gamma}(u,v) = (u - 1, u + v - 1)$.  Our first aim is to show that $h(M_1(u,v)) \leq h(u,v)$ in this situation.  
 \begin{enumerate}
\item {\bf Case 1:} If $2v - u \geq 0$, then $h(u,v) = u^2 + 2v - u$, while $h(u',v') = u^2 - 2u + 1 + | u + 2v -1|$,so 
$$
h(u',v') \leq h(u,v) \Longleftrightarrow | u + 2v - 1| \leq u + 2v  - 1 \Longleftrightarrow u + 2v \geq 1.
$$  
But since $(u,v) \in S^+ \setminus R_1$, we know that $u > 1/2$, so $u + 2v \geq 2u > 1$, and $h(u',v') \leq h(u,v)$ holds in this case.
\item {\bf Case 2:} If $2v - u \leq 0$, then $h(u,v) = u^2 + u - 2v$, while the expression for $h(u',v')$ remains unchanged, so
\begin{equation}
h(u',v') \leq h(u,v) \Longleftrightarrow | u + 2v - 1 | \leq 3u - 2v - 1.
\label{cond}
\end{equation}
We split this case into two subcases:
\begin{enumerate}
\item {\bf Case 2(a)}: If, on the other hand, $u + 2v > 1$, then $|u + 2v - 1| = u + 2v - 1$, and \eqref{cond} simplifies to 
$$
h(u',v') \leq h(u,v) \Longleftrightarrow  u + 2v - 1 \leq 3u - 2v - 1  \Longleftrightarrow 2v \leq u.
$$
 But since $2v - u \leq 0$ by assumption, the result holds in this subcase.

\item {\bf Case 2(b)}: If $u + 2v \leq 1$, then 
\begin{eqnarray}
h(u',v') \leq h(u,v) &\Longleftrightarrow&  -u - 2v + 1 \leq 3u - 2v - 1 \nonumber \\
&\Longleftrightarrow& u \geq 1/2.
\end{eqnarray}
But of course, the condition $u \geq 1/2$ holds throughout $S^+ \setminus R_1$.
\end{enumerate}
\end{enumerate}

\noindent We have shown thus far that $h(u',v') \leq h(u,v)$ if $(u,v) \in S^+ \setminus R_1$ and $M_1(u,v) = (u-1, u + v -1)$.  It remains to show that $h(u, u + v) \leq h(u,v)$ if $(u,v) \in S^+ \setminus R_1$ and $F_{\gamma}(u,v) \leq \gamma/2$.  By inspection of Figure 1, the intersection of the regions $S^+ \setminus R_1$ and $\{ (u,v): F_{\gamma}(u,v) \leq \gamma/2\} $ consists of two disjoint sets: $P_1:  \{ (u,v): 0 \leq u + \gamma v \leq \gamma/2, u + 2v \geq 1, u \leq 0 \}$, and $P_2 :  \{ (u,v): 0 \leq u + \gamma v \leq \gamma/2, u + 2v \leq -1,  u \geq 0 \}.$

\begin{enumerate}
\item {\bf Case 1: $(u,v) \in P_1$}: As $P_1 \subset \{ (u,v): u \leq 0, v \geq 0 \}$, the restriction $2v - u \geq 0$ trivially holds, and so $h(u,v) = u^2 + 2v - u$, and
\begin{eqnarray}
h(u,u+v) \leq h(u,v) &\Longleftrightarrow& u^2 + | 2(u + v) - u | \leq u^2 + 2v - u  \nonumber \\        &\Longleftrightarrow& |2v + u| \leq 2v - u \nonumber \\
&\Longleftrightarrow& 2v + u \leq 2v - u \nonumber \\
&\Longleftrightarrow& u \leq 0 \nonumber
\end{eqnarray}
which is satisfied by assumption.  
\item {\bf Case 2: $(u,v) \in P_2$ }: $P_2$ is contained in the quadrant $\{ (u,v): u \geq  0, v \leq 0 \}$, and so $2v - u < 0$, $h(u,v) = u^2 - 2v + u$, and
\begin{equation}
h(u,u+v) \leq h(u,v) \Longleftrightarrow | u + 2v | \leq u - 2v \Longleftrightarrow -(u + 2v) \leq u - 2v \Longleftrightarrow u \geq 0, \nonumber
\end{equation}
which again is satisfied by assumption.
\end{enumerate}
By symmetry of the set $R$ and the map $A_{\gamma}$, we have also that $h(A_{\gamma}(u,v)) \leq h(u,v)$ if $(u,v) \in S^- \cap \mathbb{R} \setminus R_2$.

\chapter{Cross validation in compressed sensing}
\section{Compressed sensing: Redefining traditional notions of sampling}
%Compressed Sensing (CS) is a fast emerging area of applied mathematics that represents a radically new approach to traditional sampling techniques.  Recall that the classic Nyquist-Shannon sampling theorem \eqref{reconstruct} says that a function $f$ with Fourier transform $\hat{f}$ supported on an interval [-$\Omega, \Omega]$ can be reconstructed from evenly spaced samples $f(t_n)$ taken above the critical sampling rate $|t_{n+1} - t_n| = \frac{1}{2\Omega}$.   For signals with very large bandwidth $\Omega$, it is prohibitive to build a device that can sample at such a high frequency.    Compressed Sensing provides a possible bypass to this problem, asserting that if the signal $f$ is sufficiently \emph{sparse}, consisting of far fewer frequencies than a ``generic" function having the same bandwidth, then $f$ can be reconstructed from fewer samples than the classical theory maintains. This phenomenon is very applicable because most real-world signals do satisfy sparsity constraints.  \\ \\
Compressed Sensing (CS) is a fast developing area in applied mathematics, motivated by the reality that most data we store and transmit contains far less information than its dimension suggests.  For example, a one-dimensional slice through the pixels in a typical grayscale image will contain segments of smoothly varying intensity, with sharp changes between adjacent pixels appearing only at edges in the image.  If a large data vector contains only $k << N$ nonzero entries, or is { \it k-sparse}, it is common practice to temporarily store the entire vector, possibly with the intent to go back and replace this vector with a smaller dimensional vector encoding the location and magnitude of its $k$ significant coefficients.  In compressed sensing, one instead collects fewer fixed linear measurements of the data to start with, sufficient in number  to recover the location and numerical value of the $k$ nonzero coordinates at a later time.   Finding "good" linear measurements, as well as fast, accurate, and simple algorithms for recovering the original data from these measurements, are the twofold goals of Compressed Sensing research today. \\[\baselineskip]
\noindent In the sequel, we restrict our focus to signals that can be represented as real-valued vectors $x = (x_1, x_2, ... , x_N) \in \mathbb{R}^N$.  This signal model works well for digital images, which are naturally sparse: for example, a one-dimensional slice through the pixels in a typical grayscale image will contain segments of smoothly varying intensity, with sharp changes between adjacent pixels appearing only at edges in the image. Often this sparsity in information translates into a sparse (or approximately sparse) representation of the data with respect to some standard basis; for the image example, the basis would be a wavelet of curvelet basis.   For such $N$ dimensional data vectors that are well approximated by a $k$-sparse vector (or a vector that contains at most $k << N$ nonzero entries), it is common practice to temporarily store the entire vector, possibly with the intent to go back and replace this vector with a smaller dimensional vector encoding the location and magnitude of its $k$ significant coefficients.  In compressed sensing, one instead collects fewer fixed linear measurements of the data to start with, sufficient in number  to recover the location and numerical value of the $k$ nonzero coordinates at a later time.   Finding `good' linear measurements, as well as fast, accurate, and simple algorithms for recovering the original data from these measurements, are the twofold goals of compressed sensing research today.\\ \\
\noindent { \bf Review of basic CS setup}.  The data of interest is taken to be a real-valued vector $x \in  \mathbb{R}^N$ that is {\it unknown}, but from which we are allowed up to $m < N$ linear measurements, in the form of inner products of $x$ with $m$ vectors $v_j \in \mathbb{R}^N$ of our choosing.  Letting $\Phi$ denote the $m \times N$ matrix whose $j$th row is the vector $v_j$, this is equivalent to saying that we have the freedom to choose and store an $m \times N$ matrix $\Phi$, along with the $m$-dimensional measurement vector $y = \Phi x$.  Of course, since $\Phi$ maps vectors in $\mathbb{R}^N$ to vectors in a smaller dimensional space $\mathbb{R}^m$, the matrix $\Phi$ is not invertible, and we thus have no hope of being able to reconstruct an arbitrary $N$ dimensional vector $x$ from such measurements.\\ \\
\noindent However, if the otherwise unknown vector $x$ is specified to be $k$-sparse, and $k$ is fairly small compared with $N$, then there do exist matrices $\Phi$ for which $y = \Phi x$ uniquely determines $x$, and allows recovery of $x$ using fast and simple algorithms.  It was the interpretation of this phenomenon given by Candes and Tao $\cite{CandesTao}$, $\cite{CRT}$, and Donoho $\cite{DonohoElad}$, that gave rise to compressed sensing.  In particular, these authors define classes of matrices that possess this property.   One particularly elegant characterization of this class is via the {\it Restricted Isometry Property} (RIP) $\cite{CRT}$.  A matrix $\Phi$ with unit normed columns is said to be $k$-RIP if all singular values of any $k$ column submatrix of $\Phi$ lie in the interval $[1 - \delta, 1 + \delta]$ for a given constant $\delta < 1$.  
For a fixed value of $\delta > 0$,  an $m \times N$ matrix $\Phi$ whose entries $\Phi_{i,j}$ are independent realizations of a Gaussian or Bernoulli random variable satisfies $2k$-RIP of level
\begin{equation}
k = K(\delta, m,N) := c_1(\delta)m/ \log(N/m)), \textrm{where } m \leq \frac{1}{2}N,
\label{k}
\end{equation}
with probability $\geq 1 - 2\exp{-c_2(\delta) m}$ $\cite{bddw}$.   Also with high probability, an $m \times N$ matrix obtained by selecting $m$ rows at random from the $N \times N$ discrete Fourier matrix satisfies $2k$-RIP of the same order as $\eqref{k}$ up to an additional $\log{(\log N)}$ factor \cite{RV06}.  In fact, the order of $k$ given by $\eqref{k}$ is optimal given $m$ and $N$, as shown in $\cite{cdd09}$ using classical results on Gelfand widths of $l_1^N$ unit balls in $l_2^N$.   To date, there exist no deterministic constructions of RIP matrices of this order.\\ \\
\noindent { \bf Recovering or approximating x. } As shown in $ \cite{FL08}$, the following approximation results hold for matrices $\Phi$ that satisfy $2k$-RIP with constant $\delta_{2k} \leq 2(3-\sqrt{2})/7 \approx .4531$:
\begin{enumerate}
\item If $x \in \mathbb{R}^N$ is $k$-sparse,  then $x$ can be reconstructed from $\Phi$ and the measurement vector $y = \Phi x$ as the solution to the following $\ell_1$ minimization problem:
\begin{equation}
x = {\cal L}_1(\Phi, y) := \arg \min_{\Phi z = y} \| z \|_1.
\label{l1}
\end{equation} 
\item If $x$ is not $k$-sparse, the error between $x$ and the approximation $\hat{x} =  {\cal L}_1(\Phi, y)$ is still bounded by
\begin{equation}
\| x - \hat{x} \|_2 \leq \frac{c}{\sqrt{k}} \sigma_k(x)_{l_1^N},
\label{approxerr1}
\end{equation}
where $c = 2(1 + \delta_{2k})/(1 - \delta_{2k})$, and $\sigma_k(x)_{l_p^N} := \inf_{|z| \leq k} \|x - z\|_{l_p^N}$ denotes the best possible approximation error in the metric of $l^N_p$ between $x$ and the set of $k$-sparse signals in $\mathbb{R}^N$.  The approximation error $\sigma_k(x)_{l_p^N}$ is realized by the $k$-sparse vector $x_k \in \mathbb{R}^N$ that corresponds to the vector $x$ with all but the $k$ largest entries set to zero, independent of the $l^N_p$ norm in the approximation $\sigma_k(x)_{l_p^N}$. 
\end{enumerate} 

\noindent This immediately suggests to use the $\ell_1$-minimizer ${ \cal L}_{1}$ as a means to recover or approximate an unknown $x$ with sparsity constraint.  Several other decoding algorithms are used as alternatives to $\ell_1$ minimization for recovering a sparse vector $x$ from its image $y = \Phi x$, not because they offer better accuracy ( $\ell_1$ minimization gives optimal approximation bounds when $\Phi$ satisfies RIP), but because they can be faster and easier to implement.  For a comprehensive survey on compressed sensing decoding algorithms, we refer the reader to \cite{cosamp}. 

\section{Estimating the accuracy of compressed sensing estimates}

According to the bound $\eqref{approxerr1}$, the quality of a compressed sensing estimate $\hat{x} = \triangle(\Phi, y)$ depends on how well $x$ can be approximated by a $k$-sparse vector, where the value of $k$ is determined by the number of rows $m$ composing $\Phi$.   While $k$ is assumed to be known and fixed in the compressed sensing literature, no such bound is guaranteed for real-world signal models such as vectors $x \in \mathbb{R}^N$ corresponding to wavelet coefficient sequences of discrete photograph-like images.  Thus,  the quality of a compressed sensing estimate $\hat{x}$ in general is not guaranteed. \\ \\
If the error $\|x - \hat{x} \|_2/\|x\|_2$ incurred by a particular approximation $\hat{x}$ were \emph{observed} to be large, then decoding could be repeated using more measurements,  perhaps at increasing measurement levels $\{m_1, m_2, ... , m_p \}$, until the error $\|x - \hat{x}_j \|_2/\|x\|_2$ corresponding to $m_j$ measurements were observed to be sufficiently small.  Of course, the errors $\|x - \hat{x}_j \|_2$ and $\| x - \hat{x}_j \|_2 / \| x \|_2$ are typically not known, as $x$ is unknown.  Our main observation in this chapter is that one can apply the Johnson-Lindenstrauss lemma $\cite{JL}$ to the set of $p$ points,
\begin{equation}
\{ (x - \hat{x}_1), (x - \hat{x}_2), ... , (x - \hat{x}_p) \}.
\end{equation}
In particular, $r = O(\log p)$ measurements of $x$, provided by $y_{\Psi} = \Psi x$, when $\Psi$ is, e.g. a Gaussian or Bernoulli random matrix, are sufficient to guarantee that with high probability,
\begin{equation}
4/5 \| y_{\Psi} - \Psi \hat{x}_j \|_2 \leq  \|x - \hat{x}_j \|_2  \leq 4/3 \| y_{\Psi} - \Psi \hat{x}_j \|_2 \label{same}
\end{equation}
and
\begin{equation}
1/3 \frac{\| y_{\Psi} - \Psi \hat{x}_j \|_2}{ \|y_{\Psi}\|_2} \leq \frac{ \|x - \hat{x}_j \|_2}{ \|x\|_2}  \leq 2 \frac{\| y_{\Psi} - \Psi \hat{x}_j \|_2}{\|y_{\Psi}\|_2}
\label{same2}
\end{equation}
for any $p$ compressed sensing estimates.   The equivalences $\eqref{same}$ and  
$\eqref{same2}$ allow the {\it measurable} quantities $\|y_{\Psi} - \Psi \hat{x}_j \|_2$ and $\| y_{\Psi} - \Psi \hat{x}_j \|_2/ \|y_{\Psi}\|_2$ to function as proxies for the {\it unknown} quantities $\| x - \hat{x}_j \|_2$ and $\|x - \hat{x}_j \|_2 / \|x\|_2$; these proxies can be used to
\begin{enumerate}
\renewcommand{\labelenumi}{(\alph{enumi})}
\item provide tight numerical upper and lower bounds on the errors $ \| x - \hat{x}_j \|_2$ and $\|x - \hat{x}_j \|_2/\|x\|_2$ at up to $p$ compressed sensing estimates $\hat{x}_j$, 

\item provide estimates of the underlying $k$-term approximations $\| x - x_k \|_2$ of $x$ for up to $p$ different values of $k$, and

\item return from among a sequence of estimates $( \hat{x}_1, ... , \hat{x}_p )$ with different initialization parameters, an estimate $\hat{x}_{cv} = \arg \min_j \| y_{\Psi}- \Psi \hat{x}_j \|_2$ having error $\| x - \hat{x}_{cv}\|_2$ that does not exceed a small multiplicative factor of the best possible error in the metric of $\ell_2^N$ between $x$ and an element from the sequence at hand.  
\end{enumerate}
More precisely, all CS decoding algorithms require as input a parameter $m$ corresponding to the number of rows in $\Phi$; some compressed sensing decoding algorithms (such as greedy algorithms) require also a parameter $k$ indicating the sparsity level of $x$, and other algorithms require as input a bound $\gamma$ on the expected amount of energy in $x$ outside of its significant coefficients.  All CS decoding algorithms can be symbolically represented by functions of the form $\triangle(\Phi, y, k, \gamma)$,  and we will give examples where each of the parameters $m$, $k$, and $\gamma$ can be optimized over a sequence of estimates $(\hat{x}_1, \hat{x}_2, ..., \hat{x}_p)$ indexed by increasing hypotheses on each of the parameters $m$, $k$, and $\gamma$.  \\ \\
The method we propose for parameter selection and error estimation in compressed sensing is reminiscent of {\it cross validation}, which is a technique used in statistics and learning theory whereby a data set is separated into a training/estimation set and a test/cross validation set, and the test set is used to prevent overfitting on the training set by estimating underlying noise parameters. Indeed, we  take a set of $m$ measurements of the unknown $x$, and use $m - r$ of these measurements, $\Phi x$, in a compressed sensing decoding algorithm to return a sequence $(\hat{x}_1, \hat{x}_2, ... )$ of candidate approximations to $x$.  The remaining $r$ measurements, $\Psi x$, are then used to identify from among this sequence a `best' approximation $\hat{x} = \hat{x}_j$, along with an estimate of the sparsity level of $x$.   Although the application of cross validation in compressed sensing has been previously proposed by Boufounos, Duarte, and Baraniuk in $\cite{cv_bdb}$, the context in which it is studied there is different from that of the present paper (we will discuss this difference  further in the last section), and in their application one cannot immediately apply the mathematical justification of the Johnson Lindenstrauss lemma that we present below.

 \section{Preliminary notation}
 Throughout the paper, we will be dealing with large dimensional vectors that have few nonzero coefficients.  We use the notation $| x | = n$ to indicate that a vector $x \in \mathbb{R}$$^N$ has exactly $n$ nonzero coordinates.\\ \\
\noindent We will sometimes use the notation $a \sim_{\epsilon} b$ as shorthand for the multiplicative relation
\begin{equation} 
(1 - \epsilon)a \leq b \leq (1 + \epsilon)a,
\end{equation}
that can be worded as ``the quantity $a$ approximates the quantity $b$ to within a multiplicative factor of $(1 \pm \epsilon)$".  Note that the relation $\sim_{\epsilon}$ is not symmetric.  Properties of the relation $a \sim_{\epsilon} b$ are listed below; we omit the proofs, which amount to a string of simple inequalities.

\begin{lemma} Fix $\epsilon \in (0,1)$.
\begin{enumerate}
\item  If $a, b \in \mathbb{R}^+$ satisfy $a \sim_{\epsilon} b$, then $b/\big[(1+\epsilon)(1-\epsilon)\big] \sim_{\epsilon} a$. 
\item If $a,b,c,d \in \mathbb{R}^+$ satisfy  $a \sim_{\epsilon} b$ and  $c \sim_{\epsilon} d$, then $a/c \sim_{\delta} b/d$ for parameter $\delta = 2\epsilon/1-\epsilon$.
\item If $( a_1, a_2, ... , a_p )$ and $( b_1, b_2, ... , b_p )$ are sequences in $\mathbb{R}^{+}$, and $a_j \sim_{\epsilon} b_j$ for each $1 \leq j \leq p$,  then $\min_j a_j \sim_{\epsilon} \min_j b_j$.
\end{enumerate}
\label{babylemma}
\end{lemma}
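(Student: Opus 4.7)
The plan is to unfold each assertion directly from the definition $a\sim_\epsilon b \Longleftrightarrow (1-\epsilon)a\le b\le(1+\epsilon)a$, using only that the quantities involved are positive so that the inequalities may be inverted, multiplied, and divided. I expect no genuine obstacle; the only delicate point is bookkeeping of which side of each defining inequality is used where, and noticing in part (2) that the parameter $\delta=2\epsilon/(1-\epsilon)$ is dictated by the larger of the two ratios $(1\pm\epsilon)/(1\mp\epsilon)$.

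For part (1), I would divide the defining chain by the positive constants $1\pm\epsilon$. The right half $b\le(1+\epsilon)a$ gives $a\ge b/(1+\epsilon)=(1-\epsilon)\cdot b/[(1+\epsilon)(1-\epsilon)]$, while the left half $(1-\epsilon)a\le b$ gives $a\le b/(1-\epsilon)=(1+\epsilon)\cdot b/[(1+\epsilon)(1-\epsilon)]$. Read together, these are exactly the claim $b/[(1+\epsilon)(1-\epsilon)]\sim_\epsilon a$.

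For part (2), I would combine the two hypotheses ratio-wise. Positivity lets me divide $(1-\epsilon)a\le b\le(1+\epsilon)a$ by $(1-\epsilon)c\le d\le(1+\epsilon)c$ to obtain $\tfrac{1-\epsilon}{1+\epsilon}\,(a/c)\le b/d\le \tfrac{1+\epsilon}{1-\epsilon}\,(a/c)$. The upper factor equals $1+2\epsilon/(1-\epsilon)=1+\delta$, and the lower factor equals $1-2\epsilon/(1+\epsilon)$, which is bounded below by $1-\delta=1-2\epsilon/(1-\epsilon)$ since $1+\epsilon\ge 1-\epsilon$. The sandwich $(1-\delta)(a/c)\le b/d\le(1+\delta)(a/c)$ then gives $a/c\sim_\delta b/d$.

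For part (3), I would separately pick an index $i\in\arg\min_j b_j$ and an index $k\in\arg\min_j a_j$. From $\min_j b_j = b_i\ge(1-\epsilon)a_i\ge(1-\epsilon)\min_j a_j$ I get the lower bound, and from $\min_j b_j \le b_k\le(1+\epsilon)a_k=(1+\epsilon)\min_j a_j$ I get the upper bound. Combining these two chains yields $\min_j a_j \sim_\epsilon \min_j b_j$, which finishes the lemma.
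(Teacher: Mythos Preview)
Your proof is correct and is exactly the ``string of simple inequalities'' the paper alludes to; the paper in fact omits the proof entirely, so your argument fills in precisely what was left implicit.
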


\section{Mathematical foundations}
 
The Johnson Lindenstrauss (JL) lemma, in its original form, states that any set of $p$ points in high dimensional Euclidean space can be embedded into $\epsilon^{-2} \log(p)$ dimensions, without distorting the distance between any two points by more than a factor of $(1 \pm \epsilon)$ $\cite{JL}$.   In the same paper, it was shown that a random orthogonal projection would provide such an embedding with positive probability.  Following several simplifications to the original proof $\cite{FM}$, $\cite{Indyk98}$, $\cite{Dasgupta}$, it is now understood that Gaussian random matrices, among other purely random matrix constructions, can substitute for the random projection in the original proof of Johnson and Lindenstrauss.  Of the several versions of the lemma now appearing in the literature, the following variant presented in Matousek $\cite{mat08}$ is most applicable to the current presentation.

\begin{lemma}[Johnson-Lindenstrauss Lemma]
Fix an accuracy parameter $\epsilon \in (0, 1/2]$, a confidence parameter $\delta \in (0,1)$, and an integer $r \geq r_0 = C\epsilon^{-2}\log{\frac{1}{2\delta}}$. \\ \\
Let ${ \cal M}$ be a random $r \times N$ matrix whose entries ${\cal M}_{i,j}$ are independent realizations of a random variable R that satisfies:
\begin{enumerate}
\item { \it Var} $\big[ R \big] = 1/r$  (so that the columns of ${\cal M}$ have expected $\ell_2$ norm 1)
\item $\mathbb{E}\big[ R \big] = 0$,  
\item For some fixed $a > 0$ and for all $\lambda$,
\begin{equation}
\textrm{Prob} \big[ |R| > \lambda \big] \leq 2e^{-a \lambda^2}
\label{a} 
\end{equation}
\end{enumerate}
Then for a predetermined $x \in \mathbb{R}^N$, 
\begin{equation}
(1 - \epsilon)\| x\|_{l_2^N} \leq \| {\cal M} x \|_{l_2^r} \leq (1 + \epsilon)\| x\|_{l_2^N}
\label{jlbound} 
\end{equation}
is satisfied with probability exceeding $1 - \delta$.
\label{psi}
\end{lemma}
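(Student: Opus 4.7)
The plan is to prove the two-sided multiplicative bound \eqref{jlbound} by first establishing tight concentration of $\|\mathcal{M}x\|_{\ell_2^r}^2$ around its mean value of $1$ via a Chernoff argument, and then taking square roots. By homogeneity in $x$, I may assume $\|x\|_{\ell_2^N} = 1$. Writing $Y_i := (\mathcal{M}x)_i = \sum_{j=1}^N \mathcal{M}_{i,j}\, x_j$, the independence of the entries of $\mathcal{M}$ makes $Y_1, \ldots, Y_r$ independent, and conditions (1) and (2) give $\mathbb{E}[Y_i] = 0$ together with $\mathbb{E}[Y_i^2] = \|x\|_{\ell_2^N}^2 / r = 1/r$; in particular $\mathbb{E}\|\mathcal{M}x\|_{\ell_2^r}^2 = 1$.

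The core step is a tail bound on $\bigl|\|\mathcal{M}x\|_{\ell_2^r}^2 - 1\bigr|$. The subgaussian tail hypothesis (3) is equivalent to moment generating function control of the form $\mathbb{E}[e^{sR}] \leq e^{c_0 s^2 / r}$ valid for $|s|$ in a neighborhood of size $c_1 r$, where the constants $c_0, c_1$ depend only on $a$ and the $1/r$ in the exponent reflects the variance normalization (1). Since $Y_i = \sum_j x_j \mathcal{M}_{i,j}$ is a weighted sum of independent copies of $R$ with weights of unit $\ell_2$-norm, this bound transfers directly to $Y_i$. A standard computation based on $\mathbb{E}[e^{tZ^2}] \leq (1-2t\sigma^2)^{-1/2}$ for a subgaussian $Z$ with parameter $\sigma^2$ then shows that $Y_i^2 - 1/r$ is subexponential with parameter $O(1/r)$, so Bernstein's inequality applied to the independent sum $\sum_{i=1}^r (Y_i^2 - 1/r)$ yields
\[
\Pr\!\bigl[\,\bigl|\|\mathcal{M}x\|_{\ell_2^r}^2 - 1\bigr| > \epsilon\,\bigr] \,\leq\, 2\, e^{-c \epsilon^2 r}
\]
for $\epsilon \in (0, 1/2]$ and some $c = c(a) > 0$. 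Choosing $C = 1/c$ and $r \geq C \epsilon^{-2} \log(1/(2\delta))$ makes the right-hand side at most $\delta$. On the complementary high-probability event, $\sqrt{1-\epsilon} \leq \|\mathcal{M}x\|_{\ell_2^r} \leq \sqrt{1+\epsilon}$, and the elementary inequalities $\sqrt{1-\epsilon} \geq 1-\epsilon$ and $\sqrt{1+\epsilon} \leq 1+\epsilon$ (valid for $\epsilon \in [0,1]$) immediately recover \eqref{jlbound}.

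The main obstacle will be bookkeeping the subgaussian and subexponential constants through the passage from the scalar tail condition (3) to the final bound on $\sum_i Y_i^2$. Because condition (1) fixes $\mathrm{Var}[R] = 1/r$ rather than the more common normalization of order $1$, one must verify that the MGF bound inherited from (3) has the form $e^{O(s^2/r)}$ with the admissible range of $s$ scaling as $O(r)$, so that the $\ell_2$-normalization $\sum_j x_j^2 = 1$ preserves both scalings when the $\mathcal{M}_{i,j}$ are combined into $Y_i$. Executed correctly the Bernstein step is routine; executed carelessly, spurious factors involving $N$ or $\|x\|_\infty$ can slip in and destroy the desired scaling $r \asymp \epsilon^{-2}\log(1/\delta)$.
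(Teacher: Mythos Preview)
Your proposal is correct and follows the standard route to the Johnson--Lindenstrauss lemma. The paper itself does not give a self-contained proof of this statement: it attributes the result to Matou\v{s}ek \cite{mat08} and offers only a brief heuristic, namely that $\mathbb{E}\bigl[\|\mathcal{M}x\|_2^2\bigr] = \|x\|_2^2$ together with the one-sided concentration bound $\Pr\bigl[\|\mathcal{M}x\|_2^2 - \|x\|_2^2 > \epsilon\|x\|_2^2\bigr] \leq e^{-a(2\epsilon\sqrt{r})^2}$, whose derivation is deferred to the citation, and then combines the two tails. Your argument is the fleshed-out version of exactly this sketch: you supply the mechanism (subgaussian $Y_i$ $\Rightarrow$ subexponential $Y_i^2$ $\Rightarrow$ Bernstein on the sum) that the paper outsources to \cite{mat08}, and you make explicit the square-root step that the paper leaves implicit. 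Your flagged concern about tracking the interplay between the variance normalization (1) and the tail parameter $a$ in (3) is well placed---in the paper's intended applications (Gaussian or Bernoulli entries with variance $1/r$) the parameter $a$ itself scales like $r$, which is what makes the exponent in the paper's displayed bound come out as $-4a\epsilon^2 r$ rather than something dimension-dependent.
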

\noindent The constant $C$ bounding $r_0$ in Lemma $\ref{psi}$ grows with the parameter $a$ specific to the construction of ${\cal M}$ $\eqref{a}$.   Gaussian and Bernoulli random variables $R$ will satisfy the concentration inequality $\eqref{a}$ for a relatively small parameter $a$ (as can be verified directly), and for these matrices one can take $C = 8$ in Lemma $\ref{psi}$. \\ \\
\noindent The Johnson Lindenstrauss lemma can be made intuitive with a few observations.  Since $\mathbb{E}\big[R\big] =0$ and $\textrm{Var}\big[R\big] = \frac{1}{r}$, the random variable $\| {\cal M} x \|_2^2$ equals $\|x\|_2^2$ in expected value; that is,
\begin{equation}
\mathbb{E} \big[ \textrm{ } \| {\cal M} x \|^2_2\textrm{ }  \big] = \| x \|_2^2.
\end{equation} 
Additionally, $\| {\cal M} x \|^2_2$ inherits from the random variable $R$ a nice concentration inequality:
\begin{eqnarray}
\textrm{Prob} \big[ \| {\cal M} x \|_2^2 - \|x\|_2^2 > \epsilon \|x\|_2^2 \big] &\leq& e^{-a(2\epsilon\sqrt{r})^2} 
\leq \delta/2.
\label{bound}
\end{eqnarray} 
The first inequality above is at the heart of the JL lemma; its proof can be found in $\cite{mat08}$.  The second inequality follows using that $r \geq  (2a\epsilon^2)^{-1}\log(\frac{\delta}{2})$ and $\epsilon \leq 1/2$ by construction.  A bound similar to $\eqref{bound}$ holds for $\textrm{Prob} \big[ \| {\cal M} x \|_2^2 - \|x\|_2^2 < - \epsilon \|x\|_2^2 \big]$ as well, and combining these two bounds gives desired result $\eqref{jlbound}$.\\ \\
\noindent For fixed $x \in \mathbb{R}^N$, a random matrix ${\cal M}$ constructed according to Lemma $\ref{psi}$ fails to satisfy the concentration bound $\eqref{jlbound}$ with probability at most $\delta$.  Applying Boole's inequality, ${\cal M}$ then fails to satisfy the stated concentration on any of $p$ predetermined points $\{x_j\}_{j=1}^p$, $x_j \in \mathbb{R}^N$, with probability at most $\xi = p\delta$.   In fact, a specific value of $\xi \in (0,1)$ may be imposed for fixed $p$ by setting $\delta = \xi/p$.  These observations are summarized in the following corollary to Lemma $\ref{psi}$.  

\begin{corollary}
Fix an accuracy parameter $\epsilon \in (0,1/2]$, a confidence parameter $\xi \in (0,1)$, and fix a set of $p$ points $\{x_j\}_{j=1}^p \subset \mathbb{R}^N$.  Set $\delta = \xi/p$, and fix an integer $r \geq r_0 = C\epsilon^{-2}\log{\frac{1}{2\delta}} = C\epsilon^{-2}\log{\frac{p}{2\xi}}$.  If ${\cal M}$ is a $r \times N$ matrix constructed according to Lemma $\ref{psi}$, then with probability $\geq 1 - \xi$, the bound
\begin{equation}
(1 - \epsilon) \| x_j \|_{l_2^N} \leq \| {\cal M} x_j \|_{l_2^r} \leq (1 + \epsilon) \| x_j \|_{l_2^N}
\label{jlplus}
 \end{equation}
obtains for each $j=1,2, ... ,p$.
\label{myjl}
\end{corollary}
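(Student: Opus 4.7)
The plan is to derive Corollary \ref{myjl} as a direct union bound consequence of Lemma \ref{psi}, essentially formalizing the heuristic given in the paragraph preceding the statement. First I would invoke Lemma \ref{psi} with the given accuracy $\epsilon$ but with the confidence parameter $\delta := \xi/p$ in place of the original $\delta$. The hypothesis $r \geq C\epsilon^{-2}\log\frac{p}{2\xi} = C\epsilon^{-2}\log\frac{1}{2\delta}$ precisely matches the lower bound required by Lemma \ref{psi} under this substitution, so the lemma applies and, for any \emph{fixed} $x_j$, guarantees
\begin{equation}
\mathrm{Prob}\bigl[\,(1-\epsilon)\|x_j\|_{\ell_2^N} \leq \|\mathcal{M}x_j\|_{\ell_2^r} \leq (1+\epsilon)\|x_j\|_{\ell_2^N}\,\bigr] \geq 1 - \delta = 1 - \xi/p.
\end{equation}

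Next, I would apply Boole's inequality (the union bound) over the $p$ predetermined points $\{x_j\}_{j=1}^{p}$. Letting $E_j$ denote the event that the two-sided inequality \eqref{jlplus} fails for $x_j$, the previous step gives $\mathrm{Prob}[E_j] \leq \xi/p$ for each $j$, and hence
\begin{equation}
\mathrm{Prob}\Bigl[\,\bigcup_{j=1}^{p} E_j\,\Bigr] \leq \sum_{j=1}^{p} \mathrm{Prob}[E_j] \leq p \cdot \frac{\xi}{p} = \xi.
\end{equation}
Taking complements, the simultaneous event that \eqref{jlplus} holds for every $j=1,\dots,p$ has probability at least $1-\xi$, as claimed. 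The argument is entirely routine once Lemma \ref{psi} is in hand, and there is no genuine obstacle — the only subtlety worth flagging is that the $x_j$ must be fixed \emph{before} $\mathcal{M}$ is drawn (so that each $E_j$ is determined by an independent realization of the randomness in $\mathcal{M}$), which is consistent with the statement of the corollary since the $p$ points are prescribed in advance.
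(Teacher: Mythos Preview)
Your proposal is correct and follows essentially the same route as the paper: invoke Lemma~\ref{psi} with $\delta=\xi/p$ and then apply Boole's inequality over the $p$ points. One small wording quibble: the events $E_j$ are all defined with respect to the \emph{same} draw of $\mathcal{M}$, not independent realizations, but this is harmless since the union bound requires no independence.
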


\section{Cross validation in compressed sensing}
We return to the situation where we would like to approximate a vector $x \in \mathbb{R}^N$ with an assumed sparsity constraint using $m < N$ linear measurements $y = {\cal A} x$ and  $m \times N$ matrix ${\cal A}$ of our choosing.  Continuing the discussion in Section 1, we will not reconstruct $x$ in the standard way by $\hat{x} = \Delta({\cal A}, y, k, \gamma)$ for fixed values of the input parameters, but instead separate the $m \times N$ matrix ${\cal A}$ into an $n \times N$ {\it implementation} matrix $\Phi$ and an $r \times N$ {\it cross validation} matrix $\Psi$, and separate the measurements $y$ accordingly into $y_{\Phi}$ and $y_{\Psi}$.  We use the implementation matrix $\Phi$ and corresponding measurements $y_{\Phi}$ as input into the decoding algorithm to obtain a sequence of possible estimates $(\hat{x}_1, ..., \hat{x}_p)$ corresponding to increasing one of the input parameters $m$, $k$, or $\gamma$.  We reserve the cross validation matrix $ \Psi$ and measurements $y_{\Psi}$ to estimate each of the error terms $\| x - \hat{x}_j \|_2$ in terms of the computable $\|y_{\Psi} - \Psi \hat{x}_j\|_2$. 
We will also estimate the unknown \emph{oracle error},
\begin{equation}
\eta_{or} = \min_{1\leq j \leq p}  \|x - \hat{x}_j \|_{l_2^N},
\end{equation}
corresponding to the best possible approximation to $x$ in the metric of $l_2^N$ from the sequence $(\hat{x}_1, ..., \hat{x}_p)$, using the computable \emph{cross validation error},
\begin{equation}
\widehat{\eta_{cv}} = \min_{1 \leq j \leq p} \| \Psi (x - \hat{x}_j) \|_{l_2^r}.
\end{equation}
 Our main result, which follows from Corollary $\eqref{myjl}$, details how the number of cross validation measurements $r$ should be chosen in terms of the desired accuracy $\epsilon$ of estimation, confidence level $\xi$ in the prediction, and number $p$ of estimates $\hat{x}_j$ to be measured.    
\begin{theorem}
For a given accuracy $\epsilon \in (0,1/2]$, confidence $\xi \in (0,1)$, and number $p$ of estimates $\hat{x}_j \in \mathbb{R}^N$, it suffices to allocate $r  = \lceil C\epsilon^{-2}\log{\frac{p}{2\xi}} \rceil$ rows to a cross validation matrix $\Psi$ of Gaussian or Bernoulli type, normalized according to Lemma \ref{myjl} and independent of the estimates $\hat{x}_j$, to obtain with probability greater than or equal to $1 - \xi$, and for each $j = 1,2, ..., p$, the bounds
\\
\\
\begin{equation}
 \frac{1}{1+\epsilon} \leq \frac{\|x - \hat{x}_j \|_{l_2^N}}{\| \Psi (x - \hat{x}_j) \|_{l_2^{\ell}}}\leq \frac{1}{1-\epsilon}
 \label{1a}
\end{equation}
and 
\begin{eqnarray}
 \frac{1-3\epsilon}{(1+\epsilon)(1-\epsilon)^2} &\leq& \frac{\|x - \hat{x}_j \|_{l_2^N}/\|x\|_{l_2^N}}{\| \Psi (x - \hat{x}_j) \|_{l_2^{\ell}}/\|\Psi x \|_{l_2^{\ell}}} \leq \frac{1}{(1-\epsilon)^2} \nonumber \\ \nonumber \\
\label{2a}
\end{eqnarray}
and also
\begin{equation}
 \frac{1}{1+\epsilon} \leq \frac{\eta_{or}}{\widehat{\eta_{cv}}} \leq \frac{1}{1-\epsilon}.
\label{1b}
\end{equation}
\label{mainthm}
\end{theorem}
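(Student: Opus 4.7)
The plan is to apply Corollary \ref{myjl} once, to the finite set of $p+1$ vectors
\[
\{x,\, x-\hat{x}_1,\, \ldots,\, x-\hat{x}_p\} \subset \mathbb{R}^N,
\]
and then derive all three conclusions (\ref{1a}), (\ref{2a}), (\ref{1b}) as algebraic consequences of that single JL event. The hypothesis that $\Psi$ is independent of the $\hat{x}_j$ is the whole reason this step is legal: conditioning on $(\hat{x}_1,\ldots,\hat{x}_p)$ we may treat the list above as deterministic \emph{before} the random matrix $\Psi$ is drawn, so the union-bounded JL statement applies verbatim. The stated $r = \lceil C\epsilon^{-2}\log(p/(2\xi))\rceil$ technically needs the harmless upgrade of $p$ to $p+1$ to absorb the extra vector $x$; this either gets folded into the constant $C$ or one simply reads $p$ in the theorem as the size of the augmented list.

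On the good event of probability at least $1-\xi$, set $u_j=\|x-\hat{x}_j\|_{\ell_2^N}$, $v_j=\|\Psi(x-\hat{x}_j)\|_{\ell_2^r}$, $c=\|x\|_{\ell_2^N}$, and $d=\|\Psi x\|_{\ell_2^r}$. The JL conclusion reads $u_j \sim_\epsilon v_j$ for each $j$ and $c \sim_\epsilon d$. Inverting the first gives $\frac{1}{1+\epsilon} \le u_j/v_j \le \frac{1}{1-\epsilon}$, which is exactly (\ref{1a}); applying part 3 of Lemma \ref{babylemma} to the pairs $\{u_j \sim_\epsilon v_j\}$ yields $\eta_{or} = \min_j u_j \sim_\epsilon \min_j v_j = \widehat{\eta_{cv}}$, and inversion delivers (\ref{1b}).

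For the relative-error bound (\ref{2a}) I would multiply directly rather than invoke Lemma \ref{babylemma}(2) (whose bracket is a shade too loose):
\[
\frac{u_j/c}{v_j/d} \;=\; \frac{u_j}{v_j}\cdot\frac{d}{c} \;\in\; \left[\frac{1-\epsilon}{1+\epsilon},\;\frac{1+\epsilon}{1-\epsilon}\right].
\]
Then I would loosen to match the printed constants: since $\frac{1+\epsilon}{1-\epsilon} = \frac{1-\epsilon^2}{(1-\epsilon)^2} \le \frac{1}{(1-\epsilon)^2}$, and since $(1-\epsilon)^3 \ge 1-3\epsilon$ on $\epsilon\in(0,1/2]$ gives $\frac{1-\epsilon}{1+\epsilon} = \frac{(1-\epsilon)^3}{(1+\epsilon)(1-\epsilon)^2} \ge \frac{1-3\epsilon}{(1+\epsilon)(1-\epsilon)^2}$, the cleaner bracket implies exactly (\ref{2a}).

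Once the JL event is secured the rest is mechanical, so there is no real technical obstacle inside the calculation; the only genuine point worth stressing is the independence of $\Psi$ from the estimates $\hat{x}_j$. Without it the estimates could depend adversarially on $\Psi$ and no union bound on concentration would survive. This is precisely why, in the proposed cross-validation scheme, the $r$ cross-validation measurements are carved off before the decoder runs, rather than being reused adaptively inside the decoding loop.
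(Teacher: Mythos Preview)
Your proposal is correct and follows essentially the same route as the paper: apply the Johnson--Lindenstrauss corollary to the points $x$ and $x-\hat{x}_j$, then read off (\ref{1a}) and (\ref{1b}) via Lemma~\ref{babylemma}(1),(3) exactly as the paper does. The only cosmetic difference is that for (\ref{2a}) the paper invokes Lemma~\ref{babylemma}(2) directly, whereas you multiply the two JL ratios by hand and then relax to the stated constants---this is the same argument unpacked, and your observation about $p$ versus $p+1$ being absorbed into $C$ is accurate.
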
 

\begin{proof}.
\begin{itemize}
\item The bounds in \eqref{1a} are obtained by application of Lemma $\ref{myjl}$ to the $p$ points $u_j = x - \hat{x}_j$, and rearranging the resulting bounds according to Lemma $\ref{babylemma}$ part $(1)$.   The bound \eqref{1b} follows from the bounds \eqref{1a} and part (3) of Lemma $\ref{babylemma}$.
\item  The bounds in \eqref{2a} are obtained by application of Lemma $\ref{myjl}$ to the $p+1$ points $u_0 = x, u_j = x - \hat{x}_j$, and regrouping the resulting bounds according to part (2) of Lemma $\ref{babylemma}$.
\end{itemize}
\end{proof}
\begin{remark}
\emph{The measurements making up the cross validation matrix $\Psi$ must be independent of the measurements comprising the rows of the implementation matrix $\Phi$.  This comes from the requirement in Lemma $\ref{psi}$ that the matrix $\Psi$ be independent of the points $u_j = x - \hat{x}_j$.  This requirement is crucial, as observed  when $\hat{x}$ solves the $\ell_1$ minimization problem 
\begin{equation}
\hat{x} = \arg \min_{z \in \mathbb{R}^N} \| z \|_1 \textrm{ subject to } \Phi z = \Phi x,
\end{equation}
in which case the constraint $\Phi(\hat{x} - x) = 0$ clearly precludes the rows of $\Phi$ from giving any information about the error $\| \hat{x} - x \|_2$. }
\end{remark}

\begin{remark}
\emph{ If the full compressed sensing matrix $\cal{A} =$[$\Phi$ ; $\Psi $] is itself of Gaussian or Bernoulli type, then one can obtain a more accurate approximation to the unknown quantities $ \| x - \hat{x}_j \| / \| x \| $ by using all of the measurements ${\cal A}x$ in the estimates $\| \Psi (x - \hat{x}_j) \|_{l_2^{\ell}}/\|{\cal A} x \|_{l_2^{m}}$.}
\end{remark}

\begin{remark}
\emph{Proposition \ref{mainthm} should be applied with a different level of care depending on what information about the sequence $(x - x_1, x - x_2, ..., x - x_p)$ is sought.  If the minimizer $\hat{x} = \arg \min_{1 \leq j \leq p} \| \Psi (x - \hat{x}_j) \|_{l_2^r}$ is sufficient for one's purposes, then the precise normalization of $\Psi$ in Proposition \ref{mainthm} is not important.  The normalization doesn't matter either for estimating the normalized quantities $\|x - \hat{x}_j\|_2/\|x\|_2$.  On the other hand, if one is using cross validation to obtain estimates for the quantities $\|x - \hat{x}_j\|_2$, then normalization is absolutely crucial, and one must observe the normalization factor given by Lemma $\ref{myjl}$ that depends on the number of rows $r$ allocated to the cross validation matrix $\Psi$.}
\end{remark}
%A final decoding with the initialization parameters corresponding to the estimate $\hat{x}_{j*}$ can be ultimately repeated using all $m$ measurements for most accurate results.

\section{Applications}

\subsection{Estimation of the best $k$-term approximation error}
We have already seen that if the $m \times N$ matrix $\Phi$ satisfies $2k$-RIP with parameter $\delta \leq \approx .45$, and $\hat{x} = {\cal L}_1(\Phi, \Phi x)$ is returned as the solution to the $\ell_1$ minimization problem $\eqref{l1}$, then the error between $x$ and the approximation $\hat{x}$ is bounded by
\begin{equation}
\| x - \hat{x} \|_2 \leq \frac{c}{\sqrt{k}} \sigma_k(x)_{l_1^N}.
\label{approxerr}
\end{equation}
Several other decoding algorithms in addition to $\ell_1$ minimization enjoy the reconstruction guarantee $\eqref{approxerr}$ under similar bounds on $\Phi$, such as the Iteratively Reweighted Least Squares algorithm (IRLS) $\cite{irls}$, and the greedy algorithms CoSAMP $\cite{cosamp}$ and Subspace Pursuit $\cite{subspace}$.  It has recently been shown \cite{InstOpt} \cite{InstOpt2} that if the bound \eqref{approxerr} is obtained, and if $x - \hat{x}$ lies in the null space of $\Phi$ (as is the case for the decoding algorithms just mentioned), then if $\Phi$ is a Gaussian or a Bernoulli random matrix, the error $\|x - \hat{x} \|_2$ also satisfies a bound, with high probability on $\Phi$, with respect to the $\ell_2^N$ residual, namely
\begin{equation}
\| x - \hat{x} \|_2 \leq c\sigma_k(x)_{l_2^N},
\label{w2}
\end{equation}
for a reasonable constant $c$ depending on the RIP constant $\delta_{2k}$ of $\Phi$.  In the event that $\eqref{w2}$ is obtained, a cross validation estimate $\| \Psi (x - \hat{x}) \|_{l_2^{r}}$ can be used to lower bound the residual $\sigma_k(x)_{l_2^N}$, with high probability, according to
\begin{eqnarray}
(1 - \epsilon) \| \Psi (x - \hat{x}) \|_{l_2^{\ell}} \leq \|x - \hat{x} \|_{l_2^N}  \leq c \sigma_k(x)_{l_2^N},
\end{eqnarray}
with $O(\frac{1}{\epsilon^2})$ rows reserved for the matrix $\Psi$ $\eqref{mainthm}$.  At this point, we will use Corollary 3.2 of $\cite{Romp07}$, where it is proved that if the bound $\eqref{approxerr}$ holds for $\hat{x}$ with constant $c$, then the same bound will hold for
\begin{equation}
\hat{x}_k = \arg \min_{z: |z| \leq k} \| \hat{x} - z \|_{l_2^N},
\end{equation}
the best $k$-sparse approximation to $\hat{x}$, with constant $\tilde{c} = 3c$.  Thus, we may assume without loss of generality that $\hat{x}$ is $k$-sparse,  in which case $\| \Psi (x - \hat{x}) \|_{l_2^{r}}$ also provides an upper bound on the residual $\sigma_k(x)_{l_2^N}$ by
\begin{equation}
(1 + \epsilon) \| \Psi (x - \hat{x}) \|_{l_2^{r}}  \geq  \|x - \hat{x} \|_{l_2^N} \geq \sigma_k(x)_{l_2^N}.
\end{equation} 
With almost no effort then, cross validation can be incorporated into many decoding algorithms to obtain tight upper and lower bounds on the unknown $k$-sparse approximation error $\sigma_k(x)_{l_2^N}$ of $x$.  More generally, the allocation of $10\log{p}$ measurements to the cross validation matrix $\Psi$ is sufficient to estimate the errors $(\|x - x_{k_j}\|_2)_{j=1}^p$ or the normalized approximation errors $(\|x - x_{k_j}\|_2/\|x\|_2)^p_{j=1}$ at $p$ sparsity levels $k_j$ by decoding $p$ times, adding $m_j$ measurements to the implementation matrix $\Phi_j$ at each repetition.  Recall that the quantities $k_j$ and $m_j$ are related according to $\eqref{k}$. 

\subsection{Choice of the number of measurements $m$}
Photograph-like images have wavelet or curvelet coefficient sequences $x \in \mathbb{R}^N$ that are \emph{compressible} $\cite{DeVore92}$ $\cite{curvelet}$, having entries that obey a power law decay
\begin{equation}
|x|_{(k)} \leq c_s k^{-s},
\label{power}
\end{equation}
where $x_{(k)}$ denotes the $k$th largest coefficient of $x$ in absolute value, the parameter $s > 1$ indicates the level of compressibility of the underlying image, and $c_s$ is a constant that depends only on $s$ and the normalization of $x$.  From the definition $\eqref{power}$, compressible signals are immediately seen to satisfy
\begin{equation}
\|x - x_k \|_1/\sqrt{k} \leq c_s' k^{-s + 1/2}, 
\label{compress}
\end{equation}
so that the solution $\hat{x}_m = {\cal L}_1(\Phi, \Phi x)$ to the $\ell_1$ minimization problem $\eqref{l1}$ using an $m \times N$ matrix $\Phi$ of optimal RIP order $k = cm/ \log(N/m))$ satisfies 
\begin{equation}
\| x - \hat{x}_{m} \|_2 \leq c_{s,\delta} k^{-s+1/2}.
\label{levelcomp}
\end{equation}
The number of measurements $m$ needed to obtain an estimate $\hat{x}_{m}$ satisfying $ \|x - \hat{x}_m \|_2 \leq \tau$ for a predetermined threshold $\tau$ will vary according to the compressibility of the image at hand.  Armed with a total of $m$ measurements, the following decoding method that \emph{adaptively} chooses the number of measurements for a given signal $x$ presents a more democratic alternative to standard compressed sensing decoding structure:  

\begin{table}[h]
%\hline
\label{BDC}
{ \small
\begin{center}
\caption{ {\it \small CS decoding structure with adaptive number of measurements} } 
\end{center}
\begin{enumerate}
\item { \it Input}: The $m$-dimensional vector $y = \Phi x$, the $m \times N$ matrix $\Phi$, (in some algorithms) the sparsity level $k$, and  (again, in some algorithms) a bound $\gamma$ on the noise level of $x$, the number $p$ of row subsets of $\Phi$,  $(\Phi_1, \Phi_2, ..., \Phi_p)$, corresponding to increasing number of rows $m_1 < m_2 < ... < m_p < m$, and threshold $\tau > 0$.
\item { \it Initialize} the decoding algorithm at $j = 1$.
\item {\it Estimate} $\hat{x}_j = \triangle(\Phi_{m_j}, y_{m_j}, k, \gamma)$ with the decoder $\triangle$ at hand, using only the first $m_j$ measurement rows of $\Phi$.  The previous estimate $\hat{x}_{j-1}$ can be used for ``warm initialization" of the algorithm, if applicable.  The remaining $r_j = m - m_j$ measurement rows are allocated to a cross validation matrix $\Psi_j$ that is used to estimate the resulting error $\| x - \hat{x}_j \|_2/\|x\|_2$. 
\item { \it Increment}  $j$ by 1, and iterate from step 3 if stopping rule 
is not satisfied.
\item { \it Stop}: at index $j = j^* < p $ if $\|x - x_{m_j}\|_2/\| x \|_2 \leq \tau$ holds with near certainty, as indicated by
\begin{equation}
\frac{\sqrt{r_j}\|\Psi (x - x_{m_j}) \|_2/ \| \Phi x\|_2}{\sqrt{r_j} - 3\log{p}} \leq \tau
\label{r_j}
\end{equation}
according to Proposition \ref{mainthm}.  If the maximal number of decoding measurements $m_p < m$ have been used at iteration $p$, and $\eqref{r_j}$ indicates that $\| x - \hat{x}_{m_p} \|_2/\|x\|_2 > \tau$ still, return $\hat{x}_m = \triangle(\Phi, y, k, \gamma)$ using all $m$ measurements, but with a warning that the underlying image $x$ is probably too dense, and its reconstruction is not trustworthy.     
\label{CS decode}
\end{enumerate}  
}
%\hline
\end{table}

\subsection{Choice of regularization parameter in homotopy-type  algorithms}
Certain compressed sensing decoding algorithms iterate through a sequence of intermediate estimates $\hat{x}_j$ that could be potential optimal solutions to $x$ under certain reconstruction parameter choices.  This is the case for greedy and homotopy-continuation based algorithms.  In this section, we study the application of cross validation to the intermediate estimates of decoding algorithms of homotopy-continuation type.  
\\
\\
LASSO is the name coined in $\cite{lasso}$ for the problem of minimizing of the following convex program:
\begin{equation}
\hat{x}^{[\tau]} = \arg \min_{\| z \|_1 \leq \tau} \| \Phi x - \Phi z \|_{\ell_{2}} 
\label{lasso}
\end{equation}
The two terms in the LASSO optimization problem $\eqref{lasso}$ enforce data fidelity and sparsity, respectively, as balanced by the regularization parameter $\tau$.  In general, choosing an appropriate value for $\tau$ in $\eqref{lasso}$ is a hard problem; when $\Phi$ is an underdetermined matrix, as is the case in compressed sensing, the function $f(\tau) = \| x - \hat{x}^{[\tau]} \|_2$ is unknown to the user but is seen empirically to have a minimum at a value of $\tau$ in the interval $[0, \| \Phi x \|_{\infty}]$ that depends on the unknown noise level and/or and compressibility level of $x$.      
\\
\\
The \emph{homotopy continuation} algorithm \cite{homotopy}, which can be viewed as the appropriate variant of LARS \cite{homotopy}, is one of many algorithms for solving the LASSO problem $\eqref{lasso}$ at a predetermined value of $\tau$; it proceeds by first initializing $\tau' $ to a value sufficiently large to ensure that the $\ell_1$ penalization term in $\eqref{lasso}$ completely dominates the minimization problem and $x^{[\tau']} = 0$ trivially.  The homotopy continuation algorithm goes on to generate $x^{[\tau']}$ for decreasing $\tau'$ until the desired level for $\tau$ is reached.   If $\tau = 0$, then the homotopy method traces through the entire solution path $x^{[\tau']} \in \mathbb{R}^N$ for $\tau' \geq 0$ before reaching the final algorithm output $x^{[0]} = {\cal L}_1(\Phi, y)$ corresponding to the $\ell_1$ minimizer $\eqref{l1}$.   
\\
\\
From the non-smooth optimality conditions for the convex functional $\eqref{lasso}$, it can be shown that the solution path $\hat{x}^{[\tau]} \in \mathbb{R}^N$ is a piecewise-affine function of $\tau$ \cite{homotopy}, with ``kinks" possible only at a finite number of points $\tau \in \{\tau_1, \tau_2, ... \}$.  Proposition \ref{mainthm} suggests a method whereby an appropriate value of $\tau*$ can be chosen from among a subsequence of the kinks $(\tau_1, \tau_2, ... , \tau_p)$ by solving the minimization problem $\hat{x}^{[\tau*]} = \arg \min_{j \leq p} \| \Psi (x - \hat{x}^{[\tau_j]}) \|_2$ for appropriate cross validation matrix $\Psi$.  Moreover, since the solution $x - \hat{x}^{[\tau]}$ for $\tau_j \leq \tau \leq \tau_{j+1}$ is restricted to lie in the two-dimensional subspace spanned by $x - \hat{x}^{[\tau_j]}$ and $x - \hat{x}^{[\tau_{j+1}]}$,  one can combine the Johnson Lindenstrauss Lemma with a covering argument analogous to that used to derive the RIP property for Gaussian and Bernoulli random matrices in $\cite{bddw}$, to cross validate the entire \emph{continuum} of solutions $\hat{x}^{[\tau]}$ between $\tau_1 \leq \tau \leq \tau_p$.  More precisely, the following bound holds under the conditions outlined in Theorem $\eqref{mainthm}$, with the exception that $2r$ (as opposed to $r$) measurements are reserved to $\Psi$:

\begin{equation}
 \frac{1}{1+\epsilon} \leq \frac{\min_{\tau_1 \leq \tau \leq \tau_p} \| x - \hat{x}^{[\tau]} \|_2}{\min_{\tau_1 \leq \tau \leq \tau_p} \|\Psi ( x - \hat{x}^{[\tau]})\|_2} \leq \frac{1}{1-\epsilon}
 \label{lassobound}
\end{equation}

Unfortunately, the bound $\eqref{lassobound}$ is not strong enough to \emph{provably} evaluate the entire solution path $\hat{x}^{[\tau]}$ for $\tau \geq 0$, because the best upper bound on the number of kinks on a generic  LASSO solution path can be very large.  One can prove that this number is bounded by $3^N$, by observing that if $\hat{x}^{[\tau_1]}$ and $\hat{x}^{[\tau_2]}$ have the same sign pattern, then $\hat{x}^{[\tau]}$ also has the same sign pattern for $\tau_1 \leq \tau \leq \tau_2$.  Applying Proposition \ref{mainthm} to $p = 3^N$ points $x - \hat{x}_j$, this suggests that $O(N)$ rows would need to be allocated to a cross validation matrix $\Psi$ in order for Proposition \ref{mainthm} and the corollary $\eqref{lassobound}$ to apply to the entire solution path, which clearly defeats the compressed sensing purpose.  However, whenever the matrix $\Phi$ is an $m \times N$ compressed sensing matrix of random Gaussian, Bernoulli, or partial Fourier construction, it is observed \emph{empirically} that the number of kinks along a homotopy solution path behaves like $O(m)$ for generic vectors $x \in \mathbb{R}^N$ used to generate the path, see Figure \ref{fig:homotopy}.  This suggests, at least heuristically, that the allocation of $O(\log{m})$ out of $m$ compressed sensing measurements of this type suffices to ensure that the error $\| x - \hat{x}^{[\tau]} \|_2$ for the solution $\hat{x}^{[\tau]} = \arg \min_{\tau \geq 0} \| \Psi (x - \hat{x}^{[\tau]}) \|_2$ will be within a small multiplicative factor of the best possible error in the metric of $\ell_2^N$ obtainable by any approximant $\hat{x}^{[\tau]}$ along the solution curve $\tau \geq 0$.   At the value of $\tau$ corresponding to $\hat{x}^{[\tau]}$, the LASSO solution $\eqref{lasso}$ can be computed using all $m$ measurements $\Phi$ as a final approximation to $x$.
\\
\\

\begin{figure*}
\begin{center}
\includegraphics[width=8cm]{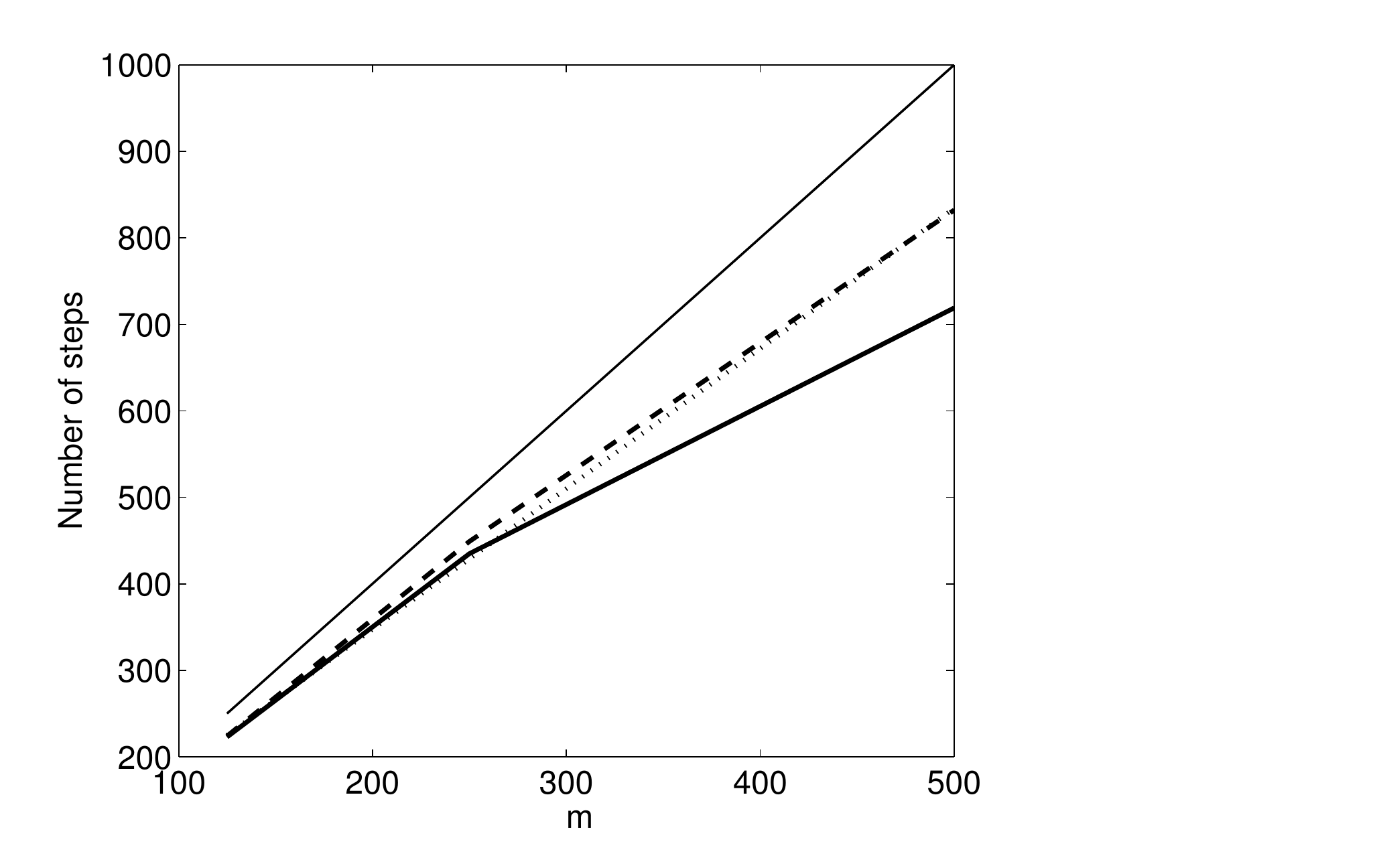}
\caption{Number of steps in the homotopy continuation algorithm, as described in Section $3.6.3$, as a function of the number of rows $m$ comprising the encoding matrix $\Phi$.  The thick solid, dashed, and dotted lines represent a bound satisfied by $95$ out of $100$ trials corresponding to independent realizations of an $m \times N$ Gaussian random matrix $\Phi$ and $N$-dimensional Gaussian random vector $x$, when $N = 500, 1000,$ and $2000$, respectively. The lines are generated from data at $m = 125, 250,$ and $500$. The number of steps appears to depend sublinearly on the number of rows $m$ and does not seem to depend on the ambient dimension $N$.  The thin solid line corresponds to the observed upper bound of $2m$.}
\label{fig:homotopy}
\end{center}
\end{figure*}

The \emph{Dantzig selector} (DS) \cite{dantzig1} refers to a minimization problem that is similar in form to the LASSO problem: 
\begin{equation}
\hat{x}_{\tau} = \arg \min_{z \in \mathbb{R}^N} \| \Phi x - \Phi z \|_{\ell_{\infty}} + \tau \| z \|_1
\label{DS} 
\end{equation}
The difference between the DS $\eqref{DS}$ and LASSO $\eqref{lasso}$ is the choice of norm ($\ell_{\infty}$ versus $\ell_2$) on the fidelity-promoting term.  Homotopy-continuation based algorithms have also been developed to solve the minimization problem $\eqref{DS}$ by tracing through the solution path $\hat{x}_{\tau'}$ for $\tau' \geq \tau$.  As the minimization problem $\eqref{DS}$ can be reformulated as a linear program, its solution path $\hat{x}_{\tau} \in \mathbb{R}^N$ is seen to be a piecewise \emph{constant} function of $\tau$, in contrast to the LASSO solution path.  In practice, the total number of breakpoints $(\tau_1, \tau_2, ... )$ in the domain $0 \leq \tau$ is observed to be on the same order of magnitude as $m$ when the $m \times N$ matrix $\Phi$ satisfies RIP \cite{DASSO09}; thus, the procedure just described to cross validate the LASSO solution path can be adapted to cross validate the solution path of $\eqref{DS}$ as well. 
\\
\\
Thus far we have not discussed the possibility of using cross validation as a stopping criterion for homotopy-type decoding algorithms.  Along the LARS homotopy curve $\eqref{lasso}$, most of the breakpoints $(\tau_1, \tau_2, ... )$ appear only near the end of the curve in a very small neighborhood of $\tau = 0$.  These breakpoints incur only miniscule changes in the error $\| x - \hat{x}_{\tau_j} \|_2$ even though they account for most of the computational expense of the LARS decoding algorithm.  Therefore, it would be interesting to adapt such algorithms, perhaps using cross validation, to stop once $\tau^*$ is reached for which the error $\|x - \hat{x}_{\tau^*} \|_2$ is sensed to be sufficiently small.

\subsection{Choice of sparsity parameter in greedy-type algorithms}
Greedy compressed sensing decoding algorithms also iterate through a sequence of intermediate estimates $\hat{x}_j$ that could be potential optimal solutions to $x$ under certain reconstruction parameter choices.  \emph{Orthogonal Matching Pursuit} (OMP), which can be viewed as the prototypical greedy algorithm in compressed sensing, picks columns from the implementation matrix $\Phi$ one at a time in a greedy fashion until, after $k$ iterations, the $k$-sparse vector $\hat{x}_k$, a linear combination of the $k$ columns of $\Phi$ chosen in the successive iteration steps, is returned as an approximation to $x$.  The OMP algorithm is listed in Table 2.  Although we will not describe the algorithm in full detail, a comprehensive study of OMP can be found in $\cite{TroGil}$.   \\ \\
\begin{table}[h]
{ \small 
%\hline
\label{omp}
 \begin{center}
\caption{{\it \small Orthogonal Matching Pursuit Basic Structure}}
\end{center}
\begin{enumerate}
\item { \it Input}: The $m$-dimensional vector $y = {\cal B} x$, the $m \times N$ encoding matrix $\Phi$ whose $j^{th}$ column is labeled $\phi_j$, and the sparsity bound $k$.
\item { \it Initialize} the decoding algorithm at $j = 1$, the residual $r_0 = y$, and the index set $\Lambda_0 = \emptyset$.
\item {\it Estimate} 
\begin{enumerate}
\item Find an index $\lambda_j$ that realizes the bound $ (\Phi^T r_{j-1} )_{\lambda_j} = \| \Phi^T r_{j-1} \|_{\infty} $.
\item Update the index set $\Lambda_j = \Lambda_{j-1} \cup { \lambda_j}$ and the submatrix of contributing columns: $\Phi_j = [ \Phi_{j-1}\textrm{,  }  \phi_{\lambda_j} ]$
\item Update the residual:
\begin{eqnarray}
s_j &=& \arg \min_x \| \Phi_j x - y \|_2 = (\Phi_j^T\Phi_j)^{-1}\Phi_j^Ty, \nonumber \\
a_j &=& \Phi_j x_j \nonumber \\
r_j &=& r_{j-1} - a_j. \nonumber
\end{eqnarray}
\item The estimate $\hat{x}_j$ for the signal has nonzero indices at the components listed in $\Lambda_j$, and the value of the estimate $\hat{x}_j$ in component $\lambda_i$ equals the $ith$ component of $s_j$.
\end{enumerate} 
\item { \it Increment}  $j$ by 1 and iterate from step 3, if $j < k$.
\item { \it Stop}: at $j = k$.  Output $\hat{x}_{omp} = \hat{x}_k$ as approximation to $x$.
\end{enumerate}
}
%\hline
\end{table}

\pagebreak

Note in particular that OMP requires as input a parameter $k$ corresponding to the expected sparsity level for $x \in \mathbb{R}^N$.  Such input is typical among greedy algorithms in compressive sensing (in particular, we refer the reader to $\cite{cosamp}$,  $\cite{irls}$, and  $\cite{subspace}$).  As shown in $\cite{TroGil}$, OMP will recover with high probability a vector $x$ having at most $k \leq m / \log{N}$ nonzero coordinates from its image $\Phi x$ if $\Phi$ is a (known) $m \times N$ Gaussian or Bernoulli matrix with high probability.   Over the more general class of vectors $x = x_d + {\cal N}$ that can be decomposed into a $d$-sparse vector $x_d$ (with $d$ presumably less than or equal to $k$) and additive noise vector ${\cal N}$, we might expect an intermediate estimate $\hat{x}_s$ to be a better estimate to $x$ than the final OMP output $\hat{x}_k$, at least when  $d << k$.  Assuming that the signal $x$ admits a decomposition of the form $x = x_d + {\cal N}$, the sequence of intermediate estimates $(\hat{x}_1, ... , \hat{x}_k)$ of an OMP algorithm can be cross validated in order to estimate the noise level and recover a better approximation to $x$.  We will study this particular application of cross validation in more detail below.

\section{Orthogonal matching pursuit: a case study}
As detailed in Table 2, a single index $\lambda_j$ is added to a set $\Lambda_j$ estimated as the $j$ most significant coefficients of $x$ at each iteration $j$ of OMP; following the selection of $\Lambda_j$, an estimate $\hat{x}_j$ to $x$ is determined by the least squares solution,
\begin{equation}
\hat{x}_j = \arg \min_{\textrm{supp}(z) \in \Lambda_j} \| \Phi z - y \|_2,
\end{equation}
among the subspace of vectors $z \in \mathbb{R}$$^N$ having nonzero coordinates in the index set $\Lambda_j$.  OMP continues as such, adding a single index $\lambda_j$ to the set $\Lambda_j$ at iteration $j$, until $j = k$ at which point the algorithm terminates and returns the $k$-sparse vector $\hat{x}_{omp} = \hat{x}_k$ as approximation to $x$.  
\\
\\
Suppose $x$ has only $d$ significant coordinates.  If $d$ could be specified beforehand, then the estimate $\hat{x}_d$ at iteration $j = d$ of OMP would be returned as an approximation to $x$.   However, the sparsity $d$ is not known in advance, and $k$ will instead be an upper bound on $d$.  As the estimate $\hat{x}_j$ in OMP can be then identified with the hypothesis that $x$ has $j$ significant coordinates, the application of cross-validation as described in the previous section applies in a very natural way to OMP.  In particular, we expect $\hat{x}_{or}$ and $\hat{x}_{cv}$ of Proposition \ref{mainthm} to be close to the estimate $\hat{x}_j$ at index $j = |x|$ corresponding to the true sparsity of $x$; furthermore, in the case that $|x|$ is significantly less than $k$, we expect the cross validation estimate $\hat{x}_{cv}$ to be a better approximation to $x$ than the OMP-returned estimate $\hat{x}_k$.  We will put this intuition to the test in the following numerical experiment. 

\subsection{Experimental setup}
We initialize a signal $x_0$ of length $N = 3600$ and sparsity level $d = 100$ as
\begin{equation}
 x_0(j) = 
 \left \{ \begin{array}{ll}
1,  &  \textrm{ for  } j = 1... 100 \\
0, & \textrm{ else}.
\end{array} \right.
\label{x0}
\end{equation}
\noindent Noise is then added to $x_a = x_0 + {\cal N}_a$ in the form of  a Gaussian random variable ${\cal N}_a$ distributed according to
\begin{equation}
{\cal N}_a \sim N(0, .05),
\end{equation}
and the resulting vector $x_a$ is renormalized to satisfy $\|x_a\|_{l_2^N} = 1$.  This yields an expected noise level of 
\begin{equation}
\mathbb{E} \big[ \sigma_d(x_a) \big] \approx .284.
\end{equation}
\\
We fix the input $k=200$ in Table 2, and assume we have a total number of compressed sensing measurements $m = 800$.  A number $r$ of these $m$ measurements are allotted to cross validation, while the remaining $n = m - r$ measurements are allocated as input to the OMP algorithm in Table 2. This experiment aims to numerically verify Proposition \ref{mainthm}; to this end, we specify a confidence $\xi = 1/100$, and solve for the accuracy $\epsilon$ according to the relation $r =  \epsilon^{-2} \log(\frac{k}{2\xi})$; that is, 

\begin{equation}
\epsilon(r) = \sqrt{\frac{\log(\frac{k}{2\xi})}{r}} \approx \frac{3}{\sqrt{r}}.
\label{rach}
\end{equation} 

\noindent Note that the specification $\eqref{rach}$ corresponds to setting the constant $C = 1$ in Proposition \ref{mainthm}.  Although $C \geq 8$ is needed for  the proof of the Johnson Lindenstrauss lemma at present, we find that in practice $C = 1$ already upper bounds the optimal constant needed for Proposition \ref{mainthm} for Gaussian and Bernoulli random ensembles. 
\\
\\
A single (properly normalized) Gaussian $n \times N$ measurement matrix $\Phi$ is generated (recall that $n$ = $m$ - $r$) , and this matrix and the measurements $y = \Phi x$ are provided as input to the OMP algorithm; the resulting sequence of estimates  $(\hat{x}_1, \hat{x}_2, ..., \hat{x}_k)$ is stored.  The final estimate $\hat{x}_k$ from this sequence is the returned OMP estimate $\hat{x}_{omp}$ to $x$.  The error $\eta_{omp} = \| \hat{x}_{omp} - x \|_2$  is greater than or equal to the oracle error of the sequence, $\eta_{or} = \min_{\hat{x}_j} \| x - \hat{x}_j \|_2$.
\\
\\
With the sequence $(\hat{x}_1, \hat{x}_2, ... , \hat{x}_k)$ at hand, we consider $1000$ realizations $\Psi_q$ of an $r \times N$ cross validation matrix having the same componentwise distribution as $\Phi$, but normalized to have variance $1/r$ according to Theorem $\ref{psi}$.  The cross validation error
 \begin{equation}
 \widehat{\eta_{cv}}(q) = \min_j \|\Psi_q (x - \hat{x}_j)\|_{l_2^r}
 \end{equation}
 is measured at each realization $\Psi_q$; we plot the average $\bar{\widehat{\eta_{cv}}}$ of these $1000$ values and intervals centered at $\bar{\widehat{\eta_{cv}}}$ having length equal to twice the empirical standard deviation.  Note that we are effectively testing $1000$ trials of OMP-CV, the algorithm which modifies OMP to incorporate cross validation so that $(\hat{x}_{cv}, \widehat{\eta_{cv}})$ are output instead of $\hat{x}_{omp} = \hat{x}_k$.
 \\
 \\
 At the specified value of $\xi$, Proposition \ref{mainthm} part $\eqref{2a}$ (with constant $C = 1$) implies that 
 \begin{equation}
 \big(1 -\epsilon)\eta_{or} \leq \widehat{\eta_{cv}}(q) \leq \big( 1 +\epsilon \big)\eta_{or}
 \label{theory}
 \end{equation}
 should obtain on at least $990$ of the $1000$ estimates $\widehat{\eta_{cv}}(q)$; in other words, at least $990$ of the $1000$ discrepancies $| \eta_{or} - \widehat{\eta_{cv}}(q)|$ should be bounded by
 \begin{equation}
 0 \leq | \widehat{\eta_{cv}}(q) - \eta_{or} | \leq \epsilon \eta_{or}.
 \label{theory+}
 \end{equation}
Using the relation $\eqref{rach}$ between $\epsilon$ and $r$, this bound becomes tighter as the number $r$ of CV measurements increases; however, at the same time, the oracle error $\eta_{or}$ increases with $r$ for fixed $m$ as fewer measurements $n = m - r$ are input to OMP.  An ideal number $r$ of CV measurements should not be too large or too small;  Figure 1 suggests that setting aside just enough measurements $r$ such that $\epsilon \leq.6$ is satisfied in $\eqref{rach}$ serves as a good heuristic to choose the number of cross validation measurements (in Figure 1, $\epsilon \leq .6$ is satisfied by taking only $r = 30$ measurements).  
 \\
 \\
We indicate the theoretical bound $\eqref{theory}$ with dark gray in Figure 1, which is compared to the interval in light gray of the $990$ values of $\eta_{cv}(q)$ that are closest to $\eta_{or}$ in actuality.   
 \\
 \\
This experiment is run for several values of $r$ within the interval $[5, 90]$; the results are plotted in Figure 1(a), with the particular range $r \in [5, 30]$ blown up in Figure 1(b). 
\\
\\
We have also carried out this experiment with a smaller noise variance; i.e. $x_b = x_0 + {\cal N}_b$ is subject to additive noise
\begin{equation}
{\cal N}_b \sim N(0, .02).
\end{equation}
The signal $x_b$ is again renormalized to satisfy $\|x_b\|_{l_2^N} = 1$; it now has an expected noise level of 
\begin{equation}
\mathbb{E} \big[ \sigma_d(x_b) \big] \approx .116.
\end{equation}
The results of this experiment are plotted in Figure 1(c).
\\
\\
\begin{figure}[htp]
{ \label{}
\includegraphics[width=6.5in]{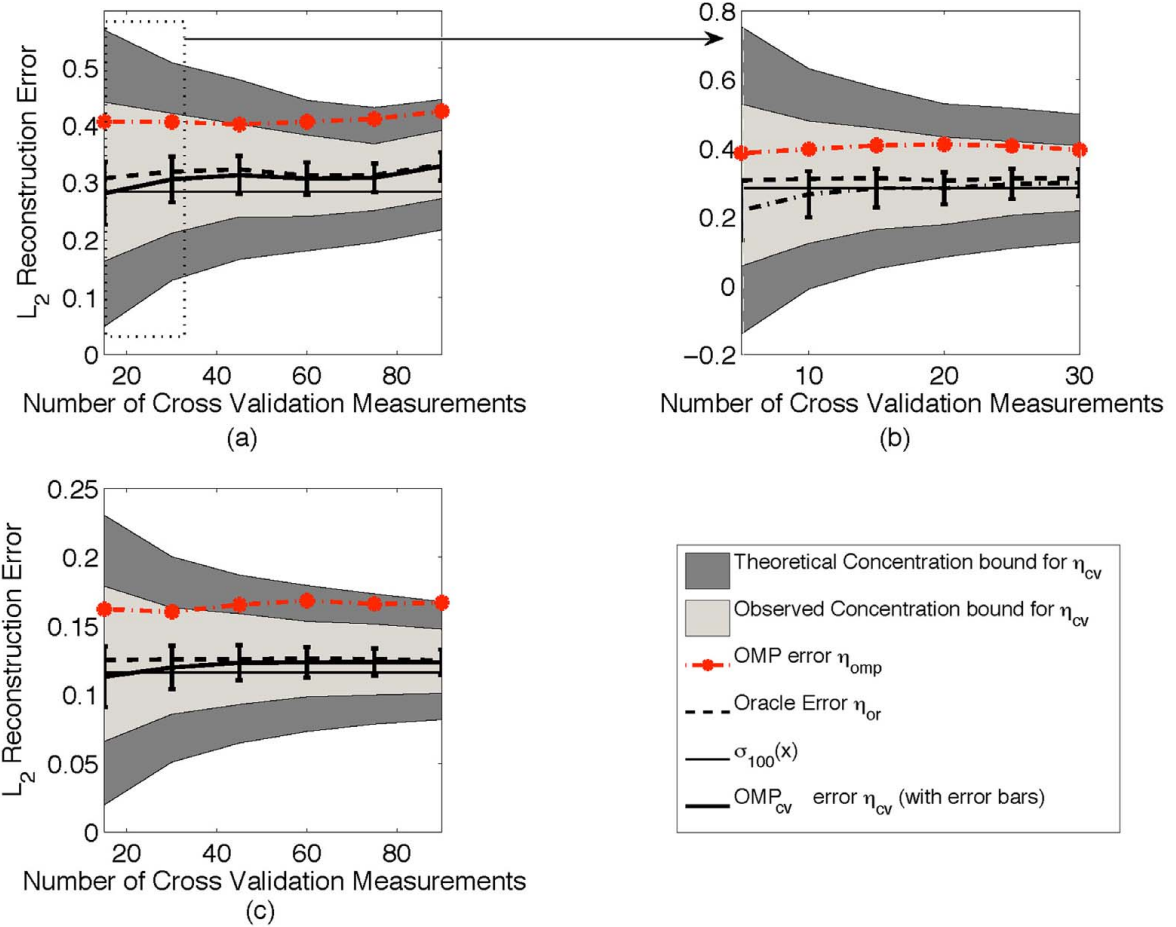}}

\caption{\small{Comparison of the reconstruction algorithms OMP and OMP-CV.  We fix the parameters $N = 3600, m = 800, k = 200$, and underlying sparsity $d = 100$, but vary the number $r$ of the total $m$ measurements reserved for cross validation from $5$ to $90$, using the remaining $n = m-r$ measurements for training.  The underlying signal has residual $\sigma_{100}(x) \approx .284$ in Figures 1(a) and 1(b), and $\sigma_{100}(x) \approx .116$ in Figure 1(c), as shown for reference by the thin horizontal line.  In both cases, the OMP-CV  error $\eta_{cv}$ (the solid black line with error bars; each point represents the average of 1000 trials) gives a better approximation to the residual error than does OMP (dot-dashed line) with very high probability, even when as few as $20$ of the total $800$ measurements are used for cross validation.  Even though $\eta_{cv}$ is guaranteed to provide a tighter bound for $\eta_{or}$ as the number $r$ of CV measurements increases, at the same time, the oracle error $\eta_{or}$ becomes a worse indicator of the residual $\sigma_{100}(x)$ because fewer measurements $n = m - r$ are input to OMP.  } }
\label{figure 1}
\end{figure}

\subsection{Experimental results}
\begin{enumerate}

\item We remind the reader that the cross-validation estimates $\widehat{\eta_{cv}}$ are observable to the user, while the values of $\eta_{omp}$, $\eta_{or}$, along with the noise level $\sigma_d(x)$, are not available to the user.  Nevertheless, $\widehat{\eta_{cv}}$ can serve as a proxy for $\eta_{or}$ according to $\eqref{theory}$, and this is verified by the plots in Figure 1.  $\widehat{\eta_{cv}}$ can also provide an upper bound on $\sigma_d(x)$, as is detailed in Section 5.1.

\item The theoretical bound $\eqref{theory}$ is seen to be tight, when compared with the observed concentration bounds in Figure 1.  

\item With high probability, the estimate $\hat{x}_{cv}(15)$ using $r = 15$ out of the alloted $m = 800$ measurements will be a better estimate of $x$ than the OMP estimate: $\| \hat{x}_{cv}(15) - x \|_2 \leq \| \hat{x}_{omp}(15) - x \|_2$.  With \emph{overwhelming probability}, the estimate $\hat{x}_{cv}(30)$ will result in error $\| \hat{x}_{cv}(30) - x \|_2 \leq \| \hat{x}_{omp}(30) - x\|_2$.  We note that the estimates $\hat{x}_{cv}(15)$ and $\hat{x}_{cv}(30)$ correspond to accuracy parameters $\epsilon(15) = .8405$ and $\epsilon(30) = .5943$ in \eqref{rach}, indicating that $\epsilon \leq.6$ is a good heuristic to determine when enough CV measurements have been reserved.

\item The OMP-CV estimate $\hat{x}_{cv}$ will have more pronounced improvement over the OMP estimate $\hat{x}_{omp}$ when there is larger discrepancy between the true sparsity $d$ of $x_0$ and the upper bound $k$ used by OMP (in Figure (1), $d = 100$ and $k = 200$).   In contrast, OMP-CV will not outperform OMP in approximation accuracy when $d$ is close to $k$; however, the multiplicative relation $\eqref{theory}$ guarantees that OMP-CV will not underperform OMP, either.   
\end{enumerate}

\section{Beyond compressed sensing}

The Compressed Sensing setup can be viewed within the more general class of {\it underdetermined linear inverse problems}, in which $x \in \mathbb{R}^N$ is to be reconstructed from a known $m \times N$ underdetermined matrix ${\cal A}$ and lower dimensional vector $y = {\cal A} x$ using a decoding algorithm $\Delta: \mathbb{R}^m \rightarrow \mathbb{R}^N$; in this broader context, ${\cal A}$ is given to the user, but not necessarily {\it specified by} the user as in compressed sensing.  In many cases, a prior assumption of sparsity is imposed on $x$, and an iterative decoding algorithm for solving the LASSO problem $\eqref{lasso}$ will be used to reconstruct $x$ from $y$ \cite{DDD}.   If it is possible to take on the order of $r = \log{p}$ additional measurements of $x$ by an $r \times N$ matrix $\Psi$ satisfying the conditions of Lemma $\ref{psi}$, then all of the analysis presented in this paper applies to this more general setting.  In particular, the error $\|x - \hat{x}_j \|_{l_2^N}$ at up to $j \leq p$ successive approximations $\hat{x}_j$ of the decoding algorithm $\Delta$ may be bounded from below and above using the quantities $\| \Psi (x - \hat{x}_j) \|_{\ell_2^r}$, and the final approximation $\hat{x}$ to $x$ can be chosen from among the entire sequence of estimates $\hat{x}_j$ as outlined in Proposition \ref{mainthm};  an earlier estimate $\hat{x}_j$ may approximate $x$ better than a final estimate $\hat{x}_p$ which contains the artifacts of parameter overfitting occurring at later stages of iteration. 

\section{Extensions}
We have presented an alternative approach to compressed sensing in which a certain number $r$ of the $m$ allowed measurements of a signal $x \in \mathbb{R}$$^N$ are reserved to track the error in decoding by the remaining $m - r$ measurements, allowing us to choose a best approximation to $x$ in the metric of $\ell^N_2$ out of a sequence of $p$ estimates $(\hat{x}_j)_{j=1}^p$, and estimate the error between $x$ and its best approximation by a $k$-sparse vector, again with respect to the metric of $\ell^N_2$.   We detailed how the number $r$ of such measurements should be chosen in terms of desired accuracy $\epsilon$ of estimation, confidence level $\xi$ in the prediction, and number $p$ of decoding iterations to be measured; in general, $r = O(\log(p))$ measurements suffice.   Several important issues remain unresolved; we mention only a few below.

\begin{enumerate}

\item {\bf Noisy measurements.}
 Consider the noisy measurement model,
\begin{equation}
y = {\cal A}x + {\cal N},
\label{noise}
\end{equation}
where $x \in \mathbb{R}^N$, $y \in \mathbb{R}^M$, and ${\cal N} \sim N(0,\sigma^2)$ is a Gaussian random variable that accounts for both noise and quantization error on the measurements ${\cal A} x$.  Because measurement noise and quantization error are unavoidable in any real-world sensing device, any proposed compressed sensing technique should extend to the model $\eqref{noise}$.   Cross validation is studied in $\cite{cv_bdb}$ in the context of noisy measurements, assuming that $x$ is truly sparse and that ${\cal N} \sim N(0,\sigma^2)$ is Gaussian noise.   The experimental results in $\cite{cv_bdb}$ indicate that cross validation works well in the presence of noise, and Proposition \ref{mainthm} provides intuition as to why.  Suppose that $x \in \mathbb{R}^N$ is $k$-sparse, so that the best possible reconstruction to $x$ using $n$ noisy measurements $y_1 = \Phi x + {\cal N}_1$ of the $m$ total noisy measurements $y = {\cal A}x + {\cal N}$ is bounded by the $\ell_2$ norm of ${\cal N}_1$: 
\begin{equation}
\| x  - \hat{x} \|_{\ell_2^N} \sim \mathbb{E}\big[ \| {\cal N}_1 \|_{\ell_2^n} \big]  = \sigma \sqrt{n}.
\end{equation}
Alternatively, the noise vector ${\cal N}_{cv}$ corresponding to the remaining $r = m - n$ cross validation measurements has \emph{exponentially} smaller expected $\ell_2$ norm $\mathbb{E}[ \| {\cal N}_{cv} \|_{\ell_2^r} ] = \sigma \sqrt{10\log{n}}$, assuming that $r = 10\log{n}$.  It follows by application of the triangle inequality,
\begin{eqnarray}
 (1 - \epsilon) \| x - \hat{x} \|_{\ell_2^r} - \sigma \sqrt{\log{n}} 
&\leq& \| \Psi(x  - \hat{x}) + {\cal N}_{cv} \|_{\ell_2^r}  \nonumber \\
&\leq& (1 + \epsilon) \| x - \hat{x} \|_{\ell_2^r} + \sigma \sqrt{\log{n}}, \nonumber 
\end{eqnarray}
whence the approximation error $\| x - \hat{x} \|_{\ell_2^r} \sim \sigma \sqrt{n}$ dominates both lower and upper bounds on the noisy cross validation error.  

\item {\bf Other cross validation techniques.} The cross validation technique promoted in this paper corresponds in particular to the technique of \emph{holdout cross validation} in statistics, where a data set is partitioned into a single training and cross validation set (as a rule of thumb, the cross validation set is usually taken to be less than or equal to a third of the size of the training set; in the the current paper, we have shown that the Johnson Lindenstrauss lemma provides a theoretical justification of how many, or, more precisely, how few, cross validation measurements are needed in the context of compressed sensing).  Other forms of cross validation, such as \emph{repeated random subsampling} cross validation or \emph{K-fold} cross validation, remain to be analyzed in the context of compressed sensing.  The former technique corresponds to repeated application of holdout cross validation, with  $r$ cross validation measurements out of the total $m$ measurements chosen by random selection at each application.  The results are then averaged (or otherwise combined) to produce a single estimation.   The latter technique, $K$-fold cross validation, also corresponds to repeated application of holdout cross validation.  In this case, the $m$ measurements are partitioned into $K$ subsets of equal size $r$, and cross-validation is repeated exactly $K$ times with each of the $K$ subsets of measurements used exactly once as the validation set. The $K$ results are again combined to produce a single estimation.   Although Proposition $\ref{mainthm}$ does not directly apply to these cross validation models, the experimental results of Section 6 suggest that, equiped with an $m \times N$ matrix satisfying the requirements of Lemma $\ref{psi}$, the application of $K$ fold cross validation to subsets of the measurements of size $r << m - r$ just large enough that $\epsilon > 0$ in Proposition \ref{mainthm} for fixed accuracy $\xi$ and constant $C = 1$ can be combined to accurately approximate the underlying signal with near certainty.  

\item {\bf Cross validation in different reconstruction metrics.} We have only considered cross validation over the metric of $\ell_2$.  However, the error $\|x - \hat{x}\|_{\ell_2^N}$, or \emph{root mean squared error}, is just one of several metrics used in image processing for analyzing the quality of a reconstruction $\hat{x} \in \mathbb{R}^N$ to a (known) image $x \in \mathbb{R}^N$.  In fact, the $\ell_1$ reconstruction error $\|x - \hat{x} \|_{\ell_1^N}$ has been argued to outperform the root mean squared error as an indicator of reconstruction quality $\cite{DeVore92}$.  Unfortunately, Proposition \ref{mainthm} cannot be extended to the metric of $\ell_1$, as there exists no $\ell_1$ analog of the Johnson Lindenstrauss Lemma \cite{Charikar}.  However, it remains to understand the extent to which cross validation in compressed sensing can be applied over a broader class of image reconstruction metrics, perhaps using more refined techniques than those considered in this paper.

\item {\bf Cross validation with more general CS matrix ensembles.} Many more compressed sensing matrices in addition to those satisfying the requirements of Lemma $\ref{psi}$ can be used for cross validation purposes, in the sense that a concentration bound similar to \eqref{psi} will hold for \emph{most} input $x$. Indeed, it has been recently shown \cite{fastJL} that such random partial Fourier matrices, if multiplied on the right by an $N \times N$ diagonal matrix of independent and identically distributed Bernoulli random variables, will satisfy the concentration bound $\eqref{psi}$ with slightly larger number of measurements $r \geq \epsilon^{-2}\log^3{\frac{1}{2\delta}}$.   

\end{enumerate}

\chapter{Free discontinuity problems meet iterative thresholding}
\section{Introduction}
Free-discontinuity problems describe situations where the solution of interest is defined by a function and a lower dimensional set consisting of the discontinuities of the function.  Hence, the derivative of the solution is assumed to be a `small' function almost everywhere except on sets where it concentrates as a singular measure. This is the case, for instance, in crack detection from fracture mechanics or in certain digital image segmentation problems. If we discretize such situations for numerical purposes, the free-discontinuity problem in the discrete setting can be re-formulated as that of finding a derivative vector with small components at all but a few entries that exceed a certain threshold.  This problem is similar to those encountered in Compressed sensing, where vectors with a small number of dominating components in absolute value are recovered from a few given linear measurements via the minimization of related energy functionals. Several iterative thresholding algorithms that intertwine gradient-type iterations with thresholding steps have been designed to recover sparse solutions in this setting.  It is natural to wonder if and/or how such algorithms can be used towards solving discrete free-discontinuity problems.  The current chapter explores this connection, and, by establishing an iterative thresholding algorithm for discrete free-discontinuity problems, provides new insights on properties of minimizing solutions thereof.

\subsection{Free-discontinuity problems: the Mumford-Shah functional}
The terminology `free-discontinuity problem' was introduced by De Giorgi \cite{dg91}
%#\footnote{E. De Giorgi, \emph{ Free-discontinuity problems in calculus of variations}, Frontiers in pure and applied mathematics, a collection of papers dedicated to J.-L. Lions on the occasion of his $60^{th}$ birthday, R. Dautray, ed. North Holland, 1991, 55-62.} 
to indicate a class of variational problems that consist in the minimization of a functional, involving both volume and surface energies, depending on a closed set $K \subset \mathbb R^d$, and a function $u$ on $\mathbb R^d$ usually smooth outside of $K$. In particular, 
\begin{itemize}
\item $K$ is not fixed a priori and is an unknown of the problem;
\item $K$ is not a boundary in general, but a free-surface inside the domain of the problem.
\end{itemize}

\noindent The best-known example of a free-discontinuity problem is the one modelled by the so-called Mumford-Shah functional \cite{MS89}, which is defined by
$$
J(u,K):= \int_{\Omega \setminus K} \left [ | \nabla u |^2 + \alpha ( u - g )^2 \right ] dx +\beta \mathcal{H}^{d-1}(K \cap \Omega).
$$
The set $\Omega$ is a bounded open subset of $\mathbb{R}^d$, $\alpha, \beta >0$ are fixed constants, and $g \in L^\infty(\Omega)$. Here $\mathcal H^N$ denotes the $N$-dimensional Hausdorff measure. Throughout this paper, the dimension of the underlying Euclidean space $\mathbb R^d$ will always be $d=1$ or $d=2$.  In the context of visual analysis, $g$ is a given noisy image that we want to approximate by the minimizing function $u \in W^{1,2}(\Omega \setminus K)$; the set $K$ is simultaneously used in order to {\it segment} the image into connected components.  For a broad overview on free-discontinuity problems, their analysis, and applications, we refer the reader to \cite{AFP}.
\\

\noindent If the set $K$ were fixed, then the minimization of $J$ with respect to $u$ would be a relatively simple problem, equivalent to solving the following system of equations:
\begin{eqnarray*}
\Delta u&=& \alpha ( u - g ), \qquad \mbox{in } \Omega \setminus K,\\
\frac{\partial u}{\partial \nu} &=&0, \qquad  \qquad \quad \mbox{ on } \partial \Omega \cup K,   
\end{eqnarray*}
where $\nu$ is the outward-pointing normal vector at any $x \in \partial \Omega \cup K$. Therefore the relevant unknown in free-discontinuity problems is the set $K$.
Ensuring the existence of minimizers $(u,K)$ of $J$ is a challenging problem because there is no topology on the closed sets that ensures 
\begin{itemize} 
\item[(a)] compactness of minimizing sequences and 
\item[(b)] lower semicontinuity of the Hausdorff measure. 
\end{itemize}
Indeed, it is well-known, by the direct method of calculus of variations \cite[Chapter 1]{DM}, that the two previous conditions ensure the existence of minimizers.  However, the problem becomes more manageable if we restrict our domain to functions $u \in BV(\Omega) \cap  W^{1,2}(\Omega \setminus K)$, and make the identification $K \equiv \overline{S_u}$ where $S_u$ is the well-defined discontinuity set of $u$.  In this case, we need to work only with a topology on the space $BV (\Omega)$ of bounded variation, and no set topology is anymore required. \\ \\
Unfortunately the space $BV(\Omega)$ is `too large'; it contains Cantor-like functions whose approximate gradient vanishes, $\nabla u= 0$, almost everywhere, and whose discontinuity set has measure zero, $\mathcal{H}^{d-1}(S_u)=0$.  As these functions are dense in $L^2(\Omega)$, the problem is trivialized; see \cite{AFP} for details.
\\
\\
Nevertheless, it is possible to give a meaningful formulation of the functional $J$ if we exclude such functions and restrict $J$ to the space $SBV(\Omega)$ constituted of $BV$-functions with vanishing Cantor part.   If we assume again $K \equiv \overline{S_u}$,  the solution can be recast as the minimization of
\begin{equation}
\label{amb}
\mathcal J ( u ) = \int_{\Omega \setminus S_u} \left [ | \nabla u |^2 + \alpha ( u - g )^2 \right ] dx +\beta \mathcal{H}^{d-1}(S_u).
\end{equation}
The existence of minimizers in $SBV$ for the functional \eqref{amb} was established by Ambrosio on the basis of his fundamental compactness theorem in \cite{AM89}, see also \cite[Theorem 4.7 and Theorem 4.8]{AFP}.

\subsection{$\Gamma$-convergence approximation to free-discontinuity problems}

The discontinuity set $S_u$ of a $SBV$-function $u$ is not an object that can be easily handled, especially numerically. This difficulty gave rise to the development of approximation methods for the Mumford-Shah functional and its minimizers where \emph{sets} are no longer involved, and instead substituted by suitable \emph{indicator functions}.  In order to understand the theoretical basis for these approximations, we need to introduce the notion of $\Gamma$-convergence, which is today considered one of the most successful notions of `variational convergence'; we state only the definition of $\Gamma$-convergence below, but refer the reader to \cite{DM,br02} for a broad introduction.
\begin{definition}
Let $(X,d)$ be a metric space\footnote{ Observe that by \cite[Proposition 8.7]{DM} suitable bounded sets $X$ endowed with the weak topology induced by a larger Banach space are indeed metrizable, so this condition is not that restrictive.} and let $f,f_n : X \to [0,\infty]$ be functions for $n \in \mathbb N$. We say that $(f_n)_{n \in \mathbb N}$ $\Gamma$-converges to $f$ if the following two conditions are satisfied:
\begin{itemize}
\item[i)] for any sequence $(x_n)_n \subset X$ converging to $x$, 
$$
\liminf_n f_n(x_n) \geq f(x);
$$
\item[ii)] for any $x \in X$, there exists a sequence $(x_n)_n \subset X$ converging to $x$ such that
$$
\limsup_n f_n(x_n) \leq f(x).
$$

\end{itemize}
\label{GammaDef}
\end{definition}

One important consequence of Definition \ref{GammaDef} is that if a sequence of functionals $f_n$ $\Gamma$-converges to a target functional $f$, then the corresponding minimizers of $f_n$ also converge to minimizers of $f$, see \cite[Corollary 7.30]{DM}. 
\\
\\
We define now
\begin{equation}
F_{\varepsilon}(u,v) := \int_{\Omega} \left [ v^2 | \nabla u |^2 + \alpha ( u - g )^2 \right ]  +\frac{\beta}{2 }  \left ( \varepsilon |\nabla v|^2 + \frac{(1 -v)^2}{\varepsilon} \right ) dx
\end{equation}
over the domain $(L^2(\Omega))^2$, along with the related functional
\begin{equation}
\label{amto}
\mathcal J_\varepsilon(u,v) :=   \left\{\begin{array}{cl} F_\varepsilon(u,v)
	  & \textrm{, if } v \in W^{1,2}(\Omega) \textrm{, }u v \in W^{1,2}(\Omega) \textrm{, and } 0 \leq v \leq 1,\\
	\infty   & \textrm{, else.}
	   \end{array}\right.
\end{equation}
Note that at the minimizer $(u,v)$ of $\mathcal J_\varepsilon$, the function $0 \leq v \leq 1$ tends to indicate the discontinuity set $S_u$ of the functional \eqref{amb} as $\varepsilon \to 0$. In \cite{amto90} Ambrosio and Tortorelli proved the following $\Gamma$-approximation result:
\begin{theorem}[Ambrosio-Tortorelli '90] 
For any infinitesimal sequence $(\varepsilon_n)_n$, the functional $\mathcal J_{\varepsilon_n}(u,v)$ $\Gamma$-converges in $(L^2(\Omega))^2$ to the functional
\begin{equation}
\mathcal J ( u,v):=  \left\{\begin{array}{cl}  \mathcal J(u) & \textrm{, if } v \equiv 1, \\
 \infty & \textrm{, otherwise.}
 \end{array}\right.
 \end{equation}
\end{theorem}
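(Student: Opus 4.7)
The plan is to verify the two conditions of Definition \ref{GammaDef}: the liminf (lower bound) inequality and the limsup (recovery sequence) inequality. Throughout, let $(u_n, v_n) \to (u,v)$ in $(L^2(\Omega))^2$.

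For the liminf inequality I would split into two cases. If $v \not\equiv 1$, then $\int_\Omega (1-v)^2 dx > 0$, and since $v_n \to v$ in $L^2$, the Modica--Mortola penalty satisfies
$$
\frac{\beta}{2\varepsilon_n}\int_\Omega (1-v_n)^2 \, dx \;\longrightarrow\; +\infty,
$$
forcing $\liminf_n \mathcal J_{\varepsilon_n}(u_n,v_n) = +\infty = \mathcal J(u,v)$. The substantive case is $v \equiv 1$, where one must extract from the $\varepsilon_n$-dependent functional \emph{both} the bulk energy $\int |\nabla u|^2 + \alpha(u-g)^2 \, dx$ and the surface energy $\beta \mathcal H^{d-1}(S_u)$. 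The fidelity term passes to the liminf by lower semicontinuity of the $L^2$-norm. For the gradient term, passing $v_n^2|\nabla u_n|^2$ to $|\nabla u|^2$ on $\Omega \setminus S_u$ requires a slicing/localization argument: on regions where $v_n$ stays close to $1$ the integrand converges to $|\nabla u|^2$ by weak lower semicontinuity, while on the transition zones the Modica--Mortola term takes over. For the surface term I would use Young's inequality
$$
\frac{\varepsilon}{2}|\nabla v|^2 + \frac{1}{2\varepsilon}(1-v)^2 \;\geq\; |\nabla v|\,|1-v| \;=\; |\nabla \Psi(v)|,\qquad \Psi(s):=s-\tfrac{s^2}{2},
$$
and then invoke the coarea formula: the left-hand side controls, in the limit, the total variation of $\Psi(v_n)$, whose level sets encircle the discontinuity set of $u$, producing the factor $\beta \mathcal H^{d-1}(S_u)$. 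Combining the two contributions on complementary subsets of $\Omega$ and passing to the liminf yields the bound.

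For the limsup inequality, only the case $v \equiv 1$ and $u \in SBV(\Omega)$ with $\mathcal J(u) < \infty$ is non-trivial. The plan is to construct an explicit recovery sequence in two stages. First, suppose $u$ is smooth away from a $(d-1)$-dimensional polyhedral set $K \supset S_u$. Let $d_K(x) = \mathrm{dist}(x, K)$, and choose a small parameter $\eta_n \to 0$ with $\eta_n/\varepsilon_n \to \infty$. Define
$$
v_n(x) \;=\; \begin{cases} 0, & d_K(x) \leq \eta_n,\\[2pt] 1 - \exp\!\bigl(-(d_K(x)-\eta_n)/\varepsilon_n\bigr), & d_K(x) > \eta_n, \end{cases}
$$
which is essentially the one-dimensional optimal Modica--Mortola profile transported along the normal direction to $K$. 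A direct computation using the coarea formula shows
$$
\frac{\beta}{2}\!\int_\Omega\!\Bigl(\varepsilon_n |\nabla v_n|^2 + \tfrac{(1-v_n)^2}{\varepsilon_n}\Bigr) dx \;\longrightarrow\; \beta\,\mathcal H^{d-1}(K).
$$
Simultaneously, modify $u$ inside $\{d_K < \eta_n\}$ by a cut-off/extension so that $u_n v_n \in W^{1,2}(\Omega)$, $u_n \to u$ in $L^2$, and $\int v_n^2 |\nabla u_n|^2\,dx \to \int_{\Omega\setminus S_u} |\nabla u|^2\,dx$ (the smallness of $v_n^2$ in the strip absorbs the cost of the transition). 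The fidelity term passes by dominated convergence.

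The main obstacle, and the second stage, is extending this construction from $u$ with polyhedral $S_u$ to an arbitrary $u \in SBV(\Omega)$ of finite energy. Here I would invoke a density theorem in $SBV$ (of Cortesani--Toader type) asserting that such $u$ can be approximated in $L^2$, with convergence of the Mumford--Shah energy $\mathcal J(u_k) \to \mathcal J(u)$, by functions $u_k$ which are $C^\infty$ outside a polyhedral closed set. A diagonal-sequence argument (choosing $n=n(k)$ suitably large) then produces the desired recovery sequence for $u$. This density/diagonalization step is the most delicate ingredient, since it requires that the approximants $u_k$ satisfy $\mathcal H^{d-1}(S_{u_k}) \to \mathcal H^{d-1}(S_u)$ \emph{and} $\nabla u_k \to \nabla u$ in $L^2$ simultaneously; away from it the construction is completely explicit.
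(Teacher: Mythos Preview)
The paper does not prove this theorem at all: it is stated as a background result, attributed to Ambrosio and Tortorelli and cited to \cite{amto90}, and the text moves on immediately to the discrete approximation of Chambolle. There is therefore no ``paper's own proof'' to compare against.

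Your sketch is the standard strategy one finds in the original paper and in textbook treatments (e.g.\ Braides): the liminf via Young's inequality on the Modica--Mortola term combined with the coarea formula to capture the surface energy, and the limsup via an explicit optimal-profile recovery sequence near a polyhedral jump set together with an $SBV$ density/diagonalization step. As a plan it is correct in outline; the genuinely delicate points you identify---the localization that separates the bulk and surface contributions in the liminf, and the $SBV$ density result needed to reduce the limsup to the polyhedral case---are exactly where the work lies in the actual proof.
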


\subsection{Discrete approximation}

In fact, the Mumford-Shah functional is the continuous version of a previous discrete formulation of the image segmentation problem proposed by Geman and Geman in \cite{GG}; see also the work of Blake and Zisserman in \cite{BZ}. Let us recall this discrete approach.
For simplicity let $d=2$ (as for image processing problems),  $\Omega = [0,1]^2$, and let $u_{i,j}=u(h i,h j)$, $(i,j) \in \mathbb{Z}^2$ be a discrete function defined on $\Omega_h:=\Omega \cap h \mathbb{Z}^2$, for $h>0$.
Define $W_h ( t ) = \min\{ t^2, \beta/h\}$ to be the truncated quadratic potential, and
\begin{eqnarray}
\mathcal J_{\sqrt{\beta/h}}(u ) &:=& h^2 \sum_{(h i, h j) \in \Omega_h} W_h \left ( \frac{u_{i+1,j} - u_{i,j}}{h} \right ) \nonumber \\
&+& h^2 \sum_{ ( h i, h j) \in \Omega_h} W_h \left ( \frac{u_{i,j+1} - u_{i,j}}{h} \right )\nonumber \\
&+& \alpha h^2 \sum_{(h i,h j) \in \Omega_h} ( u_{i,j} - g_{i,j})^2.
\label{discrete}
\end{eqnarray}
Chambolle \cite{ca92,ca95} gave formal clarification as to how the discrete functional $\mathcal J_{\sqrt{\beta/h}}$ approximates the continuous functional $\mathcal J$ of Ambrosio: discrete sequences can be interpolated by piecewise linear functions in such a way as to allow for discontinuities when the discrete finite differences of the sampling values are large enough. On the basis of this identification of discrete functions on $\Omega_h$ and functions defined on the `continuous domain' $\Omega$, we have the following result:
\begin{theorem}[Chambolle '95]
\label{chthm} 
The functional $\mathcal J_{\sqrt{\beta/h}}$ $\Gamma$-converges in $\mathcal B(\Omega) $ (the space of Borel-measurable functions, which is metrizable, see \cite{ca95} for details) to 
$$
\mathcal J^{cab} ( u ) = \int_{\Omega \setminus S_u} \left [ | \nabla u |^2 + \alpha ( u - g )^2 \right ] dx +\beta \mathcal{C}(S_u),
$$
as $h \to 0$, where $\mathcal{C}$ is the so-called `cab-driver' measure defined below.
\end{theorem}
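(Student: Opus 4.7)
The plan is to establish the two defining inequalities of $\Gamma$-convergence separately—the liminf (lower bound) and the limsup (recovery sequence)—after first identifying each discrete array $(u_{i,j})$ on $\Omega_h$ with a function $\tilde u_h$ on $\Omega$ defined as the piecewise affine interpolant on a Cartesian triangulation of $\Omega_h$, with admissible discontinuities across edges where the discrete finite difference exceeds the truncation threshold $\sqrt{\beta/h}$. Under this identification, convergence in $\mathcal B(\Omega)$ is well-defined and compatible with the discrete energies $\mathcal J_{\sqrt{\beta/h}}$.

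For the liminf inequality, fix $u_h \to u$ in $\mathcal B(\Omega)$ with $\sup_h \mathcal J_{\sqrt{\beta/h}}(u_h) < +\infty$. The truncated potential $W_h(t) = \min\{t^2, \beta/h\}$ partitions each grid edge into two classes: \emph{regular} edges, on which the discrete squared finite difference $|(u_{i+1,j}-u_{i,j})/h|^2$ contributes to a Riemann-sum approximation of $\int|\partial_1 u|^2$ (and symmetrically $\int|\partial_2 u|^2$ for vertical edges), and \emph{jump} edges, on which $W_h$ saturates at $\beta/h$. Each saturated horizontal edge carries energy $h^2 \cdot \beta/h = \beta h$, so summing along a fixed row counts (at scale $h$) the crossings of the horizontal line through that row with $S_u$; summing over all rows converges to $\beta \int_{S_u}|\tau \cdot e_2|\,d\mathcal H^1$ via the classical slicing/coarea formula. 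The symmetric computation for vertical edges produces $\beta \int_{S_u}|\tau\cdot e_1|\,d\mathcal H^1$. Adding the two yields the anisotropic density $|\tau_1|+|\tau_2|$, i.e. $\beta\mathcal C(S_u)$. Ambrosio's $SBV$-compactness theorem, applied to the uniformly bounded regular part, guarantees $u \in SBV(\Omega)$, while lower semicontinuity of both the bulk and surface terms transfers slicewise by Fatou's lemma.

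For the limsup inequality I would proceed by density. For $u$ smooth outside a finite union $K$ of axis-parallel segments, the natural discretization $u_h(ih,jh) := u(ih,jh)$ satisfies $\mathcal J_{\sqrt{\beta/h}}(u_h) \to \mathcal J^{cab}(u)$, because on axis-parallel segments the cab-driver and Hausdorff lengths coincide. For a jump segment oblique to the grid, I would approximate it by a \emph{staircase}, i.e. a polygonal path of alternating horizontal and vertical moves whose total horizontal-plus-vertical length matches exactly the cab-driver length of the original segment; the recovery sequence is built on this polygonal skeleton, and a diagonal extraction combined with the density of piecewise-smooth functions with polygonal jump sets in $SBV(\Omega)$ (in a topology that controls $\mathcal C$-surface energy) extends the construction to arbitrary $u\in SBV(\Omega)$ with $\mathcal J^{cab}(u)<+\infty$.

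The main obstacle, and the feature that distinguishes this theorem from the isotropic Ambrosio-Tortorelli $\Gamma$-convergence, is the emergence of the anisotropic cab-driver measure $\mathcal C$ rather than $\mathcal H^1$. On the liminf side, one has to verify that the horizontal and vertical slicing contributions combine, without double-counting nor cancellation, to produce exactly the density $|\tau_1|+|\tau_2|$; this relies essentially on the rectifiability of $S_u$ supplied by Ambrosio's theorem. On the limsup side, the staircase construction must be executed with enough care that the residual "corner" energy is negligible as $h \to 0$, and one must regularize $u$ in a thin tubular neighborhood of $S_u$ in order to simultaneously control the $L^2$-convergence and match the energy upper bound. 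Ruling out spurious Cantor-type limits via the $SBV$ closure theorem is the standard but indispensable final ingredient.
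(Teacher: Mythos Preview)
The paper does not actually prove this theorem: it is quoted as a result of Chambolle \cite{ca92,ca95} and used as background, with only an informal explanation afterward of why the anisotropic cab-driver measure $\mathcal C$ appears in place of $\mathcal H^1$. So there is no ``paper's own proof'' to compare against.

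That said, your outline is a faithful sketch of the strategy Chambolle uses. The liminf via horizontal/vertical slicing together with $SBV$-compactness, and the limsup via staircase recovery sequences plus density of functions with polyhedral jump sets, is exactly the structure of the original argument. Two points deserve care if you were to fill in the details. First, the identification of a discrete array with a function in $\mathcal B(\Omega)$ is a bit more delicate than ``piecewise affine with discontinuities across saturated edges'': Chambolle builds the interpolant so that the jump set is well controlled as a $1$-rectifiable set, which is what makes the slicing argument clean. Second, the limsup density step is the genuinely technical part: one needs that $SBV$ functions with polyhedral (or piecewise $C^1$) jump set are dense in a topology for which the \emph{anisotropic} surface energy $u\mapsto \mathcal C(S_u)$ is upper semicontinuous, not just $\mathcal H^1(S_u)$; this is where the staircase construction has to be combined with the approximation results of Cortesani--Toader type, and where a careless argument could yield only an inequality with $\mathcal H^1$ rather than $\mathcal C$.
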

Basically $\mathcal{C}$ measures the length of a curve only through its projections along horizontal and vertical axes; for a regular $C^1$ curve $c=\gamma([0,1])$, with $\gamma(t)=(\gamma_1(t), \gamma_2(t)) \in \Omega$, we have
$$
\mathcal{C}(c) = \int_0^1 \left ( |\gamma_1'(t)| + |\gamma_2'(t)| \right) dt.
$$
The reason this anisotropic (or, direction dependent) measure appears, in place of the Hausdorff measure in the Mumford-Shah functional, is due to the approximation of derivatives by finite differences defined on a `rigid' squared geometry. A discretization of derivatives based on meshes {\it adapted} to the morphology of the discontinuity indeed leads to precise approximations of the Mumford-Shah functional  \cite{chdm99,boch00}.  

\subsection{Free-discontinuity problems and discrete derivatives}
 In the literature, several methods have been proposed to numerically approximate minimizers of the Mumford-Shah functional \cite{beco94,boch00,ca92,ca95, ma92}.
In particular, a relaxation algorithm, based essentially on alternated minimization of a finite element approximation of the Ambrosio and Tortorelli functional \eqref{amto}, leads to iterated solutions of suitable elliptic PDEs, where the differential part includes the auxiliary variable $v$ which encodes and indicates information about the discontinuity set. These implementations are basically finite dimensional approximations to the following algorithm: \emph{Starting with $v^{(0)} \equiv 1$, iterate}
$$
\left \{ \begin{array}{l}
u^{(n+1)} := \arg \min_{u \in W^{1,2}(\Omega)} \mathcal J_\varepsilon(u ,v^{(n)})\\
v^{(n+1)} := \arg \min_{v \in W^{1,2}(\Omega)} \mathcal J_\varepsilon(u^{(n+1)} ,v).
\end{array}
\right .
$$
{  However, neither has a proof of convergence of this iterative process to its stationary points been explicitly provided in the literature, nor have the properties of such stationary points been investigated, especially in case of genuine inverse problems (see the discussion in Subsection \ref{MS4invprob}).

In this paper, we take a different approach and investigate how minimization of the $\Gamma$-approximating discrete functionals \eqref{discrete} can be implemented efficiently by iterative thresholding on the discrete derivatives. 
Unlike the aforementioned approach, we will be able to provide a rigorous proof of convergence to stationary points, which coincide with local minimizers of the discrete Mumford-Shah functional.  Moreover, we are able to characterize stability properties of such stationary points, and demonstrate the stability of global minimizers of the discrete Mumford Shah functional.}

Let us recall: the solutions $u$ of a free-discontinuity problem are supposed to be smooth out of a minimal ipersurface $K$. This means that the distributional derivative of $u$ is a `small function' everywhere except on $K$ where it coincides with a singular measure. In the discrete approximation $\eqref{discrete}$, the vector of finite differences $(w_j) = (\frac{u_{i,j+1} - u_{i,j}}{h}, \frac{u_{i+1,j} - u_{i,j}}{h} )$ corresponds to a piecewise constant function that is small everywhere except for a few locations, corresponding to $|w_j| \geq \sqrt{\beta/h}$, that approximate the discontinuity set $K$.   So, in terms of derivatives, solutions of $\eqref{discrete}$ are vectors having only few large entries.  In the next section, we clarify how we can indeed work with just derivatives and forget the primal problem.
\subsubsection{The 1-D case}
Let us assume for simplicity that the dimension $d=1$, the domain $\Omega = [0,1]$, and the parameters $\alpha=\beta=1$.  Denote by $u_{i}=u(h i)$ a discrete function defined on $h i \in \Omega_h:=\Omega \cap h \mathbb{Z}$, for $h>0$; note that the vector $(u_i) \in \mathbb{R}^n$ for $n = \lfloor1/h \rfloor$. In this setting, the discrete functional $\eqref{discrete}$ reduces to
\begin{eqnarray*}
\mathcal J_{\sqrt{1/h}}(u ) &=& h \sum_{(h i) \in \Omega_h} W_h \left ( \frac{u_{i+1} - u_{i}}{h} \right ) \\
&+& h \sum_{(h i) \in \Omega_h} ( u_{i} - g_{i})^2,
\end{eqnarray*}
where we recall that $W_h(t) = \min\{t^2, 1/h \}$.  Since no geometrical anisotropy is now involved ($d=1$), it is possible to show that this discrete functional $\Gamma$-converges precisely  to the corresponding Mumford-Shah functional on intervals \cite{ca92}.
\\
\\
For $(u_i)_{ hi \in \Omega_h}$ we define the discrete derivative as the matrix $D_h: \mathbb{R}^n \rightarrow \mathbb{R}^{n-1}$ that maps $(u_i)_{ hi \in \Omega_h}$ into $\left (\frac{u_{i+1}-u_i}{h} \right )_{i}$, given by
\begin{equation}
\label{dermtrx}
D_h = \frac{1}{h} \left ( \begin{array}{cccccc} -1 & 1 & 0& \dots & \dots &0\\
0&-1&1&0 &\dots&0\\
\dots\\
0&0&\dots&\dots&-1&1
\end{array} \right ).
\end{equation}
It is not too difficult to show that
$$
 u =D^\dagger_h D_h u + c,
$$
where $D_h^\dagger$ is the pseudo-inverse matrix of $D_h$ (in the Moore-Penrose sense; note that $D^\dagger_h$ maps $\mathbb R^{n-1}$ into $\mathbb R^n$ and is an injective operator) and $c$ is a constant vector which depends on $u$, and the values of its entries coincide with the mean value $h \sum_{ hi \in \Omega_h} u_i$ of $u$.
Therefore, any vector $u$ is uniquely identified by the pair $(D_h u , c)$.
\\

 \noindent Since constant vectors comprise the null space of $D_h$, the orthogonality relation $\langle D^\dagger_h D_h u,c\rangle_{\ell_2^n} =0$ holds for any vector $u$ and any constant  vector $c$. Here the scalar product $\langle \cdot, \cdot \rangle_{\ell_2^n} = \sum_i u_i v_i$ is the standard Euclidean scalar product, which induces the Euclidean norm $\| u \|_{\ell_2^n} := \left ( \sum_ i u_i^2 \right )^{1/2}$. 
Using this orthogonality property, we have that
\begin{eqnarray*}
\| u - g \|_{\ell_2^n}^2 &=& \| D^\dagger_h D_h u -  D^\dagger_h D_h g + ( c - c_g )\|_{\ell_2^n}^2 \\
&=&    \| D^\dagger_h D_h u -  D^\dagger_h D_h g\|_{\ell_2^n}^2 + \| c - c_g \|_{\ell_2^n}^2 
\end{eqnarray*}
Hence, with a slight abuse of notation, we can reformulate the original problem in terms of derivatives, and mean values, by
\begin{eqnarray*}
\mathcal J_{1/\sqrt h}(z,c) &=& h \| D_h^\dagger z  - f \|_{\ell_2^{n}}^2 + h \| c- c_{g} \|_{\ell_2^n}^2 + h \sum_{i} \min \left \{|z_i|^2, \frac{1}{h}  \right \} \\
\end{eqnarray*}
where $z = D_h u$ and $f =  D^\dagger_h D_h g$. Of course at the minimizer $u$ we have $c = c_g$, since this term in $\mathcal J_{1/\sqrt h}$ does not depend on $z$. Therefore, $\| c - c_g \|_2^2$ does not play any role in the minimization and can be neglected. Once the minimal derivative vector $z$ is computed, we can assemble the minimal $u$ by incorporating the mean value of $g$ as follows:
$$
 u = D_h^\dagger z + c_g.
$$
%\begin{remark}[$\circledast \circledast$]
% In the following we will introduce and study iterative thresholding algorithms which minimize functionals of the type presented above. The most onerous effort in the iterations is due to the application of the bounded operator which is involved in the discrepancy with respect to data in the functional. In the case described above, this operator is $D^\dagger_h$.
% So, for the algorithms to be efficient or even just implementable, it is necessary to have matrices/operators which are easily stored in memory and whose application is not too expensive computationally. On the one hand, for the case of the discrete derivative matrix as in \eqref{dermtrx}, the computation of $D^\dagger_h$ can be done by means of rescaled fast Fourier transforms (indeed the matrix $D_h$ is almost a circulant matrix). On the other hand the discrete derivative matrix in 2D and in particular on meshes which are NOT on a regular square geometry (for example the one adapted to the discontinuity as in \cite{chdm99,boch00}) looses his structure and $D_h^\dagger$ is expected to be a fully populated matrix whose application and storage might be significantly demanding. How to deal with these situations is indeed a serious open problem. However, at the cost of a minor increase of implementation complexity, the use of subspace correction/domain decomposition strategy !
% ! as proposed in \cite{foDD} can turn out to offer a solution which is still computationally competitive. 
% \end{remark}

\subsubsection{The 2-D case, discrete Schwartz conditions, and constrained optimization}
\label{2dcase}
Let us assume now $d=2, \Omega = [0,1]^2$, and again $\alpha=\beta=1$.  Denote $u_{i,j}=u(h i,h j)$, $(i,j) \in \mathbb{Z}^2$, a discrete function defined on $\Omega_h:=\Omega \cap h \mathbb{Z}^2$, $n = \lfloor 1/h \rfloor$,  and
\begin{eqnarray*}
\mathcal J_{1/\sqrt{h}}(u ) &:=& h^2 \sum_{(h i, h j) \in \Omega_h} W_h \left ( \frac{u_{i+1,j} - u_{i,j}}{h} \right ) \\
&+& h^2 \sum_{ ( h i, h j) \in \Omega_h} W_h \left ( \frac{u_{i,j+1} - u_{i,j}}{h} \right )\\
&+& h^2 \sum_{(h i,h j) \in \Omega_h} ( u_{i,j} - g_{i,j})^2.
\end{eqnarray*} 
In two dimensions, we have to  consider the derivative matrix $D_h : \mathbb R^{n^2} \to \mathbb R^{2n(n-1)}$ that maps the vector $(u_{j + (i-1) n}) := (u_{i,j})$ to the vector composed of the finite differences in the horizontal and vertical directions $u_x$ and $u_y$ respectively, given by
$$
D_h u := \left [ 
\begin{array}{l}
u_x\\
u_y
\end{array}
\right ], \quad \left \{ \begin{array}{ll} (u_x)_{j + n(i-1) }:= (u_x)_{i,j}:= \frac{u_{i+1,j} - u_{i,j}}{h}, i=1,\dots,n-1, j=1,\dots,n\\
 (u_y)_{j + (n-1)(i-1) }:= (u_y)_{i,j}:= \frac{u_{i,j+1} - u_{i,j}}{h}, i=1,\dots,n, j=1,\dots,n-1
\end{array} \right. .
$$
{ Note that its range $R(D_h) \subset \mathbb{R}^{2n(n-1)}$ is a $(n^2-1)$-dimensional subspace} because $D_h c = 0$ for constant vectors $c \in \mathbb{R}^{n^2}$.  Again, we have the differentiation-integration formula, given by
$$
 u =D^\dagger_h D_h u + c,
$$
where $D_h^\dagger$ is the pseudo-inverse matrix of $D_h$ (in the Moore-Penrose sense); note that $D^\dagger_h$ maps $R(D_h)$ injectively into $\mathbb R^{n^2}$.  Also, $c$ is a constant vector that depends on $u$, and the values of its entries coincide with the mean value $h^2\sum_{ (hi,hj) \in \Omega_h} u_{i,j}$ of $u$.
\\
\\
Proceeding as before and again with a slight abuse of notation, we can reformulate the original discrete functional $\eqref{discrete}$ in terms of derivatives, and mean values, by
\begin{eqnarray*}
\mathcal J_{1/\sqrt h}(z,c) &=& h^2 \big[ \| D_h^\dagger z  - f \|^2_{\ell_2^{n^2}} + \| c - c_{g} \|_{\ell_2^{n^2}}^2 + \sum_{i,j} \min \left \{|z_{i,j}|^2, \frac{1}{h}  \right \} \big].
\label{2Dunconstrained}
\end{eqnarray*}
where $z = D_h u \in \mathbb{R}^{2n(n-1)}$, and $f =  D^{\dagger}_h D_h g \in \mathbb{R}^{n^2}$. Of course $c = c_g$ is again assumed at the minimizer $u$, since this latter term in $\mathcal J_{1/\sqrt h}$ does not depend on $z$.  However, in order to minimize only over vectors in $\mathbb R^{2n(n-1)}$ that are derivatives of vectors in $\mathbb R^{n^2}$, we must minimize $\mathcal J_{1/\sqrt{h}}(z,c)$ subject to { the constraint $D_h D^{\dagger}_h z = z$.}
\begin{figure}[htp]
\begin{center}
\includegraphics[width=2.5in]{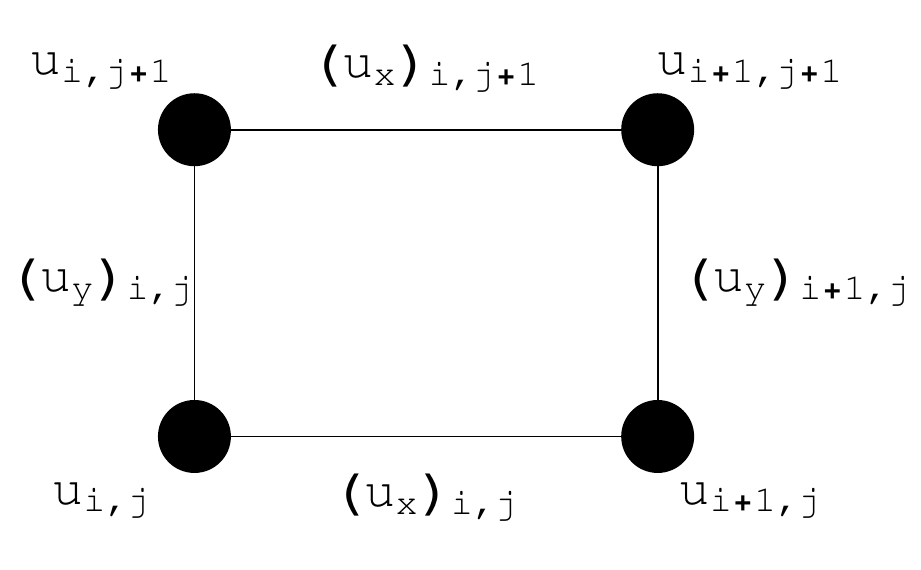}
\end{center}

\caption{Compatibility conditions of derivatives in 2D.}\label{compatib}
\end{figure}
\\
\\
The { $2n(n-1)$ linearly independent constraints $D_h D^{\dagger}_h z = z$} are equivalent to the \emph{discrete Schwartz constraints}\footnote{These discrete conditions correspond to the well-known  Schwartz mixed derivative theorem for which $\partial_{xy} u = \partial_{yx} u$ for any $u \in C^2(\Omega)$.}, 
\begin{equation}
\label{comp}
(u_y)_{i,j} + (u_x)_{i,j+1} =  (u_y)_{i+1,j} + (u_x)_{i,j},
\end{equation}
that establish the equivalence of the length of the paths from $u_{i,j}$ to $u_{i+1,j+1}$, whether one moves in vertical first and then in horizontal direction or in horizontal first and then in vertical direction (see Figure \ref{compatib}).
\\
\\
In short, we arrive at the following constrained optimization problem:

\begin{eqnarray}
&&
 \left\{ \begin{array}{llll}
\textrm{Minimize} &  \mathcal J_{1/\sqrt h}(z) = h^2 \big[ \| T z  - f \|^2_{\ell_2^{n^2}} +  \sum_{i,j} \min \left \{|z_{i,j}|^2, \frac{1}{h}  \right \} \big].
\\
\\
 \textrm{subject to} & {\cal Q} z = 0, \\\end{array}
\right.
\label{MS2d}
\end{eqnarray}
for $T = D_h^\dagger$ and { ${\cal Q} = {\cal I}- D_h D^{\dagger}_h $.}  Once the minimal derivative vector $z$ is computed, we can assemble the minimal $u$ by incorporating the mean value of $g$ as follows:
$$
 u = D_h^\dagger z + c_g.
$$

\subsubsection{Regularization of inverse problems by means of the Mumford-Shah constraint}
\label{MS4invprob}

The Mumford-Shah regularization term 
\begin{equation}
\label{MSreg}
MS ( u ) = \int_{\Omega \setminus S_u} | \nabla u |^2  +\beta \mathcal{H}^{d-1}(S_u),
\end{equation}
has been used frequently in inverse problems for image processing \cite{essh02,rari07}, such as inpainting and tomographic inversion. Despite the successful numerical results observed  in the aforementioned papers for the minimization of functionals of the type 
\begin{equation}
\label{regMS}
\mathcal J (u) = \alpha \| K u - g \|_{L^2(\Omega)} + MS(u),
\end{equation}
where $K:L^2 (\Omega) \to L^2 (\Omega)$ is a bounded operator which is not boundedly invertible, no rigorous results on existence of minimizers are currently available in the literature. Indeed, the Ambrosio compactness theorem \cite{AM89} used for the proof of the case $K = I$ does not apply in general. A few attempts towards using the regularization $MS$ for inverse problems in fracture detection appear in the work of Rondi \cite{ro07,ro08-2,ro08-1}, although restrictive technical assumptions on the admissible discontinuities of the solutions are required.
\\

\noindent As one of the contributions to this paper, we show that discretizations of regularized functionals of the type \eqref{regMS} \emph{always} have minimizers (see Theorem \ref{existmin}).  More precisely, these discretizations correspond to functionals of the form,
\begin{equation}
\mathcal J_{\sqrt{\beta/h}}(u ):=  \alpha h^2 \| K u - g\|_{\ell_2}^2 + h^2 \sum_{(h i, h j) \in \Omega_h} \left [ W_h \left ( \frac{u_{i+1,j} - u_{i,j}}{h} \right ) + W_h \left ( \frac{u_{i,j+1} - u_{i,j}}{h} \right ) \right ].
\label{regdiscrete}
\end{equation} 
and we prove that such functionals admit minimizers. Note that the discrete Mumford-Shah approximation $\eqref{discrete}$ can be written in this form.   We go on to show that such minimizers can be characterized by certain fixed point conditions, see Theorem 
 \ref{mintheorem} and Theorem \ref{globalmin}.  
As a consequence of these achievements we can prove that global minimizers are always isolated, although not necessarily unique, whereas local minimizers may constitute a continuum of unstable equilibria. Hence, our analysis will shed light on fundamental properties, virtues, and limitations, of regularization by means of the Mumford-Shah functional $MS$, and provide a rigorous justification of the numerical results appearing in the literature.
\\
\\
\noindent It is useful to show how the discrete functional \eqref{regdiscrete} can be still expressed in terms of the sole derivatives for general $K$. As done before in the case $K = {\cal I}$, and with the now usual identification $u= (D_h u, c)$, we can rewrite the functional in terms of derivatives and mean value as follows:
\begin{eqnarray}
\mathcal J_{\sqrt{\beta/h}}(z,c) &=&  h^2 \alpha \| K D_h^\dagger z  - (g - K c) \|_2^2+ h^2  \sum_{i,j} \min \left \{|z_{i,j}|^2, \frac{\beta}{h}  \right \},
\label{withc}
\end{eqnarray}
Note that in general we cannot anymore split orthogonally the discrepancy $\| K D_h^\dagger z  - (g - K c) \|_2^2$ into a sum of two terms which depend only on derivatives $z$ and mean value $c$ respectively. Nevertheless, for fixed $z$, it is straightforward to show that { $\bar{c} = \arg \min_c \mathcal J_{\sqrt{\beta/h}}(z,c)$  depends on $z$ via an affine map}. Indeed we can compute
$$
{\it \bar{c}} = \left ( \frac{ \langle K \mathbf 1, g - K D_h^\dagger z \rangle}{\| K \mathbf 1\|_{\ell^2}^2} \right ) \mathbf 1,
\label{c}
$$
where $\mathbf 1$ is the constant vector with entries identically $1$. Here we assume that $\mathbf 1 \notin \ker K$, that is a necessary condition in order to be able to identify the mean value of minimizers (a similar condition is required anytime we deal with regularization functionals which depend on the sole derivatives, see, e.g., \cite{chli97,ve01}).  By substituting this expression for $\bar{c}$ into \eqref{withc}, it is clear that the minimization of functionals $\eqref{regdiscrete}$ can be reformulated, in terms of the sole derivatives, as constrained minimization problems of the form $\eqref{MS2d}$.  

\section{Existence of minimizers for a class of discrete free-discontinuity problems}
In light of the observations above, we can transform the problem of the minimization of functionals of the type \eqref{regMS}, by means of discretization first and then reduction to sole derivatives, into the (possibly, but not necessarily) constrained minimization problem:
\begin{eqnarray}
&&
 \left\{ \begin{array}{llll}
\textrm{Minimize} &  \mathcal J_r (u) = \big[ \| T u  - g \|^2_{\ell_2^M} +  \sum^N_{i=1} \min \left \{|u_i|^2, r^2  \right \} \big].
\\
 \textrm{subject to} & {\cal Q} u = 0. \\\end{array}
\right.
\label{L2general}
\end{eqnarray}

Our first result ensures the existence of minimizers for the constrained optimization problem \eqref{L2general}:
\begin{proposition}
Assume $r>0$, and fix linear operators $T:\mathbb R^N \to \mathbb R^M$ and ${\cal Q}: \mathbb R^N \to \mathbb R^{M'}$, which are identified in the following with their matrices with respect to the canonical bases. We also fix $g \in \mathbb R^M$. The constrained minimization problem
\begin{eqnarray}
&&
 \left\{ \begin{array}{llll}
\textrm{Minimize} &  \mathcal J_r (u) = \big[ \| T u  - g \|^2_{\ell_2^M} +  \sum^N_{i=1} \min \left \{|u_i|^2, r^2  \right \} \big]
\\
 \textrm{subject to} & {\cal Q} u = 0. \\\end{array}
\right.
\label{L2}
\end{eqnarray}
has minimizers $u^*$. 
\label{existmin2}
\end{proposition}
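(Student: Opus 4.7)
The plan is to exploit the observation that the non-convex functional $\mathcal{J}_r$ is the pointwise minimum of a finite family of convex quadratics, which reduces existence to $2^N$ convex sub-problems. Specifically, for each $S \subseteq \{1,\ldots,N\}$, I would define
\[
\mathcal{J}_r^S(u) := \|Tu - g\|^2_{\ell_2^M} + r^2 |S| + \sum_{i \notin S} |u_i|^2.
\]
Since $\min\{|u_i|^2, r^2\}$ equals $r^2$ when $|u_i| \geq r$ and $|u_i|^2$ otherwise, a direct check gives $\mathcal{J}_r(u) = \min_{S} \mathcal{J}_r^S(u)$, with the minimum attained at $S^*(u) := \{i : |u_i| \geq r\}$. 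Commuting the finite outer minimum with the infimum over the linear subspace $K := \ker \mathcal{Q}$ then yields
\[
\inf_{u \in K} \mathcal{J}_r(u) \;=\; \min_{S \subseteq \{1,\ldots,N\}} \Bigl( \inf_{u \in K} \mathcal{J}_r^S(u) \Bigr).
\]

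Next I would show that for every $S$ the inner infimum is attained. Since $\mathcal{J}_r^S$ is a non-negative convex quadratic polynomial in $u$ and $K$ is a linear subspace of $\mathbb{R}^N$, the restricted functional $\mathcal{J}_r^S|_K$ is again a non-negative convex quadratic on a finite-dimensional Euclidean space. Any such quadratic attains its infimum: the stationarity equation $\nabla(\mathcal{J}_r^S|_K)(u) = 0$ is a linear system whose right-hand side lies in the range of the (PSD) Hessian, since otherwise the functional would decrease to $-\infty$ along a kernel direction, contradicting non-negativity. Select $u^S \in K$ realizing $\mathcal{J}_r^S(u^S) = \inf_{u \in K} \mathcal{J}_r^S(u)$ for each $S$, and then choose $\hat{S} \in \argmin_{S} \mathcal{J}_r^{S}(u^S)$. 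Setting $u^* := u^{\hat{S}}$, I obtain
\[
\mathcal{J}_r(u^*) \;\leq\; \mathcal{J}_r^{\hat{S}}(u^*) \;=\; \min_{S} \inf_{u \in K} \mathcal{J}_r^S(u) \;=\; \inf_{u \in K} \mathcal{J}_r(u),
\]
which forces equality throughout and certifies $u^*$ as a minimizer.

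The main obstacle, and the reason a more direct compactness argument fails, is that $\mathcal{J}_r$ is neither convex nor coercive on $K$: the truncated penalty is globally bounded by $Nr^2$, and $T$ need not be injective on $\ker\mathcal{Q}$, so a minimizing sequence for $\mathcal{J}_r$ in $K$ can escape to infinity along $\ker T \cap \ker \mathcal{Q}$ without any control from the regularization term. Trading continuous coercivity for the finite outer minimization over the $2^N$ subsets $S$ is precisely what makes the argument go through, and the finiteness of this combinatorial family is the essential use of the discrete formulation derived in Subsection~\ref{2dcase}.
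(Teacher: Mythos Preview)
Your proof is correct and follows essentially the same strategy as the paper: decompose the non-convex problem into $2^N$ quadratic sub-problems indexed by subsets of $\{1,\dots,N\}$, show each sub-problem admits a minimizer, and then select the best among these finitely many candidates.

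The one noteworthy difference is in how the sub-problems are set up. The paper partitions $\mathbb{R}^N$ into the regions $\mathcal{U}_{\mathcal{I}_j}=\{u:|u_i|\le r\text{ for }i\in\mathcal{I}_j,\ |u_i|>r\text{ for }i\notin\mathcal{I}_j\}$, on whose closures $\mathcal{J}_r$ coincides with a quadratic, and then invokes the Frank--Wolfe theorem to minimize that quadratic over the polyhedron $\overline{\mathcal{U}_{\mathcal{I}_j}}\cap\ker\mathcal{Q}$. You instead write $\mathcal{J}_r$ as the pointwise minimum of the quadratics $\mathcal{J}_r^S$ and minimize each $\mathcal{J}_r^S$ over the entire linear subspace $\ker\mathcal{Q}$, which lets you appeal only to the elementary fact that a non-negative convex quadratic on a finite-dimensional space attains its infimum. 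This is a mild but genuine simplification: you avoid the polyhedral constraints and the Frank--Wolfe machinery entirely, at no cost to the argument.
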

\begin{proof}
We begin by noting that $\inf_{{\cal Q}u = 0} {\cal{J}}_r({u})$ is well-defined and finite, since ${\cal{J}}_r \geq 0$ is bounded from below.  It remains to show that there exists a vector ${u^*}$ that satisfies ${\cal{J}}_r({u^*}) = \inf_{u \in \mathbb R^N} {\cal{J}}_r({u})$.  Towards this goal, consider the following partition $\mathcal P = \{ \mathcal U_{\mathcal I_j } \}_{j=1}^{2^N}$ of $\mathbb{R}^N$ indexed by the subsets $\mathcal I_j$ of the index set $\mathcal I = \{1,2,...,N\}$, as follows:
\begin{equation}
\mathcal U_{{\cal I}_j} := \{u \in \mathbb{R}^{N}: |u_i| \leq r, i \in {\cal I}_j, |u_i| > r, i \in {\cal I} / {\cal I}_j \}.
\end{equation}
The minimization  of ${\cal{J}}_r$ subject to ${\cal Q}u = 0$ and constrained to the closure of the subset $\mathcal U_{\mathcal I_j}$ can be reformulated as a quadratic optimization problem, for which the classical Frank-Wolfe theorem  $\cite{bashsh93}$ guarantees the existence of a minimizer $u(\mathcal I_j)$.  Now, since $\mathbb{R}^N = \cup_j {\cal I}_j$, the minimal value of $\mathcal J_r$ subject to $Qu = 0$ and over all of $\mathbb{R}^N$ is just the minimal value from the finite set $\{ \mathcal J_r(u({\cal I}_j)): \quad j=1, \dots, 2^N\}$; that is,
\begin{eqnarray}
\min_{{\cal Q}u = 0} {\cal{J}}_r({u}) &=& \min_{{\cal I}_j \subset {\cal I}} {\cal{J}}_r({u}({\cal I}_j))
\nonumber
\end{eqnarray}
and $u^*=\arg \min_{{\cal Q}u = 0} {\cal{J}}_r({u}) = u\big(\arg \min_{{\cal I}_j \subset {\cal I}} {\cal{J}}_r({u}({\cal I}_j))\big).$
\end{proof}
In fact, Proposition \ref{existmin2} extends to a much larger class of free-discontinuity type minimization problems; by the same reasoning as before, we arrive at the more general result:
\begin{theorem}
The constrained minimization problem
\begin{eqnarray}
&&
 \left\{ \begin{array}{llll}
\textrm{Minimize} &  \mathcal J^p_r (u) = \big[ \| T u  - g \|^2_{\ell_2^M} +  \sum^N_{i=1} \min \left \{|u_i|^p, r^p  \right \} \big]
\\
 \textrm{subject to} & {\cal Q} u = 0. \\\end{array}
\right.
\label{Lpgeneral}
\end{eqnarray}
has minimizers $u^*$ for any real-valued parameter $p \geq 1$. 
\label{existmin}
\end{theorem}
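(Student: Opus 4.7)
The plan is to extend the partition argument used for Proposition~\ref{existmin2} from $p=2$ to general $p \geq 1$. The main new difficulty is that for $p \neq 2$, the restricted objective on each partition region is convex but no longer quadratic, so the Frank-Wolfe theorem does not apply directly to the whole region; instead, one must separate the bounded coordinates (on which the non-quadratic penalty acts) from the unbounded ones (on which only the quadratic data term acts).

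First, I would refine the partition $\{\mathcal U_{\mathcal I_j}\}$ by sign patterns in order to obtain polyhedral pieces. For each subset $\mathcal I \subset \{1,\dots,N\}$ and each sign vector $\epsilon \in \{-1,+1\}^{\mathcal I^c}$, define
$$
\mathcal U_{\mathcal I,\epsilon} := \{u \in \mathbb R^N : |u_i| \leq r \text{ for } i \in \mathcal I,\ \epsilon_i u_i \geq r \text{ for } i \in \mathcal I^c\}.
$$
Each $\mathcal U_{\mathcal I,\epsilon}$ is a closed convex polyhedron, and the finitely many pieces (at most $2^{2N}$ of them) cover $\mathbb R^N$. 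On each such piece the functional $\mathcal J^p_r$ coincides with the convex continuous function
$$
F_{\mathcal I,\epsilon}(u) := \|T u - g\|_{\ell_2^M}^2 + \sum_{i \in \mathcal I} |u_i|^p + r^p (N - |\mathcal I|),
$$
because the truncated penalty equals $|u_i|^p$ on the bounded coordinates ($i \in \mathcal I$) and saturates to $r^p$ on the unbounded ones.

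Second, I would show that $F_{\mathcal I,\epsilon}$ attains its minimum over $\mathcal U_{\mathcal I,\epsilon} \cap \{\mathcal Q u = 0\}$ whenever this set is nonempty. Decompose $u = (u^{(1)}, u^{(2)})$ with $u^{(1)} := (u_i)_{i \in \mathcal I} \in [-r,r]^{|\mathcal I|}$ and $u^{(2)} := (u_i)_{i \in \mathcal I^c}$. For fixed $u^{(1)}$ the inner problem
$$
V(u^{(1)}) := \inf_{u^{(2)}} \| T^{(1)} u^{(1)} + T^{(2)} u^{(2)} - g\|_{\ell_2^M}^2 \quad \text{subject to } \epsilon_i u_i^{(2)} \geq r,\ \mathcal Q^{(2)} u^{(2)} = -\mathcal Q^{(1)} u^{(1)},
$$
is a convex quadratic program with polyhedral constraints, bounded below by $0$. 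The Frank-Wolfe theorem~\cite{bashsh93} supplies an inner minimizer whenever the feasible set is nonempty, and standard parametric quadratic programming shows that $V$ is lower semicontinuous on the compact cube $[-r,r]^{|\mathcal I|}$ (with $V(u^{(1)}) = +\infty$ where feasibility fails). The outer problem of minimizing $V(u^{(1)}) + \sum_{i \in \mathcal I} |u_i|^p$ over this compact cube then attains its minimum by Weierstrass; concatenating with the inner minimizer yields a minimizer of $F_{\mathcal I,\epsilon}$. Taking the smallest of the finitely many regional minima gives a global minimizer $u^*$, since $\mathcal J^p_r \geq 0$ ensures the overall infimum is finite and realized on at least one region.

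The main obstacle is the technical verification that the parametric value function $V$ is lower semicontinuous on $[-r,r]^{|\mathcal I|}$, including near the boundary of its effective domain where inner feasibility may be lost. This is handled by noting that the inner feasible set depends on $u^{(1)}$ only through the affine data $-\mathcal Q^{(1)} u^{(1)}$, so Hoffman's error bound for polyhedra gives continuity of the optimal value on its effective domain and lower semicontinuity at its boundary. Once this point is settled, the remainder of the argument is a routine adaptation of the proof of Proposition~\ref{existmin2}.
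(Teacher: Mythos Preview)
Your argument is correct and takes a genuinely different route from the paper. Both proofs begin with the same partition idea: on each region $\mathcal U_{\mathcal I}$ the truncated penalty becomes $\sum_{i\in\mathcal I}|u_i|^p$ plus a constant, so one needs existence of minimizers for a convex functional of the form $\|Tu-g\|^2+\sum_{i\in\mathcal I}|u_i|^p$ on a polyhedral set. The paper handles this by proving a standalone generalization of the Frank--Wolfe theorem (Proposition~\ref{firstlemma}): for any $p\geq 1$, a function $u^tAu+b^tu+\sum c_j|u_j|^p$ bounded below on a polyhedron attains its infimum. Its proof uses a recession-direction analysis (if the sublevel set is unbounded, the function is constant along certain rays, which can then be factored out) followed by Weierstrass on the reduced problem.

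You instead exploit the specific structure that the non-quadratic term involves only the coordinates confined to $[-r,r]$. Fixing those, the inner problem in the unbounded coordinates is purely quadratic on a polyhedron, so the classical Frank--Wolfe theorem applies directly; the outer minimization is then over a compact cube and needs only lower semicontinuity of the parametric value function, which you obtain via Hoffman's bound. Your explicit refinement by sign pattern to make each piece genuinely polyhedral is a point the paper leaves implicit. The paper's route yields a reusable lemma of independent interest, while yours stays closer to off-the-shelf results at the price of the parametric l.s.c.\ verification; both are sound.
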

The Frank-Wolfe theorem, which guarantees the existence of minimizers for quadratic programs with bounded objective function, does not apply to the general case $p \geq 1$ where the objective function $\mathcal J^p_r$ is not necessarily quadratic.  Nevertheless, with the following generalization for the Frank-Wolfe theorem, Theorem \ref{existmin} follows directly from a similar argument as for Proposition \ref{existmin2}.

\begin{proposition}
Suppose $A$ is an $N \times N$ positive semidefinite matrix, and suppose $b$ and $c$ are $N \times 1$ vectors.   Suppose also that $X$ is a nonempty convex polyhedral subset of $\mathbb{R}^N$.  The convex optimization problem
\begin{eqnarray}
 \left\{ \begin{array}{llll}
\textrm{minimize} & u^t A u + b^t u + \sum_{1 \leq j \leq N} c_j |u_j|^p
\\
 \textrm{subject to} & u \in X.  \\\end{array} 
\right.
\label{FWgen}
\end{eqnarray}
admits minimizers for any real parameter $p \geq 1$, as long as the objective function is bounded from below.  
\label{firstlemma}
\end{proposition}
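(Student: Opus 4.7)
The plan is to generalize the classical Frank-Wolfe argument from quadratics to this mixed quadratic-plus-$\ell_p$-power objective via a recession cone analysis. First I would observe that under the natural sign assumption $c_j \geq 0$ (which is what the application to \eqref{L2general} produces on each partition piece $\mathcal{U}_{\mathcal{I}_j}$, where the coefficient in front of $|u_i|^p$ is either $1$ or $0$), the objective $f(u) := u^t A u + b^t u + \sum_j c_j |u_j|^p$ is a sum of convex continuous functions (using $A \succeq 0$ and convexity of $t \mapsto |t|^p$ for $p \geq 1$), hence convex and continuous on $\mathbb{R}^N$. Set $d := \inf_{u \in X} f(u)$, finite by hypothesis, and fix a minimizing sequence $(u^{(k)}) \subset X$ with $f(u^{(k)}) \to d$.

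If $(u^{(k)})$ is bounded, a subsequence converges to some $u^* \in X$ (since $X$ is closed) and continuity of $f$ gives $f(u^*) = d$, concluding the proof. Otherwise, after extracting a subsequence, $\|u^{(k)}\| \to \infty$. I would then invoke the Minkowski-Weyl decomposition $X = P + C$, where $P$ is a polytope and $C = \{w : Bw \leq 0\}$ is the (closed, finitely generated) recession cone, writing $u^{(k)} = v^{(k)} + t_k \hat w^{(k)}$ with $v^{(k)} \in P$ bounded, $\hat w^{(k)} \in C$ of unit norm, and $t_k \to \infty$; a further subsequence gives $\hat w^{(k)} \to w \in C$ with $\|w\| = 1$.

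The asymptotic expansion of $f$ along this sequence reads $f(u^{(k)}) = t_k^2\, w^t A w + t_k^p \sum_j c_j |w_j|^p + O(t_k^{\max(1,p-1)})$. I would then split into cases: (i) if $w^t A w > 0$, the quadratic term forces $f(u^{(k)}) \to +\infty$ (note $c_j \geq 0$ keeps the $p$-power term nonnegative); (ii) if $w \in \ker A$ but $\sum_j c_j |w_j|^p > 0$ with $p > 1$, the $t_k^p$ term dominates and again $f(u^{(k)}) \to +\infty$; both contradict $f(u^{(k)}) \to d$. The remaining degenerate case is $Aw = 0$ together with $w_j = 0$ for every $j$ with $c_j > 0$ (and, when $p=1$, also $\sum_j c_j|w_j| + b^t w = 0$). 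Here $f(u + tw) = f(u) + t\,b^t w$ along the entire ray, and boundedness below forces $b^t w \geq 0$; if $b^t w > 0$ one contradicts $t_k \to \infty$ in the minimizing sequence, while if $b^t w = 0$ the function is flat along $w$.

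The main obstacle is this last flat case: I need to reduce to a genuinely bounded minimizing sequence. I would handle it by replacing each $u^{(k)}$ with $u^{(k)} - s_k w$, choosing $s_k \geq 0$ maximal subject to staying in $X$; this preserves the $f$-value and pushes the sequence onto a proper face of $X$. Since $X$ is polyhedral, the lineality space $C \cap (-C)$ of such flat directions is a finite-dimensional subspace, and iterating this reduction in finitely many independent flat directions yields an equivalent minimization problem on a polyhedron with pointed recession cone, for which Steps (i)-(ii) of the case analysis above rule out unbounded minimizing sequences and Weierstrass concludes. The polyhedrality of $X$ is used crucially here to guarantee that this reduction terminates in finitely many steps.
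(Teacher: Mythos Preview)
Your overall strategy---a recession-cone analysis showing that any direction of escape for an unbounded minimizing sequence must be one along which $f$ is constant, followed by a reduction eliminating these flat directions---is exactly the paper's route; your case analysis (i)--(ii) is the contrapositive of the paper's key lemma that boundedness of $f$ on a ray forces $f$ to be constant on the entire line. The gap is in your final paragraph: you conflate the \emph{flat directions of $f$} (those $w$ with $Aw = 0$, $b^t w = 0$, and $c_j w_j = 0$ for all $j$) with the \emph{lineality space} $C \cap (-C)$ of $X$. A flat direction $w$ of $f$ can lie in $C$ without $-w \in C$, so passing to a polyhedron with pointed recession cone does \emph{not} ensure that (i)--(ii) then rule out all unbounded minimizing sequences---there may remain an extreme ray $w \in C$ along which $f$ is flat. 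Your face-pushing step $u^{(k)} \mapsto u^{(k)} - s_k w$ does apply in this pointed case (pointedness makes $s_k$ finite), but it lands you on a proper face, and the whole argument must be rerun there; this induction on dimension is gestured at but not carried out.

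The paper's resolution is cleaner: a second lemma (via convexity) shows that if $f$ is constant on one line in direction $d$ then it is constant on every parallel line, so the flat directions of $f$ form a linear subspace $Y$, independent of $X$. Projecting $X$ orthogonally onto $Y^\perp$ yields a polyhedron $\tilde X$ on which $f$ descends with the same infimum and, by construction, admits no ray along which $f$ is bounded; the sublevel set is then bounded and Weierstrass applies directly, with no face-by-face iteration needed.
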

For ease of presentation, we reserve the proof of Proposition \ref{firstlemma} to the Appendix.

\noindent From the proof of Theorem \ref{existmin}, one could in principle obtain a minimizer for ${\cal J}^p_r$ by computing a minimizer ${u}({\cal I}_j)$ for each subset ${\cal I}_j \subset {\cal I}$ using a quadratic program solver $\cite{bashsh93}$, and then minimizing ${\cal J}^p_r$ over the finite set of points $\{ u({\cal I}_j) \}$.  Unfortunately, this algorithm is computationally infeasible as the number of subsets of the index set $\{1,2, ..., N \}$ grows exponentially with the dimension $N$ of the underlying space.   

Indeed, the minimization problem \eqref{Lpgeneral} is  NP hard, as the known NP-complete problem SUBSET-SUM can be reduced to this problem.  A complete discussion about the NP-hardness of \eqref{Lpgeneral} can be found in \cite{RachelPrep} . 

\section{An iterative thresholding algorithm for 1-D free-discontinuity inverse problems}

\subsection{Overview of the algorithm}

In this section, we introduce an algorithm that is guaranteed to converge to a local minimizer of the real-valued functional ${\cal{J}}_r^p: \ell_2(\mathcal I) \rightarrow \mathbb{R}$ having the form 
\begin{equation}
{\cal{J}}^p_r ({ u}) =  \| T {u} - g \|^2_{\ell_2(\mathcal K)} +  \sum_{i \in \mathcal I} \min \{ |u_i|^p, r^p \},
\label{generalform}
\end{equation}
subject to the { conditions:
\begin{itemize}
\item  $\mathcal I$ and $\mathcal K$ are countable sets of indices, and $T:\ell_2(\mathcal I) \to  \ell_2(\mathcal K)$ is a bounded linear operator, which is in the following identified with its matrix associated to the canonical basis;
\item the operator $T$ has spectral norm $\|T\|  < 1$. Note that this requirement is easily met by an appropriate scaling for the functional, i.e., we may have to consider instead
$$
{\cal{J}}^p_r ({ u}) =  \gamma \| T{ u} - g \|^2_{\ell_2(\mathcal K)} +  \gamma \sum_{i \in \mathcal I} \min \{ |u_i|^p, r^p \}, \quad \gamma \leq 1.
$$ 
This modification leads to minor changes in the analysis that follows (see also Subsection \ref{denoising}), and throughout this paper we assume, without loss of generality, that $\gamma =1$;

\item  the parameter $p$ is in the range $1 \leq p \leq 2$.  In case the index set $\mathcal I$ is \emph{finite}, only the restriction $p \geq 1$ is necessary.  

\end{itemize}
We note that the scaled 1D discrete Mumford-Shah functional $\frac{1}{h}{\cal{J}}_{1/\sqrt h}$ is clearly a functional of the form \eqref{generalform} having $r=1/\sqrt h$, index set $\mathcal I  = \{1, \dots, \lfloor r^2 \rfloor\}$, parameter $p = 2$, and operator $T = D_{1/r^2}^{\dagger}: \mathbb{R}^{\lfloor r^2 \rfloor - 1} \rightarrow \mathbb{R}^{\lfloor r^2 \rfloor}$ .  As shown in the Appendix, the operators $D_{1/r^2}^{\dagger}$ satisfy the uniform bound $\| D_{1/r^2}^{\dagger} \| \leq 1/2$, independent of dimension, so a scaling factor is not needed in this case.  
\\

\noindent In the following, we will not minimize ${\cal{J}}^p_r$ directly.  Instead, we propose a \emph{majorization-minimization} algorithm for finding solutions to ${\cal{J}}^p_r$, motivated by the recent application of such algorithms for minimizing energy functionals arising in sparse signal recovery and image denoising \cite{blda??,DDD}.    More precisely, consider the following \emph{surrogate} objective function,
\begin{equation}
 {\cal{J}}^{p, surr}_r (u,a) := {\cal{J}}^p_r(u) - \| T{u} - T {a} ||_{\ell_2(\mathcal K)}^2 + \| {u} - {a} \|_{\ell_2(\mathcal I)}^2. \quad u,a \in \ell_2(\mathcal I).
\label{surr1}
 \end{equation}
The surrogate functional ${\cal{J}}^{p,surr}_r$ satisfies ${{\cal{J}}}^{p,surr}_r (u, {a}) \geq {\cal{J}}^p_r({ u})$ everywhere, with equality if and only if ${u = a}$, and is such that the sequence
\begin{equation}
{ u}^{n+1} = \arg \min_{ u} {{\cal{J}}}^{p,surr}_r ({ u}, { u}^{n})
\label{map}
\end{equation}
obtained by successive minimizations of ${{\cal{J}}}^{p,surr}_r ({ u, a})$ in ${ u}$ for fixed ${ a}$ results in a nonincreasing sequence of the original functional ${\cal{J}}^p_r (u^n)$ (see Lemmas \ref{l1} and \ref{l2}).   We will study the implementation and the convergence properties of the iteration \eqref{map} as follows:
\begin{itemize}
\item in Section $3.2$, we review the standard properties of majorization-minimization iterations,
\item in Section $3.3$, we explicitly compute $u$-global minimizers of the surrogate functional ${{\cal{J}}}^{p,surr}_r ({ u, a})$, for $a$ fixed; 
\item in Section $4.1$ we discuss connections between the resulting thresholding functions and thresholding functions used in compressed sensing,
\item in Sections $5.1$, $5.2$, and $5.3$, we show that  the sequence $({ u}^n )_{n \in \mathbb N}$ defined by \eqref{map} will converge to a stationary value $\bar{u}= \arg \min_{ u} {{\cal{J}}}^{p,surr}_r ({ u}, \bar{ u})$, starting from any initial value ${ u}^0$ for which ${\cal{J}}^p_r({ u}^0) < \infty$,
\item in Section $5.4$, we show that such stationary values $\bar{u}$ are also local minimizers of the original functional ${\cal{J}}^p_r$ that satisfy a certain fixed point condition, and 
\item in Section $5.5$, it is shown that any global minimizer of ${\cal{J}}^p_r$ is among the set of possible fixed points $\bar{u}$ of the iteration $\eqref{map}$.
\end{itemize}By means of the thresholding algorithm, we also show that global minimizers of the functional ${\cal{J}}^p_r$ are isolated, and moreover possess a certain segmentation property that is also shared by fixed points of the algorithm.
}

\subsection{Preliminary lemmas}
The lemmas in this section are standard when using surrogate functionals (see $\cite{DDD}$ and $\cite{blda??}$), and concern general real-valued surrogate functionals of the form
\begin{equation}
{\cal{F}}^{surr}({ u, a}) = {\cal{F}}({ u}) - \|T{ u} - T{ a}\|^2_{\ell_2(\mathcal K)}+ \|{ u - a}\|^2_{\ell_2(\mathcal I)}.
\label{surrgen}
\end{equation}
The lemmas in this section hold independent of the specific form of the functional ${\cal{F}}:\ell_2(\mathcal I) \to \mathbb R^+$,  but do rely on the restriction that $\|T\| < 1$.  
\begin{lemma}
If the real-valued functionals ${\cal{F}}({ u})$ and ${\cal{F}}^{surr}({ u,a})$ satisfy the relation $\eqref{surrgen}$ and the sequence $({ u^n})_{n \in \mathbb N}$ defined by ${ u}^{n+1} = \arg \min_{ u \in \ell_2(\mathcal I)} {\cal{F}}^{surr} ({ u, u^n})$ is initialized in such a way that ${\cal{F}}({ u^0}) < \infty$, then the sequences ${\cal{F}} ({ u^n})$ and ${{\cal{F}}}^{surr} ({ u^{n+1}, u^{n}})$ are non-increasing as long as $\|T\| < 1$.
\label{l1}
\end{lemma}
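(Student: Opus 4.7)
The plan rests on the standard majorization-minimization identity: the surrogate dominates the original functional, with equality along the diagonal. First I would verify this dominance. From the definition \eqref{surrgen},
\[
\mathcal{F}^{surr}(u,a) - \mathcal{F}(u) = \|u-a\|_{\ell_2(\mathcal I)}^2 - \|Tu - Ta\|_{\ell_2(\mathcal K)}^2 = \langle (I - T^*T)(u-a),\, u-a\rangle_{\ell_2(\mathcal I)}.
\]
Since $\|T\| < 1$ by hypothesis, the operator $I - T^*T$ is positive semidefinite (indeed strictly positive definite), so the right-hand side is nonnegative and vanishes only when $u = a$. In particular $\mathcal{F}^{surr}(a,a) = \mathcal{F}(a)$ for every $a$.

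The monotonicity of $\mathcal{F}(u^n)$ is then immediate from the chain
\[
\mathcal{F}(u^{n+1}) \;\leq\; \mathcal{F}^{surr}(u^{n+1}, u^n) \;\leq\; \mathcal{F}^{surr}(u^n, u^n) \;=\; \mathcal{F}(u^n),
\]
where the first inequality is the majorization just shown, the second uses that $u^{n+1}$ minimizes $u \mapsto \mathcal{F}^{surr}(u, u^n)$ (so in particular its value there cannot exceed the value at $u = u^n$), and the last equality is the diagonal coincidence. The assumption $\mathcal{F}(u^0) < \infty$ ensures the chain is well-defined at every step and that $\arg\min$ need only be compared against a finite value, so the induction propagates.

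For the surrogate sequence, I would simply splice the two inequalities in a different order. Combining the diagonal identity with the previous step,
\[
\mathcal{F}^{surr}(u^{n+2}, u^{n+1}) \;\leq\; \mathcal{F}^{surr}(u^{n+1}, u^{n+1}) \;=\; \mathcal{F}(u^{n+1}) \;\leq\; \mathcal{F}^{surr}(u^{n+1}, u^n),
\]
again using the minimizing property of $u^{n+2}$ for $\mathcal{F}^{surr}(\cdot, u^{n+1})$ and the nonnegativity of $\mathcal{F}^{surr}(u,a) - \mathcal{F}(u)$. This yields the desired monotonicity.

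There is no real obstacle here; the only subtle point worth flagging is that all of these manipulations require the minimum in the definition $u^{n+1} = \arg\min_u \mathcal{F}^{surr}(u, u^n)$ to actually be attained. This will be established in Section 3.3, where a closed-form expression for the minimizer is computed componentwise, so the present lemma may be treated as a clean consequence of the two algebraic facts $\mathcal{F}^{surr}(u,a) \geq \mathcal{F}(u)$ and $\mathcal{F}^{surr}(a,a) = \mathcal{F}(a)$, both of which are direct consequences of $\|T\| < 1$.
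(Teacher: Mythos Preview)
Your proof is correct and follows essentially the same approach as the paper: both establish the majorization inequality $\mathcal{F}^{surr}(u,a)\ge\mathcal{F}(u)$ via positivity of $I-T^*T$ when $\|T\|<1$, note the diagonal equality $\mathcal{F}^{surr}(a,a)=\mathcal{F}(a)$, and then read off the same chain of inequalities. The only cosmetic difference is that the paper introduces the square root $L=\sqrt{I-T^*T}$ and writes the gap as $\|L(u-a)\|^2$, a notation it reuses in the next lemma to extract the quantitative bound $\|u^{n+1}-u^n\|\to 0$; your quadratic-form formulation is equivalent.
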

\begin{proof}
Since $\|T\| < 1$, also $\|T^*T\| < 1$,  and so the operator $L = \sqrt{I - T^*T}$ is a well-defined positive operator whose spectrum is contained within a closed interval $[c, 1]$ that is bounded away from zero $c > 0$.   We can then rewrite ${{\cal{F}}}^{surr} ({ u^{n+1} u^{n}})$ as ${{\cal{F}}}^{surr} ({ u^{n+1}, u^{n}}) = {{\cal{F}}}({ u^{n+1}}) + \|L({ u^{n+1} - u^n)}\|_{\ell_2(\mathcal I)}^2$, from which it follows that
\begin{eqnarray}
{{\cal{F}}}({ u^{n+1}}) &\leq& {{\cal{F}}}({ u^{n+1}}) +  \|L({ u^{n+1} - u^n})\|_{\ell_2(\mathcal I)}^2 \nonumber \\
&=& {{\cal{F}}}^{surr} ({ u^{n+1}, u^{n}}) \nonumber \\
&\leq& {{\cal{F}}}^{surr} ({ u^{n}, u^{n}}) \nonumber \\
&=& {{\cal{F}}}({ u^{n}}) \nonumber \\
&\leq& {{\cal{F}}}({ u^{n}}) +  \|L({ u^{n} - u^{n-1}})\|_{\ell_2(\mathcal I)}^2 \nonumber \\
&=& {{\cal{F}}}^{surr} ({ u^{n}, u^{n-1}}),
\label{est}
\end{eqnarray}
where the second inequality follows from ${ u^{n+1}}$ being a minimizer of $ {{\cal{F}}}^{surr} ({ u, u^{n}})$.
\end{proof}
\noindent From Lemma \ref{l1} we obtain the following corollary:
\begin{lemma}
As long as the conditions of Lemma $\ref{l1}$ are satisfied, one can choose $N \in \mathbb N$ sufficiently large such that for all $n \geq N$, $\|{ u^{n+1} - u^n} \|_{\ell_2(\mathcal I)} \leq \epsilon$, i.e., $$\lim_{n \to \infty}  \|{ u^{n+1} - u^n} \|_{\ell_2(\mathcal I)}=0.$$
\label{l2}
\end{lemma}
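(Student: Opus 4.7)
The plan is to extract a summable control of $\|u^{n+1}-u^n\|^2$ from the monotonicity established in Lemma \ref{l1} and the positivity of the operator $L=\sqrt{I-T^*T}$. The key identity, already exhibited in the proof of Lemma \ref{l1}, is
$$
\mathcal F^{surr}(u^{n+1},u^n) \;=\; \mathcal F(u^{n+1}) + \|L(u^{n+1}-u^n)\|_{\ell_2(\mathcal I)}^{2},
$$
together with the minimizing inequality $\mathcal F^{surr}(u^{n+1},u^n)\le \mathcal F^{surr}(u^n,u^n)=\mathcal F(u^n)$. Subtracting these gives the a priori estimate
$$
\|L(u^{n+1}-u^n)\|_{\ell_2(\mathcal I)}^{2} \;\le\; \mathcal F(u^{n})-\mathcal F(u^{n+1}).
$$

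First I would observe that $\mathcal F\ge 0$ (it is a sum of squared discrepancies and nonnegative penalty terms), so that the non-increasing sequence $\mathcal F(u^n)$ converges to some finite limit $\mathcal F_\infty\ge 0$. Telescoping the inequality above from $n=0$ to $n=N$ yields
$$
\sum_{n=0}^{N}\|L(u^{n+1}-u^n)\|_{\ell_2(\mathcal I)}^{2} \;\le\; \mathcal F(u^{0})-\mathcal F(u^{N+1}) \;\le\; \mathcal F(u^{0}) \;<\; \infty,
$$
where the last bound uses the standing assumption $\mathcal F(u^0)<\infty$ from Lemma \ref{l1}. Letting $N\to\infty$ shows that $\sum_{n}\|L(u^{n+1}-u^n)\|_{\ell_2(\mathcal I)}^{2}$ is finite, and hence $\|L(u^{n+1}-u^n)\|_{\ell_2(\mathcal I)}\to 0$.

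To pass from convergence of $L(u^{n+1}-u^n)$ to convergence of $u^{n+1}-u^n$, I would use that $\|T\|<1$ makes $L$ boundedly invertible. Concretely, the spectrum of $L=\sqrt{I-T^*T}$ lies in $[c,1]$ with $c=\sqrt{1-\|T\|^{2}}>0$, so that $\|L v\|_{\ell_2(\mathcal I)}\ge c\,\|v\|_{\ell_2(\mathcal I)}$ for every $v\in\ell_2(\mathcal I)$. Applying this to $v=u^{n+1}-u^n$ gives
$$
\|u^{n+1}-u^n\|_{\ell_2(\mathcal I)} \;\le\; c^{-1}\|L(u^{n+1}-u^n)\|_{\ell_2(\mathcal I)} \;\longrightarrow\; 0,
$$
which is the claim; the quantitative statement ``$\|u^{n+1}-u^n\|\le\varepsilon$ for all $n\ge N$'' follows by choosing $N$ large enough that the tail of the summable series above is below $c^{2}\varepsilon^{2}$.

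There is no substantial obstacle: the argument is a telescoping trick coupled with the coercivity $L\ge c\, I$. The only point that requires a small amount of care is the first step, namely confirming that the sequence $\mathcal F(u^n)$ has a finite limit, which boils down to the nonnegativity of $\mathcal F$; this is clear for the concrete $\mathcal F=\mathcal J^{p}_{r}$ of interest but deserves explicit mention since Lemma \ref{l2} is stated at the abstract level of surrogates $\mathcal F^{surr}$ of the form \eqref{surrgen}.
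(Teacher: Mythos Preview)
Your proof is correct and follows essentially the same approach as the paper: both extract the inequality $\|L(u^{n+1}-u^n)\|_{\ell_2(\mathcal I)}^2 \le \mathcal F(u^n)-\mathcal F(u^{n+1})$ from the surrogate identity, use nonnegativity of $\mathcal F$ to bound the right-hand side, and then invoke the lower bound $\|Lv\|^2\ge(1-\|T\|^2)\|v\|^2$. The only cosmetic difference is that you telescope to obtain summability of $\|u^{n+1}-u^n\|^2$ (a slightly stronger conclusion), whereas the paper simply observes that $\mathcal F(u^n)-\mathcal F(u^{n+1})\to 0$ termwise; both routes immediately yield the stated limit.
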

\begin{proof}
From Lemma  \ref{l1}, it follows that ${\cal F} (u^{n}) \geq 0$ is a nonincreasing sequence, therefore it converges, and ${\cal F} (u^{n}) - {\cal F} (u^{n+1}) \to 0$ for $n \to \infty$. The lemma follows from \eqref{est}, and the estimates
$$
{\cal F} (u^{n}) - {\cal F} (u^{n+1}) \geq \|L({ u^{n+1} - u^n})\|_{\ell_2(\mathcal I)}^2 \geq (1 - \|T\|^2) \|  u^{n+1} - u^n\|_{\ell_2(\mathcal I)}^2.
$$
\end{proof}

% Lemma $\eqref{l2}$ follows if we can show that the sequence $s_N = \sum_{n=0}^N ||{ u^{n+1} - u^n} ||_2^2$ converges as $N \rightarrow \infty$.  Since the terms $||{ u^{n+1} - u^n} ||_2^2$ are nonnegative, the sequence $( s_N ) $ is monotonically increasing, and thus converges if and only if the $s_N$ are uniformly bounded.  But        

% \begin{eqnarray}
% \sum_{n=0}^N ||{ u^{n+1} - u^n} ||_2^2 &=& \sum_{n=0}^N || L^{-1}L({ u^{n+1} - u^n)} ||_2^2 \nonumber \\
% &\leq& \frac{1}{c} \sum_{n=0}^N || L({ u^{n+1} - u^n}) ||_2^2 \nonumber \\
% &\leq& \frac{1}{c} \sum_{n=0}^N \big[ {{\cal{F}}} ({ u^{n}}) - {{\cal{F}}} ({ u^{n+1}}) \big] \nonumber \\
% &=& \frac{1}{c} ( {{\cal{F}}} ({ u_{0}}) - {{\cal{F}}} ({ u^{n+1}}) ) \nonumber \\
% &\leq& \frac{1}{c} {{\cal{F}}} ({ u_{0}});
% \end{eqnarray}
% Again, the constant $c$ is a positive lower bound on the spectrum of the positive operator $L$.  The second inequality uses Lemma $\eqref{l1}$.

\subsection{The surrogate functional ${{\cal{J}}}^{p, surr}_r$, its explicit minimization, and a new thresholding operator}
It is not immediately clear that the surrogate functional ${{\cal{J}}}^{p,surr}_r$ in $\eqref{surr1}$ is any easier to manage than its parent functional ${{\cal{J}}}^p_r$.  However, expanding the squared terms on the right hand side of $\eqref{surr1}$, ${{\cal{J}}}^{p,surr}_r(u,a)$ can be equivalently expressed as
\begin{eqnarray}
{{\cal{J}}}^{p,surr}_r ({ u, a}) &=& \| { u} - (I - T^*T){ a} + T^*{ g}  \|_{\ell_2(\mathcal I)}^2 + \sum_{i \in \mathcal I} \min \{ |u_i|^p, r^p \} + C\nonumber \\ 
&=& \sum_{i \in \mathcal I} \Big[ ( u_i - [a - T^*Ta + T^*g]_i )^2 + \min \{ |u_i|^p, r^p \} \Big] + C, \nonumber
\label{surr}
\end{eqnarray}
where the term $C = C(T,a,g)$ depends only on $T$, ${ a}$ and ${ g}$.  Indeed, unlike the original functional ${\cal{J}}^p_r$, the surrogate functional ${{\cal{J}}}^{p,surr}_r$ {\it decouples} in the variables $u_i$, due to the cancellation of terms involving $\|T{ u}\|_{\ell_2}^2$.  Because of this decoupling, global ${ u}$-minimizers of ${{\cal{J}}}^{p,surr}_r({ u,a})$, for $a$ fixed, can be computed {\it component-wise} according to
\begin{equation}
\bar{u}_i = \arg \min_{t \in \mathbb R}  \Big[ (t - [a - T^*Ta + T^*g]_i )^2 + \min \{ |t|^p, r^p \} \Big], \quad i \in \mathcal I.
\label{surmin}
\end{equation} 
One can solve $\eqref{surmin}$ explicitly when e.g. $p = 2$, $p = 3/2$, and $p = 1$; in the general case $p \geq 1$, we have the following result: 

\begin{proposition}[Minimizers of ${{\cal{J}}}^{p,surr}_r({ u,a})$ for $a$ fixed]\label{thmthrs}.
\begin{enumerate}
\item If {$p > 1$}, the minimization problem ${ \bar{u}} = \arg \min_{ u \in \ell_2(\mathcal I)} {{\cal{J}}}^{p,surr}_r ({ u}, { a})$ can be solved component-wise by
\begin{equation}
\bar{u}_i = H_{(p,r)}([{ a} - T^*T{ a} + T^*{ g}]_i), \quad i \in \mathcal I,
\label{three}
\end{equation}
where $H_{(p,r)}: \mathbb{R} \rightarrow \mathbb{R}$ is the \emph{`thresholding function'},
\begin{equation}
H_{(p,r)} (\lambda) =
\left\{
\begin{array}{ll}
F_p^{-1}(\lambda), & |\lambda| \leq \lambda'(r,p)  \\
\lambda, & |\lambda|  > \lambda'(r,p). \\  
\end{array}
\right.
\label{thresh}
\end{equation}
Here, $F_p^{-1}(\lambda)$ is the inverse of the function $F_p(t) = t + \frac{p}{2}\sgn{t}{|t|}^{p-1}$,
and $\lambda':=\lambda'(r,p) \in (r, r + \frac{p}{2}r^{p-1})$ is the unique positive value at which
\begin{equation}
(F_p^{-1}(\lambda') - \lambda')^2 + |F_p^{-1}(\lambda')|^p = r^p.
\end{equation}
\item When {  $p=1$}, the general form $\eqref{three}$ still holds, but we have to consider two cases:
\begin{enumerate}
\item If $r > 1/4$, the thresholding function $H_{(1,r)}: \mathbb{R} \rightarrow \mathbb{R}$ satisfies
\begin{equation}
H_{(1,r)} (\lambda) =
\left\{
\begin{array}{ll}
0, & |\lambda| \leq 1/2  \\
(|\lambda| - 1/2)\sgn{\lambda}, &1/2 < |\lambda|  \leq r + 1/4 =  \lambda'(r,1) \\
\lambda, & |\lambda| >  r + 1/4
\end{array}
\right.
\label{thresh}
\end{equation}
\item If, on the other hand, $r \leq1/4$, the function $H_{(1,r)}$ satisfies
\begin{equation}
H_{(1,r)} (\lambda) =
\left\{
\begin{array}{ll}
0, & |\lambda| \leq \sqrt{r}=  \lambda'(r,1)  \\
\lambda, & |\lambda| > \sqrt{r}
\end{array}
\right.
\label{thresh}
\end{equation}
\end{enumerate}
\end{enumerate}
In all cases, the function $H_{(p,r)}$ is continuous except at $\lambda'(r,p)$, where $H_{(p,r)}$ has a jump-discontinuity of size $\delta(r,p) = |\lambda' - H_{(p,r)}(\lambda')| > 0$ if $r > 0$.  In particular, it holds that $\lambda'(r,p) > r$ while $H_{(p,r)}(\lambda') < r$.
\end{proposition}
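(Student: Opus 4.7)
The decoupling already observed in the text reduces the problem to the scalar minimization
\[
\bar{u}_i = \arg\min_{t \in \mathbb{R}} \, f_\lambda(t), \qquad f_\lambda(t) := (t-\lambda)^2 + \min\{|t|^p, r^p\},
\]
with $\lambda = [a - T^*Ta + T^*g]_i$. The plan is to analyze $f_\lambda$ by splitting the real line into the \emph{inner region} $I = \{|t| \leq r\}$ where $f_\lambda(t) = (t-\lambda)^2 + |t|^p$, and the \emph{outer region} $O = \{|t| > r\}$ where $f_\lambda(t) = (t-\lambda)^2 + r^p$. By symmetry $f_{-\lambda}(-t) = f_\lambda(t)$, so it suffices to treat $\lambda \geq 0$.

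For $p > 1$, the inner objective is strictly convex and its first-order condition gives $F_p(t) = \lambda$, yielding the unique inner critical point $t_I(\lambda) = F_p^{-1}(\lambda)$. This critical point lies in $I$ precisely when $|\lambda| \leq F_p(r) = r + \tfrac{p}{2} r^{p-1}$; otherwise the inner infimum is achieved at the boundary $|t| = r$. On the outer region, the unique unconstrained minimizer of $(t-\lambda)^2 + r^p$ is $t = \lambda$, which lies in $O$ precisely when $|\lambda| > r$; otherwise the outer infimum is the limit as $|t| \to r^+$, giving value $(r-|\lambda|)^2 + r^p$, which is strictly worse than the inner boundary value $(r-|\lambda|)^2 + r^p$ only at equality, so the outer region can be ignored when $|\lambda| \leq r$. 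Thus the global minimizer is determined by comparing
\[
g_I(\lambda) := (F_p^{-1}(\lambda)-\lambda)^2 + |F_p^{-1}(\lambda)|^p \quad \text{versus} \quad g_O(\lambda) := r^p,
\]
for $\lambda > r$. I would verify that $g_I(\lambda)$ is continuous and strictly increasing in $\lambda > 0$ (this follows from differentiating and using $F_p'(t) = 1 + \tfrac{p(p-1)}{2}|t|^{p-2} > 0$), with $g_I(r) < r^p$ and $g_I(F_p(r)) = (\tfrac{p}{2} r^{p-1})^2 + r^p > r^p$. The intermediate value theorem then produces a unique $\lambda' \in (r, r + \tfrac{p}{2} r^{p-1})$ with $g_I(\lambda') = r^p$, and monotonicity of $g_I$ gives the switching rule \eqref{thresh}. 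The jump $\delta(r,p) = \lambda' - F_p^{-1}(\lambda') > 0$ follows because $F_p^{-1}(\lambda') < r < \lambda'$.

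For $p = 1$, the function $F_1(t) = t + \tfrac{1}{2}\sgn(t)$ is discontinuous at $0$, so the inner minimizer becomes the soft threshold $F_1^{-1}(\lambda) = \sgn(\lambda)(|\lambda|-1/2)_+$. Substituting gives inner minimum value $|\lambda| - 1/4$ when $|\lambda| > 1/2$ and $\lambda^2$ when $|\lambda| \leq 1/2$. Comparing with the outer candidate value $r$ splits into two subcases: if $r > 1/4$, then $1/2 < r+1/4$, and balancing $|\lambda|-1/4 = r$ yields $\lambda' = r+1/4$, giving the three-piece formula; if $r \leq 1/4$, the soft-threshold regime never undercuts $r$, so one compares $\lambda^2$ directly with $r$ and obtains $\lambda' = \sqrt r$.

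The main obstacle is the $\lambda'$ characterization for general $p > 1$: establishing that $g_I$ is monotone and locating $\lambda'$ strictly between $r$ and $F_p(r)$ requires a careful use of the implicit-function relation $\tfrac{d}{d\lambda} F_p^{-1}(\lambda) = 1/F_p'(F_p^{-1}(\lambda))$ to differentiate $g_I$ cleanly, after which the rest is bookkeeping. Continuity of $H_{(p,r)}$ away from $\lambda'$ and the jump discontinuity at $\lambda'$ follow immediately once the threshold structure is in place.
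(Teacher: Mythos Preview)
Your proposal is correct and follows essentially the same route as the paper's proof. Both arguments reduce to the scalar problem by symmetry, identify the inner candidate $F_p^{-1}(\lambda)$ versus the outer candidate $\lambda$, introduce the same comparison function $g_I(\lambda) = S_p(\lambda) = (F_p^{-1}(\lambda)-\lambda)^2 + |F_p^{-1}(\lambda)|^p$, establish its strict monotonicity via the implicit relation $\tfrac{d}{d\lambda}F_p^{-1}(\lambda) = 1/F_p'(F_p^{-1}(\lambda)) \in (0,1]$, and then apply the intermediate value theorem between $\lambda = r$ and $\lambda = F_p(r)$ to locate $\lambda'$; the jump at $\lambda'$ is deduced identically from $F_p^{-1}(\lambda') < r < \lambda'$. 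Your treatment of the $p=1$ case is actually more detailed than the paper's, which leaves that case to the reader.
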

\noindent We leave the proof of Proposition \ref{thmthrs} to the Appendix.  

\begin{remark}
\emph{ In the particular case $p = 2$ corresponding to classical Mumford-Shah regularization \eqref{L2general}, the thresholding function $H_{(2,r)}: \mathbb{R} \rightarrow \mathbb{R}$ has a particularly simple explicit form:}
\begin{equation}
H_{(2,r)} (\lambda) =
\left\{
\begin{array}{ll}
\lambda/2, & |\lambda| \leq \sqrt{2}r  \\
\lambda, & |\lambda|  > \sqrt{2}r \\  
\end{array}
\right. 
\label{thresh}
\end{equation}
\emph{In addition to $H_{(2,r)}$ and $H_{(1,r)}$, the thresholding operator $H_{(3/2,r)}(\lambda)$ corresponding to $p = 3/2$ can also be computed explicitly, by solving for the positive root of a suitable polynomial of third degree.  In Figure $\ref{threshold}$ below, we plot $H_{(2,1)}, H_{(3/2,1)}$, and $H_{(1,1)}$ with parameter $r = 1$.  For general noninteger values of $p$, $H_{(p,r)}$ cannot be solved in closed form.   However, recall the following general properties of $H_{(p,r)}$:
\begin{itemize}
\item $H_{(p,r)}$ is an odd function, 
\item $H_{(p,r)}(0) = 0$, and 
\item $H_{(p,r)}(\lambda) = \lambda$ once $|\lambda| > r + \frac{p}{2}r^{p-1}$. 
\end{itemize}
In fact, we can effectively \emph{pre}compute $H_{(p,r)}$ by numerically solving for the value of $H_{(p,r)}(\lambda_j)$ on a discrete set $\{\lambda_j\}$ of points $\lambda_j  \in (0,\frac{p}{2}r^{p-1} + r]$.  At $\lambda_j$, one just needs to solve the real equation
\begin{equation}
h_j + \frac{p}{2}h_j^{p-1} - \lambda_j = 0
 \end{equation}
which can be computed effortlessly via a root-finding procedure such as Newton's method:  while $h_j$ satisfies $(h_j - \lambda_j)^2 + (h_j)^p \leq r^p$, set $H_{(p,r)}(\lambda_j) = h_j$; once this constraint is violated, set $H_{(p,r)}(\lambda_j) = \lambda_j$. }
\end{remark}

\begin{figure}
\begin{center}
\subfigure[]{ \label{1(a)}
\includegraphics[width=3in]{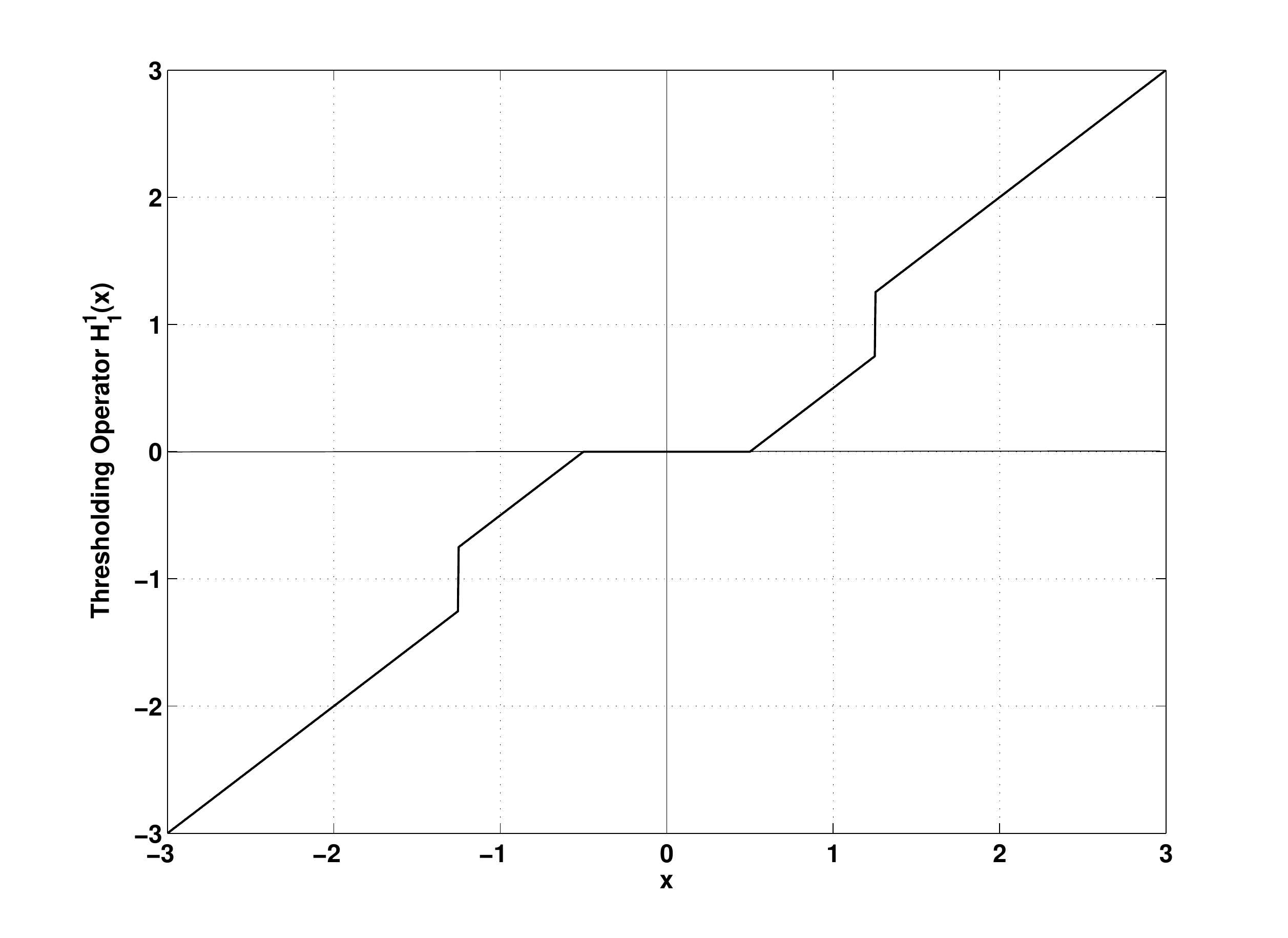}}
\subfigure[]{\label{1(b)}
\includegraphics[width=3in]{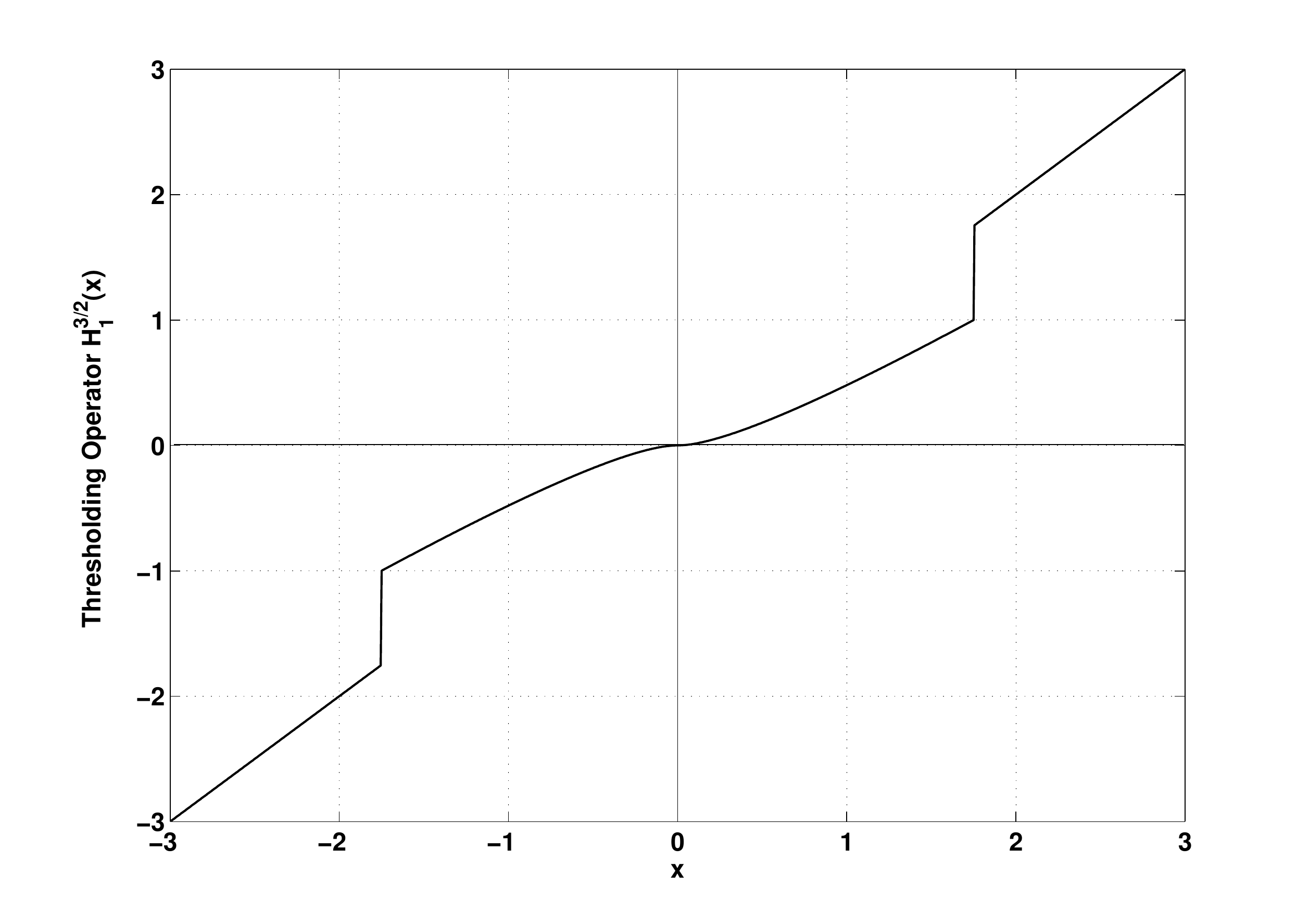}}
\subfigure[]{\label{1(c)}
\includegraphics[width=3in]{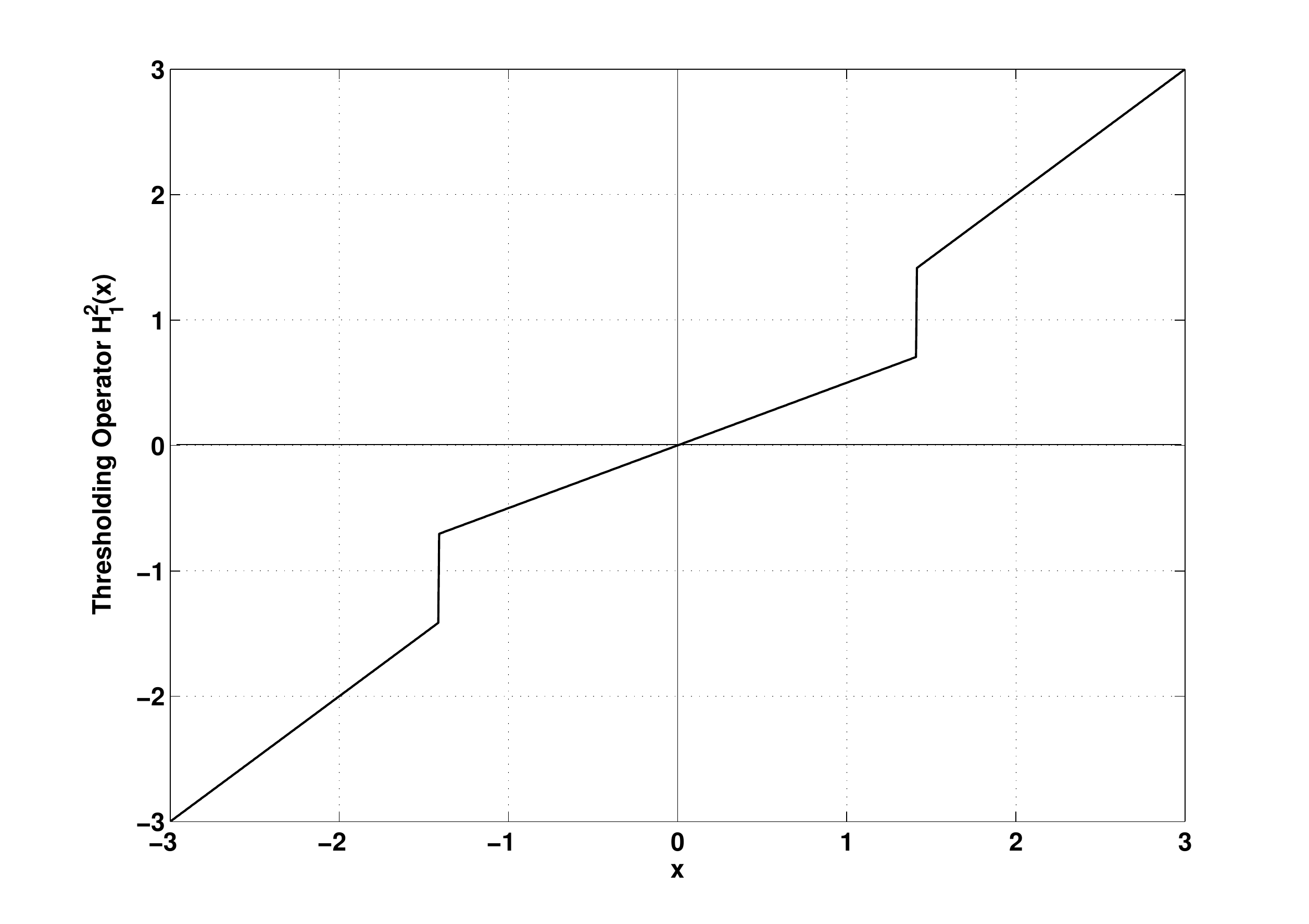}}
\end{center}
\caption{The discontinuous thresholding functions  $H_{(1,1)}$,  $H_{(3/2,1)}$, and $H_{(2,1)}$, with parameters $p=1,3/2$, and $2$, respectively, and $r = 1$.}\label{threshold}
\end{figure}

\pagebreak

\section{A connection to compressive sensing}

When $p=1$ and $r \leq 1/4$, we know from Theorem \ref{thmthrs} that the iterative algorithm
\begin{equation}
{ u}^{n+1} = \arg \min_{ u} {{\cal{J}}}^{p,surr}_r ({ u}, { u}^{n})
\end{equation}
reduces to the component-wise thresholding
\begin{eqnarray}\label{hard}
u^{n+1}_i &=& H_{\sqrt{r}}([u^n - T^*T{ u^n} + T^*g]_i),
\end{eqnarray}
where 
\begin{equation}
H_{\gamma}(\lambda) =
\left\{
\begin{array}{ll}
0, & |\lambda| \leq \gamma  \\
\lambda, & |\lambda|  > \gamma.
\end{array} \right.
\end{equation}
This thresholding function $H_{\gamma}: \mathbb{R} \rightarrow \mathbb{R}$ is referred to as {\it hard-thresholding} in the area of sparse recovery, and the iteration \eqref{hard} generated by successive applications of hard thresholding has been previously studied \cite{blda??}.  In particular, the iteration \eqref{hard} was shown in  \cite{blda??} to correspond to successive minimization in $u$ for fixed $a$ of the surrogate functional ${\cal F}_r^{0,surr} (u, a)$ corresponding to the $\ell_0$ regularized functional, 
\begin{equation}\label{ell_0}
 \mathcal F^0_{r}(u) = \|T{ u} - { g}\|^2_{\ell_2(\mathcal K)} + r \|{ u}\|_{\ell_0(\mathcal I)}.
\end{equation}
Here, the $\ell_0$ quasi-norm  $\| u \|_{\ell_0(\mathcal I)} := \sum_{i \in \mathcal I } | u_i|_0$  is defined component-wise by $$
|u_i|_0= \left \{ 
\begin{array}{ll}
0, & \textrm{ if } u_i= 0\\
1, & \textrm{ otherwise} 
\end{array} \right . 
$$  

It is not a coincidence that hard thresholding comes out from applying iterative thresholding to both the $\ell_0$ regularized problem \eqref{ell_0}, and free-discontinuity problem, 
$$
{\cal{J}}^1_r ({ u}) =  \| T{ u} - g \|^2_{\ell_2(\mathcal K)} + \sum_{i \in \mathcal I} \min \{ |u_i|, r \}$$ 
Indeed, consider the more general class of free-discontinuity-type functionals,
\begin{equation}\label{onegen}
{\cal{J}}^1_{(r,a)} ({ u}) =  \| T{ u} - g \|^2_{\ell_2(\mathcal K)} + a\sum_{i \in \mathcal I} \min \{ |u_i|, r \},
\end{equation}
which have an additional degree of freedom in the scaling parameter $a$ that is present in the discrete MS functional \eqref{discrete}, but which was omitted in the previous section to ease computations.  
\\
\\
\noindent The free-discontinuity functional ${\cal{J}}^1_{(r,a)}$ is related to the $\ell_0$ regularized functional $F^0_{\gamma}$ as follows:

\begin{proposition}
For any sequence $(r_n)$ satisfying $r_n \rightarrow 0$, the functional sequence ${\cal{J}}^1_{(\gamma/r_n, r_n)}$ $\Gamma$-converges to $\mathcal F^0_{\gamma}$ in the metric of $\ell_2^N$.
\end{proposition}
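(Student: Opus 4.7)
The strategy is to prove $\Gamma$-convergence in $\ell_2^N$ by verifying the two defining inequalities separately, working at the level of a single coordinate. Writing the per-coordinate penalty as
$$\phi_n(t) := \frac{\gamma}{r_n}\min\bigl\{|t|,\, r_n\bigr\} = \begin{cases} (\gamma/r_n)\,|t|, & |t| \leq r_n, \\ \gamma, & |t| > r_n, \end{cases}$$
so that $\mathcal J_n(u) := \mathcal J^1_{(r_n,\gamma/r_n)}(u) = \|Tu - g\|_{\ell_2(\mathcal K)}^2 + \sum_{i=1}^N \phi_n(u_i)$, the basic observation is that each $\phi_n$ is continuous, bounded by $\gamma$, vanishes at the origin, and equals $\gamma$ identically outside the shrinking window $[-r_n, r_n]$. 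Thus $\phi_n(t) \to \gamma|t|_0$ pointwise as $r_n \to 0$, and moreover $\phi_n$ saturates at $\gamma$ on every fixed neighborhood of every nonzero $t$ for all sufficiently large $n$.

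For the $\Gamma$-$\limsup$ inequality I will choose the constant recovery sequence $u^{(n)} \equiv u$, which requires no construction at all. The fidelity term is independent of $n$, and for each coordinate $i$: either $u_i = 0$, in which case $\phi_n(u_i) \equiv 0$, or $u_i \neq 0$, in which case $\phi_n(u_i) = \gamma$ for all $n$ with $r_n < |u_i|$. Summing the $N$ coordinates (the sum is finite) gives $\sum_i \phi_n(u_i) \to \gamma\|u\|_{\ell_0}$, and therefore $\limsup_n \mathcal J_n(u) = \|Tu - g\|^2 + \gamma\|u\|_{\ell_0} = \mathcal F^0_\gamma(u)$.

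For the $\Gamma$-$\liminf$ inequality, let $u^{(n)} \to u$ in $\ell_2^N$. Finite dimensionality forces coordinatewise convergence $u^{(n)}_i \to u_i$ for every $i$, and continuity of $T$ gives $\|Tu^{(n)} - g\|^2 \to \|Tu - g\|^2$. For each $i$ in the support $S := \{j : u_j \neq 0\}$, we have $|u^{(n)}_i| \geq |u_i|/2 > 0$ for $n$ large, and combined with $r_n \to 0$ this yields $|u^{(n)}_i| > r_n$ eventually, forcing $\phi_n(u^{(n)}_i) = \gamma$. Discarding the nonnegative off-support terms,
\begin{equation*}
\liminf_n \sum_{i=1}^N \phi_n\bigl(u^{(n)}_i\bigr) \;\geq\; \sum_{i \in S} \lim_n \phi_n\bigl(u^{(n)}_i\bigr) \;=\; \gamma\, |S| \;=\; \gamma\,\|u\|_{\ell_0},
\end{equation*}
and combining with the limit of the fidelity term yields $\liminf_n \mathcal J_n(u^{(n)}) \geq \mathcal F^0_\gamma(u)$.

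The main, and essentially only, obstacle is the $\liminf$ inequality: the target $\mathcal F^0_\gamma$ is only lower semicontinuous, since the $\ell_0$ term can drop in the limit when a nonzero coordinate is approached, so naive pointwise convergence $\phi_n \to \gamma|\cdot|_0$ would not suffice along arbitrary sequences $u^{(n)} \to u$. The saving property is the uniform saturation above, which forces $\phi_n(u^{(n)}_i) = \gamma$ for every $i$ in the support of $u$ as soon as $r_n$ is small enough, preserving the full $\ell_0$-count in the limit. Finite dimension eliminates any tail estimate and makes the coordinatewise reduction clean; as a byproduct, one obtains convergence of (near-)minimizers, providing the rigorous bridge from the free-discontinuity-type penalty to the $\ell_0$-regularization motivating compressed sensing.
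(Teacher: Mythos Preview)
Your proof is correct and follows essentially the same approach as the paper's: both reduce to the coordinatewise penalty $\gamma\min\{|t|/r_n,1\}$, use the constant recovery sequence for the $\limsup$ inequality, and for the $\liminf$ inequality exploit that whenever $u_i\neq 0$ the approximating coordinates eventually exceed $r_n$ so the penalty saturates at $\gamma$. Your write-up is in fact slightly more careful than the paper's, since you treat the continuous fidelity term explicitly and combine the coordinates directly rather than asserting that scalar $\Gamma$-convergence lifts to the finite sum.
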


\begin{proof}
We have to verify the conditions for gamma convergence as established in Definition\eqref{GammaDef}.  Note that we can write
$$
{\cal{J}}^1_{(\gamma/r_n, r_n)} = \| T{ u} - g \|^2_{\ell_2(\mathcal K)} + \gamma \sum_{j \in \mathcal I} \min \{ |u_j|/r_n, 1 \},
$$ 
so that $\Gamma$ convergence of the full functional sequence ${\cal{J}}^1_{(\gamma/r_n, r_n)}$ is established if we can show $\Gamma$ convergence of the scalar function $\min \{ |x|/r_n, 1 \}$ to $| x |_0$.  
\begin{itemize}
\item Suppose that $x_n \rightarrow x$.  Our aim is to show that $\liminf_{r_n \rightarrow 0} \min \{ |x^n|/r_n, 1 \} \geq | x |_0$:
\begin{itemize} 
\item If $|x| \geq \epsilon > 0$, then $|x^n| \geq \epsilon/2$ for $n$ sufficiently large, and \\
$ \lim_{r_n \rightarrow 0} \min \{ |x^n|/r_n, 1\} = 1$.
\item If on the other hand $x = 0$, then $ \min \{ |x^n|/r_n, 1\} \geq  \min \{ |x|/r_n, 1\} = 0.$
\end{itemize}

\item On the other hand, it is easily seen that $\lim_{r_n \rightarrow 0} \min \{ |x|/r_n, 1 \} = | x |_0$ for any $x \in \mathbb{R}$, since $$\lim_{r_n \rightarrow 0} \min\{ |x|/r_n, 1\} = \left\{
\begin{array}{ll}
1, & |x| > 0, \\
0, & \textrm{else}.
\end{array}
\right. $$
\end{itemize}
The desired result is established, according to \eqref{GammaDef}.
\end{proof}

\begin{remark}
\emph{ We showed that ${\cal{J}}^1_{(\gamma/r_n, r_n)}$ $\Gamma$ converges to $\mathcal F_{\gamma}^0$ for sequences $r_n \rightarrow 0$.  More generally, ${\cal{J}}^p_{(\gamma/(r_n)^p, r_n)}$ $\Gamma$ converges to $\mathcal F_{\gamma}^0$ for any $p \in [1,\infty)$.  It is not hard to show that the thresholding functions $H_{(p, \gamma/(r_n)^p, r_n)}$ corresponding to ${\cal{J}}^p_{(\gamma/(r_n)^p, r_n)}$, which can be derived  following the proof of Proposition \ref{thmthrs}, converge to the hard thresholding function $H_{\sqrt{\gamma}}$ corresponding to $\mathcal F_{\gamma}^0$.    } 
\end{remark}
\vspace{5mm}
\noindent To ease presentation, we again take $a = 1$ in the sequel, although all of the following analysis extends to the case where $a$ is a free parameter.  Because a convergence analysis of the iteration $\eqref{hard}$ corresponding to hard thresholding has been studied already $\cite{blda??}$, we omit the case $p = 1$ and $r \leq 1/4$ in the sequel.

%, but return its inclusion in section 3 when we discuss the connection between stationary points of the map $\eqref{map3}$ and local minimizers of the functionals $\eqref{generalform}$. 

\section{Convergence of the iterative thresholding algorithm \eqref{map} }
\subsection{Fixation of the discontinuity set}
We prove now that the sequence $({ u}^n)_{n \in \mathbb N}$ defined by
\begin{eqnarray}
{ u}^{n+1} &=& \arg \min_{ u} {{\cal{J}}}^{p,surr}_r ({ u}, { u}^{n}) 
\end{eqnarray} 
or equivalently, according to Proposition $\ref{thmthrs}$, component-wise by
\begin{eqnarray}\label{map3}
u^{n+1}_i &=& H_{(p,r)}([u^n - T^*Tu^n + T^*g]_i), \quad i \in \mathcal I,
\end{eqnarray}
will converge, granted that $p \geq 1$ and $\|T\| < 1$.  To ease notation, we define the operator $\mathbb{H}: \ell_2(\mathcal I) \rightarrow \ell_2(\mathcal I)$ by its component-wise action,
\begin{equation}
[\mathbb{H}(u)]_i := H_{(p,r)}([u - T^*T{ u} + T^*g]_i);
 \label{u}
\end{equation}
so that the iteration $\eqref{map3}$ can be written more concisely in operator notation as
\begin{equation}
u^{n+1} = \mathbb{H}(u^n).
\end{equation}
We omit the dependence of $\mathbb{H}$ on the parameters $p, r$, and the function ${ g}$ for continuity of presentation.  At the core of the convergence proof is the fact that the `discontinuity set', indicated below by ${\cal I}_1^n$, of $u^n$ must eventually fix during the iteration \eqref{map3}, at which point the `free-discontinuity' problem is transformed into a simpler `fixed-discontinuity' problem. 

\begin{lemma}[Fixation of the index set ${\cal I}_1$]
Fix $p \geq 1, r \in \mathbb{R}^+$, and ${ g} \in \ell_2(\mathcal K)$.  Consider the iteration
\begin{equation}
u^{n+1} = \mathbb{H}(u^n)
\end{equation}
and the time-dependent partition of the index set $\mathcal I$ into `small' set
\begin{eqnarray}
\mathcal I_0^n &=&  \{ i \in \mathcal I: |u^n_i| \leq \lambda'(r,p) \} 
\label{lam0}
\end{eqnarray}
and 'large' set
\begin{eqnarray}
\mathcal I_1^n &=& \{ i  \in \mathcal I:  |u^n_i| > \lambda'(r,p) \} 
\label{lam1}
\end{eqnarray}
where $\lambda'(r,p)$ is the position of the jump discontinuity of the thresholding function, as defined in Proposition $\ref{thmthrs}$.  For $N \in \mathbb N$ sufficiently large, this partition fixes during the iteration $u^{n+1} = \mathbb{H} (u^n)$; that is, there exists a set $\mathcal I_0$ such that for all $n \geq N$, $\mathcal I_0^n = \mathcal I_0$ and $\mathcal I_1^n = \mathcal I_1 := \mathcal I \setminus \mathcal I_0$.  
\label{l3}
\end{lemma}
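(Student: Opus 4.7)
The plan is to exploit the jump discontinuity of the thresholding function $H_{(p,r)}$ at $\lambda'(r,p)$: once an iterate $u^n$ is produced by the algorithm, none of its coordinates can lie strictly between the two ``branches'' of the image of $H_{(p,r)}$, and the norm decay guaranteed by Lemma \ref{l2} then prevents any coordinate from jumping the resulting gap infinitely often.

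First I would observe that, by the iteration \eqref{map3}, for every $n \geq 1$ each coordinate $u^n_i$ is $H_{(p,r)}$ applied to some real argument. From Proposition \ref{thmthrs}, $H_{(p,r)}$ equals $F_p^{-1}$ on $[-\lambda',\lambda']$ (whose magnitude is bounded by $F_p^{-1}(\lambda') < r$) and reduces to the identity for $|\lambda| > \lambda'$, so the image of $H_{(p,r)}$ is contained in
\[
[-r,r] \,\cup\, \{t \in \mathbb R : |t| > \lambda'\}.
\]
(In the case $p=1$, $r>1/4$, the same conclusion holds with the tighter inclusion $H_{(1,r)}([-\lambda',\lambda']) \subseteq [-(r-1/4),\,r-1/4]$.) Consequently, for every $n \geq 1$ the partition admits the more restrictive description
\begin{align*}
\mathcal I_0^n &= \{i \in \mathcal I : |u^n_i| \leq r\}, \\
\mathcal I_1^n &= \{i \in \mathcal I : |u^n_i| > \lambda'\},
\end{align*}
and the positive separation $\lambda' - r > 0$ isolates the two admissible ranges of $|u^n_i|$.

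Next I would convert this gap into a fixation statement. Suppose at step $n \geq 1$ some index $i$ switches between $\mathcal I_0^n$ and $\mathcal I_1^{n+1}$ (or vice versa). By the previous paragraph and the reverse triangle inequality, this forces $|u^{n+1}_i - u^n_i| > \lambda' - r$. On the other hand, Lemma \ref{l2} gives $\|u^{n+1} - u^n\|_{\ell_2(\mathcal I)} \to 0$, so via the embedding $\sup_{i \in \mathcal I} |u^{n+1}_i - u^n_i| \leq \|u^{n+1} - u^n\|_{\ell_2(\mathcal I)}$ one also has $\sup_i |u^{n+1}_i - u^n_i| \to 0$. Choose $N \in \mathbb N$ so that $\sup_i |u^{n+1}_i - u^n_i| < \lambda' - r$ for all $n \geq N$; then no switching at any coordinate is possible for $n \geq N$, and setting $\mathcal I_0 := \mathcal I_0^N$ and $\mathcal I_1 := \mathcal I_1^N$ finishes the argument.

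The delicate point is that a single $N$ must work for all coordinates simultaneously, which is nontrivial when $\mathcal I$ is countably infinite: coordinatewise convergence $u^{n+1}_i - u^n_i \to 0$ for each fixed $i$ would not be enough. The $\ell_2 \hookrightarrow \ell_\infty$ embedding of the difference sequence, together with the fact that the forbidden gap $\lambda' - r$ is a positive constant independent of $i$, is exactly what upgrades the norm convergence from Lemma \ref{l2} to the uniform-in-$i$ estimate needed here. This is the main obstacle, and the only place where the restriction $\|T\|<1$ (used in Lemma \ref{l2}) and the strict inequality $\lambda'(r,p) > r$ from Proposition \ref{thmthrs} both get used in an essential way.
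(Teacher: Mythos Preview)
Your argument is correct and follows essentially the same route as the paper: both proofs observe that the range of $H_{(p,r)}$ avoids an interval around $\lambda'$, so a coordinate switching sets forces $|u^{n+1}_i-u^n_i|$ to exceed a fixed positive constant, which is eventually ruled out by the $\ell_2$ (hence $\ell_\infty$) convergence of successive differences from Lemma~\ref{l2}. The only cosmetic difference is that the paper quantifies the gap as $\delta(r,p)=\lambda'-H_{(p,r)}(\lambda')$ while you use the slightly smaller (but still strictly positive) quantity $\lambda'-r$; either constant suffices.
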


\begin{proof}
By {\it discontinuity} of the thresholding operator $H_{(p,r)}(\lambda)$, each sequence component
\begin{equation}
u^{n}_i = H_{(p,r)} ([u^{n-1} - T^*Tu^{n-1} + T^*g]_i)
\end{equation}
satisfies 
\begin{enumerate}
\item[(a)] $| u^n_i | \leq \lambda'(r,p) - \delta(r,p) < \lambda'(r,p) $, if $i \in \mathcal I_0^n$, or
\item[(b)] $| u^n_i | > \lambda'(r,p)$, if $i \in \mathcal I_1^n$.
\end{enumerate}
Thus, $|u^{n+1}_i - u^n_i| \geq \delta(r,p)$ if $i \in \mathcal I_0^{n+1} \bigcap \mathcal I_1^n$, or vice versa if $i \in \mathcal I_0^{n} \bigcap \mathcal I_1^{n+1}$.  At the same time, Lemma $\ref{l2}$ implies
\begin{equation}
| u^{n+1}_i - u^n_i | \leq \|u^{n+1} - u^n \|_{\ell_2(\mathcal I)} \leq \epsilon, 
\label{contra}
\end{equation}   
once $n \geq N(\epsilon)$, and $\epsilon > 0$ can be taken arbitrarily small.  In particular, $\eqref{contra}$ implies that $\mathcal I_0$ and $\mathcal I_1$ must be fixed once $n \geq N(\epsilon)$ and $\epsilon < \delta(r,p)$.
\end{proof}

\noindent After fixation of the index set $\mathcal I_0 = \{i \in \mathcal I: |u_i^n| \leq \lambda'(r,p)\}$, $\mathbb{H}(u^n) =\mathbb{U}_{\mathcal I_0}(u^n)$ and $\mathbb{U}_{\mathcal I_0}$ is an operator having component-wise action{, for $p>1$,}
\begin{eqnarray}
[\mathbb{U}_{\mathcal I_0} u]_i &=& \left\{
\begin{array}{ll} F_p^{-1}([(I - T^*T)u + T^*g]_i), & \textrm{if } i \in \mathcal I_0 \\  
\big( (I - T^*T)u + T^*g \big)_i, & \textrm{if } i \in \mathcal I_1
\end{array} \right.
\label{separate}
\end{eqnarray}

Here, as in Proposition $\ref{thmthrs}$, the function $F_p^{-1}$ is the inverse of the function $F_p(t) = t + \frac{p}{2}\sgn{t}|t|^{p-1}$.  Again, for ease of presentation, we omit the dependence of $\mathbb{U}_{\mathcal I_0}$ on the parameters $p,r$, and ${ g}$. { For $p=1$ the description is similar, and in general,} one easily verifies the equivalence
\begin{eqnarray}
\mathbb{U}_{\mathcal I_0} ({ v}) =  \arg \min_{ u \in \ell_2(\mathcal I)} {\cal{J}}_{\mathcal I_0}^{p,surr} ({ u, v})
\label{equiv}
\end{eqnarray}
where ${\cal{J}}_{\mathcal I_0}^{p,surr}$ is a surrogate for the {\it convex} functional,
\begin{equation}
 {\cal{J}}_{\mathcal I_0}^p({ u}) :=  \| T{ u} - { g} \|_{\ell_2(\mathcal K)}^2 +  \sum_{i \in \mathcal I_0} |u_i|^p. 
\end{equation}
That is, fixation of the index set $\mathcal I_0$ implies that the sequence $( u^n)_{n \in \mathbb N}$ has become constrained to a subset of $\ell_2(\mathcal I)$ on which the map $\mathbb{H}$ agrees with a map $\mathbb{U}_{\mathcal I_0}$, associated to the convex functional ${\cal J}_{\mathcal I_0}^p$.  As we will see, this implies that  the nonconvex functional ${\cal J}_r^p$ behaves \emph{locally} like a convex functional in neighborhoods of fixed points $u = \mathbb{H}(u)$, including the global minimizers of ${\cal J}_r^p$.

\subsection{On the nonexpansiveness and convergence for $T$ injective}
Given that $\mathbb{H} ({ u}^n) = \mathbb{U}_{\mathcal I_0} ({ u}^n)$ after a finite number of iterations, we can use well-known tools from convex analysis to prove that the sequence $({ u}^n)_{n \in \mathbb N}$ converges.
If the operator $T^*T: \ell_2(\mathcal I) \rightarrow  \ell_2(\mathcal K) $ is invertible, or, equivalently, if the operator $T$ maps onto its range and has a trivial null space -- as, for example, does the discrete pseudoinverse $D_{h}^{\dagger}$ in the 1D Mumford-Shah approximation -- then the mapping $\mathbb{U}_{\mathcal I_0}$ has the nice property of being a contraction mapping, so that a direct application of the Banach fixed point theorem ensures exponential convergence of the sequence $( u^n)_{n \in \mathbb N}$ after fixation of the index sets. \begin{theorem} \label{contract}
Suppose $T: \ell_2(\mathcal I) \rightarrow \ell_2(\mathcal K)$ maps onto $\ell_2(\mathcal K)$ and has a trivial null space.  Let $\delta > 0$ be a lower bound on the spectrum of $T^* T$.
 Then the sequence 
\begin{equation}
{ u^{n+1}} = \mathbb{H} ({ u^n}), 
\end{equation}
as defined in $\eqref{u}$, is guaranteed to converge in norm.  In particular, after a finite number of iterations $N \in \mathbb N$, this mapping takes the form
\begin{equation}
{ u}^{N+m} =  \mathbb{U}_{\mathcal I_0}^m ({{ u}}^N), \quad m \in \mathbb N \setminus \{0\},
\end{equation} 
and the sequence $( u^n)_{n \in \mathbb N}$ converges to the unique fixed point $\bar{u}$ of the map $\mathbb{U}_{\mathcal I_0}$.  
Moreover, after fixation of of the index set $\mathcal I_0$, the rate of convergence becomes exponential:
\begin{equation}
\| { u}^{N+m} - { \bar{u}} \|_{\ell_2(\mathcal I)} =\| \mathbb{U}_{\mathcal I_0}^m ({ u}^{N}) - \mathbb{U}_{\mathcal I_0}^m ({ \bar{u}}) \|_{\ell_2(\mathcal I)}  \leq (1-\delta)^m\| {{ u}}^N - { \bar{u}} \|_{\ell_2(\mathcal I)}, \quad m \in \mathbb N \setminus \{0\}.
\end{equation} 
\end{theorem}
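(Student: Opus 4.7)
The plan is to combine the fixation result of Lemma \ref{l3} with the Banach fixed point theorem applied to the map $\mathbb{U}_{\mathcal I_0}$. By Lemma \ref{l3}, after some finite number of iterations $N$, the partition $(\mathcal I_0^n,\mathcal I_1^n)$ of $\mathcal I$ stabilizes to a fixed pair $(\mathcal I_0,\mathcal I_1)$, and the iteration reduces to $u^{N+m}=\mathbb{U}_{\mathcal I_0}^m(u^N)$ for all $m\geq 0$. What remains, then, is to argue that $\mathbb{U}_{\mathcal I_0}$ is a strict contraction on $\ell_2(\mathcal I)$, from which existence and uniqueness of a fixed point $\bar u$ and the claimed exponential convergence rate will follow.

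To establish contractivity, I would write $\mathbb{U}_{\mathcal I_0}=\Theta\circ A$, where $A:u\mapsto (I-T^*T)u+T^*g$ is affine and $\Theta$ is the component-wise map that acts as $F_p^{-1}$ on coordinates $i\in\mathcal I_0$ and as the identity on coordinates $i\in\mathcal I_1$ (see \eqref{separate}). Since by hypothesis the spectrum of $T^*T$ lies in $[\delta,\|T\|^2]\subset[\delta,1)$, the operator $I-T^*T$ has operator norm bounded by $1-\delta$, so $A$ is Lipschitz on $\ell_2(\mathcal I)$ with constant at most $1-\delta$. The key step is therefore to show that $\Theta$ is nonexpansive with respect to $\|\cdot\|_{\ell_2(\mathcal I)}$, because then
\[
\|\mathbb{U}_{\mathcal I_0}(u)-\mathbb{U}_{\mathcal I_0}(v)\|_{\ell_2(\mathcal I)}
\leq \|A(u)-A(v)\|_{\ell_2(\mathcal I)}
\leq (1-\delta)\|u-v\|_{\ell_2(\mathcal I)},
\]
which immediately yields the Banach fixed point conclusion together with the exponential rate.

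The main obstacle, therefore, is verifying component-wise nonexpansiveness of the scalar map $F_p^{-1}$ for $p\geq 1$. This I would handle by a direct derivative computation: for $p>1$, $F_p(t)=t+\tfrac{p}{2}\operatorname{sgn}(t)|t|^{p-1}$ is $C^1$ away from $0$ with $F_p'(t)=1+\tfrac{p(p-1)}{2}|t|^{p-2}\geq 1$, so $F_p$ is strictly monotone and Lipschitz-invertible with $|(F_p^{-1})'|\leq 1$; for $p=1$ (with $r>1/4$) the map $F_1^{-1}$ coincides with soft thresholding $t\mapsto\operatorname{sgn}(t)(|t|-1/2)_+$ on the relevant interval, which is well known to be 1-Lipschitz. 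A small subtlety arises at $t=0$ when $1<p<2$, where $F_p$ has a vertical-tangent type point; this is handled by continuity since $F_p$ is still strictly increasing and $F_p^{-1}$ is Lipschitz-1 on any closed subinterval excluding a neighborhood of $F_p(0)$, and one passes to the limit. Since $\Theta$ acts coordinate-wise with each coordinate map nonexpansive, the operator $\Theta$ is itself nonexpansive on $\ell_2(\mathcal I)$ by summing squares.

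With these ingredients in place, Banach's theorem applied to $\mathbb{U}_{\mathcal I_0}$ on $\ell_2(\mathcal I)$ yields a unique fixed point $\bar u$ and the estimate $\|u^{N+m}-\bar u\|_{\ell_2(\mathcal I)}\leq(1-\delta)^m\|u^N-\bar u\|_{\ell_2(\mathcal I)}$ for $m\geq 1$. Finally, one checks that this $\bar u$ is also a fixed point of the original map $\mathbb{H}$: by Lemma \ref{l3}, once the index partition stabilizes along the orbit, it must remain the same partition induced by $\bar u$ itself, so $\mathbb{H}(\bar u)=\mathbb{U}_{\mathcal I_0}(\bar u)=\bar u$. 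This completes the convergence statement and gives the stated geometric rate.
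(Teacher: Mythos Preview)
Your proposal is correct and follows essentially the same route as the paper: both arguments reduce to showing that $\mathbb{U}_{\mathcal I_0}$ is a contraction with constant $1-\delta$ by combining the bound $\|I-T^*T\|\leq 1-\delta$ with the fact that $F_p^{-1}$ is $1$-Lipschitz (via $F_p'(t)\geq 1$), then invoking Banach's fixed point theorem after the index set has stabilized by Lemma~\ref{l3}. Your explicit factorization $\mathbb{U}_{\mathcal I_0}=\Theta\circ A$ is just a cleaner packaging of the paper's coordinate-wise estimate; the only cosmetic difference is that the paper handles the $p=1$ case by direct inspection rather than by identifying $F_1^{-1}$ with soft thresholding, and your final remark that $\bar u$ is also a fixed point of $\mathbb{H}$ goes slightly beyond what the theorem statement requires.
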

The proof of Theorem  \ref{contract} is deferred to the Appendix.  

\subsection{Convergence for general operators $T$}
Unfortunately, if $T^*T$ is not invertible (that is, if $\delta = 0$ belongs to its nonnegative spectrum), then the map $\mathbb{U}_{\mathcal I_0}$ is not necessarily a contraction, and we can no longer apply the Banach fixed point theorem to prove convergence of the sequence $( u^n)_{n \in \mathbb N}$.  
However, as long as $\|T\| < 1$, we observe by following the proof of Theorem \eqref{contract} that $\mathbb{U}_{\mathcal I_0}$ is still \emph{non-expansive}, meaning that for all $v, v' \in \ell_2(\mathcal I)$, $\| \mathbb{U}_{\mathcal I_0} (v) - \mathbb{U}_{\mathcal I_0} (v') \|_{\ell_2(\mathcal I)} \leq \| v - v' \|_{\ell_2(\mathcal I)}$.  The following Opial's theorem $\cite{op67}$, here reported adjusted to our notations and context, gives sufficient conditions under which non-expansive maps admit convergent successive iterations:
\begin{theorem}[Opial's Theorem]
Let the mapping $\mathbb{A}$ from $\ell_2(\mathcal I)$ to $\ell_2(\mathcal I)$ satisfy the following conditions:
\begin{enumerate}
\item $\mathbb{A}$ is asymptotically regular: for all $v \in \ell_2(\mathcal I)$, $\| \mathbb{A}^{n+1} (v) - \mathbb{A}^n (v) \|_{\ell_2(\mathcal I)} \rightarrow  0$ for $n \to \infty$;
\item $\mathbb{A}$ is non-expansive: for all $v, v' \in \ell_2(\mathcal I)$, $\| \mathbb{A} (v) - \mathbb{A} (v') \|_{\ell_2(\mathcal I)} \leq \| v - v' \|_{\ell_2(\mathcal I)}$;
\item the set $\operatorname{Fix}(\mathbb A)$ of the fixed points of $\mathbb{A}$ in $\ell_2(\mathcal I)$ is not empty.
\end{enumerate}
Then, for all $v \in \ell_2(\mathcal I)$, the sequence $( \mathbb{A}^n (v))_{n \in \mathbb{N}}$ converges weakly to a fixed point in $\operatorname{Fix}(\mathbb A)$. 
\label{opial}
\end{theorem}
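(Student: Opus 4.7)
The plan is to follow the classical argument for weak convergence of iterates of nonexpansive maps in Hilbert space, which rests on three ingredients: monotonicity of the distance to any fixed point, the demiclosedness-type property provided by asymptotic regularity, and Opial's lemma on weak limits in Hilbert space. First I would fix some $p \in \operatorname{Fix}(\mathbb A)$ (which exists by hypothesis 3) and observe that nonexpansiveness gives
\begin{equation}
\|\mathbb A^{n+1}(v)-p\| = \|\mathbb A(\mathbb A^n(v))-\mathbb A(p)\| \leq \|\mathbb A^n(v)-p\|,
\end{equation}
so that $n \mapsto \|\mathbb A^n(v)-p\|$ is monotone nonincreasing and hence convergent; in particular the orbit $(\mathbb A^n(v))_{n\in\mathbb N}$ is norm-bounded and therefore has weak cluster points by the Banach--Alaoglu theorem in the Hilbert space $\ell_2(\mathcal I)$.

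Next I would show that every weak cluster point $w$ of the orbit is a fixed point of $\mathbb A$. This is the main technical step and, in my opinion, the principal obstacle: one cannot pass $\mathbb A$ through a weak limit directly because $\mathbb A$ is only nonexpansive, not necessarily weakly continuous. The trick is to combine nonexpansiveness with asymptotic regularity (hypothesis 1) and Opial's lemma, which states that if $x_{n_k} \rightharpoonup w$ weakly in a Hilbert space, then for every $y \neq w$,
\begin{equation}
\liminf_{k} \|x_{n_k}-w\| < \liminf_{k} \|x_{n_k}-y\|.
\end{equation}
This lemma follows from the identity $\|x_{n_k}-y\|^2=\|x_{n_k}-w\|^2+2\langle x_{n_k}-w,w-y\rangle+\|w-y\|^2$ and the fact that the middle term tends to $0$ by weak convergence. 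Assuming $\mathbb A w\neq w$, applying nonexpansiveness and then asymptotic regularity gives
\begin{equation}
\|\mathbb A^{n_k}(v)-\mathbb A w\| \leq \|\mathbb A^{n_k}(v)-\mathbb A^{n_k+1}(v)\| + \|\mathbb A^{n_k+1}(v)-\mathbb A w\| \leq o(1) + \|\mathbb A^{n_k}(v)-w\|,
\end{equation}
so that $\liminf_k\|\mathbb A^{n_k}(v)-\mathbb A w\| \leq \liminf_k\|\mathbb A^{n_k}(v)-w\|$, contradicting Opial's lemma applied with $y = \mathbb A w$. Hence $\mathbb A w = w$.

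Finally I would argue that the orbit has a unique weak cluster point, which then forces the entire sequence to converge weakly to that point. Suppose $w_1$ and $w_2$ are two weak cluster points, realized along subsequences $\mathbb A^{n_k}(v)\rightharpoonup w_1$ and $\mathbb A^{m_j}(v)\rightharpoonup w_2$. By the previous step both lie in $\operatorname{Fix}(\mathbb A)$, so by the monotonicity established in the first paragraph, the full limits $L_i := \lim_n\|\mathbb A^n(v)-w_i\|$ exist for $i=1,2$. Assuming $w_1\neq w_2$ and applying Opial's lemma once along $(n_k)$ with limit $w_1$ and once along $(m_j)$ with limit $w_2$ yields
\begin{equation}
L_1 < L_2 \quad \text{and} \quad L_2 < L_1,
\end{equation}
a contradiction. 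Therefore $w_1=w_2$, and since the bounded orbit has a unique weak cluster point in a Hilbert space, it converges weakly to that point, which by step two is an element of $\operatorname{Fix}(\mathbb A)$. This completes the proof.
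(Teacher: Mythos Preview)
Your proof is correct and follows the classical argument originally due to Opial: monotonicity of the distance to fixed points gives boundedness and hence weak compactness; Opial's property of Hilbert space combined with asymptotic regularity and nonexpansiveness forces every weak cluster point to be a fixed point (the demiclosedness step); and a second application of Opial's property rules out multiple cluster points. Each of your three steps is clean and the key inequality in step two is handled properly via the triangle inequality and nonexpansiveness.

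The paper itself does not prove this theorem. It is quoted as a known result from the literature (Opial, 1967), adapted to the notation of the chapter, and then applied as a black box to the map $\mathbb U_{\mathcal I_0}$. So there is no ``paper's proof'' to compare against; you have simply supplied the standard proof that the paper omits by citation.
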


\noindent In fact, we already know that $\mathbb{U}_{\mathcal I_0}$ is asymptotically regular, in addition to being nonexpansive -  this follows by application of Lemma $\ref{l1}$ and Lemma $\ref{l2}$ to the functional ${\cal J}_{\mathcal I_0}^p$.  Thus, in order to apply Opial's theorem, it remains only to show that $\mathbb{U}_{\mathcal I_0}$ has a fixed point; that is, that there exists a point $\bar{u} \in \ell_2(\mathcal I)$ for which $${ \bar{u}} = \mathbb{U}_{\mathcal I_0}({ \bar{u}}). $$
In more detail, we must prove the existence of a vector ${ \bar{u}} \in \ell_2(\mathcal I)$ satisfying
\begin{eqnarray}
\bar{u}_i &=& \left\{
\begin{array}{ll} F_p^{-1}([(I - T^*T)\bar u + T^*g]_i), & \textrm{if } i \in \mathcal I_0 \\  
\big( (I - T^*T)\bar{u} + T^*g \big)_i, & \textrm{if } i \in \mathcal I_1
\end{array} \right.
\label{seperate2}
\end{eqnarray}
The following lemma gives a simple yet useful characterization of points satisfying the fixed point relation $\eqref{seperate2}$:
\begin{lemma}
Suppose $p > 1$.  A vector ${ \bar{u}} \in \ell_2(\mathcal I)$ satisfies the fixed point relation ${ \bar{u}} = \mathbb{U}_{\mathcal I_0}({ \bar{u}})$ if and only if
\begin{equation}
\label{wha}
 \big[ T^*(g - T\bar{u}) \big]_i  = \left\{
\begin{array}{ll}
0, &  i \in \mathcal I_1 \\  
F_p(\bar{u}_i) - \bar{u}_i, & i \in \mathcal I_0,
\end{array}
\right.
\end{equation}
Alternatively, if $p = 1$ and $r \geq 1/4$,  ${ \bar{u}} = \mathbb{U}_{\mathcal I_0}({ \bar{u}})$ is satisfied if and only if
\begin{equation}
   \left\{
\begin{array}{ll}\label{wah}
\big[ T^*(g - T\bar{u}) \big]_i \in [-1/2, 1/2] , & i \in \mathcal I^a_0, \\  
\big[ T^*(g - T\bar{u}) \big]_i = 1/2\sgn{\bar{u}_i}, & i \in \mathcal I^b_0, \\
\big[ T^*(g - T\bar{u}) \big]_i =0, &  i \in \mathcal I_1,
\end{array}
\right.
\end{equation}
where in $\eqref{wah}$, the index set $\mathcal I_0$ is split into
\begin{itemize}
\item $\mathcal I^a_0 =  \left \{ i \in \mathcal I_0: |\bar{u}_i| \leq 1/2 \right \}$, and
\item $\mathcal I^b_0 =  \left \{ i \in \mathcal I_0: 1/2 < |\bar{u}_i| \leq r + 1/4 \right \}$.
\end{itemize}
\label{l4}
\end{lemma}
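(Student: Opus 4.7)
The plan is to prove each equivalence by reading off the definition \eqref{separate} of $\mathbb U_{\mathcal I_0}$ coordinate-wise and then inverting a scalar equation on each coordinate. The case $i \in \mathcal I_1$ is common to both \eqref{wha} and \eqref{wah}: the second branch of \eqref{separate} asserts $\bar u_i = [(I - T^*T)\bar u + T^*g]_i = \bar u_i + [T^*(g - T\bar u)]_i$, which collapses immediately to $[T^*(g - T\bar u)]_i = 0$.

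For $i \in \mathcal I_0$ with $p > 1$, the first branch of \eqref{separate} reads $\bar u_i = F_p^{-1}\bigl([\bar u + T^*(g-T\bar u)]_i\bigr)$. Since $F_p(t) = t + (p/2)\sgn(t)|t|^{p-1}$ is continuous and strictly increasing on $\mathbb R$ for $p>1$ (its one-sided derivatives away from $0$ are $1 + \tfrac{p(p-1)}{2}|t|^{p-2} > 0$, and it is continuous through the origin), $F_p$ is a bijection of $\mathbb R$ onto itself. Applying $F_p$ to both sides yields $F_p(\bar u_i) = \bar u_i + [T^*(g-T\bar u)]_i$, which rearranges to $[T^*(g-T\bar u)]_i = F_p(\bar u_i) - \bar u_i$, matching \eqref{wha}. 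The converse is obtained by applying $F_p^{-1}$ to the same identity.

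For $p=1$ with $r \geq 1/4$, the inverse $F_p^{-1}$ in \eqref{separate} is replaced by the three-branch threshold $H_{(1,r)}$ of Proposition \ref{thmthrs}, and the plan is a case analysis on the subdivision $\mathcal I_0 = \mathcal I_0^a \cup \mathcal I_0^b$. Writing $\lambda_i := \bar u_i + [T^*(g-T\bar u)]_i$ so that the fixed-point condition at $i\in \mathcal I_0$ becomes $\bar u_i = H_{(1,r)}(\lambda_i)$, I will argue: for $i \in \mathcal I_0^a$, $\bar u_i$ lies in the dead zone of $H_{(1,r)}$ and therefore the identity $\bar u_i = H_{(1,r)}(\lambda_i)$ with $|\bar u_i|\leq 1/2$ translates to $|\lambda_i|\leq 1/2$, i.e.\ $[T^*(g-T\bar u)]_i \in [-1/2,1/2]$; for $i \in \mathcal I_0^b$, where $|\bar u_i| > 1/2$, the value $\bar u_i$ must arise from the soft-threshold middle branch, which forces $\sgn(\lambda_i) = \sgn(\bar u_i)$ and $|\lambda_i| - 1/2 = |\bar u_i|$, and substituting the definition of $\lambda_i$ gives precisely $[T^*(g-T\bar u)]_i = (1/2)\sgn(\bar u_i)$. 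Each converse direction follows by plugging the stated scalar condition back into $H_{(1,r)}$ and recovering $\bar u_i$ in the appropriate branch.

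The only real obstacle is bookkeeping in the $p=1$ case: one must keep track of which branch of the piecewise function $H_{(1,r)}$ produces each admissible value of $\bar u_i$, ruling out that a nonzero $\bar u_i$ arises from the dead zone or that $\bar u_i$ of large magnitude arises from the middle branch in a way inconsistent with the sign. Once this branch identification is settled, each scalar equation solves in one line, so the lemma reduces to a direct verification.
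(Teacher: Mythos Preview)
Your proposal is correct and follows exactly the same coordinate-wise inversion strategy as the paper: rewrite the fixed-point relation as $\bar u_i = [(I-T^*T)\bar u + T^*g]_i$ on $\mathcal I_1$ and as $\bar u_i = F_p^{-1}(\lambda_i)$ (respectively $H_{(1,r)}(\lambda_i)$) on $\mathcal I_0$, then invert the scalar map. The paper's own proof is in fact briefer than yours---it handles $\mathcal I_1$ and the $p>1$ case in the same way and leaves $p=1$ entirely to the reader---so your additional branch analysis for $\mathcal I_0^a$ and $\mathcal I_0^b$ only adds detail (one small caution: when $0<|\bar u_i|\le 1/2$ the value arises from the soft-threshold branch rather than the dead zone, but this still yields $[T^*(g-T\bar u)]_i=\pm 1/2\in[-1/2,1/2]$, so your conclusion stands).
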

Again, recall the notation $ F_p(t) = t + \frac{p}{2}\sgn{t}{|t|}^{p-1}$, and observe that the fixed point relation \eqref{wha} has a very simple expression when $p = 2$.   The proof of Lemma \ref{l4} is given in the Appendix.
\\
\\
The fixed point characterization of Lemma $\ref{l4}$ will be crucial in the following theorem that ensures the existence of a fixed point  $\bar{{ u}} = \mathbb{U}_{\mathcal I_0} (\bar{{ u}})$.  We remind the reader that until now, all of the results of Section $1.3$ remain valid in the infinite-dimensional setting $| {\cal I} | = \infty$.  From this point on, however, certain results will only hold in finite dimensions; for clarity, we will account each such situation explicitly. 

\begin{proposition}
In finite dimensions $| {\cal I} | < \infty$, then there exist (global) minimizers of the convex functional,
\begin{equation}
 {\cal{J}}_{\mathcal I_0}^p({ u}) =  \| T{ u} - { g} \|_{\ell_2(\mathcal K)}^2 +  \sum_{i \in \mathcal I_0} |u_i|^p,
 \label{lp} 
\end{equation}
for all $p \geq 1$, and any minimizer $\bar{{ u}}$ of ${\cal{J}}_{\mathcal I_0}^p$ satisfies the fixed point relation $\bar{{ u}} = \mathbb{U}_{\mathcal I_0}(\bar{{ u}})$.  Restricted to the range $1 \leq p \leq 2$, the statement is true also in the limit $| {\cal I} | = \infty$.  
\label{l5}
\end{proposition}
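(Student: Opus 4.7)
My plan is to obtain both claims by the direct method of the calculus of variations, exploiting convexity of $\mathcal{J}_{\mathcal{I}_0}^p$. Write any $u \in \ell_2(\mathcal{I})$ as $u = u^{(0)} + u^{(1)}$ supported on $\mathcal{I}_0$ and $\mathcal{I}_1$ respectively, and let $N := \ker T \cap \ell_2(\mathcal{I}_1)$. Since $\mathcal{J}_{\mathcal{I}_0}^p$ is invariant under additions from $N$, I may first replace any candidate by its projection onto $N^{\perp}$, on which $T$ is bounded below. Starting from a minimizing sequence $(u^n)$, the $\ell_p$-penalty gives a uniform bound on $\|u^{n,(0)}\|_{\ell_p}$, hence on $\|u^{n,(0)}\|_{\ell_2}$ provided $p \leq 2$ (in finite dimensions any $p \geq 1$ works, as all norms on $\mathbb{R}^{|\mathcal{I}_0|}$ are equivalent); the discrepancy bound then gives $\|Tu^{n,(1)}\|_{\ell_2(\mathcal{K})}$ bounded, which after the projection above yields a uniform bound on $\|u^{n,(1)}\|_{\ell_2}$.

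In finite dimensions the sequence $(u^n)$ has a convergent subsequence and $\mathcal{J}_{\mathcal{I}_0}^p$ is continuous, so its limit attains the infimum. In infinite dimensions with $1 \leq p \leq 2$, I would extract a weakly $\ell_2$-convergent subsequence $u^n \rightharpoonup \bar u$ and verify lower semicontinuity of $\mathcal{J}_{\mathcal{I}_0}^p$: the quadratic term $\|Tu-g\|^2$ is convex and continuous, hence weakly lsc, while Fatou applied coordinate-wise gives $\sum_{i \in \mathcal{I}_0} |\bar u_i|^p \leq \liminf_n \sum_{i \in \mathcal{I}_0} |u^n_i|^p$. Adding the two inequalities identifies $\bar u$ as a minimizer.

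For the fixed-point property I invoke the first-order convex optimality conditions and compare them with Lemma \ref{l4}. When $p>1$, $t \mapsto |t|^p$ is $C^1$ with derivative $p\,\mathrm{sgn}(t)|t|^{p-1}$, so $\nabla \mathcal{J}_{\mathcal{I}_0}^p(\bar u)=0$ reads $[T^*(T\bar u - g)]_i=0$ for $i \in \mathcal{I}_1$ and $2[T^*(T\bar u - g)]_i + p\,\mathrm{sgn}(\bar u_i)|\bar u_i|^{p-1}=0$ for $i \in \mathcal{I}_0$; using $F_p(t)-t = \tfrac{p}{2}\mathrm{sgn}(t)|t|^{p-1}$ this is exactly \eqref{wha}. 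For $p=1$ I would replace the gradient by the subdifferential, using $0 \in \partial \mathcal{J}_{\mathcal{I}_0}^1(\bar u)$ together with $\partial|x|=\{\mathrm{sgn}(x)\}$ when $x \neq 0$ and $\partial|0|=[-1,1]$; splitting $\mathcal{I}_0$ into $\mathcal{I}_0^a$ and $\mathcal{I}_0^b$ according to whether $|\bar u_i| \leq 1/2$ then yields \eqref{wah}.

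The main obstacle is the infinite-dimensional existence when $p=1$, since $\ell_1$ fails to be reflexive and the penalty alone does not control the topology. The remedy is the two-step argument above: first quotient out by $N = \ker T \cap \ell_2(\mathcal{I}_1)$ so that minimizing sequences are bounded in $\ell_2$, then lean on Fatou (rather than any reflexivity) to pass the $\ell_1$ term to the weak $\ell_2$-limit.
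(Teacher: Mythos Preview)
Your approach is correct and takes a genuinely different route from the paper. The paper does not run the direct method on $\mathcal{J}_{\mathcal{I}_0}^p$ itself; instead it eliminates the $\mathcal{I}_1$-part by orthogonally projecting onto the range of $T_1 = T\mathcal{P}$, reducing to a functional $\mathcal{F}(v) = \|\mathcal{P}_1^\perp T_0 v - \mathcal{P}_1^\perp g\|_{\ell_2}^2 + \|v\|_{\ell_p(\mathcal{I}_0)}^p$ in the $\mathcal{I}_0$-variables alone, and then imports both existence of minimizers of $\mathcal{F}$ and their fixed-point characterization wholesale from \cite{DDD} (this citation is precisely where the restriction $1\le p\le 2$ enters). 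The fixed-point relation for $\mathcal{J}_{\mathcal{I}_0}^p$ is obtained by translating back through the identification $T_1\bar u_1=\mathcal{P}_1(g-T_0\bar v)$. Your route is more self-contained: coercivity comes directly from the $\ell_p$-penalty on $\mathcal{I}_0$ together with injectivity of $T$ on the quotient by $N$; lower semicontinuity from convexity of the data term and Fatou for the penalty; and the fixed-point conditions of Lemma~\ref{l4} are read off from the Euler--Lagrange equations (resp.\ the subdifferential inclusion $0\in\partial\mathcal{J}_{\mathcal{I}_0}^1(\bar u)$ when $p=1$) on $\mathcal{J}_{\mathcal{I}_0}^p$ itself, rather than pulled back from $\mathcal{F}$. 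The paper's reduction buys not having to redo the compactness/lsc work, at the cost of an extra layer of projections; your argument buys transparency and avoids the external reference.

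One caveat worth flagging explicitly: your assertion that $T$ is ``bounded below'' on $N^\perp\cap\ell_2(\mathcal{I}_1)$ is a closed-range hypothesis on $T_1$ and is not automatic when $|\mathcal{I}_1|=\infty$. The paper's proof hides the same assumption in the step where $\bar u_1$ is reconstructed from $T_1\bar u_1=\mathcal{P}_1(g-T_0\bar v)$. In the actual application this is harmless, since the $\mathcal{I}_1$ arising from Lemma~\ref{l3} is always finite (finiteness of $\mathcal{J}_r^p(u^n)$ forces only finitely many coordinates above $r$), but you should state this rather than assert boundedness-below in general.
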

\begin{proof}
In the finite-dimensional setting, minimizers necessarily exist for all $p \geq 1$ according to Proposition \ref{firstlemma}.   We now consider the general case. Consider the unique decomposition $u = u_0 + u_1$ into a vector $u_0$ supported on $\mathcal I_0$ and another $u_1$ supported on $\mathcal I_1$, i.e., the vectors $u_0 \in \ell_2^{\mathcal I_0}(\mathcal I) := \{u \in \ell_2(\mathcal I): u_i=0, \quad i \in \mathcal I_1\}$ and $u_1 \in \ell_2^{\mathcal I_1}(\mathcal I) := \{u \in \ell_2(\mathcal I): u_i=0, \quad i \in \mathcal I_0\}$. 
Let $\mathcal P: u \rightarrow u_1$ and $\mathcal P^{\perp} = \mathcal I - \mathcal P: u \rightarrow u_0$ denote the orthogonal projections onto the subspaces  $\ell_2^{\mathcal I_1}(\mathcal I)$ and $\ell_2^{\mathcal I_0}(\mathcal I)$, respectively.   Consider the operators $T_0 = T \mathcal P^{\perp}$ and $T_1 = T \mathcal P$; note that clearly $T = T_0 + T_1$ is satisfied.
%Write $T_0: {\cal H}_0 \rightarrow {\cal{\tilde{H}}}_0$ and $T_1: {\cal H}_1 \rightarrow {\cal{\tilde{H}}}_1$.  
The functional \eqref{lp} can be re-written with this decomposition according to
\begin{equation}
 {\cal{J}}_{\mathcal I_0}^p({ u_0} + { u_1}) =  \| T_0 u_0 + T_1 u_1 - g \|_{\ell_2(\mathcal K)}^2 +  \|u_0\|_{\ell_p^{\mathcal I_0}(\mathcal I)}^p
\end{equation}  
where $\|z \|_{\ell_p^{\mathcal I_0}(\mathcal I)}:= \left (\sum_{i \in \mathcal I_0} |z_i|^p\right)^{1/p} $ is the $\ell_p$-norm on vectors supported on $\mathcal I_0$.
\\
\\
\noindent Let $\mathcal P_1$ be the orthogonal projection onto the range of $T_1$ in $\ell_2(\mathcal K)$ (not to be confused with $\mathcal P$, which operates on the space $\ell_2(\mathcal I)$) and let $\mathcal P_1^{\perp} = \mathcal I  - \mathcal P_1$ be the orthogonal projection in $\ell_2(\mathcal K)$ onto the orthogonal complement of the range of $T_1$.  Then, fixing $u_0 \in  \ell_2^{\mathcal I_1}(\mathcal I)$, the vector $\mathcal P_1(g - T_0 u_0) \in \operatorname{range}(T_1) \subset \ell_2(\mathcal K)$ is the solution to the minimization problem
\begin{equation}
\mathcal P_1(g - T_0 u_0)= \arg \min_{v \in \operatorname{range}(T_1)} \| v - (g - T_0 u_0) \|^2_{\ell_2(\mathcal K)}, 
\end{equation}
so that minimizers of the functional ${\cal{F}}: \ell_2^{\mathcal I_0}(\mathcal I) \rightarrow \mathbb{R}^+$ defined by
\begin{eqnarray}
 {\cal{F}}(v) &=&  \| T_0 v + \mathcal P_1(g - T_0 v) - g \|^2_{\ell_2(\mathcal K)} + \|v \|_{\ell_p^{\mathcal I_0}(\mathcal I)}^p \nonumber \\
 &=& \| K v - y \|^2_{\ell_2(\mathcal K)} + \|v \|_{\ell_p^{\mathcal I_0}(\mathcal I)}^p
 \label{DDD}
\end{eqnarray}
with $K := \mathcal P_1^{\perp} T_0$, and $y := \mathcal P_1^{\perp} g$, will yield minimizers of ${\cal{J}}_{\mathcal I_0}^p$.   Functionals of the form $\eqref{DDD}$ were studied in $\cite{DDD}$; there, it is shown that as long as $1 \leq p \leq 2$, ${\cal{F}}$ has minimizers, and any minimizer $\bar{v}$ can be characterized by the fixed point relation
\begin{equation}
\bar{v}_i = F_p^{-1}([(I - K^*K)\bar{v} + K^*y]_i), \quad i \in \mathcal I_0;
\label{usual}
\end{equation}
(recall that $F_p^{-1}$ is the inverse of the function $F_p(t) = t + \frac{p}{2}\sgn t |t|^{p-1}$).
\\
 In the finite-dimensional setting $| \cal I | < \infty$, the Euler-Lagrange equations corresponding to minimizers of the convex functional ${\cal{F}}$ as in \eqref{DDD} imply the same fixed point relation \eqref{usual} also, for all $p \geq 1$. 
\\
By Lemma $\ref{l4}$, the characterization \eqref{usual} is equivalent to the condition
\begin{itemize}
\item $p > 1$:
\begin{equation}
 \big[ K^*(y - K\bar{v}) \big]_i  =  \frac{p}{2}\sgn{\bar{v}_i} |\bar{v}_i|^{p-1}, 
 \label{ey}
\end{equation}
\item $p = 1$: 
\begin{equation}
 \left\{
\begin{array}{ll}
 \big[ K^*(y - K\bar{v}) \big]_i  \in [-1/2, 1/2], & \textrm{if } |\bar{v}_i| \leq 1/2,  \\  
 \big[ K^*(y - K\bar{v}) \big]_i  = 1/2\sgn{\bar{v}_j}, & \textrm{if } 1/2 < |\bar{v}_i| \leq r + 1/4.
\end{array}
\right., \quad i \in \mathcal I_0.
\label{ew}
\end{equation}
\end{itemize}
Making the identification $\bar{u}_0 = \bar{v}$ and $T_1\bar{u}_1 = \mathcal P_1(g - T_0 \bar{v})$, and rewriting $K = \mathcal P_1^{\perp} T_0$, and $y = \mathcal P_1^{\perp} g$, the relations $\eqref{ey}$ and $\eqref{ew}$ imply the full fixed point characterization in Lemma \ref{l4}. 
\end{proof}

\begin{remark}
\emph{The restriction $p \leq 2$ that is necessary for the results of this paper in the infinite dimensional setting $| \mathcal I | = \infty$ was only used in the proof of Theorem $\ref{l5}$, where it comes from $\cite{DDD}$ and is needed there to prove the existence of minimizers of functionals ${\cal{F}}$ of the form $\eqref{DDD}$.   If that proof can be extended to functionals of the form $\eqref{DDD}$ for general $p \geq 1$, then the restriction $p \leq 2$ can be dropped in the current paper.  
For instance, if we additionally require that $T$ is a bounded operator from $\ell_p(\mathcal I)$ to $\ell_2(\mathcal I)$ for $1 \leq p < \infty$ then the existence of minimizers would be guaranteed also for $1 \leq p < \infty$ and $| \mathcal I | = \infty$. In this case we could consider a minimizing sequence $(v^k)$ of $\mathcal F$, which is necessarily bounded in $\ell_p$. Therefore, there exists a subsequence $(v^{k_h})$ which weakly converges in $\ell_p$ to a point $v^*$. This also implies the weak convergence of the sequence $K v^{k_h}$ in $\ell_2$; note that $\langle K v^{k_h}, w \rangle_{\ell_2 \times \ell_2} = \langle v^{k_h}, K^* w \rangle_{\ell_p \times \ell_{p'}}$, for $1/p+1/p' =1$. By Fatou's lemma we obtain $\mathcal F (v^*) \leq \lim \inf_h \mathcal F (v^{k_h})$ and $v^*$ is a minimizer of $\mathcal F$.
However, we still require that $p \geq 1$ for the proof of Proposition $\ref{thmthrs}$ and for the results of the next section to hold.  }
\end{remark}

\noindent Combining the results from this section, we obtain:

\begin{theorem}\label{mainth}
Suppose $1 \leq p \leq 2$.  Starting from any ${ u}^0$ satisfying ${\cal{J}}^p_r({ u}^0) < \infty$, the sequence $({ u}^n)_{n \in \mathbb N}$ defined by ${ u}^{n+1} = \mathbb{H}^n ({ u}^0)$ as in \eqref{u} will converge weakly to a vector $\bar{{ u}} \in \ell_2(\mathcal I)$ that satisfies the fixed point condition,
\begin{enumerate}
\item $|\bar{u}_i| \geq \lambda'(r,p)$, if $i \in \mathcal I_1 = \{j \in \mathcal I: |\bar{u}_j| > r\}$
\item $|\bar{u}_i| \leq F_p^{-1}(\lambda'(r,p))$, { for $p>1$,} if $i \in \mathcal I_0 = \{j \in \mathcal I: |\bar{u}_j| \leq r\}$, and
\item 
\begin{enumerate}
\item If $p > 1$: 
\begin{equation}
\label{wha2}
 \big[ T^*(g - T\bar{u}) \big]_i  = \left\{
\begin{array}{ll}
0, & \textrm{ if }  |\bar{u}_i| \geq  \lambda'(r,p) \\
F_p(\bar{u}_i) - \bar{u}_i, & \textrm{ if } |\bar{u}_i| \leq  \lambda'(r,p) - \delta(r,p)
\end{array}
\right.
\end{equation}
\item If $p = 1$ and $r \geq 1/4$: 

\begin{equation}
  \left\{
\begin{array}{ll}
\big[ T^*(g - T\bar{u}) \big]_i  \in [-1/2, 1/2] , & |\bar{u}_i| \leq 1/2 \\  
\big[ T^*(g - T\bar{u}) \big]_i  = 1/2\sgn{\bar{u}_i}, & 1/2 < |\bar{u}_i| \leq r - 1/4. \\
\big[ T^*(g - T\bar{u}) \big]_i  = 0, &  |\bar{u}_i| > r + 1/4.
\label{wah2}
\end{array}
\right.
\end{equation}
\end{enumerate}
\end{enumerate}
If the index set $| \cal I | < \infty$ is finite dimensional, the theorem holds for all $p \geq 1$.  
\label{bigthm2}
\end{theorem}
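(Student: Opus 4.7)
The plan is to reduce the nonconvex iteration $u^{n+1}=\mathbb{H}(u^n)$ to the convex iteration $u^{n+1}=\mathbb{U}_{\mathcal I_0}(u^n)$ by invoking the index-set fixation lemma, then to apply Opial's theorem on the restricted iteration, and finally to read off the fixed-point characterization via Lemma \ref{l4}. First, since $\mathcal J_r^p(u^0)<\infty$ by hypothesis and $\|T\|<1$, Lemmas \ref{l1} and \ref{l2} apply, so the sequence $(\mathcal J_r^p(u^n))_n$ is monotone and $\|u^{n+1}-u^n\|_{\ell_2(\mathcal I)}\to 0$. Lemma \ref{l3} then yields an integer $N$ and a partition $\mathcal I=\mathcal I_0\sqcup\mathcal I_1$ such that for all $n\geq N$ one has $\{i:|u_i^n|\leq \lambda'(r,p)\}=\mathcal I_0$ and $\{i:|u_i^n|>\lambda'(r,p)\}=\mathcal I_1$, and thus $\mathbb{H}(u^n)=\mathbb{U}_{\mathcal I_0}(u^n)$. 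This effectively converts the problem into one for the \emph{convex} surrogate of $\mathcal J_{\mathcal I_0}^p(u)=\|Tu-g\|_{\ell_2(\mathcal K)}^2+\sum_{i\in\mathcal I_0}|u_i|^p$.

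Next I would verify the three hypotheses of Opial's Theorem (Theorem \ref{opial}) for the operator $\mathbb{U}_{\mathcal I_0}$ acting on $\ell_2(\mathcal I)$. Non-expansiveness is inherited from the argument in the proof of Theorem \ref{contract} (which only uses $\|T\|<1$ and the fact that $\mathbb{U}_{\mathcal I_0}$ is the proximal-type map of a convex functional whose nonsmooth part is separable), with the spectral gap argument dropped and the contraction constant relaxed to $1$. Asymptotic regularity is the content of Lemma \ref{l2} applied to the convex majorization $\mathcal J_{\mathcal I_0}^{p,surr}$, using that on the (fixed) partition the two surrogate functionals agree. The existence of at least one fixed point of $\mathbb{U}_{\mathcal I_0}$ is exactly the content of Proposition \ref{l5}, which furnishes a minimizer $\bar v$ of $\mathcal J_{\mathcal I_0}^p$ satisfying $\bar v=\mathbb{U}_{\mathcal I_0}(\bar v)$ (for $1\leq p\leq 2$ in the infinite-dimensional case, and for all $p\geq 1$ in the finite-dimensional case). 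Opial's theorem then delivers a weak limit point $\bar u\in\operatorname{Fix}(\mathbb{U}_{\mathcal I_0})$.

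To obtain the stated fixed-point conditions for $\bar u$, I would apply Lemma \ref{l4} to $\bar u$, which immediately yields relations \eqref{wha2} and \eqref{wah2}. The gap conditions (a) $|\bar u_i|\geq \lambda'(r,p)$ on $\mathcal I_1$ and (b) $|\bar u_i|\leq F_p^{-1}(\lambda'(r,p))$ on $\mathcal I_0$ follow from passage to the limit: for $n\geq N$ and $i\in\mathcal I_1$ we have $|u_i^n|>\lambda'(r,p)$ and $u_i^{n+1}=[(I-T^*T)u^n+T^*g]_i$, so in the limit $|\bar u_i|\geq \lambda'(r,p)$; while for $i\in\mathcal I_0$, $u_i^{n+1}=F_p^{-1}([(I-T^*T)u^n+T^*g]_i)$ with the argument in $[-\lambda'(r,p),\lambda'(r,p)]$, and the monotonicity of $F_p^{-1}$ gives $|u_i^{n+1}|\leq F_p^{-1}(\lambda'(r,p))$. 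These coordinatewise bounds survive the weak limit because they only test a single coordinate functional.

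The main obstacle I anticipate is not the Opial step but rather ensuring that passage to the weak limit interacts correctly with the discontinuous thresholding operator; in particular, the weak limit $\bar u$ must be shown to \emph{still} respect the partition $(\mathcal I_0,\mathcal I_1)$ that was fixed along the tail of the sequence, since otherwise the identification of $\operatorname{Fix}(\mathbb{U}_{\mathcal I_0})$ with fixed points of $\mathbb{H}$ could be lost. This is resolved by the pointwise gap estimate from Lemma \ref{l3} (the jump $\delta(r,p)>0$ prevents any coordinate from approaching the discontinuity threshold in the limit), and by the fact that coordinate evaluation is weakly continuous on $\ell_2(\mathcal I)$. A secondary subtlety is the case $p=1$, where the implicit relation $F_p^{-1}$ must be replaced by the subdifferential condition \eqref{wah}; here one simply observes that Proposition \ref{thmthrs}(2) provides the explicit form of $H_{(1,r)}$, and Lemma \ref{l4} already packages the correct Euler--Lagrange equations, so the argument proceeds in parallel with the $p>1$ case modulo case-splitting on $\mathcal I_0^a$ and $\mathcal I_0^b$.
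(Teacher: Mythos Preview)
Your proposal is correct and follows essentially the same route as the paper: reduce to $\mathbb{U}_{\mathcal I_0}$ via Lemma \ref{l3}, verify the three hypotheses of Opial's theorem using Lemmas \ref{l1}--\ref{l2}, the nonexpansiveness estimate from Theorem \ref{contract}, and Proposition \ref{l5}, then read off the fixed-point relations from Lemma \ref{l4} and obtain the gap conditions by passing the coordinatewise bounds through the weak (hence componentwise) limit. Your discussion of the potential obstacle is a bit more explicit than the paper's, but the argument and its ingredients coincide.
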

\begin{proof}
By Lemma $\ref{l3}$, the map $u^{n+1} = \mathbb{H}(u^n)$ becomes equivalent to a map of the form
$u^{n+1}  = \mathbb{U}_{\mathcal I_0} (u^n)$ after a finite number of iterations $N \in \mathbb N$.  By Lemma $\ref{l3}$ and Proposition $\ref{thmthrs}$, the subset $\mathcal I_0 \subset \mathcal I$ separates $\mathcal I$ in the sense that, for all $n \geq N$, 
\begin{itemize}
\item $|u_i^n| < F_p^{-1}(\lambda'(r,p))$, if $i \in \mathcal I_0$,
\item $|u_i^n| > \lambda'(r,p)$, if $i \in \mathcal I_1 = \mathcal I \setminus \mathcal I_0$.
\end{itemize}
That the sequence $(u^n)_{n \in \mathbb N}$ converges to a fixed point of the map $\mathbb{U}_{\mathcal I_0} $ follows from Opial's theorem applied to the map $\mathbb{U}_{\mathcal I_0}$:   
\begin{enumerate}
\item the asymptotic regularity of $\mathbb{U}_{\mathcal I_0}$ is a consequence of Lemmas $\ref{l1}$ and $\ref{l2}$;
\item the nonexpansiveness of $\mathbb{U}_{\mathcal I_0}$ follows from the proof of Theorem \eqref{contract}, and
\item Theorem $\ref{l5}$ guarantees that the set of fixed points of $\mathbb{U}_{\mathcal I_0}$ in $\ell_2(\mathcal{I})$ is nonempty.
\end{enumerate}
The limit $\bar{u}$ of the sequence $(u^n)$ will satisfy the fixed point conditions of Lemma \ref{l4}.  Since weak convergence implies component-wise convergence, it follows for all $i \in \mathcal I_0$ that
\begin{eqnarray}
|\bar{u}_i| &=& \lim_{n \rightarrow \infty} |u^n_i| \nonumber \\
&\leq& \lambda'(r,p) - \delta(r,p)
\end{eqnarray}
and the respective lower bound $|u_i^n| \geq \lambda'(r,p)$ holds analogously for $i \in \mathcal I_1$.
\end{proof}

\section{On minimizers of ${\cal J}^p_r$}

We are now in a position to explore the relationship between limit vectors $\bar{u}$ of the iterative thresholding algorithm \eqref{u} and minimizers of the free-discontinuity functional  ${{\cal{J}}}_r^p$ \eqref{generalform}.   As a first but important result in this direction, 

\begin{theorem}
A point $\bar{u}$ satisfying the fixed point relation of Theorem $\ref{bigthm2}$ is a local minimizer of the functional ${\cal{J}}^p_r$ defined in $\eqref{generalform}$. 
\label{mintheorem} 
\end{theorem}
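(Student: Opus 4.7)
The plan is to exploit the ``gap'' built into the fixed point conditions of Theorem \ref{bigthm2} in order to linearize the nonconvex penalty $\sum_i \min\{|u_i|^p,r^p\}$ on a small $\ell_2$-neighborhood of $\bar u$, thereby reducing the local behavior of $\mathcal J^p_r$ near $\bar u$ to that of a convex functional for which $\bar u$ is already known to be a global minimizer. First I would record the separation of coordinates: by Theorem \ref{bigthm2} the index set splits as $\mathcal I=\mathcal I_0\cup\mathcal I_1$ with $|\bar u_i|\leq F_p^{-1}(\lambda'(r,p))$ on $\mathcal I_0$ and $|\bar u_i|\geq \lambda'(r,p)$ on $\mathcal I_1$, and by Proposition \ref{thmthrs} one has $F_p^{-1}(\lambda'(r,p))<r<\lambda'(r,p)$, so the strictly positive quantity
$$\eta:=\min\bigl\{r-F_p^{-1}(\lambda'(r,p)),\,\lambda'(r,p)-r\bigr\}>0$$
separates both sets from the threshold $r$. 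Moreover, since $\bar u\in\ell_2(\mathcal I)$ and $|\bar u_i|\geq\lambda'(r,p)>0$ on $\mathcal I_1$, the set $\mathcal I_1$ is automatically finite.

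Next I would consider any perturbation $h\in\ell_2(\mathcal I)$ with $\|h\|_{\ell_2(\mathcal I)}<\eta$, so that in particular $\|h\|_{\ell_\infty(\mathcal I)}<\eta$. A coordinate-wise estimate then yields $|\bar u_i+h_i|\leq r$ for $i\in\mathcal I_0$ and $|\bar u_i+h_i|\geq r$ for $i\in\mathcal I_1$, so the nonconvex penalty collapses to $|\bar u_i+h_i|^p$ on $\mathcal I_0$ and to the constant $r^p$ on $\mathcal I_1$. Writing $c:=|\mathcal I_1|\,r^p<\infty$, this yields the key local identity
$$\mathcal J^p_r(\bar u + h) = \mathcal J^p_{\mathcal I_0}(\bar u + h) + c,\qquad \mathcal J^p_r(\bar u) = \mathcal J^p_{\mathcal I_0}(\bar u) + c,$$
where $\mathcal J^p_{\mathcal I_0}(u)=\|Tu-g\|^2_{\ell_2(\mathcal K)}+\sum_{i\in\mathcal I_0}|u_i|^p$ is the convex companion functional from Proposition \ref{l5}.

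Finally I would verify that $\bar u$ is a global minimizer of the convex $\mathcal J^p_{\mathcal I_0}$: the fixed point relations \eqref{wha2}--\eqref{wah2} are exactly the characterization identified in Lemma \ref{l4} for points $\bar u=\mathbb U_{\mathcal I_0}(\bar u)$, and the decomposition argument in the proof of Proposition \ref{l5} shows that such points coincide with the stationary points (understood via the subdifferential when $p=1$) of $\mathcal J^p_{\mathcal I_0}$. By convexity every stationary point is a global minimizer, so $\mathcal J^p_{\mathcal I_0}(\bar u+h)\geq\mathcal J^p_{\mathcal I_0}(\bar u)$; combined with the local identity above this yields $\mathcal J^p_r(\bar u+h)\geq\mathcal J^p_r(\bar u)$ for every $h$ with $\|h\|_{\ell_2}<\eta$, which is the desired local minimality.

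The main obstacle I anticipate is the degenerate regime $p=1$, $r\geq 1/4$: there Theorem \ref{bigthm2} partitions $\mathcal I_0$ further into $\mathcal I_0^a$ and $\mathcal I_0^b$, the map $H_{(1,r)}$ is piecewise linear (so $F_p^{-1}$ must be replaced by the appropriate affine branch), and stationarity is an inclusion rather than an equation at coordinates where $|\bar u_i|$ equals the break-point $1/2$. The convexity argument still carries through via the subdifferential characterization in Lemma \ref{l4}, but one has to redefine the gap $\eta$ in terms of the thresholds $1/2$ and $r+1/4$ of $H_{(1,r)}$, and verify that the reduction to the convex local minimum problem still separates the sets cleanly from the truncation level before the same scheme applies.
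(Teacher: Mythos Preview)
Your argument is correct and it takes a genuinely different, more conceptual route than the paper. Both proofs rest on the same two ingredients---the coordinate gap furnished by Theorem~\ref{bigthm2} and the fixed point identities of Lemma~\ref{l4}---but they deploy them differently. The paper works through the surrogate machinery: it introduces an auxiliary lemma (Lemma~\ref{lemmata}) saying that ${\cal J}^{p,surr}_r(\bar u+\delta h,\bar u)-\|\delta h\|^2\geq{\cal J}^p_r(\bar u)$ implies ${\cal J}^p_r(\bar u+\delta h)\geq{\cal J}^p_r(\bar u)$, expands the surrogate difference explicitly, uses the gap to kill the $\mathcal I_1$ part of the penalty, substitutes the fixed point identities for the cross term $[T^*(T\bar u-g)]_i$, and finally reduces everything to the elementary scalar convexity inequality $|u+v|^p-|u|^p-pv\,\sgn u\,|u|^{p-1}\geq 0$. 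You instead observe directly that on the $\eta$-ball the functional $\mathcal J^p_r$ equals the convex companion $\mathcal J^p_{\mathcal I_0}$ up to the finite constant $|\mathcal I_1|\,r^p$, and then note that the fixed point conditions of Lemma~\ref{l4} are precisely the first-order optimality conditions (gradient zero for $p>1$, subdifferential inclusion for $p=1$) for $\mathcal J^p_{\mathcal I_0}$, so convexity upgrades stationarity to global minimality. Your route is shorter and makes the underlying mechanism---local coincidence with a convex functional at whose minimum one already sits---more transparent; the paper's route stays closer to the iterative-thresholding framework used elsewhere in the chapter. One small remark: your appeal to ``the decomposition argument in the proof of Proposition~\ref{l5}'' for the direction \emph{fixed point $\Rightarrow$ minimizer} is not quite right, since that proposition only records the converse; you should instead just verify directly (as you effectively do) that the conditions in Lemma~\ref{l4} read $0\in\partial\mathcal J^p_{\mathcal I_0}(\bar u)$ and invoke convexity.
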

The proof of Theorem \ref{mintheorem} is omitted at present but can be found in the Appendix.   This result should not be surprising, however.   Due to the separation of the entries of any fixed point $\bar u$, such that $\bar u_i < r < \bar u_j$ for $i \in \mathcal I_0$ and $j \in \mathcal I_1$,  we have also $\mathcal I_0 \equiv \{i \in \mathcal I: |u_i| \leq r\}$ and $\mathcal I_1 \equiv \{j \in \mathcal I: |u_j| > r\}$ for all $u \in B(\bar u,\varepsilon(r))$, where $B(\bar u,\varepsilon(r))$ is a ball around an equilibrium point $\bar u$ of radius $\varepsilon(r)>0$ sufficiently small.  On this neighborhood $B(\bar u,\varepsilon(r))$ of $\bar{u}$, the functional $\mathcal J_r^p$ is convex.  Since $\bar{u}$ is obtained as the limit of a sequence $(u^n)$ in $B(\bar u,\varepsilon(r))$ for which the sequence ${\cal J}^p_r(u^n)$ is nonincreasing, one would expect that $\bar{u}$ minimizes ${\cal J}^p_r(u^n)$ within this neighborhood.  
\\
\\
More surprising is that global minimizers of ${\cal J}^p_r$ are also fixed points, as shown in the following theorem.   Even though the existence of such minimizers is only guaranteed in the finite-dimensional setting (see Proposition \ref{firstlemma}), the following result is not restricted as such.

\begin{theorem}[Global minimizers of ${\cal J}_r^p$ are fixed points $\bar{u} = \mathbb{H}(\bar{u})$]
{ Any global minimizer $u^*$ of ${\cal{J}}^p_r$ satisfies the fixed point condition of the map $\mathbb{H}$ that is given in Theorem \ref{bigthm2}. } 
\label{globalmin}
\end{theorem}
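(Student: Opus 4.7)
The plan is to compare the functional $\mathcal J^p_r$ with its surrogate $\mathcal J^{p,surr}_r(\cdot, u^*)$ centered at the putative global minimizer $u^*$, and to exploit the strict positivity of $L = \sqrt{I - T^*T}$ (guaranteed by $\|T\| < 1$) to deduce that $u^*$ must itself be the unique global minimizer of that surrogate. Since, by Proposition \ref{thmthrs}, the thresholding map $\mathbb H(u^*)$ produces \emph{a} global minimizer of $v \mapsto \mathcal J^{p,surr}_r(v, u^*)$, uniqueness will force $\mathbb H(u^*) = u^*$, which is by construction the fixed point condition of Theorem \ref{bigthm2}.

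Concretely, following the algebraic rearrangement used in the proof of Lemma \ref{l1}, I would first rewrite
\[
\mathcal J^{p,surr}_r(v, u^*) \;=\; \mathcal J^p_r(v) \,+\, \|L(v - u^*)\|_{\ell_2(\mathcal I)}^2,
\]
and note that $\|L w\|^2 \geq (1 - \|T\|^2)\|w\|^2 > 0$ whenever $w \neq 0$. Global minimality of $u^*$ for $\mathcal J^p_r$ gives $\mathcal J^p_r(v) \geq \mathcal J^p_r(u^*) = \mathcal J^{p,surr}_r(u^*, u^*)$, so that for any $v \neq u^*$
\[
\mathcal J^{p,surr}_r(v, u^*) \;>\; \mathcal J^{p,surr}_r(u^*, u^*).
\]
Thus $u^*$ is the \emph{unique} global minimizer of $v \mapsto \mathcal J^{p,surr}_r(v, u^*)$. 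The surrogate decouples componentwise, and $\mathbb H(u^*)$ is itself a global minimizer of that surrogate by Proposition \ref{thmthrs}; uniqueness then yields $\mathbb H(u^*) = u^*$. Unpacking this identity, the separation properties $|u^*_i| \geq \lambda'(r,p)$ on $\mathcal I_1$ and $|u^*_i| \leq F_p^{-1}(\lambda'(r,p))$ on $\mathcal I_0$ are immediate from the size of the jump discontinuity of $H_{(p,r)}$, and the Euler--Lagrange-type relations \eqref{wha2}--\eqref{wah2} are just the componentwise fixed point identity recorded in Lemma \ref{l4}.

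The delicate point I expect to require the most care is the possible non-uniqueness of the componentwise minimizer: the scalar map $t \mapsto (t - c_i)^2 + \min\{|t|^p, r^p\}$, with $c_i = [(I - T^*T)u^* + T^*g]_i$, admits two distinct global minima precisely when $|c_i| = \lambda'(r,p)$, and at those points the definition of $H_{(p,r)}$ in Proposition \ref{thmthrs} simply selects one of the two candidates. Were such a tie to occur at a global minimizer $u^*$, then swapping the $i$-th coordinate of $u^*$ between the two candidate values would produce a second minimizer of $\mathcal J^{p,surr}_r(\cdot, u^*)$, contradicting the strict uniqueness established above. Hence at a global minimizer no such tie can occur, so in fact $|c_i| \neq \lambda'(r,p)$ for every $i$ (the separation in Theorem \ref{bigthm2} even holds with strict inequalities), and the identification $u^* = \mathbb H(u^*)$ is unambiguous.
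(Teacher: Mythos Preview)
Your argument is correct and is a genuinely different, considerably shorter route than the one in the paper. The paper does not use the surrogate identity at all for this theorem. Instead it partitions the coordinates of $u^*$ into $\mathcal I_0=\{i:|u^*_i|\le r\}$ and $\mathcal I_1=\{i:|u^*_i|>r\}$, and argues in two stages: first, by a direct variational argument it shows that the restricted operator $T_1=T\mathcal P$ (projection onto the $\mathcal I_1$-supported vectors) must be injective, whence $u^*_1$ is the least-squares solution and the condition $[T^*(g-Tu^*)]_i=0$ on $\mathcal I_1$ follows; second, it identifies $u^*_0$ as a minimizer of an auxiliary convex functional of the form studied in \cite{DDD} and reads off the Euler--Lagrange/soft-thresholding conditions on $\mathcal I_0$. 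The gap properties $|u^*_i|\ge\lambda'$ on $\mathcal I_1$ and $|u^*_i|\le F_p^{-1}(\lambda')$ on $\mathcal I_0$ are then obtained by separate, somewhat delicate one-coordinate perturbation arguments.

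Your single stroke---global minimality of $u^*$ plus strict positivity of $L=\sqrt{I-T^*T}$ forces $u^*$ to be the \emph{unique} minimizer of $\mathcal J^{p,surr}_r(\cdot,u^*)$, hence $u^*=\mathbb H(u^*)$---replaces all of this and yields the gap and the Euler--Lagrange relations simultaneously, including your observation that the borderline case $|c_i|=\lambda'(r,p)$ cannot occur (so the inequalities are in fact strict). What the paper's longer route buys is an auxiliary structural fact that your proof does not produce directly: the injectivity of $T$ on the subspace $\ell_2^{\mathcal I_1}(\mathcal I)$, which the paper then uses to conclude that $\mathcal J^p_r$ is strictly convex near $u^*$ and hence that global minimizers are isolated (Corollary~\ref{isolated}). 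If you also need that corollary, you would have to supply this injectivity separately.
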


The proof of Theorem \ref{globalmin} is rather long and we defer it to the Appendix.    We reiterate once more that on a ball $B(\bar u,\varepsilon(r))$ around an equilibrium point $\bar u$ of radius $\varepsilon(r)>0$ sufficiently small, the functional $\mathcal J_r^p$ is convex; following the proof of Theorem \ref{globalmin}, we see that $\mathcal J_r^p$ is in fact \emph{strictly} convex whenever $\bar u=u^*$ is a global minimizer, since the restriction of $T$ to the subspace $\ell_2^{\mathcal I_1}(\mathcal I) \subset \ell_2(\mathcal I)$ of vectors with support in $\mathcal I_1$ must be an injective operator in this case. Hence a global minimizer is necessarily an isolated minimizer, whereas we cannot ensure the same property for local minimizers if $T$ has a nontrivial null-space; in this case, local minimizers may form continuous sets, as it is shown in the bottom-right box of Figure \ref{patterns}. We conclude the following remark.
\begin{corollary}\label{isolated}
Minimizers of $\mathcal J_r^p$ are isolated.  
\end{corollary}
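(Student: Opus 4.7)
The plan is to deduce the corollary directly from the characterization of global minimizers as fixed points (Theorem~\ref{globalmin}) together with the gap property of fixed points and the local strict convexity observation made in the remark preceding the corollary. So let $u^*$ be a global minimizer of $\mathcal{J}_r^p$; I would first invoke Theorem~\ref{globalmin} to conclude that $u^*$ satisfies the fixed point conditions of Theorem~\ref{bigthm2}, and in particular that the entries of $u^*$ are \emph{separated}: $|u^*_i|\leq \lambda'(r,p)-\delta(r,p)$ for $i\in\mathcal{I}_0$ while $|u^*_i|\geq \lambda'(r,p)$ for $i\in\mathcal{I}_1$, where $\mathcal{I}_0,\mathcal{I}_1$ are the index sets determined by $u^*$.

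Next, exploiting the strict positivity of the gap $\delta(r,p)>0$, I would choose $\varepsilon=\varepsilon(r,p)>0$ small enough that for every $u$ in the open ball $B(u^*,\varepsilon)$ the partition into $\{i:|u_i|\leq r\}$ and $\{i:|u_i|>r\}$ coincides with the partition induced by $u^*$. On such a ball the truncated term simplifies and we may write
\[
\mathcal{J}_r^p(u)=\|Tu-g\|_{\ell_2(\mathcal{K})}^2+\sum_{i\in\mathcal{I}_0}|u_i|^p+r^p|\mathcal{I}_1|,
\]
so that, up to an additive constant, $\mathcal{J}_r^p$ agrees on $B(u^*,\varepsilon)$ with the convex functional $\mathcal{J}^p_{\mathcal{I}_0}$ studied in Section~5.2.

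The key step is then to upgrade this local convexity to strict convexity. For directions supported in $\mathcal{I}_0$ and $p>1$ this is immediate from strict convexity of $t\mapsto|t|^p$; for the remaining directions I would invoke the injectivity of $T|_{\ell_2^{\mathcal{I}_1}(\mathcal{I})}$ that is asserted in the remark preceding the corollary as a byproduct of the proof of Theorem~\ref{globalmin}. Combined, these two facts make $\mathcal{J}_r^p$ strictly convex on $B(u^*,\varepsilon)$, and since $u^*$ is already known to be a (local) minimizer there, it must be the unique minimizer on the ball, i.e.\ isolated.

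The main obstacle I anticipate is the endpoint case $p=1$, where the penalty $\sum_{i\in\mathcal{I}_0}|u_i|$ is only convex, not strictly so in the $\mathcal{I}_0$ directions, and one has to work harder to rule out a flat plateau through $u^*$. To handle this case I would refine the argument by using the strict form of the fixed point conditions \eqref{wah2}: on the subset of $\mathcal{I}_0$ where $|u^*_i|>0$ one already has $|[T^*(g-Tu^*)]_i|=1/2$ with equality, and perturbations along these coordinates must either violate this equality (contradicting the optimality certificate) or move weight into the $\mathcal{I}_1$ block, where injectivity of $T|_{\ell_2^{\mathcal{I}_1}}$ takes over. A clean way to package this is to reduce, as in the proof of Theorem~\ref{l5}, to a LASSO-type functional on $\ell_2^{\mathcal{I}_0}(\mathcal{I})$ with operator $K=\mathcal{P}_1^\perp T_0$ and to observe that strict complementarity at $u^*$ forces the minimizer to be unique locally.
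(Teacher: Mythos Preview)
Your proposal is correct and follows essentially the same route as the paper. The paper's argument, contained in the paragraph immediately preceding the corollary, is precisely: invoke Theorem~\ref{globalmin} to get the fixed-point characterization and the gap property, observe that on a small ball the partition $(\mathcal I_0,\mathcal I_1)$ is frozen so $\mathcal J_r^p$ agrees locally with the convex functional $\mathcal J^p_{\mathcal I_0}$, and then upgrade to strict convexity using the injectivity of $T|_{\ell_2^{\mathcal I_1}(\mathcal I)}$ established in the proof of Theorem~\ref{globalmin}. You reproduce exactly this.

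One point worth noting: your treatment is in fact more careful than the paper's own. The paper asserts strict convexity on the ball by citing only the injectivity of $T_1$, which together with strict convexity of $t\mapsto|t|^p$ on the $\mathcal I_0$ block is enough for $p>1$, but the paper does not single out the $p=1$ case. Your identification of this as the delicate endpoint, and your suggestion to handle it via the strict complementarity encoded in \eqref{wah2} together with the reduction to the LASSO-type functional on $\ell_2^{\mathcal I_0}(\mathcal I)$, goes beyond what the paper spells out.
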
  

\section{Numerical Experiments}

\subsection{Dynamical systems, stability, and equilibria}
Iterative thresholding algorithms have a natural interpretation as discrete-time dynamical systems with nonsmooth right-hand-side, and can be associated to continuous dynamical systems of the type:
\begin{eqnarray*}
\dot u(t) &=& F(u(t),t)\\&=&\tau \left ( H_{(p,r)}( u(t) +  T^*(g - T u(t))) - u(t) \right), \quad t \geq t_0, \quad \tau >0.
\end{eqnarray*}
The study of the existence, uniqueness, stability, and long-time behavior  of these ODE's is of fundamental interest in order to clarify also the stability properties of iterative thresholding algorithms. Indeed, other than soft-thresholding iterations \cite{DDD}, the corresponding right-hand-side is not Lipschitz continuous and can even be  discontinuous, as is the case for free-discontinuity problems. In  \cite{br88,fi88} conditions are established for the existence, uniqueness, and continuous dependence on the initial data (at finite time) { of solutions of dynamical systems with discontinuous right-hand-side}. However, very little is known about long-time properties of such dynamical systems and about the nature of their equilibrium points. \\

\begin{figure}
\[\begin{array}{cc}
  \centering
  \includegraphics[width=7cm]{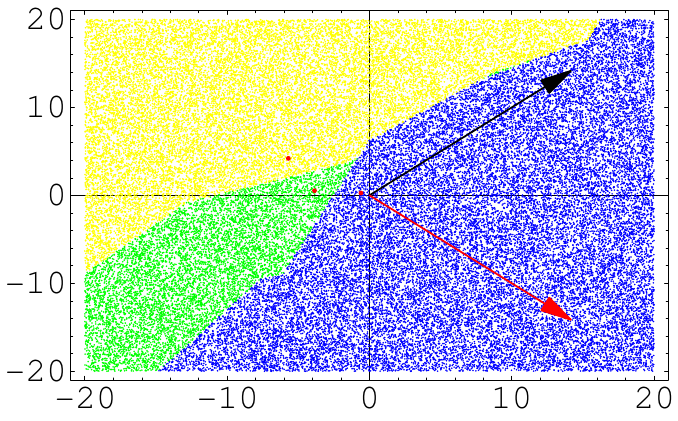} &  
  \includegraphics[width=7cm]{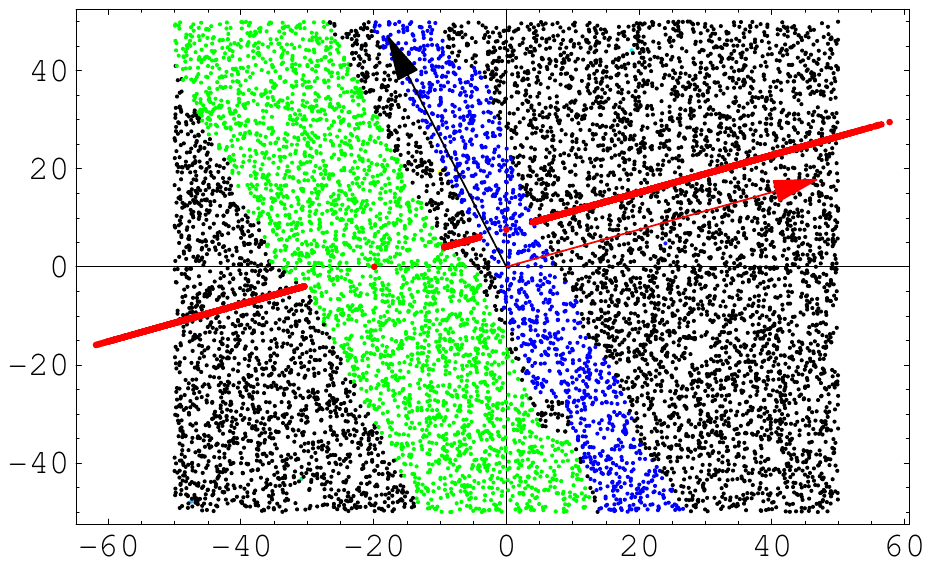} 
\end{array}\]
\caption{We show patterns in $\mathbb R^2$ formed by initial points $u^0$ colored according to the corresponding equilibria computed as limits of the iterative thresholding algorithm \eqref{map3}. For invertible $2\times 2$ squared matrices $T$, the equilibria are isolated and the region of initial points for which \eqref{map3} converges to a given equilibrium point do partition the space into sets which might be disconnected. Structures of the partition generated by a particular matrix $T$ is exemplified on the left, and in the right box we show a pattern related to iterations where the  $2\times 2$ squared matrix $T$ has nontrivial null-space. We can see again that global minimizers are isolated and correspond to the points on the axes, whereas local minimizers are continuously distributed along an affine space generated by the kernel of $T$. It is not difficult to show that this structure always occurs for such matrices.} 
\label{patterns}
\end{figure}

\noindent For several continuous thresholding functions, such as the ones introduced in \cite{DDD,fora06,fora07}, one can easily show, for instance by means of $\Gamma$-convergence arguments, that equilibrium points depend continuously on the parameters of the thresholding, see, e.g., \cite[Theorem 5.1]{fora07}. Nevertheless, for discontinuous thresholding functions $H_{(p,r)}$ such as those studied in this paper, sudden bifurcation phenomena and instabilities do appear in general. Figure \ref{patterns} shows that multiple equilibrium points can exist for these thresholding operators and their number may depend discontinuously on the thresholding shape parameters. Moreover, as established in Theorem \ref{globalmin}, global minimizers of $\mathcal J_r^p$ are always stable equilibria and isolated points, while local minimizers can be unstable equilibria and form a continuous set, as shown  in the bottom-right box of Figure \ref{patterns}.

\subsection{Denoising and segmentation of 1-D signals and digital images}
\label{denoising}

In this subsection, we are concerned with numerical experiments in the use of an iterative thresholding algorithm for the minimization of
\begin{equation}
{\cal{J}}_{r,\gamma}^2 ({u}) :=  \| D_h^\dagger u  - g \|^2_{\ell_2} +  \gamma \sum_{i=1}^N \min \{ u_i^2, r^2 \},
\end{equation}
modelling problems of denoising and segmentation.
\begin{figure}[htp]
\begin{center}
\includegraphics[width=6in]{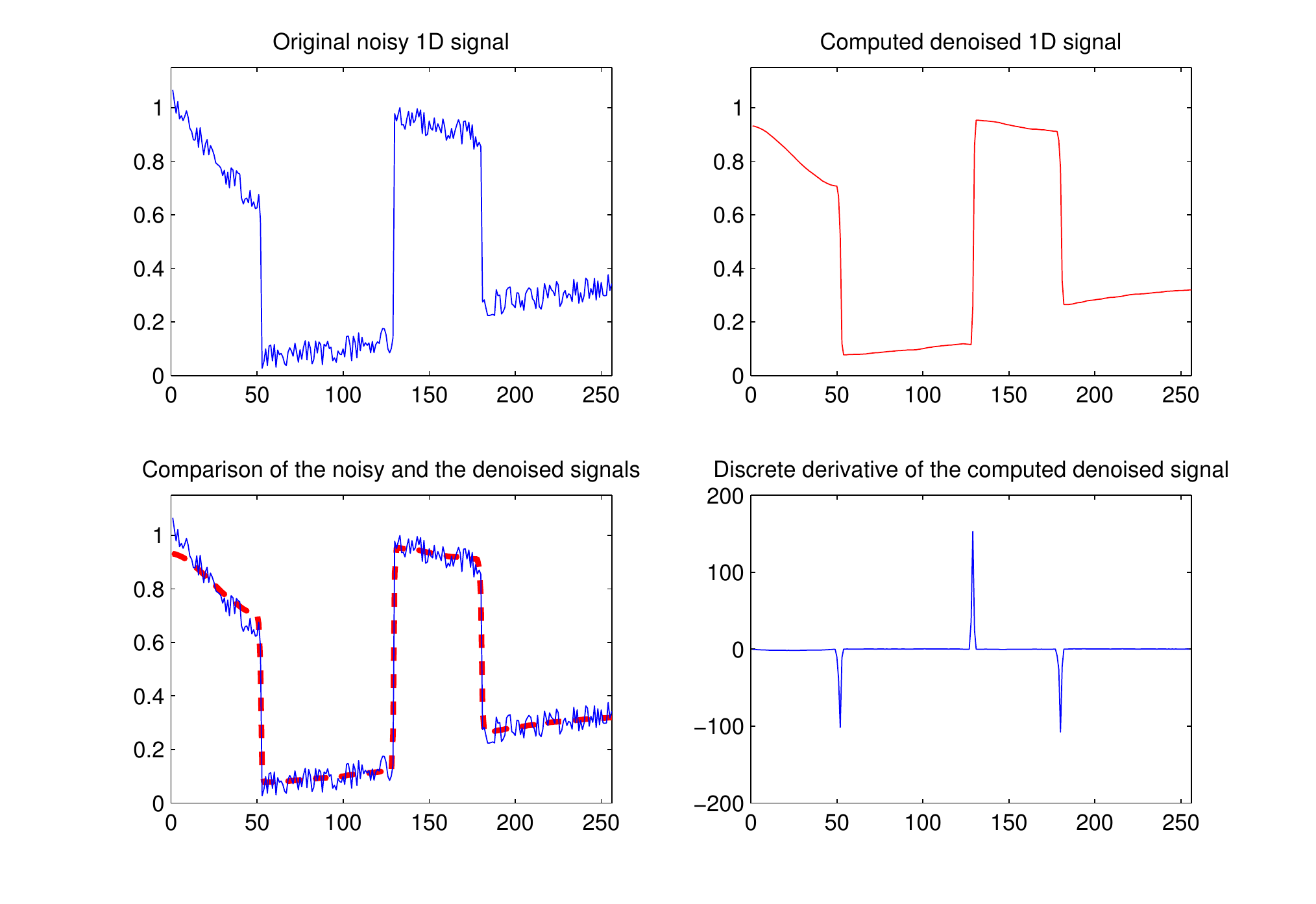}
\caption{We show the application of the  iterative thresholding algorithm \eqref{map3} for the classical denoising problem of 1-D signals where $K=I$ in \eqref{regMS}, and hence $T = D_h^\dagger$. The thresholding parameters used for the numerics are $r=2.2$ and $\gamma=0.002$.}\label{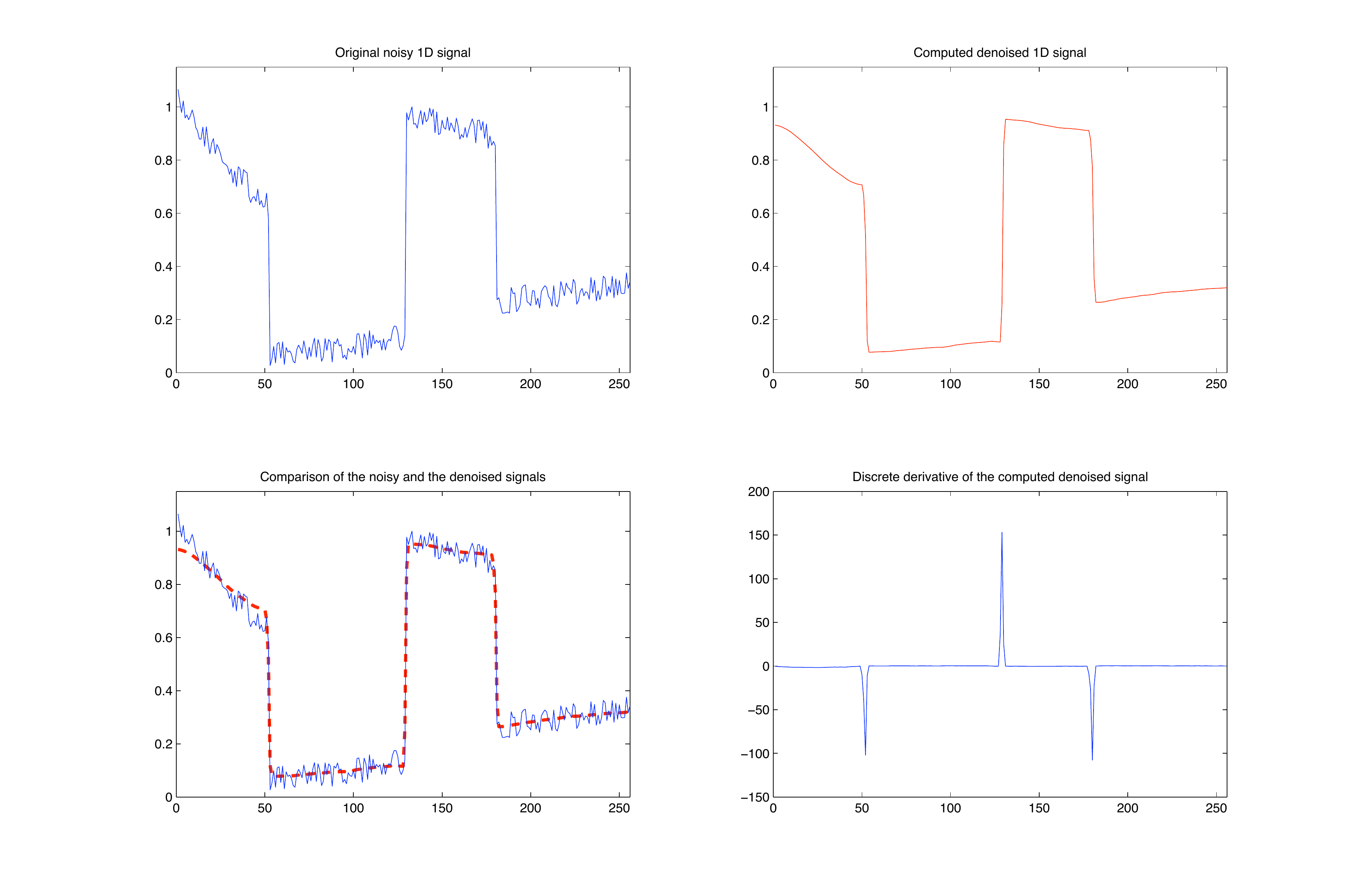}
\end{center}
\end{figure}

\begin{figure}[htp]
\begin{center}
\includegraphics[width=6in]{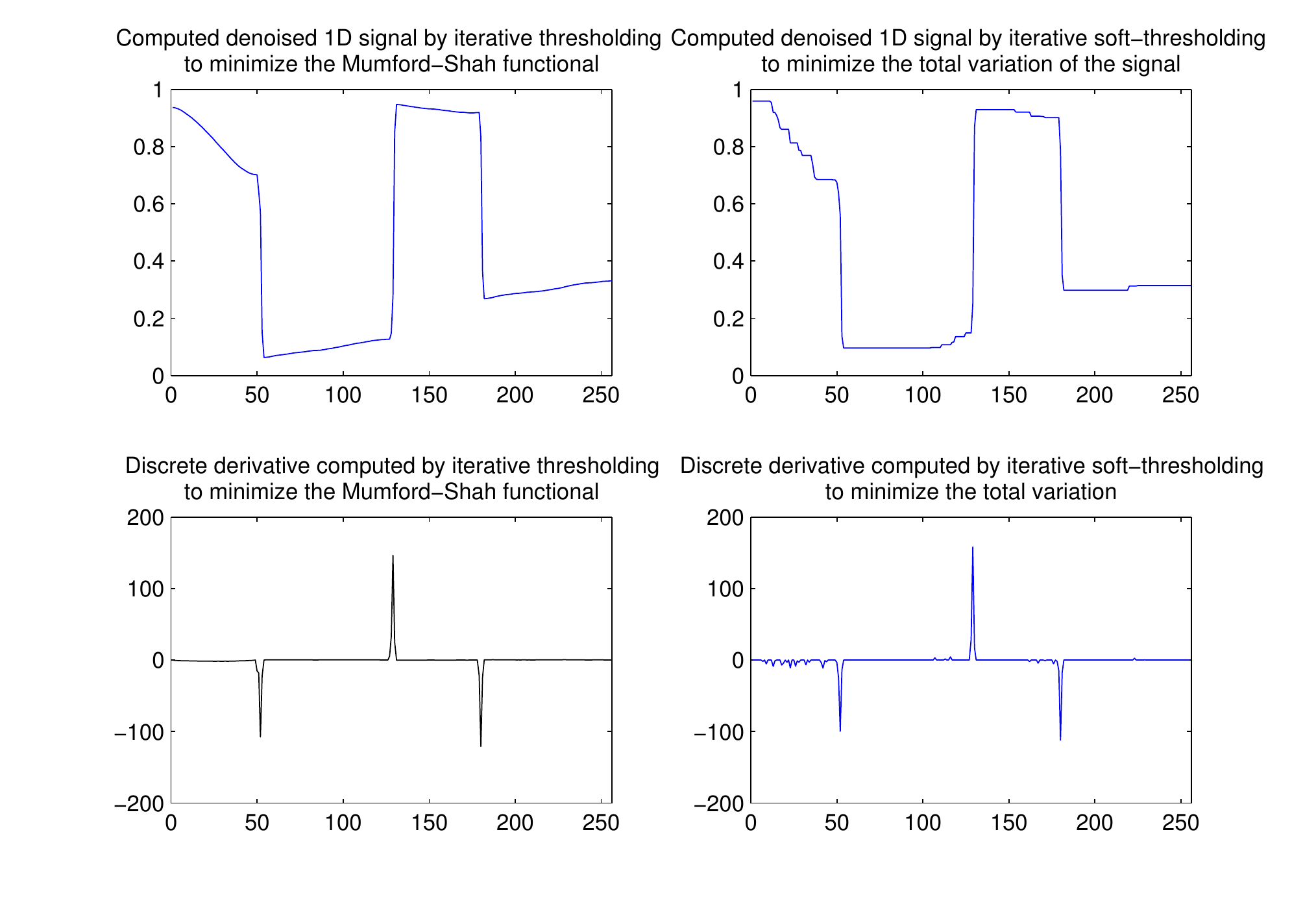}
\caption{A comparison of the denoising of the signal in Figure \ref{1Dsignal} by means of the algorithm \eqref{map3} and by iterative soft-thresholding \cite{DDD} applied to discrete derivatives. We can appreciate how the algorithm \eqref{map3} promotes piecewise smooth solutions, whereas the iterative soft-thresholding promotes the total variation minimization with the introduction of a `staircase effect'.  The thresholding parameters used for the numerics are $r=2.2$ and $\gamma=0.002$ for \eqref{modH}, and $\gamma=0.002$ for the soft-thresholding  \eqref{softthrs}.}\label{MS-TV}
\end{center}
\end{figure}

Note that we introduced an additional regularization parameter $\gamma>0$ which has the sole effect of modifying the thresholding function $H_{(2,r,\gamma)}$ as follows
\begin{equation}
\label{modH}
H_{(2,r,\gamma)}(z) = \left \{ 
\begin{array}{ll}
\frac{1}{1+\gamma} z, & |z| \leq \frac{\sqrt{1 + \gamma}r}{\sqrt{\gamma}}\\
z, & \mbox{ otherwise.}
\end{array}
\right.
\end{equation}
This thresholding function can be again easily computed by means of an argument similar to the proof of Proposition \ref{thmthrs}. In Figure \ref{1Dsignal} we show the results of applications of the iterative thresholding algorithm \eqref{map3}. In Figure \ref{MS-TV} we show a comparison of the use of the thresholding $H_{(2,r,\gamma)}$ and the soft-thresholding $S_\gamma$ (see its definition in \eqref{softthrs}); the former promotes the minimization of the Mumford-Shah constraint $MS$ and piecewise smooth solutions, whereas the latter promotes the minimization of a total variation constraint \cite{ROF}, which is also well-known to produce (almost) piecewise constant solutions with a perhaps unwanted `staircase effect'; see also \cite[Section 4]{chli97} for details.
%\begin{figure}[htp]
%\begin{center}
%\includegraphics[width=6.5in]{}
%\caption{We show the application of the  projected gradient algorithm \eqref{2DMSalg} for the classical denoising problem of digital images where $K=I$ in \eqref{regMS}, and hence $T = D_h^\dagger$.  The thresholding parameters used for the numerics are $r=5$ and $\gamma=0.005$, and the image size is $80 \times 80$. The anisotropic effects of \eqref{MS2d} are clearly visible, suggesting that for more effective image denoising, iterative thresholding on an isotropic (or direction-independent) variant of the 2D Mumford-Shah functional should be studied; see \cite{chdm99,boch00}}\label{2Dsignal}
%\end{center}
%\end{figure}

\subsection{Inverse problems}

As already mentioned in Subsection \ref{MS4invprob} the Mumford-Shah term  $MS ( u ) = \int_{\Omega \setminus S_u} | \nabla u |^2  +\beta \mathcal{H}^{d-1}(S_u)$ is also used for regularizing inverse problems involving operators $T$ which are not boundedly invertible. In this section we present a numerical experiment on the use of the algorithm \eqref{map3} 
%and \eqref{2DMSalg}
 for 1D interpolation (Figure \ref{1Dinterpolation}).
%  and for 2D inpainting (Figure \ref{2Dinpainting})  respectively. 
  In this case the operator $T$ is a multiplier by a characteristic function of a subdomain, i.e., $T u := \chi_D \cdot u$, for $D \subset \Omega$; see \cite{essh02} for other numerical examples previously obtained with the Mumford-Shah regularization.

\begin{figure}[htp]
\begin{center}
\includegraphics[width=6in]{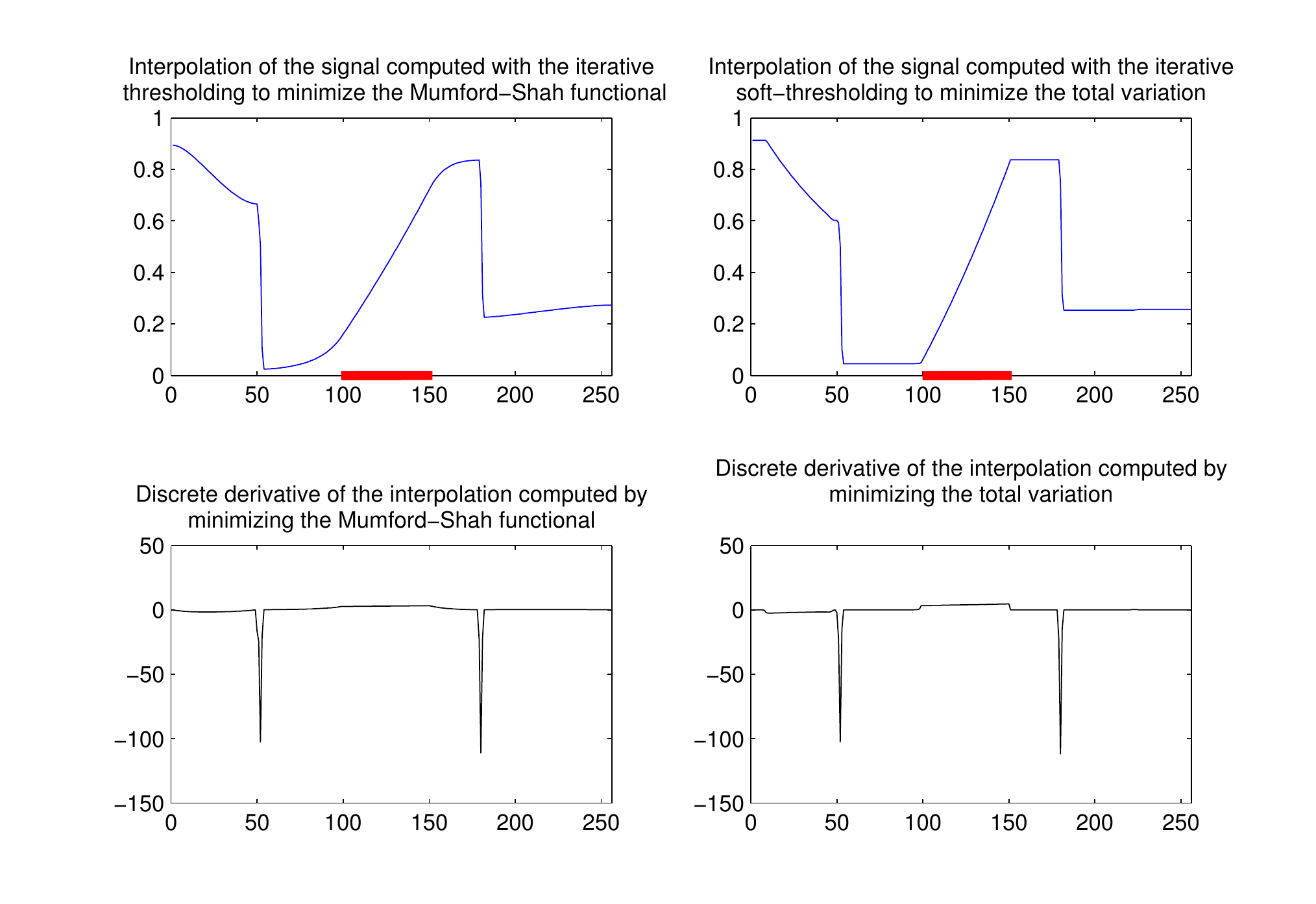}
\caption{Interpolation of an incomplete signal by means of the Mumford-Shah regularization and the total variation minimization provided by respective iterative thresholding algorithms. The red interval is the region where no information on the original signal is provided. The thresholding parameters used for the numerics are $r=2.2$ and $\gamma=0.002$  for \eqref{modH}, and $\gamma
=0.002$ for the soft-thresholding \eqref{softthrs}.}\label{1Dinterpolation}
\end{center}
\end{figure}

In Figure \ref{1Dinterpolation}  we show the reconstruction of the noiseless signal of Figure \ref{1Dsignal} provided information only out of the interval $[100,150]$ which has to be restored. On the left boxes we show the results due to algorithm \eqref{map3} and on the left ones the solution computed by iterative soft-thresholding. In the former the solution is again piecewise smooth and in the latter a (almost) piecewise constant solution is instead produced.

%\begin{figure}[htp]
%\begin{center}
%\includegraphics[width=6in]{}
%\caption{Inpainting of a binary image by means of algorithm \eqref{2DMSalg}. The occluded discontinuity is correctly recovered as already observed in \cite{essh02}.  The thresholding parameters used for the numerics are $r=8$ and $\gamma=0.0001$, and  the image size is $40 \times 40$.}\label{2Dinpainting}
%\end{center}
%\end{figure}

%In Figure  \ref{2Dinpainting} we show the {\it inpainting} of a binary image with a missing information right at its center which is occluding precisely a discontinuity. As already shown in \cite{essh02} the inpainting process produces minimal length connections of the discontinuity set as long as the inpainting region, i.e., the missing part, is not too large.

 \section{Appendix}

\subsection{Proof of Proposition \ref{firstlemma}}
 First, we recall Weierstrass' Theorem, which is used in the proof of Proposition $\ref{firstlemma}$ below.
\begin{theorem}[Weierstrass' Theorem]
The set of minima of a convex function $f$ over a subset $X \subset \mathbb{R}^N$ is nonempty and compact if $X$ is closed, $f$ is lower semicontinuous over $X$, and the function $\tilde{f}$, given by
\begin{equation}
\tilde{f} = \left\{ \begin{array}{ll}
f(x) & \textrm{, if } x \in X, \\
\infty & otherwise,
\end{array}
\right.
\end{equation}
is \emph{coercive}, i.e., for every sequence $( x_k) \subset X$ s.t. $\| x_k \| \rightarrow \infty$, we have $\lim_{k \rightarrow \infty} f(x_k) = \infty$.
\end{theorem}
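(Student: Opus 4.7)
The plan is to prove the two assertions (nonemptiness and compactness of the minimizer set $X^*$) by a standard direct method / Bolzano--Weierstrass argument, noting that convexity of $f$ is not actually used in what follows and the result holds for any lower semicontinuous coercive $f$ on a closed set $X \subset \mathbb{R}^N$.

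First I would set $m := \inf_{x \in X} f(x) \in [-\infty, +\infty)$ and pick a minimizing sequence $(x_k) \subset X$ with $f(x_k) \to m$. The key observation is that coercivity of $\tilde f$ forces $(x_k)$ to be bounded: if $\|x_k\| \to \infty$ along a subsequence, then since each $x_k \in X$, we would have $\tilde f(x_k) = f(x_k) \to \infty$, contradicting $f(x_k) \to m \leq f(x_1) < \infty$. Hence, by Bolzano--Weierstrass in $\mathbb{R}^N$, we can extract a convergent subsequence $x_{k_j} \to x^*$, and closedness of $X$ gives $x^* \in X$. Lower semicontinuity of $f$ over $X$ then yields
\begin{equation}
f(x^*) \leq \liminf_{j \to \infty} f(x_{k_j}) = m.
\end{equation}
By definition of $m$ as the infimum, $f(x^*) \geq m$, so $f(x^*) = m$ and in particular $m$ is finite. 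This shows $X^* = \{x \in X : f(x) = m\}$ is nonempty.

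For the compactness of $X^*$, I would verify closedness and boundedness separately. Closedness follows from writing $X^* = X \cap f^{-1}((-\infty, m])$: the sublevel set is closed in $\mathbb{R}^N$ by lower semicontinuity of $\tilde f$ (which agrees with $f$ on $X$ and equals $+\infty$ elsewhere), and $X$ is closed by assumption. Boundedness of $X^*$ follows directly from coercivity: any sequence $(y_k) \subset X^*$ with $\|y_k\| \to \infty$ would satisfy $\tilde f(y_k) = m$ for every $k$, contradicting $\tilde f(y_k) \to \infty$. Since $X^*$ is closed and bounded in $\mathbb{R}^N$, it is compact.

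The proof involves no real obstacle — it is the classical direct method of the calculus of variations in the finite-dimensional setting. The only minor point to flag is that the hypothesis of convexity of $f$ is not invoked anywhere; it is presumably stated to align with the intended applications (Proposition \ref{firstlemma} and Proposition \ref{existmin2}), where the objective splits as a quadratic plus a sum of $|u_j|^p$ terms and is convex on each piece of the partition considered.
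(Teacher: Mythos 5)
Your proof is correct and complete; it is the standard direct-method argument (minimizing sequence, boundedness from coercivity, Bolzano--Weierstrass, closedness of $X$, lower semicontinuity, then closedness plus boundedness of the sublevel set for compactness), and the paper itself states Weierstrass' Theorem without proof, recalling it only as a classical tool for Proposition \ref{firstlemma}. Your observation that convexity of $f$ is never used is accurate --- the result holds for any lower semicontinuous coercive function on a closed set --- and the only implicit hypothesis worth flagging is that $X$ must be nonempty for the minimizing sequence (and hence the set of minima) to exist.
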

 
% For sake of ease, we repeat now the statement of Theorem \eqref{firslemma}.
% \begin{theorem}
% Suppose $A$ is an $N \times N$ positive semidefinite matrix, and suppose $b$ and $c$ are $N \times 1$ vectors.   Suppose also that $X$ is a nonempty convex polyhedral subset of $\mathbb{R}^N$.  The convex optimization problem
% \begin{eqnarray}
%  \left\{ \begin{array}{llll}
% \textrm{minimize} & F(u) = \Big[ u^t A u + b^t u + \sum_{1 \leq j \leq N} c_j |u_j|^p \Big]  \\
%  \textrm{subject to} 
% & u \in X  \\\end{array}
% \right.  
% \label{FWgen}
% \end{eqnarray}
% admits minimizers for $p \geq 1$, as long as $F$ is bounded from below.  
% \label{firstthm}
% \end{theorem}
The following two lemmas will be helpful in the proof of Proposition \ref{firstlemma}.  

\begin{lemma}
Let $F(u)$ be a convex function defined on $\mathbb{R}^N$ having the general form $F(u) = \Big[ u^t A u + b^t u + \sum_{1 \leq j \leq N} |u_j|^p \Big]$, for some $p \geq 1$.  Fix $x$ and $d$ in  
$\mathbb{R}^N$.  If $F$ is bounded above and below on the ray $\{x + td, t \geq 0\}$, then $F$ is constant on the line $x + td$.  
\label{appendlemma1}
\end{lemma}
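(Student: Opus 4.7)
The strategy is to reduce the statement to a one-dimensional convex analysis problem by introducing $\phi(t):=F(x+td)$, which is convex in $t$ since it is the composition of the convex $F$ with an affine map. Writing it out,
\[\phi(t)=(d^tAd)\,t^2+(2x^tAd+b^td)\,t+(x^tAx+b^tx)+\sum_{j=1}^N|x_j+td_j|^p,\]
the problem reduces to showing that this particular convex function of one variable, under the given two-sided boundedness hypothesis, must be constant on the whole real line.

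The first move I would make is to extract structural constraints on $d$ from the boundedness of $\phi$ above. Since $F$ has the stated convex form, $A$ must be positive semidefinite, so $d^tAd\geq 0$; if this were strictly positive, the leading term would force $\phi(t)\to+\infty$, contradicting the upper bound. Thus $d^tAd=0$, which by positive semidefiniteness gives $Ad=0$ and kills the quadratic term of $\phi$. When $p>1$, a similar comparison shows that $\sum_j|x_j+td_j|^p$ grows like $|t|^p\sum_j|d_j|^p$ as $|t|\to\infty$, so the upper bound forces $d=0$ outright and the claim is vacuous because the ``line'' $\{x+td\}$ collapses to the single point $x$. The real content is therefore the marginal case $p=1$, where the sum is piecewise linear with asymptotic slopes $\pm\sum_j|d_j|$ at $\pm\infty$; now boundedness of $\phi$ above on the forward ray, together with convexity (which gives nondecreasing right-derivatives), forces the eventual slope of $\phi$ at $+\infty$ to vanish, yielding the algebraic cancellation $b^td+\sum_j|d_j|=0$.

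The main obstacle will be passing from this ``vanishing slope in the tail'' information to genuine constancy on the full line $\{x+td:t\in\mathbb R\}$. The plan is to exploit the fact that a convex function bounded above on $[0,\infty)$ is nonincreasing there and, combined with boundedness below, must converge to a finite limit $L$; combined with the derived slope identity and the piecewise-linear description of the sum, this forces $\phi$ to be identically $L$ on a terminal half-line $[T,\infty)$. To extend this across the full line, I would read off the symmetric piecewise-linear structure at $-\infty$ and use the cancellation identity $b^td+\sum_j|d_j|=0$ together with the monotonicity of the right-derivatives of the convex function $\phi$ to show that the slopes must in fact be zero throughout $\mathbb{R}$, giving $\phi\equiv L$. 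This last propagation is the delicate step and the one that genuinely uses all of the hypotheses in combination: convexity plus the specific additive form with $|\cdot|^p$ summands plus two-sided boundedness on the ray.
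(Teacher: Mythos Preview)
Your treatment of the case $p>1$ is correct and is essentially the paper's argument: both of you use the growth-rate hierarchy among $t^2$, $t^p$, and $t$ in the one-variable function $\phi(t)=F(x+td)$ to peel off coefficients and force $d=0$, which collapses the line to a point.

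The gap is in your $p=1$ propagation step. You derive the identity $b^td+\sum_j|d_j|=0$ correctly, and you correctly get $\phi\equiv L$ on a terminal half-line $[T,\infty)$. But the extension ``slopes must in fact be zero throughout $\mathbb{R}$'' cannot be carried out, because the lemma as stated is actually \emph{false} when $p=1$. Take $N=1$, $A=0$, $b=-1$, so that $F(u)=-u+|u|$; this is convex, of the required form, and identically zero on the ray $\{t:t\ge 0\}$ (so certainly bounded above and below there), yet $F(-1)=2$, so $F$ is not constant on the full line. In your framework this is $x=0$, $d=1$: your identity gives the slope at $+\infty$ equal to $b^td+\sum_j|d_j|=-1+1=0$, but the slope at $-\infty$ is $b^td-\sum_j|d_j|=-2$, and convexity (nondecreasing right-derivative) only pins the slopes to the interval $[-2,0]$, not to zero. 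So the ``delicate step'' you flag is not merely delicate---it is impossible in general.

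For completeness: the paper's own proof commits the same oversight. It folds $p=1$ into ``Case 1: $1\le p\le 2$'' and argues that $\lim_n \mu(t_n)/t_n^{p}=0$ forces each $d_j$ (or $c_j$) to vanish, but when $p=1$ the linear part of $P(t)$ contributes to this same limit and can cancel $\sum_j|d_j|$ exactly, as in the counterexample above. The growth-rate separation genuinely requires $p>1$.
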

\begin{proof}
Let $\mu(t) = F(x + td)$, and note that $\mu$ is convex because $F$ is convex. Moreover, $\mu$ has the general form $\mu(t) = P(t) + \sum_{1 \leq j \leq N} c_j \|x_j + td_j \|^p$ where $P(t)$ is a polynomial in $t$ of order at most $2$. Without loss of generality, suppose $0 \leq \mu(t) \leq 1$ for all values of $t \in \mathbb{R}^+$.  Then there exists a sequence of points $(t_n)_{n \in \mathbb N}$, $t_n \to \infty$ for $n \to \infty$, for which $\mu(t_n)$ is a convergent sequence; let us denote the limit of this sequence by $\gamma$.

\begin{enumerate} 
\item {\bf Case 1: $1 \leq p \leq 2$}.  To repeat, 
\begin{equation}
\lim_{n \rightarrow \infty} \mu(t_n) = \lim_{n \rightarrow \infty} P(t_n) + \sum_{1 \leq j \leq N} c_j \|x_j + t_n d_j \|^p = \gamma.
\end{equation}  
Since $0 = \lim_{n \rightarrow \infty} \mu(t_n) / t_n^2$, it follows that all coefficients in $\mu(t)$ of degree 2 must vanish.  In turn, then, $0 = \lim_{n \rightarrow \infty} \mu(t_n) / t_n^p $, has the implication that for each $j$, one of the coefficients $c_j$ or $d_j$ must vanish as well.   Following in the same manner, we conclude that all linear coefficients in $\mu(t)$ also vanish, leaving only the possibility that $\mu(t) \equiv \gamma$ is a constant function.  
\item {\bf Case 2: $p > 2$}: The proof in this case is identical to that of the previous case, and as such we leave the details to the reader.  
\end{enumerate}
\end{proof}

\begin{lemma}
Suppose $F$ is a convex function defined on $\mathbb{R}^N$ that is bounded from below, and has the property that if $F$ is bounded above on a ray $\{x + td, t \in \mathbb{R}^{+} \}$, then $F$ is constant on the line $x + td$.  Then if $F$ is constant on the line $x + td$, $F$ is also constant on any parallel line $y + td$.  
\label{appendlemma2}
\end{lemma}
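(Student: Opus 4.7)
Suppose $F$ is constant equal to $c$ on the line $L_x := \{x + td : t \in \mathbb{R}\}$, and let $L_y := \{y + td : t \in \mathbb{R}\}$ be any parallel line. The strategy is to exhibit an explicit uniform upper bound for $F$ along the ray $\{y + td : t \geq 0\}$, and then invoke the hypothesis of the lemma to upgrade this to $F$ being constant on all of $L_y$.

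First I would fix a parameter $\lambda \in (0,1)$ and seek to represent an arbitrary point $y + td$ (with $t > 0$) as a convex combination of (i) a point lying on $L_x$ and (ii) a reference point that does not depend on $t$. Setting
\[
y + td = \lambda\bigl(x + s\, d\bigr) + (1-\lambda)\, z,
\]
and solving, the choice $s = t/\lambda$ and $z_\lambda := \tfrac{1}{1-\lambda}(y - \lambda x)$ makes $z_\lambda$ independent of $t$, while $x + s d \in L_x$. By convexity of $F$ and the fact that $F \equiv c$ on $L_x$, this gives
\[
F(y + td) \;\leq\; \lambda \, F\bigl(x + (t/\lambda) d\bigr) + (1-\lambda)\, F(z_\lambda) \;=\; \lambda c + (1-\lambda)\, F(z_\lambda),
\]
a bound that is uniform in $t \geq 0$. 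Since $F$ is assumed defined (hence finite) on all of $\mathbb{R}^N$, the quantity $F(z_\lambda)$ is finite, and so $F$ is bounded above on the ray $\{y + td : t \geq 0\}$.

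At this point, the hypothesis of the lemma applies directly: boundedness above on this ray forces $F$ to be constant on the entire line $L_y$. The same argument works verbatim for the opposite ray $\{y - td : t \geq 0\}$ by swapping the direction $d \mapsto -d$, so no asymmetry in the $t \geq 0$ assumption causes trouble.

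The only subtlety I anticipate is the implicit assumption that $F$ is everywhere finite on $\mathbb{R}^N$, which is needed to guarantee $F(z_\lambda) < \infty$; this is consistent with the context in which Lemma \ref{appendlemma2} is used (the function $F$ of Lemma \ref{appendlemma1}, of the form $u^t A u + b^t u + \sum_j c_j |u_j|^p$, is finite throughout $\mathbb{R}^N$). The computational step is elementary --- the key geometric idea is that any point on $L_y$ can be reached as a convex combination of a point of $L_x$ and a single fixed auxiliary point $z_\lambda$, with the auxiliary point absorbing the translation between the two parallel lines.
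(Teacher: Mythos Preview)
Your proof is correct and follows essentially the same approach as the paper: both write $y + td$ as a convex combination of a fixed auxiliary point and a point on the line $L_x$, then use convexity together with $F \equiv c$ on $L_x$ to produce a uniform upper bound on the ray $\{y + td : t \geq 0\}$, after which the hypothesis finishes the argument. The paper simply fixes $\lambda = \tfrac{1}{2}$ (so the auxiliary point is $z = x + 2(y-x)$), whereas you allow a general $\lambda \in (0,1)$; your remark about the opposite ray is harmless but unnecessary, since the hypothesis already yields constancy on the full line from boundedness on a single ray.
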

\begin{proof}
Let $\mu(t) = F(x + td)$ which by assumption is a constant function $\mu(t) = \gamma$, and let $v(t) = F(y + td)$.  Fix $t \in \mathbb{R}^+$, and let $z$ be the point $z = x + 2(y-x)$, i.e. $y = \frac{1}{2}x + \frac{1}{2}z$.   By convexity of $F$, we have that 
\begin{equation}
F(y + td) = F \Big(\frac{1}{2}z + \frac{1}{2}(x + 2td) \Big) \leq \frac{1}{2}F(z) + \frac{1}{2}\mu(2t) = \alpha,
\end{equation}
for a constant $\alpha$.  It follows that $F$ is bounded above by $\alpha$ on the ray $\{y + td, t \in \mathbb{R}^{+} \}$, from which it follows, by assumption, that $F$ is constant on the line $y + td$.
\end{proof}
We now prove Proposition \ref{firstlemma}. %\begin{proof}[Proof of Theorem \eqref{firstthm}]
Choosing $x_0 \in X$, we define the (nonempty) set
\begin{equation}
M := X \cap \{x \in \mathbb{R}^N, F(x) \leq F(x_0) \}.
\end{equation}
Obviously, the set $M$ is convex and closed.  By assumption, $F$ is bounded from below on $X$ and hence on $M$.  Therefore, if $M$ is bounded, then Weierstrass' Theorem yields the desired result.  
\\
\\
Thus, we may assume that $M$ is unbounded.  Then, the convexity of $M$ implies that $M$ contains a ray $r = \{z + td, t \geq 0\}$.   Denote by $r_1, r_2, ..., r_J$ a set of $J$ rays in $M$ corresponding to linearly independent vectors $d_1, ..., d_J$, so that any ray in $M$ can be expressed as a linear combination of the $r_1, ..., r_J$.   By definition of $M$ and by the assumption, $F$ is bounded on $M$, hence, $F$ is constant on each of the the lines $z_j + td_j$, according to Lemma \eqref{appendlemma1}.  From Lemma \eqref{appendlemma2}, it follows that $F$ is constant along each line $x + td_j$ for arbitrary $x \in \mathbb{R}^N$, from which we deduce that $F$ is constant along any line $x + td$ for arbitrary $d \in Y = \operatorname{span}\{d_1, ...., d_J \}$.  Thus, we project $X$ onto the subspace of $\mathbb{R}^N$ that is orthogonal to $Y$; call this subspace $\tilde{X}$.  
\\
\\
From the foregoing arguments, we have
\begin{equation}
\inf_{\tilde{X}} F(u) = \inf_X F(u)
\end{equation}
As $\tilde{X}$ is still a convex polyhedral set, and by construction $\tilde{M} = \tilde{X} \cap \{x \in \mathbb{R}^N \}$ contains no rays, Weierstrass' Theorem yields the desired result. 
%\end{proof}

\subsection{On uniform boundedness of $\| D^\dagger_h \|$}

\noindent The aim of the second part of the appendix is to prove the uniform bound $\| D^\dagger_h \| \leq 1/2$ eluded to in Section 3.1.  Again, $\| A \|$ denotes the spectral norm of the matrix $A$, and $D_h^\dagger: \mathbb{R}^{n-1} \rightarrow \mathbb R^n$ is the pseudo-inverse  of the discrete derivative matrix $D_h$ as given by \eqref{dermtrx}, with the identification $n = \lfloor 1/h \rfloor$.   From the expression for $D_h$, and the knowledge that $D_h D_h^\dagger = I$ is the identity operator and $D_h^\dagger D_h = (D_h^\dagger D_h)^*$ is self-adjoint, the $n \times (n-1)$ matrix $D_h^\dagger$ is identified as follows:
\begin{equation}
\label{dermtrxinverse}
D^{\dagger}_h = \frac{1}{n^2} \left ( \begin{array}{cccccc} -(n-1) & -(n-2) & -(n-3) & \dots & \dots & -1 \\
1 & -(n-2) & -(n-3) & \dots &\dots& -1 \\
1 & 2 & -(n-3) & \dots &\dots& -1 \\
\vdots & \vdots & \vdots & \vdots & \vdots & \vdots \\
1&2&3&\dots&\dots&n-1
\end{array} \right ).
\end{equation}
It is well-known that the spectral norm of an $m \times n$ matrix can be bounded by the more manageable entry-wise Frobenius norm, according to
\begin{equation}
\| A \| \leq \| A \|_F = \sqrt{\sum_{i=1}^m \sum_{j=1}^n | a_{i,j}|^2}.
\end{equation}
As such, we need only to bound the sum of the squares of the entries of $D^{\dagger}_h$.  The sum $S^1_n = \sum_{j=1}^{n-1} | d_{1,j}|^2$ over entries in the first row of $D^{\dagger}_h$ is given by $S^1_n = (n-1)(2n-1)/(6n^3)$, using the familiar formula $\sum_{j=1}^N j^2 = \frac{1}{6}N(N+1)(2N+1)$.  The analogous sum over entries in the $j^{th}$ row of $D^{\dagger}_h$ is seen inductively to satisfy $S^j_n = S^1_n - \frac{(j-1)}{n^2} +\frac{j(j-1)}{n^3}$.  The total sum $S_n = \sum_{j=1}^{n} S_n^j$ is then $S_n = \frac{1}{6} - \frac{1}{6n^2}$, and we arrive at the desired uniform bound:
\begin{equation}
\| D_h^\dagger \| \leq \sqrt{S_n} \leq \frac{1}{\sqrt{6}} < 1/2. 
\end{equation}  

\subsection{Proof of Proposition \ref{thmthrs}}

\noindent In order to help the reading of the current proof, as well as the proofs of Theorem \ref{mainth} and Theorem \ref{globalmin} in later appendices, we report in Table 1 the notation of the functions used in the proof of Proposition \ref{thmthrs} for the definition of $H_{(p,r)}$.
\begin{table}[h]
\begin{center}
\begin{tabular}{|l|l|}
\hline
$L_p(t,\lambda)$   & $=(t - \lambda)^2 + \min\{ |t|^p, r^p \}$ \\
\hline
$G_p(t, \lambda)  $ & $=(t - \lambda)^2 + |t|^p$ \\
\hline
$F_p(t) $  & $=t+\frac{p}{2}\sgn t |t|^{p-1}$, $p >1$ \\
\hline
$S_p(\lambda) $  & $=G_p(F^{-1}_p(\lambda),\lambda) = (F^{-1}_p(\lambda) - \lambda)^2 + | F^{-1}_p(\lambda)|^p$, $p>1$\\
\hline
$H_{(p,r)}(\lambda)  $ & $=\arg \min_{t \geq 0} L_p(t, \lambda)$ for general $\lambda \geq 0$, $p>1$ \\
\hline
& $=\arg \min_{0 \leq t \leq r} G_p(t, \lambda)= F_p^{-1}(\lambda)$ for $0\leq \lambda  \leq r$ \\
\hline
& $=\left\{ \begin{array}{ll}  F_p^{-1}(\lambda), & \textrm{if } G_p(F_p^{-1}(\lambda),\lambda) \leq r^p \nonumber \\
\lambda, & \textrm{else} \end{array} \right.$ for $\lambda > r$.\\
\hline

\end{tabular}
\caption{Notation of the functions involved in the definition of $H_{(p,r)}$ as  in the proof of Proposition \ref{thmthrs}.}
\end{center}
\end{table}

%\begin{proof}[Proof of Theorem \eqref{thmthrs}] 
Consider the functions 
\begin{equation}
L_p(t,\lambda) = (t - \lambda)^2 + \min\{ |t|^p, r^p \},
\end{equation}
and
\begin{equation}
G_p(t, \lambda) = (t - \lambda)^2 + |t|^p.
\label{G}
\end{equation}
The proof reduces to solving for
\begin{equation}
H_{(p,r)}(\lambda) = \arg \min_{t \in \mathbb R} L_p(t, \lambda)
\label{hmin}
\end{equation}
as a function of $\lambda \in \mathbb{R}$.  Since $L_p(t, \lambda) = L_p(-t, -\lambda)$, the function $H_{(p,r)}(\lambda)$ will be odd, and since also $H_{(p,r)}(0) = 0$, we can, without loss of generality, restrict the domain of interest to $\lambda > 0$.  On this domain, $H_{(p,r)}(\lambda) = \arg \min_{t \in \mathbb R} L_p(t, \lambda)$ is nonnegative, since $L_p(t, \lambda) \leq L_p(-t, \lambda)$ when $t \geq 0$ and $\lambda \geq 0$. Hence, we can restrict the minimization of $L_p(t,\lambda)$ to $t \geq 0$.
\\

\noindent It will be convenient to split the proof into two cases: $1 < p$ and $p = 1$.
\begin{enumerate}
\item We first analyze the case $1 < p$.  
\\
Note that 
\begin{eqnarray}
\arg \min_{t \geq r} L_p(t, \lambda) &=&  \arg \min_{t \geq r}  (t - \lambda)^2 \nonumber \\
&=& \max \{\lambda, r \},
\end{eqnarray}
so that the minimization  $\eqref{hmin}$ naturally splits into the following two cases:
\begin{enumerate}
\item If $\lambda \leq r$, the minimizer has to be searched in $[0,r]$, hence
\begin{equation}
H_{(p,r)}(\lambda) = \arg \min_{0 \leq t \leq r} G_p(t, \lambda) = F_p^{-1}(\lambda) \leq \lambda 
\label{lessthan}
\end{equation}
where $F_p^{-1}(\lambda)$ is the functional inverse of the increasing, and continuous function
\begin{equation}
F_p(t) = t + \frac{p}{2}\sgn{t}{|t|}^{p-1}.
\end{equation}
\item On the other hand, if $\lambda > r$, the minimizer has to be searched in $[0, \lambda]$, hence
\begin{eqnarray}
H_{(p,r)}(\lambda) &=& \left\{ \begin{array}{ll}  F_p^{-1}(\lambda), & \textrm{if } G_p(F_p^{-1}(\lambda),\lambda) \leq r^p \nonumber \\
\lambda, & \textrm{else} \end{array} \right. .
\label{h2}
\end{eqnarray}
\end{enumerate}
By implicit differentiation of the functional relation $F_p(F_p^{-1}(\lambda)) = \lambda$, it is clear that the functions $F_p^{-1}(\lambda)$ and $S_p(\lambda) := G_p(F_p^{-1}(\lambda),\lambda)$ are strictly increasing functions in $\lambda$.  
Indeed, we have the bounds
$$
0 < \frac{d}{d \lambda}F_p^{-1}(\lambda) = \left (F'_p(F_p^{-1}(\lambda)) \right)^{-1} = \left (1 + \frac{p(p-1)}{2} (F_p^{-1}(\lambda))^{p-2} \right )^{-1} \leq 1,
$$
and
\begin{eqnarray*}
\frac{d}{d \lambda} S_p(\lambda) &=& \frac{\partial}{\partial t} G_p(F_p^{-1}(\lambda),\lambda)\frac{d}{d \lambda}F_p^{-1}(\lambda) + \frac{\partial}{\partial \lambda}   G_p(F_p^{-1}(\lambda),\lambda)\\
&=& (2 (F_p^{-1}(\lambda) - \lambda) + p (F_p^{-1}(\lambda))^{p-1})\frac{d}{d \lambda}F_p^{-1}(\lambda) - 2 (  F_p^{-1}(\lambda) - \lambda)\\
&=& 2 \left (1 - \frac{d}{d \lambda}F_p^{-1}(\lambda)\right ) (\lambda - F_p^{-1}(\lambda)) + p \frac{d}{d \lambda}F_p^{-1}(\lambda) (F_p^{-1}(\lambda))^{p-1} \geq 0,
\end{eqnarray*}
since $0\leq \frac{d}{d \lambda}F_p^{-1}(\lambda) \leq 1$, and 
\begin{equation}
\label{boundinvF}
0\leq F_p^{-1}(\lambda) \leq \lambda.
\end{equation}
Also observe that $F_p^{-1}(r + \frac{p}{2}r^{p-1}) = r$, and $S_p(r + \frac{p}{2}r^{p-1}) = r^p + \frac{p^2}{4}r^{2p-2} > r^p$.
This leads us to immediately conclude that
\begin{itemize}
\item[(i)] If $\lambda \leq r $, then $H_{(p,r)}(\lambda) = F_p^{-1}(\lambda)$ (from $\eqref{lessthan}$).
\item[(ii)] If $\lambda \geq r +\frac{p}{2}r^{p-1}$, then $S_p(\lambda) = G_p(F_p^{-1}(\lambda),\lambda) > r^p$, so that $H_{(p,r)}(\lambda) = \lambda$.  
\item[(iii)] Since $S_p(r) < r^p$ while $S_p(r+\frac{p}{2} r^{p-1})) > r^p$, the intermediate value theorem implies that there exists a unique value $\lambda'(r,p)$  lying {\it strictly within} the interval $\big(r , r^{p-1}(\frac{p}{2} + r^{2-p})\big)$ at which 
 \begin{equation}
S_p(\lambda')  = r^p,
\label{equal}
 \end{equation} 
 and  
 \begin{eqnarray}
 H_{(p,r)}(\lambda) &=& \left\{ \begin{array}{ll}  F_p^{-1}(\lambda) & \lambda < \lambda'(r,p)  \\ \lambda & \lambda > \lambda'(r,p) \end{array} \right. .
 \end{eqnarray}
At $\lambda'$, $H_{(p,r)}(\lambda') = \arg \min_{t \geq 0} L_p(t, \lambda')$ is not uniquely defined and is realized at $F_p^{-1}(\lambda')$ and at $\lambda'$. In this case, we identify $H_{(p,r)} (\lambda') = F_p^{-1}(\lambda)$ for the sequel; as will be made clear, this will not cause problems in the ensuing analysis.  Finally, note that
 \item[(iv)] 
 At $\lambda'$, the function $H_{(p,r)}$ has a discontinuity $\delta(r,p) = \lambda' - H_{(p,r)} (\lambda')$ that is strictly positive, as long as $r > 0$.  Indeed,
on the one hand, we know that $\lambda'(r,p) > r$, on the other hand, $H_{(p,r)} (\lambda') < r$. This follows because $H_{(p,r)}(\lambda') = F_p^{-1}(\lambda')$, and
$$
(F_p^{-1}(\lambda'))^p < (F_p^{-1}(\lambda') - \lambda')^2 + |F_p^{-1}(\lambda')|^p = S_p(\lambda')= r^p.
$$
 \end{itemize}

 \item The analysis of the case $p = 1$ is left to the reader since it follows a similar argument as for $p > 1$.
 \end{enumerate}
%\end{proof}

\subsection{Proof of Theorem \ref{contract} }

%\begin{proof}.
We assume that the operator $T^*T : \ell_2(\mathcal I) \rightarrow \ell_2(\mathcal I)$ is nonnegative, so that its spectrum lies within an interval $[\delta, 1]$ with $\delta \geq 0$, and the operator $I - T^*T$ has norm $\| I - T^*T\| \leq 1 - \delta$.  In particular, if $T^*T$ is invertible, then the inequality $\delta > 0$ is strict, and so $\|I - T^*T\| \leq 1 - \delta < 1$.
\\
We wish to show that the map $\mathbb{U}_{\mathcal I_0}$ with component-wise action
\begin{eqnarray}
[\mathbb{U}_{\mathcal I_0} u]_i &=& \left\{
\begin{array}{ll} F_p^{-1}([(I - T^*T)u + T^*g]_i), & \textrm{if } i \in \mathcal I_0 \\  
\big( (I - T^*T)u + T^*g \big)_i, & \textrm{if } i \in \mathcal I_1
\end{array} \right.
\label{separate}
\end{eqnarray}
is a contraction.  To this end, let ${ v, v'}$ be arbitrary vectors in $\ell_2(\mathcal I)$.  
\begin{enumerate}
\item If the index $i \in \mathcal I_1$, then $$|[\mathbb{U}_{\mathcal I_0}(v)]_i - [\mathbb{U}_{\mathcal I_0}(v')]_i|  = |\big[ (I - T^*T)(v - v') \big]_i |;$$
\item  If the index $i \in \mathcal I_0$, then we split the analysis in two cases $p>1$ and $p=1$:
\begin{enumerate}
\item for $p > 1$, we have
\begin{eqnarray}
|[\mathbb{U}_{\mathcal I_0}(v)]_i - [\mathbb{U}_{\mathcal I_0}(v')]_i| &=& \left | F_p^{-1}([(I - T^*T)v + T^*g]_i) - F_p^{-1}([(I - T^*T)v' + T^*g]_i) \right | \nonumber \\
&=& \left | \frac{d}{d\lambda}F_p^{-1}(\xi) \big[ (I - T^*T)(v - v') \big]_i \right | \nonumber \\
&<& \left |\big[ (I - T^*T)(v - v') \big]_i \right |
\end{eqnarray}
where the second equality is an application of the mean value theorem, which is valid since $F_p^{-1}(\lambda)$ is differentiable.  The final inequality above follows from implicit differentiation of the relation  $$F_p^{-1}(F_p(t)) = t $$
and the observation that $|\frac{d}{dt}F_p(t)| > 1$ (see the proof of Proposition \ref{thmthrs});
\item for $p = 1$, by analyzing all cases, we get also that
\begin{eqnarray}
|[\mathbb{U}_{\mathcal I_0}(v)]_i - [\mathbb{U}_{\mathcal I_0}(v')]_i| 
&\leq& |\big[ (I - T^*T)(v - v') \big]_i |
\end{eqnarray}
\end{enumerate}
\end{enumerate}
Together, we have 
\begin{eqnarray}
\| \mathbb{U}_{\mathcal I_0} ({ v}) - \mathbb{U}_{\mathcal I_0}({ v'}) \|^2_{\ell_2(\mathcal I)} &=& \sum_{i \in \mathcal I} |[\mathbb{U}_{\mathcal I_0}(v)]_i - [\mathbb{U}_{\mathcal I_0}(v')]_i|^2 \nonumber \\
&\leq&  \sum_{i \in \mathcal I} |\big( (I - T^*T) v - v' \big)_i |^2 \nonumber \\
&=& \| (I - T^*T) { v - v'} \|^2_{\ell_2(\mathcal I)} \nonumber \\
&\leq& \| I - T^*T \|^2 \|{ v - v'} \|^2_{\ell_2(\mathcal I)} \nonumber \\ 
&\leq& (1 - \delta) \|{ v - v'} \|^2_{\ell_2(\mathcal I)}.
\label{nonexpand}
\end{eqnarray}
As $\mathbb{U}_{\mathcal I_0}$ is a contraction, we arrive at the stated result by application of the Banach Fixed Point Theorem.
% \end{proof}

\subsection{Proof of Lemma \ref{l4}}
 If $i \in \mathcal I_1$, then $\bar{u}_i = \bar{u}_i + \big[ T^*(g - T\bar{u}) \big]_i$, which is satisfied if and only if $\big[ T^*(g - T\bar{u}) \big]_i = 0$ as stated.   It remains to analyze the case $i \in \mathcal I_0$, and, again, we split the argument in the cases $p>1$ and $p=1$.  
 \begin{enumerate}
 \item First suppose $p > 1$. Using the notation $\bar{\lambda} = \bar{u}_i + \big[ T^*(g - T\bar{u}) \big]_i$, the fixed point characterization $\eqref{seperate2}$ translates to $$ F_p^{-1}(\bar{\lambda}) = \bar{u}_i. $$
But of course $\lambda = F_p(\bar{u}_i)$ is the unique value at which $F_p^{-1}(\lambda) = \bar{u}_i$, and so this implies that
\begin{equation}
\big[ T^*(g - T\bar{u}) \big]_i = F_p(\bar{u}_i) - \bar{u}_i,
\label{invert}
\end{equation}
and, by reversing operations, the relation $\eqref{invert}$ in turn implies the fixed point condition $\eqref{seperate2}$.
\item The case $p = 1$, which is similar, is left to the reader.
\end{enumerate}

\subsection{Proof of Theorem \ref{mintheorem}}
The proof will be much simplified by the following lemma which characterizes vectors such as $\bar{u}$ that satisfy the fixed point relations \eqref{wha2} or \eqref{wah2}:
\begin{lemma}
If $u$ and $v$ are such that
\begin{equation}
{\cal{J}}_r^{p,surr} (u + v, u) - \| v \|_{\ell_2(\mathcal I)}^2\geq {\cal{J}}_r^{p,surr} (u, u) = {\cal{J}}_r^{p} (u), 
\label{eq}
\end{equation}
then ${\cal{J}}^p_r (u + v) \geq  {\cal{J}}^p_r ({ u})$.
\label{lemmata}
\end{lemma}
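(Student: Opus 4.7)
The statement is in essence a bookkeeping exercise that rewrites the surrogate hypothesis as a direct inequality for $\mathcal{J}_r^p$. My plan is to unfold the definition of $\mathcal{J}_r^{p,\mathrm{surr}}$ at the point $(u+v,u)$ and observe that the $\|v\|_{\ell_2(\mathcal I)}^2$ term on the right-hand side of the surrogate exactly cancels against the one subtracted in the hypothesis, leaving a clean comparison between $\mathcal{J}_r^p(u+v)$ and $\mathcal{J}_r^p(u)$ up to a nonnegative slack of $\|Tv\|_{\ell_2(\mathcal K)}^2$.

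Concretely, I would first substitute $a=u$ and the argument $u+v$ into the definition
\[
\mathcal{J}_r^{p,\mathrm{surr}}(w,a) = \mathcal{J}_r^p(w) - \|Tw - Ta\|_{\ell_2(\mathcal K)}^2 + \|w-a\|_{\ell_2(\mathcal I)}^2
\]
to obtain the identity
\[
\mathcal{J}_r^{p,\mathrm{surr}}(u+v,u) = \mathcal{J}_r^p(u+v) - \|Tv\|_{\ell_2(\mathcal K)}^2 + \|v\|_{\ell_2(\mathcal I)}^2,
\]
and in particular $\mathcal{J}_r^{p,\mathrm{surr}}(u,u) = \mathcal{J}_r^p(u)$, which gives the second equality in the hypothesis for free. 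Subtracting $\|v\|_{\ell_2(\mathcal I)}^2$ from both sides of the identity and combining with the assumed inequality yields
\[
\mathcal{J}_r^p(u+v) - \|Tv\|_{\ell_2(\mathcal K)}^2 \;=\; \mathcal{J}_r^{p,\mathrm{surr}}(u+v,u) - \|v\|_{\ell_2(\mathcal I)}^2 \;\geq\; \mathcal{J}_r^p(u).
\]

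Since $\|Tv\|_{\ell_2(\mathcal K)}^2 \geq 0$, rearranging gives $\mathcal{J}_r^p(u+v) \geq \mathcal{J}_r^p(u) + \|Tv\|_{\ell_2(\mathcal K)}^2 \geq \mathcal{J}_r^p(u)$, which is the desired conclusion. There is no genuine obstacle here: the argument is purely a substitution and a cancellation, and it uses none of the structure of the thresholding maps or of the index partition $(\mathcal I_0,\mathcal I_1)$. The content of the lemma is really that the surrogate overestimates the original functional by exactly $\|L v\|^2 = \|v\|^2-\|Tv\|^2 \geq 0$ (recalling $\|T\|<1$), so that a surrogate inequality with a compensating $-\|v\|^2$ term translates into an inequality for $\mathcal{J}_r^p$ itself; the real work lies downstream, in Theorem \ref{mintheorem}, where one must verify the surrogate hypothesis for all sufficiently small $v$ using the fixed point characterization of $\bar u$.
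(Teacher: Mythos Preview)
Your proof is correct and is essentially the same as the paper's: both unfold the definition of the surrogate at $(u+v,u)$ and use nonnegativity of the slack term (you phrase it as $\|Tv\|^2\geq 0$, the paper equivalently as $\|Lv\|^2\leq\|v\|^2$ with $L=\sqrt{I-T^*T}$) to pass from the surrogate inequality to the desired inequality for $\mathcal{J}_r^p$.
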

\begin{proof} 
For any $u$ and $v$, the following holds because $\|L\| \leq 1$:
\begin{equation}
{\cal{J}}^p_r(u + v) = {\cal{J}}_r^{p,surr} (u + v,u) - \|Lv \|_{\ell_2(\mathcal I)}^2 \geq  {\cal{J}}_r^{p,surr} (u + v, u) - \|v \|_{\ell_2(\mathcal I)}^2.
\end{equation}
If in addition $u$ and $v$ satisfy $\eqref{eq}$, then the desired result is achieved by virtue of the equality ${\cal{J}}_r^{p,surr} (u, u) =  {\cal{J}}^p_r ({ u})$.
\end{proof}

Let us show now the proof of Theorem \ref{mintheorem}.
By Lemma $\ref{lemmata}$, it suffices to show that at a fixed point $\bar{u}$ defined by \eqref{wha2} or \eqref{wah2}, any perturbation $\delta h \in \ell_2(\mathcal I)$ with norm $\| \delta h \|_{\ell_2(\mathcal I)} \leq \min\{ [\lambda'(r,p) - r], [r - H_{(p,r)}(\lambda')] \}$ will satisfy
\begin{equation}
{\cal{J}}_r^{p,surr} (\bar{u} + \delta h, \bar{u}) -{\cal{J}}_r^{p,surr} (\bar{u}, \bar{u}) \geq \| \delta h \|_{\ell_2(\mathcal I)}^2.
\label{min}
\end{equation}
After expanding the left-hand-side above, the inequality $\eqref{min}$ is seen to be equivalent to 
\begin{equation}
2 \sum_{i \in \mathcal I} \delta h_i [T^*(T\bar{u} - g)]_i + \sum_{i \in \mathcal I} \Big[ \min\{|\bar{u}_i + \delta h_i|^p, r^p\} - \min \{|\bar{u}_i|^p, r^p \} \Big] \geq 0.
\label{lhs}
\end{equation}
At this point, it is convenient to consider the summation over $i \in \mathcal I_0$ and $i \in \mathcal I_1$ separately. 
\\
\noindent By Lemma $\ref{l3}$, the first summand above vanishes over $\mathcal I_1$ and 
\begin{enumerate}
\item  if $1 < p $, then $\sum_{i \in \mathcal I} \delta h_i [T^*(T\bar{u} - g)]_i = - \sum_{i \in \mathcal I_0} \delta h_i \sgn{u_i} \frac{p}{2} |u_i|^{p-1}$;  
\item if $p = 1$, then $\sum_{i \in \mathcal I} \delta h_i [T^*(T\bar{u} - g)]_i = - 1/2 \sum_{i \in \mathcal I^b_0} \delta h_i \sgn{u_i} + \sum_{i \in \mathcal I^a_0} \delta h_i [T^*(T\bar{u} - g)]_i$.
\end{enumerate}
With respect to the second summation, observe from Proposition $\ref{thmthrs}$ that for all $1 \leq p$,  $|\bar{u}_i| \geq \lambda'(r,p) > r $ for $i \in \mathcal I_1$, so that this summation vanishes over $\mathcal I_1$ for any perturbation $\delta h$ satisfying the component-wise inequality $| \delta h_i | \leq \lambda'(r,p) - r$.  Similarly, $|\bar{u}_i| \leq H_{(p,r)} (\lambda') < r$ for $i \in \mathcal I_0$, so that for any perturbation $\delta h$ satisfying component-wise $| \delta h_i | \leq \min\{ [\lambda'(r,p) - r], [r - H_{(p,r)}(\lambda')] \}$, we have that
\begin{equation}
 \sum_{i \in \mathcal I} \Big[ \min\{|\bar{u}_i + \delta h_i|^p, r^p\} - \min \{|\bar{u}_i|^p, r^p \} \Big]  =  \sum_{i \in \mathcal I_0} |\bar{u}_i + \delta h_i|^p - |\bar{u}_i|^p.
\end{equation}
The desired result follows if we can show that
 \begin{enumerate}
 \item $1 < p \leq 2$:  $\big[ |\bar{u}_i + \delta h_i|^p - |\bar{u}_i|^p - \delta h_i p [\sgn{u_i}] |u_i|^{p-1} \big] \geq 0$, for all $i \in \mathcal I_0$
\item $p = 1$: 
\begin{enumerate}
\item $ | \delta h_i + \bar{u}_i| - |\bar{u}_i| - \delta h_i[\sgn{u_i}] \big] \geq 0$ for all $i \in \mathcal I_0^b$, and
\item  $\delta h_i [T^*(T\bar{u} - g)]_i + | \delta h_i | \geq 0$, for all $i \in \mathcal I_0^a$.
\end{enumerate}
\end{enumerate}
The inequality in $2(b)$ follows directly from Lemma $\ref{l3}$; by symmetry, $1$ and $2(a)$ follow if, for any $u \geq 0$,
\begin{equation}
\min_{v \in \mathbb R} \left [ f(v) := |u + v|^p - u^p - pu^{p-1}v \right ]  = \min_{v \geq -u} (u + v)^p - u^p - pu^{p-1}v \geq 0.
\end{equation}
When $p = 1$, the right-hand-side is identically zero and the result holds.  When $1 < p \leq 2$, differentiating the right-hand-side gives that $f(v)$ has a local minimum at $v = 0$, at which $f(0) = 0$, and, at the endpoint, $f(-u) = (p - 1)u^{p-1} \geq 0.$   
%\end{proof}

\subsection{Proof of Theorem \ref{globalmin}}
Suppose that $u^*$ is a minimizer of the functional ${\cal{J}}^p_r$.   Consider the partition of the index set $\mathcal I$ into $\mathcal I_0 = \{i \in \mathcal I: |u^*_i| \leq r \}$  and $\mathcal I_1 = \{i \in \mathcal I: |u^*_i| > r \}$, and note that $| \mathcal I_1 | < \infty$, or else $| \mathcal J_r^p(u^*)|$ would not be finite.  As in the proof of Theorem \eqref{l5}, consider the unique decomposition $u^* = u^*_0 + u^*_1$ into a vector $u^*_0$ supported on $\mathcal I_0$ and another $u^*_1$ supported on $\mathcal I_1$.  Again, let $\mathcal P: u \rightarrow u_1$ and $\mathcal P^{\perp} = \mathcal I - \mathcal P: u \rightarrow u_0$ denote the orthogonal projections onto the subspaces  $\ell_2^{\mathcal I_1}(\mathcal I)$ and $\ell_2^{\mathcal I_0}(\mathcal I)$, respectively, and consider the operators $T_0 = T \mathcal P^{\perp}$ and $T_1 = T \mathcal P$.
\\
\\
By minimality of ${ u^*}$, if we fix ${ u^*_0}$, the vector ${ u^*_1}$ satisfies ${ u_1^*} = \arg \min_{z \in \ell_2^{\mathcal I_1}(\mathcal I)} {\cal{J}}^p_{r,1}(z)$, where 
\begin{equation}
 {\cal{J}}^p_{r,1} ({ z}) :=  \| T_1 { z} - { (g - T_0 u^*_0)} \|^2_{\ell_2(\mathcal J)} +  \sum_{i \in \mathcal I_1} \min \{ |z_i|^p, r^p \}.
\label{restrict1}
\end{equation}
Since all coefficients in ${ u^*_1}$ have absolute value $|(u^*_1)_i| > r$, the vector ${ u^*_1}$ also minimizes the functional
\begin{eqnarray}
 \| T_1{ z} - { (g - T_0 u^*_0)} \|^2_{\ell_2(\mathcal J)}, 
\label{realrestrict}
\end{eqnarray}
or, else, the vector ${ z^*}$ minimizing $\eqref{realrestrict}$ would satisfy ${\cal{J}}^p_{r,1} ({ z^*}) <  {\cal{J}}^p_{r,1} ({ u^*_1})$, contradicting the minimality of ${ u^*_1}$.  In fact, ${ u^*_1}$ must be the {\it unique} vector minimizing $\eqref{realrestrict}$.  For, if another vector ${ u'}$ also minimized $\eqref{realrestrict}$, then the operator $T_1$ would have a nontrivial null space containing the span of some nonzero vector ${ v}$, so that all vectors in the affine space $\{ u^*_1 + t v : t \in \mathbb R\}$ would be minimal solutions for $\eqref{realrestrict}$.  In this case, we would have also the freedom of choosing from this affine subspace a vector ${ u'}$ having one coefficient $u'_i$ satisfying $|u_i'| < r$.  But such a vector ${ u'}$ satisfies ${\cal{J}}^p_{r,1} ({ u'}) <  {\cal{J}}^p_{r,1} ({ u^*_1})$, contradicting the minimality of $u^*_1$.  
\\
\\
\noindent It follows that the operator $T_1$ must have trivial null space, and ${ u^*_1}$ is the unique minimal least squares solution to $\eqref{realrestrict}$, well-known to be explicitly given by
\begin{equation}
{ u^*_1} = \big(T_1^*T_1\big)^{-1}T_1^*{ (g - T_0 u^*_0)},
\label{relu1u0}
\end{equation}
so that $T_1{ u^*_1}$ is the unique orthogonal projection of ${ (g - T_0 u^*_0)}$ onto the range of $T_1$. Actually $\mathcal P_1 = T_1 ( T_1^* T_1)^{-1} T_1^*$ is the orthogonal projection onto the range of $T_1$, due to the non-triviality of the null space of $T_1$.  Therefore we have $T_1{ u^*_1} = {\cal P}_1({ g - T_0 u^*_0})$. It easily follows that
\begin{equation}
T^*_1 \big(T_1{ u^*_1}  - { (g - T_0 u^*_0)} \big) = 0,
\end{equation}
or, in other words, 
\begin{equation}
 \big[ T^*(g - T{ u}^*) \big]_i  = 0 \textrm{,  for all }  i \in \mathcal I_1.
\end{equation}
Now, on the other hand, by observing that any optimal variable ${ u_1}$ for fixed $u_0$ depends on $u_0$ via the relationship ${ u_1} = \big(T_1^*T_1\big)^{-1}T_1^*{ (g - T_0 u_0)}$, we easily infer that the vector ${ u^*_0}$ minimizes
%\begin{eqnarray}
%{\cal{J}}^p_{r,0} ({ v}) &:=&  \| T_0 { v} - (g - T_1 u^*_1) \|^2_{\ell_2(\mathcal J)} +  \sum_{i \in \mathcal I_0} \min \{ |v_i|^p, r^p \}. \nonumber \\
%&=& \| {\cal P}_1^{\perp} (T_0 { v} - { g}) \|^2_{\ell_2(\mathcal J)} +  \sum_{i \in \mathcal I_0} \min \{ |v_i|^p, r^p \}. 
%\label{restrict1}
%\end{eqnarray}
\begin{eqnarray}
{\cal{J}}^p_{r,0} ({ v})
&=& \| {\cal P}_1^{\perp} (T_0 { v} - { g}) \|^2_{\ell_2(\mathcal J)} +  \sum_{i \in \mathcal I_0} \min \{ |v_i|^p, r^p \}, 
\label{restrict1}
\end{eqnarray}
where ${\cal P}_1^{\perp}$ denotes the orthogonal projection operator onto the orthogonal complement of the range of $T_1$.
\\
\\
\noindent Consider the convex functional,
\begin{equation}
 {\cal{F}} ({ v}) :=  \|  {\cal P}_1^{\perp} (T_0 { v} - { g}) \|^2_{\ell_2(\mathcal J)} + \|v \|_{\ell_p^{\mathcal I_0}(\mathcal I)}^p,
\label{auxfunc}
\end{equation}
and note that ${\cal{J}}^p_{r,0} ({ u}) \leq  {\cal{F}} ({ u})$, while at the same time ${\cal{J}}_{r,0}^p({ u^*_0}) =  {\cal{F}} ({ u^*_0})$ by virtue of the fact that $|u^*_i| < r$. 
{ For $p>1$} it follows that ${ u^*_0}$ is also a minimizer of  ${\cal{F}} ({ u})$, and so satisfies the Euler-Lagrange equations $\cite{bashsh93}$,
\begin{equation}
\big(T_0^*{\cal P}_1^{\perp}(T_0{ u^*_0}  - { g}) \big) + \frac{p}{2}\sgn{{ u^*_0}}|{ u^*_0}|^{p-1} = 0,
\end{equation}   
which imply the fixed point conditions
\begin{equation}
 \big[ T^*(g - T{ u}^*) \big]_i = \frac{p}{2}\sgn{(u^*_0)_i}|(u_0^*)_i|^{p-1}  \textrm{,  for all } i \in \mathcal I_0.
\end{equation}
For $p=1$ one uses results from \cite{DDD} to conclude that
\begin{equation}
u_0^* = \mathbb S_{1/2} ( u_0^* + T_0^* \mathcal P_1^\perp( g - T_0 u_0^*)),
\label{fixptst}
\end{equation}
where $\mathbb S_\gamma$ is the so-called {\it soft-thresholding}, defined component-wise $\mathbb S_\gamma(v) = (S_\gamma (v_i))_{i \in \mathcal I}$, where
\begin{equation}
S_{\gamma}(\lambda) =
\left\{
\begin{array}{ll}
0, & |\lambda| \leq \gamma  \\
\lambda - \frac{\sgn \lambda}{2}, & |\lambda|  > \gamma.
\end{array} \right.
\label{softthrs}
\end{equation}
(Actually, \cite[Proposition 3.10]{DDD} only states that any fixed point of \eqref{fixptst} is a minimizer of \eqref{auxfunc}; nevertheless the converse also holds, see \cite[Remarks (1), pag. 2515]{fo07}.)
The fixed-point condition \eqref{fixptst} implies
\begin{equation}
  \left\{
\begin{array}{ll}
\big[ T^*(g - T {u}^*) \big]_i  \in [-1/2, 1/2] , & |{u}^*_i| \leq 1/2 \\  
\big[ T^*(g - T{u}^*) \big]_i  = 1/2\sgn{{u}^*_i}, & 1/2 < |{u}^*_i| \leq r. 
\end{array}
\right.
\end{equation}
It remains to verify that 
\begin{itemize}
\item $|u^*_i| \geq \lambda'(r,p)$, if $i \in \mathcal I_1$, and
\item $|u^*_i| \leq F_p^{-1}(\lambda'(r,p))$, { for $p>1$, and $|u^*_i| \leq r- 1/4$, for $p=1$,} if $i \in \mathcal I_0$.
\end{itemize}
{ We show these conditions for $p>1$ only, as the case $p=1$ is proved with an analogous argument.}
\begin{enumerate}
\item
We first show that $|u^*_i| \geq \lambda'(r,p)$ if $i \in \mathcal I_1$. From the first part of the proof, we know that at a minimizer $u^*$, the functional ${\cal J}^p_r(u^*)$ can be written as
\begin{equation}
 {\cal J}^p_r(u^*) = \| {\cal P}_1^{\perp} (T_0 u^*_0 - { g}) \|^2_{\ell_2(\mathcal K)} +  \|u_0^* \|_{\ell_p^{\mathcal I_0}(\mathcal I)}^p+ |\mathcal I_1|r^p
 \label{atmin}
\end{equation} 
Note that at this point we make  explicit use of the finite cardinality of $\mathcal I_1$.
Fix $i \in \mathcal I_1$ and any perturbation ${ h} = h_i e_i$, $h_i \in \mathbb R$, along the coordinate $i$ (here, $e_i$ is the $i^{th}$ vector of the canonical basis).  Consider the rank-one operator $t_i = T \mathcal P_i$, where we use $\mathcal P_i$ to denote the orthogonal projection onto the one-dimensional subspace spanned by $e_i$.  Observe that $|\ t_i u \| = | (u)_i | \| t_i \|$.    Since $t_i$ is orthogonal to the argument ${\cal P}_1^{\perp} (T_0 u^*_0 - { g})$ under the $\ell_2$ penalty in $\eqref{atmin}$, the minimality condition ${\cal J}^p(u^*) \leq {\cal J}^p(u^* + h)$ can be written as
\begin{eqnarray}
&& \| {\cal P}_1^{\perp} (T_0 u^*_0 - { g}) \|^2_{\ell_2(\mathcal K)}+ \|u_0^* \|_{\ell_p^{\mathcal I_0}(\mathcal I)}^p+ |\mathcal I_1|r^p \nonumber \\
&\leq&    \| {\cal P}_1^{\perp} (T_0 u^*_0 - { g}) \|^2_{\ell_2(\mathcal K)} + \|u_0^* \|_{\ell_p^{\mathcal I_0}(\mathcal I)}^p \nonumber \\
&& \phantom{XXXXX} + \|h_i t_i\|^2_{\ell_2(\mathcal I)} + \min\{r^p, |u^*_i + h_i|^p\} + r^p (|\mathcal I_1| -1) 
\end{eqnarray}
which is equivalent to the condition that
\begin{eqnarray}
r^p &\leq& \|h_i t_i \|^2_{\ell_2(\mathcal I)} + \min\{r^p, |u^*_i + h_i|^p \}
\label{up}
\end{eqnarray}
hold for all $h_i \in \mathbb{R}$.  Now, since $\|T\|<1$, it follows that $\|t_i\| \leq 1$, and $\eqref{up}$ implies that
\begin{eqnarray}
r^p &\leq& h_i^2 + \min\{r^p, |u^*_i + h_i|^p \}
\label{down}
\end{eqnarray}
holds for all $h_i \in \mathbb{R}$, or, after the change of variables $\alpha = u^*_i + h_i$, that
\begin{equation}
r^p \leq (\alpha - u^*_i)^2 + \min\{r^p, |\alpha|^p \} 
\label{alpha}
\end{equation}
holds for all $\alpha \in \mathbb{R}$.  In particular, the inequality $\eqref{alpha}$ must hold at the value $\alpha^*$ that minimizes the right-hand-side.  But we already know from Proposition $\ref{thmthrs}$ that such a minimizer $\alpha^*$ is of the form:
\begin{eqnarray}
\alpha^* &=& \left\{
\begin{array}{ll}
F_p^{-1}(u^*_i), &  |u^*_i| \leq \lambda'(r,p)  \\
u^*_i, & |u^*_i|  > \lambda'(r,p) \\  
\end{array}
\right.
\end{eqnarray}
Now, suppose $|u^*_i| < \lambda'(r,p).$ (We know that $|u^*_i| > r$, so then $r < |u^*_i| < \lambda'(r,p)$).    From the proof of Proposition $\ref{thmthrs}$ we know that the function $F_p^{-1}(\lambda)$ is increasing, so then $\alpha^* = F_p^{-1}(u^*_i) < F_p^{-1}(\lambda') < r$.  Since also $S_p$ is strictly increasing, it follows that $S_p(\alpha^*) < S_p(F_p^{-1}(\lambda')) \leq S_p(\lambda') = r^p$. In the last inequality we used \eqref{boundinvF}. (See also Table 1 for recalling the notations used here.) 
But this is a contradiction to the minimality condition, $\eqref{alpha}$, and so we must conclude that $|u^*_i| \geq \lambda'(r,p)$.  
\item We now show that $|u^*_i| \leq F_p^{-1}(\lambda'(r,p))$, if $|u^*_i| \leq r$.   Recall that for $i \in \mathcal I_0$, the coefficient $u^*_i$ satisfies the fixed point condition,
\begin{equation}
 \big[ T^*(g - T{ u}^*) \big]_i  = \frac{p}{2}\sgn{u^*_i}|u^*_i|^{p-1}.
 \label{fp0}
\end{equation}
Fix $i \in \mathcal I_0$, and consider as before any perturbation ${ h} = h_i e_i$ along the coordinate $i$, $h_i \in \mathbb R$.  Let $t_i$ be the rank-one operator as defined before.  Then, the minimality condition ${\cal J}^p_r(u^*) \leq {\cal J}^p_r(u^* + h)$ is easily seen to be equivalent to
\begin{eqnarray}
\|Tu^* - g\|^2_{\ell_2(\mathcal K)} + |u^*_i|^p &\leq&  \|Tu^* - g + h_i t_i\|^2_{\ell_2(\mathcal K)} \nonumber \\
 &&+ \min\{r^p, |u^*_i + h_i|^p\} \nonumber \\
 &=& \|Tu^* - g \|^2_{\ell_2(\mathcal K)} + \|h_i t_i \|^2_{\ell_2(\mathcal I)} + 2 h_i \langle t_i,Tu^* - g \rangle \nonumber \\
 &&+ \min\{r^p, |u^*_i + h_i|^p\} \nonumber \\
 &=& \|Tu^* - g \|^2_{\ell_2(\mathcal K)} + \|h_i t_i \|^2_{\ell_2(\mathcal I)} - 2 h_i\frac{p}{2}\sgn{u^*_i}|u^*_i|^{p-1}  \nonumber \\
 && + \min\{r^p, |u^*_i + h_i|^p\} 
 \label{string}
\end{eqnarray}
and the final equality follows directly from the fixed point condition $\eqref{fp0}$.  Now the chain of inequalities $\eqref{string}$ implies the minimality condition
\begin{eqnarray}
|u^*_i|^p &\leq&  \|h_i t_i \|^2_{\ell_2(\mathcal I)} - 2h_i\frac{p}{2}\sgn{u^*_i}|u^*_i|^{p-1} + \min\{r^p, |u^*_i + h_i|^p\}  \nonumber \\
 &\leq& h_i^2  - 2h_i\frac{p}{2}\sgn{u^*_i}|u^*_i|^{p-1} + \min\{r^p, |u^*_i + h_i|^p\},
\end{eqnarray}
or, again using the change of variables  $\alpha = u^*_i + h_i$, the inequality
\begin{equation}
|u_i^*|^p \leq (\alpha - u^*_i)^2 - 2(\alpha - u^*_i)\frac{p}{2}\sgn{u^*_i}|u^*_i|^{p-1} + \min\{r^p, |\alpha|^p\}.
\label{again}
\end{equation}
%or
%\begin{equation}
%(1 - p)|u_i^*|^p \leq (\alpha - u^*_i)^2 - p \alpha \sgn{u^*_i}|u^*_i|^{p-1} + \min\{r^p, |\alpha|^p\}
%\label{again2}
%\end{equation}
Again, the inequality $\eqref{again}$ should hold for all $\alpha$ by the minimality of $u^*$.   Minimizers $\alpha^*$ of the right-hand-side of $\eqref{again}$ also are minimizers of  
\begin{equation}
\big(\alpha - (u^*_i + \frac{p}{2}\sgn{u^*_i}|u^*_i|^{p-1})\big)^2 + \min\{r^p, |\alpha|^p\},
\end{equation}
which we know to have the form
\begin{eqnarray}
\alpha^* &=& \left\{
\begin{array}{ll}
F_p^{-1}(u^*_i + \frac{p}{2}\sgn{u^*_i}|u^*_i|^{p-1}), &  |u^*_i| + \frac{p}{2}|u^*_i|^{p-1} \leq \lambda'(r,p)  \\
u^*_i + \frac{p}{2}\sgn{u^*_i}|u^*_i|^{p-1}, & |u^*_i| + \frac{p}{2}|u^*_i|^{p-1}  > \lambda'(r,p) \\  
\end{array}
\right. .
\end{eqnarray}
But $u^*_i + \frac{p}{2}\sgn{u_i^*}|u_i^*|^{p-1} = F_p(u_i^*)$, so the above reduces to
\begin{eqnarray}
\alpha^* &=& \left\{
\begin{array}{ll}
u^*_i, &  F_p(u_i^*) \leq \lambda'(r,p)  \\
F_p(u_i^*), & F_p(u^*_i) > \lambda'(r,p) \\  
\end{array}
\right.
\end{eqnarray}
As before, the proof proceeds by contradiction.  Suppose that $F_p(u_i^*)  > \lambda'(r,p)$, so that $\alpha^* = F_p(u_i^*) > \lambda'(r,p)$ and $S_p(\alpha^*) > S_p(\lambda') = r^p$.   Note that, by recalling $F_p(u_i^*) = u_i^* - \frac{p}{2} \sgn(u_i^*) (u_i^*)^{p-1}$, we have
\begin{equation}
 S_p(\alpha^*) = (u_i^* - F_p(u_i^*))^2 + |u_i^*|^p = |u_i^*|^p + \frac{p^2}{4}|u_i^*|^{2p-2}.
\label{bad}
\end{equation}  
Plugging $\alpha^*$ into the right-hand-side of $\eqref{again}$, noting that $\lambda'(r,p) > r$ so that $|\alpha^*| \geq r$, and rearranging, yields the inequality
\begin{equation}
|u^*_i|^p \leq r^p -\frac{p^2}{4}|u_i^*|^{2p-2} \mbox{ or }  S_p(\alpha^*) = |u_i^*|^p + \frac{p^2}{4}|u_i^*|^{2p-2} \leq r^p.
\end{equation}
But this contradicts the assumption that the expression in $\eqref{bad}$ be larger than $r^p$.
\end{enumerate}
%\end{proof}

%\input{thesis.bib}

\bibliography{thesis.bib}

%\end{singlespace}

\end{document}